\documentclass[reqno,11pt]{amsart}
\usepackage{amsmath,amsfonts,amssymb,amsxtra,latexsym,amscd,enumerate,amsthm,verbatim,color,physics, marginnote, xcolor, bbm, ulem}

\usepackage[hidelinks]{hyperref}
\usepackage{changepage}
\allowdisplaybreaks
\usepackage[margin=1.40in]{geometry}
\numberwithin{equation}{section}

\newcommand{\red}[1]{\textcolor{red}{#1}}

\newcommand{\p}{\partial}

\newcommand{\D}{D_\lambda}

\newcommand{\R}{\mathbb{R}}

\newcommand{\T}{\mathbb{T}}

\newcommand{\Z}{\mathbb{Z}}

\numberwithin{equation}{section} 

\usepackage{comment}

\newtheorem{theorem}{Theorem}[section]
\newtheorem{lemma}[theorem]{Lemma}
\newtheorem{proposition}[theorem]{Proposition}

\newtheorem{definition}[theorem]{Definition}
\newtheorem{remark}[theorem]{Remark}

\newtheorem{assumption}[theorem]{Assumption}

\begin{document}

\title[Inviscid damping and vorticity depletion]{Optimal Linear Inviscid Damping and Vorticity Depletion for Non-monotonic shear flows}

\author{Rajendra Beekie}
\address{Tufts University }
\email{rajendra.beekie@tufts.edu}

\author{Shan Chen}
\address{University of Minnesota}
\email{chen7081@umn.edu}

\author{Hao Jia}
\address{University of Minnesota}
\email{jia@umn.edu}




\begin{abstract}
{\small}
We give a proof of optimal pointwise linear inviscid damping and vorticity depletion for a large class of spectrally stable shear flows $b(y)$ on the rectangular torus. This extends the linear inviscid estimates in \cite{CJWZ25} for Kolmogorov flow $b(y) =\cos y$ using completely different methods. Crucially, both methods remove an apparent logarithmic correction to the depletion and inviscid damping rates. Our method combines pointwise bounds for a parametrix with an explicit solution of a simpler inner problem which makes the removal of the logarithm clear. 
\end{abstract}

\maketitle

\bigskip

\noindent
{\bf Key Words:} Inviscid damping, vorticity depletion, non-monotonic shear flows.\\
\noindent
{\bf Mathematics Subject Classification:} 35B40, 35Q31, 35P25 

\setcounter{tocdepth}{1}

\tableofcontents

\section{Introduction}
Consider a 2d fluid on a doubly periodic domain:
\begin{subequations}
    \begin{align}\label{Euler-Hao}
    \p_t \omega +\nabla^\perp \psi \cdot \nabla \omega &=  \nu \Delta \omega , \quad \nu \geq 0, \\
    \Delta \psi & = \omega.
\end{align}
\end{subequations}
 Here $\omega$ is the vorticity, $\psi$ is the stream function, and $\nu \geq 0$ is the viscosity. For  a large class of initial data, the two-dimensional incompressible Euler ($\nu=0$) and Navier--Stokes ($\nu>0$) equations on the torus are globally well posed. However, there are still many interesting open problems regarding the dynamics of the long-time behavior.
    
One of the most remarkable features of 2d fluids is the formation of coherent states from quite general initial data. For weakly viscous fluids governed by the Navier-Stokes equation the long-time behavior is eventually dominated by viscous decay to the trivial zero solution on a timescale like $\mathcal{O}(\nu^{-1})$ (assuming mean-zero condition on the initial data). However, it has been observed numerically  that there are metastable states that act as attractors for the dynamics on much shorter time scales than the viscous one \cite{BW13,BCS19}. 
    
Numerical studies show that the selection between ``bar states'' (parallel shear flows) and dipoles (pairs of counter-rotating vortices) depends on both the domain geometry and the initial data. On the square torus $\T^2$, a canonical dipole has stream function $a_1 \cos x + a_2 \cos y$ with $a_1 a_2 \neq 0$ \cite{BCS19}. In direct numerical simulations of freely decaying turbulence on this domain, Yin, Montgomery and Clercx \cite{YMC03} observed relaxation to a dipole from random, broad-band initial data, but to a bar state from certain initial vorticity patches. In the distinct setting of weakly forced and dissipative stochastic Navier--Stokes flow, Bouchet and Simonnet \cite{BS09} observed dipoles on the square torus, nearly parallel shear flows on sufficiently elongated rectangular tori, and random transitions between the two at intermediate aspect ratios. For the unforced problem, Beck, Cooper and Spiliopoulos \cite{BCS19} combined analysis and numerical simulations of a reduced model involving the lowest eight Fourier modes to support the preference for dipoles on the square torus and bars on nearby rectangular tori $\T_{2 \pi \kappa} \times \T$; for $\kappa < 1$, the favored bars have velocity in the $x$ direction.

In this paper we focus on the rectangular torus. In the recent work \cite{CJWZ25}, the phenomenon of rapid convergence to metastable states was studied near the Kolmogorov flow $ \omega = \sin y$. It was proved that $H^3$ vorticity perturbations within the threshold $\epsilon\nu^{1/3}$ ($0<\epsilon\ll1$) of the Kolmogorov flow relax to shear flows close to the Kolmogorov flow with quantitative enhanced rates over the surprisingly short timescale $\nu^{-1/3}$, and then slowly decay to the Kolmogorov flow due to viscous damping.  A key ingredient of the proof is a quantitative linear inviscid result establishing a large family of optimal vorticity depletion and inviscid damping for the Kolmogorov flow. The method of proof for these estimates relied on a remarkable almost conserved quantity and it is not clear if an analogue of such a quantity exists for more general flows. Prior to \cite{CJWZ25}, the previous best known results included logarithmic corrections to the pointwise behavior for general spectrally stable shear flows \cite[Theorem~2.1]{BG24}, where it was suggested that this might be optimal. At first glance, the conjectured loss of a logarithmic factor for general shear flows seems natural, since the Rayleigh equation for the resolvent indeed contains a logarithmic factor over the quadratic depletion gain. This potentially suggests that the Kolmogorov flow may have distinguished features which make it more stable than other shear flows on the torus.

In this paper we show, at least on the linearized level, that Kolmogorov flow does not appear to be special. Namely, we show that the optimal bounds hold for shear flows satisfying the natural Assumption~\ref{MaAs}. In fact, we are also able to improve one of the estimates in \cite[Theorem~1.2]{CJWZ25}; see the fifth inequality in Theorem \ref{thm2}. We remark that this is likely the only corresponding bound in \cite[Theorem~1.2]{CJWZ25} which is non-optimal, in view of both the scaling analysis and the analysis of the resolvents.

\subsection{Main equations} 
Consider the two dimensional Euler equation linearized around a shear flow $(b(y),0)$, in the doubly periodic rectangular box $ (x,y,t)\in \mathbb{T}_{2 \pi \kappa} \times\T \times[0,\infty)$ with $\kappa < 1$:
\begin{equation}\label{Main1}
\begin{split}
\partial_t\omega+b(y)\partial_x\omega-b''(y)\p_x \psi &=0,\\
\Delta \psi &= \omega.
\end{split}
\end{equation}
Our goal is to understand the long time behavior of $\omega(t)$ as $t\to\infty$, with Sobolev regular initial vorticity $\omega_0$.

\subsection{Main results}
Our main assumptions on the background flow $(b(y),0), y\in \T$ are the following.
\begin{assumption}\label{MaAs}
We assume that the shear flow $(b(y),0)$, $y\in \T$ satisfies the following conditions:
\begin{itemize}
\item $b\in C^6(\T)$. The set $S:=\{y\in\T: b'(y)=0\}=\{y_{1\ast}, y_{2\ast}\}$, and for some $\varkappa \in(0,1)$, we have
\begin{equation}\label{asp1}
|b''(y)|\in[\varkappa,1/\varkappa]\,\,{\rm for\,\,}y\in S, \quad{\rm and}\quad\|b\|_{C^6(\T)}\leq1/\varkappa.
\end{equation}

\item The linearized operator $L_k: L^2(\T)\to L^2(\T)$ with $ \kappa k\in\Z\backslash\{0\}$ defined for $g\in L^2(\T)$ as
\begin{equation}\label{asp2}
L_kg(y):=b(y)g(y)-b''(y)h(y), \quad{\rm where}\,\,(-k^2+\partial_y^2)h(y)=g(y),\,\,y\in\T,
\end{equation}
has no eigenvalues in $\mathbb{C}$, nor generalized embedded eigenvalues (also called singular limiting modes) in $\{b(y), y\in\T\}$ (see Definition \ref{emb1}).
\end{itemize}
\end{assumption}
\begin{remark}
    Generalizing to the case of more non-degenerate critical points is more technically difficult but does not introduce any conceptual challenges. For this reason we have opted to write the result in the simplest relevant setting.
\end{remark}

Our main result is the following theorem.

\begin{theorem}\label{thm1}
Assume that $\omega\in C([0,\infty), H^{4}(\T_{2\pi \kappa} \times \T ))$ with the associated stream function $ \psi(t,\cdot)$ is the unique solution to \eqref{Main1}, with initial data $\omega_0\in H^4(\T_{2\pi \kappa}\times\T)$ satisfying for all $y\in\T$,
\begin{equation}\label{thm0}
\int_{\T_{2\pi \kappa}}\omega_0(x,y)\,dx=0, \quad \int_{\T_{2 \pi \kappa} \times \T } \psi(t,x,y) \, dx dy = 0.
\end{equation}
Then we have the following bounds. 

(i) Inviscid damping estimates: For $u = \nabla^\perp \psi, $ 
\begin{equation}\label{th0.5}
\|\psi(t,\cdot)\|_{L^\infty(\T_{2\pi \kappa}\times \T) }\lesssim \frac{1}{\langle t\rangle^2}\|\omega_0\|_{H^4(\T_{2\pi \kappa}\times\T)},
\end{equation}
\begin{equation}\label{thm1v}
\|u^x(t,\cdot)\|_{L^\infty(\T_{2\pi \kappa}\times\T)}\lesssim \frac{1}{\langle t\rangle}\|\omega_0\|_{H^4(\T_{2\pi \kappa}\times\T)}, \quad \|u^y(t,\cdot)\|_{L^\infty(\T_{2\pi \kappa}\times\T)}\lesssim \frac{1}{\langle t\rangle^2}\|\omega_0\|_{H^4(\T_{2\pi \kappa}\times\T)}. 
\end{equation} 

(ii) Vorticity depletion estimates: there exists a decomposition
\begin{equation}\label{thm1d}
\omega(t,x,y):=\omega_{\rm loc}(t,x,y)+\omega_{\rm nloc}(t,x,y),
\end{equation}
where for $(x,y,t)\in\T_{2\pi \kappa}\times\T\times[0,\infty)$,
\begin{equation}\label{thm3}
|\omega_{\rm loc}(t,x,y)|\lesssim \min ( |y-y_{1\ast}|^{2}, |y-y_{2\ast}|^{2} )\|\omega_0\|_{H^4(\T_{2\pi \kappa}\times\T)},\quad |\omega_{\rm nloc}(t,x,y)|\lesssim \frac{1}{\langle t\rangle}\|\omega_0\|_{H^4(\T_{2\pi \kappa}\times\T)}.
\end{equation}
\end{theorem}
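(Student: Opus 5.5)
We follow the standard spectral route: Fourier transform in $x$, represent each mode through the resolvent of $L_k$, and read off decay by integrating by parts in the spectral parameter.

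\emph{Reduction.} Write $\omega(t,x,y)=\sum_{\kappa k\in\Z\setminus\{0\}}\omega_k(t,y)e^{ikx}$, and similarly for $\psi$. The hypothesis \eqref{thm0} removes the zero mode. Each $\omega_k$ solves $\partial_t\omega_k+ikL_k\omega_k=0$. Since $L_k$ has no eigenvalues and no embedded singular modes (Assumption~\ref{MaAs}), its spectrum is the range $\Sigma=[\min b,\max b]$ and is purely continuous. Dunford's formula then gives
\begin{equation*}
\psi_k(t,y)=\frac{1}{2\pi i}\int_{\Sigma}e^{-ikt\lambda}\bigl[\psi^-_{k,\epsilon}(y,\lambda)-\psi^+_{k,\epsilon}(y,\lambda)\bigr]\,d\lambda\quad(\epsilon\to0^+),
\end{equation*}
where $\psi^\pm_{k,\epsilon}$ solves the Rayleigh equation
\begin{equation*}
(b-\lambda\pm i\epsilon)(\partial_y^2-k^2)\psi-b''\psi=\omega_{0k}.
\end{equation*}
The proof therefore reduces to pointwise bounds, uniform in $k$ with polynomial losses absorbed by $H^4$, on the limiting densities $\psi_k(y,\lambda)$ and their $\lambda$-derivatives.

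\emph{Regimes.} We split $\lambda$ into two regimes.
\begin{itemize}
\item Away from the critical values $b(y_{j\ast})$, $b$ is monotone on each preimage branch. Standard limiting absorption arguments (Green's function for $\partial_y^2-k^2$, compactness, and Fredholm alternative using the no-embedded-mode assumption) give densities that are $C^1$, and almost $C^2$, in $\lambda$ with log singularities only at $y=y_c$. Two integrations by parts in $\lambda$ then give $t^{-2}$ for $\psi$ and $u^y=\partial_x\psi$, and one integration gives $t^{-1}$ for $u^x=-\partial_y\psi$.
\item Near a critical value $\lambda_j=b(y_{j\ast})$, we rescale $\delta=\sqrt{|\lambda-\lambda_j|}$ and $y-y_{j\ast}=\delta Y$. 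The leading-order inner problem is the Rayleigh equation with $b$ replaced by its quadratic approximation $b(y_{j\ast})+\tfrac12 b''(y_{j\ast})(y-y_{j\ast})^2$. This inner problem can be solved explicitly: the regular solution of $(Y^2\mp1)\phi''-2\phi=0$ is $\phi=Y^2\mp1$ after the rescaling, and the second solution is obtained by reduction of order. We build a parametrix $\psi^{\rm app}$ by gluing this explicit inner solution to the outer regular solutions, and we control the remainder $\psi-\psi^{\rm app}$ by pointwise Green's function bounds.
\end{itemize}

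\emph{Main obstacle.} The hard step is showing that $\partial_\lambda\psi_k(y,\lambda)$ and $\partial_\lambda^2\psi_k$ have singularities near $\lambda_j$ of the form $|\lambda-\lambda_j|^{-1/2}$ and $|\lambda-\lambda_j|^{-1}$ respectively, with the expected structure and with \emph{no} extra $\log|\lambda-\lambda_j|$. A naive Rayleigh estimate produces such a log, which would spoil the $t^{-2}$ rate. We plan to use the explicit inner solution to isolate the exact singular term, a combination like $\log\bigl(\tfrac{Y-1}{Y+1}\bigr)$ times a polynomial. We will then check that after the second $\lambda$-integration by parts its contribution is an oscillatory integral of the type
\begin{equation*}
\int e^{-ikt\lambda}\,|\lambda-\lambda_j|^{-1/2}\,\chi\,d\lambda=O(t^{-1/2}),
\end{equation*}
which, combined with the prefactors from the integrations by parts, yields $t^{-2}$ overall rather than $t^{-2}\log t$. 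The key cancellation to verify is that the log term's coefficient vanishes at $Y=0$ due to evenness of the inner problem. Only the parametrix-remainder bounds need uniform control in $\delta$ and $k$. Summing over $k$ using $\|\omega_0\|_{H^4}$ gives \eqref{th0.5} and \eqref{thm1v}.

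\emph{Vorticity depletion.} We use the same representation for $\omega_k$. Define $\omega_{\rm loc}$ as the part of the spectral integral coming from the window $|\lambda-\lambda_j|\lesssim |y-y_{j\ast}|^2$ near the critical values, together with the stationary-phase leading term. The inner solution shows that the density vanishes like $(y-y_{j\ast})^2$ there, since $b''\psi/(b-\lambda)$ inherits the factor $Y^2\mp1$. This gives the first bound in \eqref{thm3}. The complement, $\omega_{\rm nloc}$, admits one integration by parts in $\lambda$ with integrable boundary terms, giving $\langle t\rangle^{-1}$.
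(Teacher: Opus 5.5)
\textbf{There is a genuine gap, and it is exactly where the theorem's content lies.} You have misidentified both the logarithm that threatens the optimal rates and the mechanism that removes it.

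The factor $\log\frac{Y-1}{Y+1}$ in the second homogeneous solution is just the critical-layer singularity $\log(b(y)-\lambda\pm i0)$. It is present for every $\lambda$, already in the non-degenerate regime. It is integrable in $\lambda$, and its branch jump only produces the oscillating $e^{-iktb(y)}$ contributions.

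The dangerous logarithm is a different one. First subtract $\chi_{j,\delta_0}\omega_{0k}/b''$. Your plan never takes this step, yet it is what makes depletion visible: the vorticity density is $(b''\psi-\omega_{0k})/(b-\lambda\pm i\epsilon)$, and it is the numerator $b''\psi-\omega_{0k}$ that must be small. It is not a factor $Y^2\mp1$ in $b''\psi/(b-\lambda)$. After the subtraction, the modified density solves a Rayleigh equation with bounded forcing, whose value at $y_{j*}$ is generically nonzero.

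The response to a constant source has size $\varrho^2|\log\varrho|$ in each branch, i.e. $\delta^2|\log\delta|$ near $y_{j*}$. The model is $(-\partial_y^2+2/y^2)(-\tfrac13 y^2\log|y|)=1$. That solution is even, so no evenness argument can make the logarithmic coefficient vanish at $Y=0$. In any case $b$ is not even about $y_{j*}$; one only has $b_j(-y)-b_j(y)=O(y^3)$.

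Moreover, this piece's second $\lambda$-derivative is already log-free: it is $\lesssim 1/(\varrho\delta)$, because $\partial_\lambda^2(b-\lambda)=0$. So proving $\partial_\lambda^2\psi\lesssim|\lambda-\lambda_j|^{-1}$ without a log, which you single out as the hard step, does not help. The loss enters in two places:
\begin{itemize}
\item in the damping estimate, through the window $|\lambda-\lambda_j|\lesssim(|k|t)^{-1}$, where a density of size $\delta^2|\log\delta|$ gives $(kt)^{-2}\log(kt)$;
\item in the depletion estimate, through the window $|\lambda-\lambda_j|\lesssim|y-y_{j*}|^2$, giving $|y-y_{j*}|^2\,|\log|y-y_{j*}||$.
\end{itemize}
Your $t^{-2}\cdot t^{-1/2}$ bookkeeping does not address either.

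\textbf{The missing idea} is to split off the log-carrying response in a form whose \emph{branch difference} vanishes. One way that works is the following. Solve the inner problem with the constant source cut off on $|y-y_{j*}|\lesssim\delta(\Lambda)$, so that the critical layers lie in the source-free zone. Use the Green's function that equals the regular solution $\varphi^\iota_\epsilon=b_j-\lambda+i\iota\epsilon$ between two mirror sources; a jump factor $a^\iota_\epsilon=1+O(\varrho)$ absorbs the asymmetry of $b$. Then $\Phi^+_\epsilon-\Phi^-_\epsilon\to0$ as $\epsilon\to0$, together with its spectral and good derivatives, so this piece drops out of the representation formula entirely.

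Only the remainder survives. Its forcing vanishes near $y_{j*}$: the excised part is supported in $|y|\lesssim\delta$, and the rest is $O(\varrho^{1/2})$, $O(\varrho)$, or $k^2\Phi$. This gives log-free quadratic bounds at the price of $|k|^\theta$.

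Those bounds also need more than ``pointwise Green's function bounds'' on a parametrix remainder. They rest on a limiting absorption principle in weighted spaces. These spaces are built on the Green's function that keeps the critical potential $\approx 2/|y-y_{j*}|^2$. The principle is proved by a blow-up argument using spectral stability of the $Y^2$ model and the absence of an embedded eigenvalue at $b(y_{j*})$.

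\textbf{A secondary problem: your depletion step loses a log even with log-free densities.} On $|\lambda-\lambda_j|\gtrsim|y-y_{j*}|^2$ the $\lambda$-derivative of the integrand is only $O(|\lambda-\lambda_j|^{-1})$. A single integration by parts therefore yields $t^{-1}\log(1/|y-y_{j*}|)$, not $\langle t\rangle^{-1}$. You must split at $\max\{|b(y)-b(y_{j*})|,(|k|t)^{-1}\}$ and integrate by parts twice beyond that scale.
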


Taking the Fourier transform in $x$ of the equation \eqref{Main1} for $\omega$, we obtain that
 \begin{align}\label{F3.0}
 \partial_t\omega_k+ikb(y)\omega_k-ikb''(y)\psi_k&=0,\\
                      (\p_y^2 - k^2 )\psi_k &= \omega_k,\label{F3.0:elliptic}
 \end{align}
 for $ \kappa k\in\mathbb{Z}, t\ge0, y \in \T $. In the above, $\omega_k$ and $\psi_k$ are the $k$-th Fourier coefficients of $\omega,\psi$ in $x$ respectively. At the level of $(\omega_k, \psi_k)$ we have the following refined bounds.
\begin{theorem}
\label{thm2}
Let $\kappa k \in \mathbb{Z} \setminus \{ 0\}$, $\theta \in (0, 1/4]$, $y \in \T$ and suppose that $(\psi_k, \omega_k)$  satisfy \eqref{F3.0}--\eqref{F3.0:elliptic} with $\omega_k(0,y) = \omega_{0k}(y)$. Then the following bounds hold.
\begin{subequations}
\label{main:bdd}
    \begin{align}
&|\psi_k(t,y)| \lesssim \langle t \rangle^{-2} |k|^{-3 + \theta} M_k, \label{damp1}
\\
&|\partial_y \psi_k(t,y)| \lesssim
\Bigl(
\langle t \rangle^{-3/2} |k|^{-5/2 + \theta}
+ \langle t \rangle^{-1} |b'|\, |k|^{-2 + \theta}
\Bigr) M_k, \label{damp2}\\
&|\omega_k(t,y)| \lesssim
\min\Bigl(
\langle t \rangle^{-1} |k|^{-2 + \theta}
+ |b'|^2 |k|^{-1 + \theta},
\; |k|^{-3 + \theta}
\Bigr) M_k, \label{depl1}
\\
&\bigl|\partial_y (e^{ik t b}\,\omega_k)(t,y)\bigr|
\lesssim
\min\Bigl(
\langle t \rangle^{-1/2} |k|^{-3/2 + \theta}
+ |b'|\, |k|^{-1 + \theta},
\; |k|^{-2 + \theta}
\Bigr) M_k, \label{pro1} \\
&\bigl|\partial_y (e^{ik t b}\,\psi_k)(t,y)\bigr|
\lesssim
\langle t \rangle^{-3/2} |k|^{-5/2 + \theta} M_k, \label{pro2}\\
&\bigl\| (\partial_y, k)^2 (e^{ik t b}\,\omega_k) \bigr\|_{L^2}
\lesssim
|k|^{-1 + \theta} M_k \label{pro3},
\end{align}
\end{subequations}
where $M_k = \norm{\omega_{0k}}_{H_k^4}$ (see \eqref{hk} for a definition of $H_k^4$). 
\end{theorem}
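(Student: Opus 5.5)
We will prove Theorem \ref{thm2} through the spectral representation of the linearized evolution. Fix $k$ with $\kappa k\in\Z\setminus\{0\}$. By Assumption \ref{MaAs}, $L_k$ has no eigenvalues and no embedded singular limiting modes, so the limiting absorption principle holds on the continuous spectrum $\{b(y):y\in\T\}$. This gives
\begin{equation*}
\psi_k(t,y)=\frac{1}{2\pi i}\int_{\R}e^{-ik\lambda t}\bigl[\psi^{-}_{k,\lambda}(y)-\psi^{+}_{k,\lambda}(y)\bigr]\,d\lambda ,
\end{equation*}
where $\psi^\pm_{k,\lambda}=\lim_{\epsilon\to0^+}\psi_{k,\lambda\pm i\epsilon}$ solve the inhomogeneous Rayleigh equation
\begin{equation*}
(\p_y^2-k^2)\psi-\frac{b''}{b-\lambda\mp i0}\psi=\frac{\omega_{0k}}{b-\lambda\mp i0}.
\end{equation*}
All the bounds in \eqref{main:bdd} then reduce to regularity estimates in $\lambda$ for the jump $\psi^-_{k,\lambda}-\psi^+_{k,\lambda}$, combined with one or two integrations by parts in $\lambda$. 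Each integration by parts gains a factor $\langle t\rangle^{-1}$. We localize $\lambda$ with a partition of unity into three regimes: $\lambda$ away from the critical values $b(y_{j\ast})$, where $b$ is locally monotone; $\lambda$ in a neighbourhood $|\lambda-b(y_{j\ast})|\lesssim\delta$; and the spectral edges. The argument on $(\p_y^2-k^2)\psi_k=\omega_k$ and the expression $\omega_k=-(ik)^{-1}\p_t\omega_k\ldots$ is obtained from the same representation. For example,
\begin{equation*}
\omega_k(t,y)=e^{-iktb(y)}\omega_{0k}(y)+ik\int_0^t e^{-ik(t-s)b(y)}b''(y)\psi_k(s,y)\,ds
\end{equation*}
lets us deduce \eqref{depl1} and \eqref{pro1} from \eqref{damp1}, \eqref{pro2} together with bounds on $\p_t\psi_k$.

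\textbf{Away from critical points.} Here we construct the standard Wronskian/Green's function solution, using the regular and singular homogeneous Rayleigh solutions (Frobenius type, with the $(b-\lambda)\log(b-\lambda)$ singularity). The spectral assumption gives a uniform lower bound on the Wronskian. With $\omega_0\in H^4_k$, we obtain $\p_\lambda^2$ bounds on the jump that are integrable in $\lambda$, uniform in $y$, with the stated powers of $|k|$. The loss $|k|^\theta$ enters through $k$-dependent weights in the Green's function and an interpolation. This piece already gives $\langle t\rangle^{-2}$ for $\psi_k$.

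\textbf{Near a critical point $y_\ast$.} This is the main obstacle. Here $b(y)-\lambda\approx\tfrac12 b''(y_\ast)\bigl((y-y_\ast)^2-\mu^2\bigr)$ with $\lambda=b(y_\ast)+\tfrac12 b''(y_\ast)\mu^2$, and there are two critical layers $y\approx y_\ast\pm\mu$. The naive parametrix gives $\p_\lambda^2$ of the jump bounded only by about $|\lambda-b(y_\ast)|^{-1}$, i.e.\ by $\mu^{-2}$. That is exactly the source of the logarithmic loss in \cite{BG24}.

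To avoid this, we change variables to $\mu$ (or $\sqrt{\lambda-b(y_\ast)}$) and rescale $y-y_\ast=\mu Y$. We then solve the inner problem with $b$ replaced by its exact quadratic Taylor polynomial, which is explicitly solvable: for quadratic $b$, $b''$ is constant and the homogeneous Rayleigh equation has explicit solutions. From these we compute the leading singular part of the jump exactly. The aim is to show that this leading part is a function of $\mu$ that is odd, or has vanishing mean across the symmetric pair of layers, so that its contribution cancels after integrating in $\lambda$. What remains of the oscillatory integral is then $O(t^{-2})$ without a log.

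The remainder, the difference between the true resolvent and this explicit inner solution, is treated as a perturbation. The Taylor error is $O(|y-y_\ast|^3)$, and we control it through pointwise parametrix bounds. This gains a power of $\mu$ and hence yields integrable $\p_\lambda^2$ bounds. The depletion factor $|b'|^2\sim|y-y_\ast|^2$ in \eqref{depl1} and the $|b'|$ terms in \eqref{damp2}, \eqref{pro1} arise from the vanishing of the inner solution's density at $Y=0$.

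\textbf{Assembling the estimates.} For the half-integer rates in \eqref{damp2}, \eqref{pro1} and \eqref{pro2}, we integrate by parts once and then use a $\mu^{1/2}$ Hölder bound, splitting the $\lambda$-integral at $|\lambda-b(y_\ast)|\sim 1/(|k|t)$. The alternative bounds inside the minima come from energy estimates on the $H^4_k$ data without time decay. Finally, \eqref{pro3} follows from an $L^2$ bound on the profile $e^{iktb}\omega_k$ using the same representation, where $\p_y$ derivatives are traded for $\lambda$-derivatives.

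The key difficulty, and where we expect the most work, is verifying the exact cancellation in the explicit inner solution and then controlling the error uniformly across the regimes $\mu\ll|k|^{-1}$ and $\mu\gtrsim|k|^{-1}$.
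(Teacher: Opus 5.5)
There is a genuine gap, at exactly the step you flag as the crux: the removal of the logarithm near $y_{j\ast}$.

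The logarithm is not an error term. Already for $k=0$ one has $(-\partial_y^2+2/y^2)(-\tfrac13 y^2\log|y|)=1$. So it sits in the leading inner profile, and no perturbative treatment of a Taylor remainder can remove it. The cancellation you hope for is also unavailable. $\lambda$ is even in $\mu$, and for even $b$ the two layers $y_{j\ast}\pm\mu$ contribute equally, not oppositely.

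What actually happens is that the logarithm is generated away from the critical layers, where the two branches $\iota=\pm$ agree. It therefore drops out of the branch difference in \eqref{F3.4}. The paper makes this exact:
\begin{itemize}
\item Explicit solvability comes from dropping $k^2$, not from $b$ being quadratic. At $k=0$, $\varphi_\epsilon^\iota=b_j-\lambda+i\iota\epsilon$ is an exact homogeneous solution for the true $b_j$, and reduction of order gives $H^\iota_{l,\epsilon}$ and $H^\iota_{r,\epsilon}$.
\item The inner problem \eqref{Inner_problem} is forced by $1-\chi_{\delta(\Lambda)}$, i.e.\ away from the layers. The factor $a^\iota_\epsilon$ compensates for non-even $b$.
\item Consequently $\Phi^\iota_\epsilon$ is a multiple of $\varphi^\iota_\epsilon$ near the layers and vanishes there as $\epsilon\to0$. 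Moreover $\Phi^+_\epsilon-\Phi^-_\epsilon\to0$ together with the needed $\partial_\lambda$ and $D_{j,\lambda}$ derivatives (Lemma \ref{phi:van}).
\item This piece therefore contributes nothing to the spectral integral.
\end{itemize}
Replacing $b$ by its quadratic Taylor polynomial would instead shift the model's layers by $O(\mu^2)$ relative to the true ones. The remainder would then be forced by new critical-layer singularities multiplied by the logarithmic inner profile, and you give no argument that this is harmless.

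Downstream, the remainder $E^\iota_{k,\epsilon}$ solves the full Rayleigh equation. Its forcing is built from $g-g_{k,j\ast}$, the cut-off piece, $1-a^\iota_\epsilon$ and $k^2\Phi^\iota_\epsilon$. It is controlled by the limiting absorption principle in the weighted spaces $X^{\sigma_1,\sigma_2}$ (Proposition \ref{DeC7}). The $|k|^\theta$ loss is the logarithm re-entering through $k^2\Phi^\iota_\epsilon$. Even then, $\partial_\lambda^2 E$ is only $O(|\lambda-b(y_{j\ast})|^{-1})$ when $|\lambda-b(y_{j\ast})|\gtrsim|b(y)-b(y_{j\ast})|$. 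There one needs a third integration by parts, using Lemma \ref{DCM103} where the annular singular term vanishes, rather than integrable $\partial_\lambda^2$ bounds.

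Separately, the uniform alternatives in the minima of \eqref{depl1} and \eqref{pro1} cannot come from energy estimates. A general non-monotone periodic shear has no coercive conserved energy, and even boundedness of $\omega_k$ rests on the spectral assumption. Your Duhamel formula does handle the time-decaying parts. The tail $ik\int_t^\infty e^{iksb}b''\psi_k\,ds$ and its $y$-derivative, bounded via \eqref{damp1} and \eqref{pro2}, give exactly the rates $\langle t\rangle^{-1}|k|^{-2+\theta}$ and $\langle t\rangle^{-1/2}|k|^{-3/2+\theta}$. But two things remain. Depletion of the limiting profile still needs the vanishing of the modified spectral density at the critical layer. And bounding the whole Duhamel integral by \eqref{damp1} yields only $|k|^{-2+\theta}M_k$, not $|k|^{-3+\theta}M_k$ (and only $|k|^{-3/2+\theta}M_k$ for the profile). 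The paper obtains these bounds directly from \eqref{PMR9}. It splits in $\lambda$, uses \eqref{prelim:pointwise} with its factor $|k\delta(\Lambda)|^{1/2}$, and absorbs a $\log(2+|k|)$ into $\theta$.
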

\begin{remark}
    Estimate \eqref{pro2} gives $\langle t\rangle^{-3/2}$ decay for the first derivative of the stream-function profile, improving the $\langle t\rangle^{-6/5}$ rate in \cite[Theorem~1.2]{CJWZ25}. The present statement uses $H_k^4$ data, whereas \cite[Theorem~1.2]{CJWZ25} uses $H_k^3$ data so they are not directly comparable. The loss of $|k|^\theta$, for arbitrarily small $\theta>0$, comes from bounding the logarithmic correction in the inner solution $\Phi_\epsilon^\iota$ by an algebraic loss; see \eqref{log:loss} and the following discussion in Section~\ref{Inner}. The further loss of $|k|^{1/2}$ relative to dimensional scaling arises from the factor $|k\delta(\Lambda)|^{1/2}$ in the preliminary pointwise bound \eqref{prelim:pointwise} at the start of Section \ref{sec:main:results}. The other principal time-decay and quadratic-depletion rates agree with the Kolmogorov result and are obtained here for the shears under Assumption~\ref{MaAs}. 
\end{remark}





\subsection{Main Ideas and Outline}
We follow the basic strategy used in \cite{IIJ24} which we briefly review. By standard spectral theory, it is possible to derive the representation formula  \eqref{F3.4} for the evolution of the linearized Euler equations \eqref{F3.0}--\eqref{F3.0:elliptic}. The analysis then reduces to the regularity of the spectral density function (sdf) $\psi_{k, \epsilon}(y, \lambda)$ which satisfies the non-homogeneous Rayleigh equation
\begin{align}
\label{ray:toy}
    \left(-\p_y^2 + k^2 + \frac{b''(y)}{b(y) - \lambda + i \epsilon} \right) \psi_{k, \epsilon}(y,\lambda) = \frac{\omega_{0k}(y)}{b(y) - \lambda + i  \epsilon}, \quad \epsilon > 0.
\end{align}
Let $y_*$ be a critical point and $b(y_*)$ the associated critical value. We work locally near $y_*$. The analysis breaks into two regimes: one where $\lambda $ is away from the critical value and one where $\lambda$ is close to the critical value. We refer to these cases as \textit{non-degenerate} and \textit{degenerate} respectively. For the non-degenerate case, the singular term $(b(y) - \lambda + i \epsilon)^{-1}$ is lower order and can be treated as a relatively compact perturbation of the Laplacian $-\p_y^2 + k^2$. To capture this, we reformulate \eqref{ray:toy}  as an integral equation  
\begin{align}
\label{fred}
    \psi_{k, \epsilon} -  \Delta_k^{-1} \frac{b''(y)\psi_{k, \epsilon}}{b(y) - \lambda + i \epsilon} = -\Delta_k^{-1}  \frac{\omega_{0k}}{b(y) - \lambda + i \epsilon   } , \quad \Delta_k := \p_y^2 - k^2.
\end{align}
The solvability of \eqref{fred} with uniform-in-$\epsilon$ bounds in $H^1$ follows from Assumption \ref{MaAs}. This goes under the name of the Limiting Absorption Principle (LAP) and can be viewed as the engine that powers the proof, see Lemma~\ref{T5} and Proposition~\ref{DeC7}. In order to deduce vorticity depletion, inviscid damping, and ``profile control'' we also need to consider higher order spectral regularity. However, spectral derivatives of the sdf are not uniformly bounded in $H^1$ because of the singularities at the critical layer.  An important point, already made in \cite{IIJ24}, is that these singularities appear in a rather explicit form. Therefore, suitable singular terms can be subtracted off from the spectral density function, and the  remainder can then be bounded using the LAP.  This is done in Sections~\ref{sec:nd:bdds} and~\ref{sec:d:bdds}.

Now we consider the more difficult degenerate case. Here, we make use of the fact that we are working with a non-degenerate critical point to define the modified sdf $\phi_{k, \epsilon}$ as 
\begin{align}
    \psi_{k, \epsilon}(y ,\lambda) = \phi_{k, \epsilon}(y,\lambda) + \frac{\omega_{0k}(y)}{b''(y)},
\end{align}
where $b''(y) \neq 0$ for $y$ near $y_*$, see \eqref{asp1}.
$\phi_{k,\epsilon}$ satisfies 
\begin{align}
\label{ray:mod}
    \left(-\p_y^2 + k^2 + \frac{b''(y)}{b(y) - \lambda + i \epsilon} \right) \phi_{k, \epsilon}(y,\lambda) = (\p_y^2 - k^2) \left(\frac{\omega_{0k}}{b''(y)} \right).
\end{align}
If we assume that the initial data is $C^2$, then the right hand side of \eqref{ray:mod} is bounded. The key observation is that  when $\lambda$ is close to the critical value $b(y_*)$, $\epsilon \ll  |b(y_*) - \lambda |$, and $y$ lies in the annulus $\ell:=|\lambda-b(y_*)|^{1/2} \ll |y_*-y| \ll 1$, we approximately have
\begin{align}
    \frac{b''(y)}{b(y) -\lambda + i\iota \epsilon} \approx  \frac{2}{|y - y_*|^2 }.
\end{align}

The potential is \textit{critically singular with a favorable sign}. Due to the criticality, it can not be treated perturbatively and must be included when constructing a modified Green's function. Pointwise bounds for the Green's function of the approximate operator can then be computed, see Lemmas \ref{mGk50}, \ref{mGk30}. With the pointwise bounds for the approximate operator in hand, \eqref{ray:mod} can  be reformulated as an integral equation  
\begin{align}
\label{fred:d}
    \phi_{k, \epsilon} + T_{k, \epsilon} \phi_{k, \epsilon} = \mathcal{L}^{-1} ( (\p_y^2 - k^2) (\frac{\omega_{0k}}{b''(y)})), 
\end{align}
where $\mathcal{L}$ satisfies, roughly speaking,
\begin{align}
    \mathcal{L} \approx -\p_y^2 + k^2 + \frac{2}{|y-y_*|^2}\mathbf{1}_{|y-y_\ast|\ge \delta}.
\end{align}
Through the incorporation of the potential $\frac{2}{|y - y_*|^2}$,    $T_{k,\epsilon}$ can be viewed perturbatively. To capture the smallness induced by the favorable potential, we prove uniform bounds with respect to $\epsilon$ in weighted spaces $X^{\sigma_1, \sigma_2}$, see  \eqref{DeC1} for the definition.  With an appropriate weight, this method can capture up to, but not including, quadratic depletion at the critical point. The reason for this gap is the presence of a logarithmic factor in the sdf. Approximately, the sdf can be thought of as  
\begin{align}
    \phi_{k, \epsilon}(y, \lambda) \approx (b(y) - \lambda + i \epsilon) \log(b(y) - \lambda + i \epsilon). 
\end{align}
Such logarithmic factors are natural in the context of Frobenius expansions \cite[Chapter~4]{CL55}. More explicitly, when $\lambda = 0$ and $y\ne0$ we have 
\begin{align}
    \left(-\p_y^2 + \frac{2}{y^2}\right) \left(-\frac{y^2 \log|y|}{3}\right) = 1.
\end{align}
See Section \ref{Inner}, especially the proof of Lemma~\ref{lem:phi-bounds} for further discussion. The presence of the logarithmic factor was also noted in \cite{BG24,BG25}. In the arXiv version of \cite{BG24} it was suggested that this logarithmic factor at the level of the sdf leads to a logarithmic factor in the pointwise depletion and decay rate \cite[Theorem~2.1 and the preceding discussion]{BG24}. Here, we show that this is not the case.

Remarkably, the logarithmic factor drops out when the sdf is integrated with respect to $\lambda$. In order to understand how this cancellation comes about, consider the  following ``inner problem:'' 
\begin{align}
\label{toy:inner}
    -\p_y^2 \Phi_\epsilon^\iota (y,\lambda) + \frac{b''(y)}{b(y) - \lambda + i \iota \epsilon} \Phi_\epsilon^\iota(y, \lambda) = 1, \quad  \iota \in \{ \pm \}, \,  \Phi_\epsilon^\iota(\pm 1, \lambda) = 0.
\end{align}
If we assume that $b$ is even around 0, it is possible to use reflection to consider the problem on the interval $[0,1]$ with a Neumann boundary condition at 0 and a Dirichlet boundary condition at 1. In this case, $\varphi(y, \lambda): = b(y) - \lambda + i \iota \epsilon $ is a homogeneous solution which satisfies the left boundary condition. Using reduction of order, it is possible to construct another solution which satisfies the right boundary condition:
\begin{align}
    H_{r}(y,\lambda) := \varphi(y, \lambda) \int_y^1 \frac{ds}{\varphi^2(s ,\lambda)}.
\end{align}
From these two homogeneous solutions, we have an explicit representation formula of  the solution $\Phi_\epsilon^\iota$. We can decompose  $\Phi_\epsilon^\iota $ into two parts: one which vanishes at $y_\lambda$ with $b(y_\lambda) = \lambda$ as $\epsilon $ goes to 0 and another part which scales like $y_\lambda^2$ \textit{without} a logarithmic factor. From the representation formula \eqref{F3.4}, the dynamically relevant quantities are the \textit{difference} $\Phi_\epsilon^{+}(y,\lambda)- \Phi_\epsilon^-(y,\lambda)$ and $\Phi_\epsilon^{\iota}(y_\lambda, \lambda)$, see \eqref{vanishing:rep}, for example. In a distributional sense, the limiting difference sdf $\Phi(y,\lambda) :=  \lim_{\epsilon \to 0}\Phi_\epsilon^{+}(y,\lambda)- \Phi_\epsilon^-(y,\lambda)$ satisfies
\begin{align}
 &-\p_y^2 \Phi(y,\lambda) + \text{P.V.} \frac{b''(y)}{b(y) - \lambda} \Phi(y,\lambda)  \\
 &= \sum_{b(y_\lambda) = \lambda, b'(y_\lambda) \neq 0 }
 \frac{  \pi i  b''(y_\lambda) \lim_{\epsilon \to 0}( \Phi_\epsilon^-(y_\lambda, \lambda)  + \Phi_\epsilon^+(y_\lambda, \lambda) ) }{|b'(y_\lambda)|} \delta(y-y_{\lambda}).
\end{align}
The essential observation is that  $\Phi$ is generated by the critical layers where the logarithmic dependence vanishes. This is ultimately how we show the optimal quadratic depletion and is the primary new idea in this paper.
$\Phi$ only captures the leading order contribution to the modified sdf. Therefore, we also need a remainder term $E_{k,\epsilon}^\iota$ which handles the missing $k$ dependence and accounts for the fact that the right hand side of the modified sdf equation $\phi_{k,\epsilon}$ is not constant. The logarithmic factor appearing in $\Phi_\epsilon^\iota$ manifests itself in $E_{k,\epsilon}^\iota$ through  terms of the form $k^2 \Phi_\epsilon^\iota$. This leads to losses in $k$ quantified by the small parameter $\theta \in (0,1/4]$, see \eqref{eq:Erhs} and \eqref{gr:bdds}. Since we ultimately sum over $k$, we view this non-optimal $x$ regularity as being acceptable. Just as in the non-degenerate case, we also need to understand higher order spectral regularity. This follows along similar lines to the non-degenerate case. With the bounds on the sdf in hand, the main results follow from fairly standard (but at times tedious) manipulations of the representation formula \eqref{F3.4}. This is done in Section \ref{sec:main:results}.


\subsection{Related results and comparison with existing literature}

 For monotone shear flows in a finite channel, Wei, Zhang and Zhao \cite{WZZ18} established sharp linear velocity decay and scattering in Sobolev spaces. The third author \cite{J20} gave an alternative proof and obtained precise pointwise asymptotics. In \cite{J20G}, the third author established linear inviscid damping and scattering in Gevrey spaces for a class of spectrally stable monotone shear flows in a finite channel, assuming that the initial vorticity perturbation and the second derivative of the shear profile are supported away from the channel boundaries. At the nonlinear level, in a breakthrough work Bedrossian and Masmoudi \cite{BM15} proved inviscid damping near Couette flow for sufficiently small Gevrey perturbations. Ionescu and the third author \cite{IJ23}, and independently Masmoudi and Zhao \cite{MZ24} extended nonlinear damping to classes of spectrally stable monotone shears in a channel, under Gevrey regularity and suitable localization assumptions. 
 
 Critical points of the shear introduce the additional depletion mechanism studied here. An earlier manifestation of vorticity depletion was observed by Bassom and Gilbert \cite{BG98} in their asymptotic and numerical study of vortex axisymmetrization: vorticity perturbations near the center of a Gaussian vortex were more strongly suppressed than a passively advected scalar. Bouchet and Morita \cite{BM10} identified the depletion mechanism for shear flows and connected it to this earlier observation in the vortex setting. The basic observation is that the nonlocal term in the linearized Euler equations changes the behavior predicted by passive transport near a critical point. Namely, the limiting vorticity profile vanishes at the stationary streamlines, which suppresses the leading stationary phase contribution to the velocity. Wei, Zhang and Zhao \cite{WZZ19} gave a rigorous proof of vorticity depletion for a class of non-monotonic shear flows and established optimal $L^2$ inviscid damping estimates under a symmetry assumption. They also studied the Kolmogorov flow in \cite{WZZ20}, proving linear inviscid damping and enhanced dissipation. Compared with the current approach, Wei, Zhang and Zhao \cite{WZZ19,WZZ20} adopt a direct approach by solving for the sdf in a rather explicit way. A similar approach was used for radial vortices by Bedrossian, Coti Zelati and Vicol \cite{BCV19}, who proved linear axisymmetrization and vorticity depletion for a class of strictly monotone radial profiles. We also mention the work of Lin and Xu \cite{LX19} who proved a RAGE theorem (essentially time integrated inviscid damping) for Euler linearized around Kolmogorov flow.  

The work of Ionescu, Iyer and the third author \cite{IIJ24} and earlier work on the vortex symmetrization problem  by Ionescu and the third author \cite{IJ} are much closer in spirit to the current work as outlined above. We point out that the original motivation for the construction of $\Phi_\epsilon^\iota$ comes from the $k=1$ case for Euler linearized around a monotone vortex. In that case, the Rayleigh equation admits a particularly simple representation for the Green's function \cite{BCV19, IJ}. The cancellation between the two different branches of the sdf leads to crucial localization properties.

In \cite{BCJ26}, we proved inviscid damping, vorticity depletion and enhanced dissipation estimates that are uniform with respect to viscosity for periodic shear flows with two non-degenerate critical points, following a combination of the strategies developed in \cite{IIJ24} and \cite{J23}. The depletion estimates just miss the quadratic endpoint. 

Bian and Grenier \cite{BG24,BG25} study the Rayleigh equation by directly constructing the Green's function through a construction of appropriate homogeneous solutions. As noted above, they explicitly identify the logarithmic factors which appear around the critical layer.  A key difference with the current approach is that the representation formula used is more akin to a Laplace inversion so the contour is not a closed curve. This makes identifying the cancellation used in the present paper harder to extract from their analysis. However, this does not preclude that the logarithmic loss also integrates out from their representation as well. 

The recent work of Chen, the third author, Wei and Zhang \cite{CJWZ25} proves nonlinear asymptotic metastability of the Kolmogorov flow for vorticity perturbations of size $\nu^{1/3}$ in $H^3$. A key ingredient is a detailed linear analysis giving optimal inviscid damping and vorticity depletion. Their proof uses an almost conserved second order quantity and a coercive operator inequality which exploit the particular structure of the Kolmogorov flow. Our result shows that the optimal linear bounds extend to the class of spectrally stable periodic shears considered here. We emphasize that the endpoint in the depletion estimate does not imply optimal regularity assumptions for all the bounds.

\subsection{Notation}


Below we list some notations that we reserve for important quantities and briefly explain their roles. 
\begin{itemize}
    
\item \textit{Sobolev Spaces}:
For $\kappa k \in \mathbb{Z} \setminus \{ 0\}$ and $m \in \mathbb{N}$, we define
\begin{align}
\label{hk}
    \norm{f}_{H_k^m}^2 := \sum_{\alpha =0}^m |k|^{2(m - \alpha)} \norm{ \p_y^\alpha f}_{L^2}^2.
\end{align}


\item \textit{Continuous Spectrum}: We denote the continuous spectrum for the operator $L_k$ from \eqref{asp2} as
 \begin{align}
\label{rng:b}
    \Sigma :=  \{ b(y) \, |  \, y \in \T \}. 
\end{align}

\item {\it Derived parameters $\delta_0,\delta(\Lambda), \, \Sigma_{j,\delta_0}, \,S^j_d$}:
These parameters are introduced to simplify notations. It follows from Assumption \ref{MaAs} that there exists a $\delta_0\in(0,1/32)$ such that
\begin{equation}\label{IntN4.0}
\begin{split}
&|y_{1\ast}-y_{2\ast}|>10\delta_0,\\
&\delta_0\sup_{|y-y_{j\ast}|<4\delta_0}|b'''(y)|<|b''(y_{j\ast})|/10,\qquad j\in\{1,2\}.
\end{split}
\end{equation}
Define for $j\in\{1,2\}$ the set
\begin{equation}\label{IntN4.2}
\Sigma_{j,\delta_0}:=\Big[b(y_{j\ast})-|b''(y_{j\ast})|\delta_0^2/16,\, b(y_{j\ast})+|b''(y_{j\ast})|\delta_0^2/16\Big] \cap \Sigma.
\end{equation}
The constant $\epsilon_0$ is chosen so that
\begin{align*}
0<\epsilon_0\leq\frac{\delta_0^2}{10}.
\end{align*}

We introduce for $j\in\{1,2\}, d\in(0,1/10)$ the shorthand
\begin{equation}\label{IntN4.3}
S^j_d:=[y_{j\ast}-d,y_{j\ast}+d]. 
\end{equation}

\item {\it Composite notations $\Lambda, \Theta$}:
We use $\Lambda=(\lambda,\epsilon)$ to record the spectral parameter and resolvent regularization, and $\Theta=(k,\lambda,\epsilon)$ when the Fourier frequency is also needed. In a degenerate neighborhood $\Sigma_{j,\delta_0}$, the critical-point index $j$ is fixed and its dependence is suppressed in $\Lambda$ and $\Theta$. When the two resolvent branches are distinguished, their sign $\iota\in\{+,-\}$ is indicated separately. 


\item {\it Weights}: We use weighted estimates to quantify depletion. For $\lambda\in\Sigma_{j,\delta_0}$, define
\begin{equation}\label{Intd1}
\delta(\Lambda):= 8\sqrt{(\lambda-b(y_{j\ast}))/b''(y_{j\ast})  + |\epsilon|  }.
\end{equation}

For $\lambda\in\Sigma_{j, \delta_0} $ and $j\in\{1,2\}$ with 
\begin{equation}\label{Intd2}
   \delta(\Lambda)\leq  1/8,
\end{equation}
we define the weights for $y\in\T$,
\begin{equation}\label{Intd3}
    \varrho_j(y ; \Lambda):=|y-y_{j\ast}|+\delta(\Lambda),\quad \varrho_{j,k}(y ; \Lambda):=\varrho_j(y ; \Lambda)\wedge \frac{1}{|k|},\quad d_{j,k}:=\delta(\Lambda)\wedge \frac{1}{|k|}.
\end{equation}
In the above, $\alpha\wedge\beta$ is the minimum of two real numbers $\alpha,\beta$.

We also define the weight centered at 0:
\begin{equation}
    \varrho(y;\Lambda) := |y| + \delta(\Lambda).
\end{equation}

\item \textit{Cutoff notations}: Let $\chi\in C_c^\infty(-2,2)$ be an even smooth cutoff function with $\chi\equiv 1$ on $[-1,1]$, $0\leq\chi\leq1$, and $\chi$ nonincreasing on $[0,\infty)$. We define the following shorthand
\begin{subequations}
\begin{align}
    \chi_{j,l}(y)&:=\chi \left(\frac{y-y_{j,*}}{l} \right),\\
    \chi_{*,l}(y)&:=\chi_{1,l}(y)+\chi_{2,l}(y),\\
    \chi_{j, l, l'} (y)&:=\begin{cases}
    \chi_{j,l}(y)-\chi_{j,l'}(y),& l>l'\\
    0,& l\leq l'
    \end{cases}, \\
    \chi_l(y) &:= \chi\left(\frac{y}{l} \right),\\
    \chi_{bj, l} (\lambda) &:= \chi\left(\frac{\lambda-b(y_{j*})}{l} \right).
    \end{align}
\end{subequations}
We remark that $\chi_{bj, l}$ will be used in Section \ref{sec:main:results} for decompositions involving the spectral parameter $\lambda$ and its proximity to the critical values. We also introduce the following notation for the annular cutoff at scale $\delta(\Lambda)$
  \begin{align}
       \chi_{j,\delta}^*(y) = \chi_{j, \delta(\Lambda), c\delta(\Lambda)},
  \end{align}
where $c \in (0,1)$ is a small constant chosen so that $|\lambda - b(y)| \approx |b(y_{j*}) - \lambda|$ for $|y - y_{j*}|\lesssim c\delta(\Lambda)$
when $ \epsilon \lesssim  |b(y_{j*}) - \lambda|$.
\item \textit{Good Derivative}: To measure profile regularity we use the differential operator 
\begin{align}
    D_\lambda := b'(y)\p_\lambda + \p_y .
\end{align}
Through the spectral representation and integration by parts in $\lambda$, $D_\lambda$ corresponds to the physical-space derivative $\partial_y+t b'(y)\partial_x$, which commutes with $\partial_t+b(y)\partial_x$. 


\item \textit{Singularity Notation}: In Sections \ref{sec:nd:bdds} and \ref{sec:d:bdds} we use the following notation to track the singular expansions $\mathcal{S}_{a,b,\epsilon}^\iota$  and remainder terms $r_{a,b,\epsilon}^\iota$ 
\begin{equation}
    \partial_\lambda^aD_\lambda^b\psi_{k,\epsilon}^\iota = \mathcal{S}_{a,b,\epsilon}^\iota+r_{a,b,\epsilon}^\iota.
\end{equation}

We write $\mathcal{R}_{a,b,\epsilon}^\iota$ for the right-hand side of the Rayleigh equation satisfied by the remainder after subtracting the singular term.

\end{itemize}

\section{Spectral property and representation formula}\label{rep}
For each $\kappa k\in\mathbb{Z}\backslash\{0\}$, recall from \eqref{asp2} that for any $g\in L^2(\T)$,
 \begin{equation}\label{F3.1}
 L_kg(y)=b(y)g(y)-b''(y)h(y).
 \end{equation}
Then \eqref{F3.0}--\eqref{F3.0:elliptic} can be reformulated abstractly as
 \begin{equation}\label{F3.2}
 \partial_t\omega_k+ikL_k\omega_k=0.
 \end{equation}
  In contrast to the spectral property of the linearized operator around monotonic shear flows, the spectral property of $L_k$ is less understood, especially on the generation of discrete eigenvalues and embedded eigenvalues. From general spectral theory, we know that the spectrum of $L_k$ consists of the continuous spectrum  $\Sigma$ \eqref{rng:b} together with some discrete eigenvalues with nonzero imaginary part which can only accumulate at the set of continuous spectrum $\Sigma$. Unlike the case of monotonic shear flows where the discrete eigenvalues can accumulate only at inflection points of the background shear flow, there appears no simple characterization of the possible accumulation points for non-monotonic shear flows. See Faddeev \cite{F73} for further discussion of the spectral properties of linearized Euler.  
 
 Recall that $\lambda\in\Sigma$ is called an embedded eigenvalue if there exists a nontrivial $g\in L^2(\T)$, such that 
 \begin{equation}\label{F3.20}
 L_kg=\lambda g. 
 \end{equation}
 For non-monotonic shear flows, this definition is too restrictive, as accumulation points of discrete eigenvalues may no longer be embedded eigenvalues. To capture the discrete eigenvalues, we recall the following definition of ``generalized embedded eigenvalues'', which can be found in \cite{WZZ19}, adapted to our setting. 
 
 \begin{definition}\label{emb1}
 We call $\lambda\in\Sigma$ a generalized embedded eigenvalue, if one of the following conditions is satisfied. 
 \begin{itemize}
 \item  $\lambda$ is an embedded eigenvalue.
 
 \item $\lambda\neq b(y_{1*}), b(y_{2*})$ and there exists a nontrivial $\psi\in H^1(\T): \T \to \mathbb{C}$ such that in the sense of distributions,
 \begin{equation}\label{emb2}
 (k^2-\partial_y^2)\psi(y)+{\rm P.V.}\frac{b''(y)\psi(y)}{b(y)-\lambda}+i\pi\sum_{z\in \T, \,b(z)=\lambda}\frac{b''(z)\psi(z)}{|b'(z)|}\delta(y-z)=0.
 \end{equation}
 \end{itemize}
 \end{definition}
 
 


We have the following Proposition for the representation of the solution operator for solutions of \eqref{F3.2}. See \cite{IIJ24} for a proof.  
 \begin{proposition}\label{F3.3}
 Suppose that $\kappa k\in\Z\backslash\{0\}$ and $\omega_{0k}\in L^2(\T)$. Then the stream function $\psi_k(t,y)$ for $\kappa k\in\mathbb{Z}\backslash\{0\}, y\in \T , t\ge0$ has the representation 
 \begin{equation}\label{F3.4}
 \psi_k(t,y)=-\frac{1}{2\pi i}\lim_{\epsilon\to0+}\int_{\Sigma}e^{-ik\lambda t}\left[\psi_{k,\epsilon}^{-}(y,\lambda)-\psi_{k,\epsilon}^{+}(y,\lambda)\right]d\lambda,
 \end{equation}
 where $\psi_{k,\epsilon}^{\iota}(y,\lambda)$ for $\iota\in\{+,-\}, \,y\in \T ,\kappa k\in\mathbb{Z}\backslash\{0\}$,  and  sufficiently small $\epsilon\in[-1/4,1/4]\backslash\{0\}$, are the solutions to
 \begin{equation}\label{F7}
 \begin{split}
  \left(-\frac{d^2}{dy^2} + k^2+\frac{b''(y)}{b(y)-\lambda+i\iota\epsilon} \right) \psi_{k,\epsilon}^{\iota}(y,\lambda)=\frac{\omega_{0k}(y)}{b(y)-\lambda+i\iota\epsilon}.
 \end{split}
 \end{equation}
 \end{proposition}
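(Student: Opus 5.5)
This is the standard resolvent/Dunford-calculus argument; the paper refers to \cite{IIJ24} for it, and I would follow the same route.

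\emph{Step 1: Dunford formula for the semigroup.} Under Assumption~\ref{MaAs}, $L_k$ is a bounded operator on $L^2(\T)$: it is multiplication by $b$ plus the compact perturbation $g\mapsto -b''\Delta_k^{-1}g$. Its spectrum is therefore $\Sigma$ together with isolated eigenvalues off the real axis, and Assumption~\ref{MaAs} rules out the latter. Hence $\sigma(L_k)=\Sigma\subset\R$. Take a contour $\Gamma$ enclosing $\Sigma$, namely a thin rectangle made of the lines $\operatorname{Im}\mu=\pm\epsilon$ joined by short vertical segments beyond the endpoints of $\Sigma$. The holomorphic functional calculus then gives
\[
\omega_k(t)=e^{-iktL_k}\omega_{0k}=\frac{1}{2\pi i}\oint_\Gamma e^{-ik\mu t}(\mu-L_k)^{-1}\omega_{0k}\,d\mu .
\]
Applying the bounded operator $\Delta_k^{-1}$ yields the same formula for $\psi_k(t)$, with $(\mu-L_k)^{-1}\omega_{0k}$ replaced by $\Delta_k^{-1}(\mu-L_k)^{-1}\omega_{0k}$.

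\emph{Step 2: identify the resolvent with the Rayleigh solutions.} Fix $\mu=\lambda-i\iota\epsilon$ and set $g=(\mu-L_k)^{-1}\omega_{0k}$, so that $\mu g-bg+b''h=\omega_{0k}$ with $h=\Delta_k^{-1}g$. Rearranging gives $g=\dfrac{b''h-\omega_{0k}}{b-\lambda+i\iota\epsilon}$. Substituting into $(\partial_y^2-k^2)h=g$ and multiplying by $-1$ produces exactly \eqref{F7}, with $h=\psi^\iota_{k,\epsilon}$ up to an overall sign. Conversely, \eqref{F7} has a unique solution because the resolvent exists. I would track the sign carefully so that $-\Delta_k^{-1}(\mu-L_k)^{-1}\omega_{0k}=\psi^{\iota}_{k,\epsilon}(\cdot,\lambda)$, with $\iota=+$ for $\mu$ on the lower line $\operatorname{Im}\mu=-\epsilon$ and $\iota=-$ for the upper line.

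\emph{Step 3: orientation and the limit $\epsilon\to0+$.} With counterclockwise orientation, the lower line is traversed with $\lambda$ increasing and the upper line with $\lambda$ decreasing. The two horizontal pieces therefore combine to
\[
-\frac{1}{2\pi i}\int e^{-ik\lambda t}e^{\pm k\epsilon t}\bigl[\psi^{+}_{k,\epsilon}-\psi^{-}_{k,\epsilon}\bigr]d\lambda ,
\]
which is \eqref{F3.4} after pulling out the sign. The contour is independent of $\epsilon$ by Cauchy's theorem, since no spectrum lies off the real axis. The factor $e^{\pm k\epsilon t}\to1$ uniformly for fixed $t$.

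It remains to show three things as $\epsilon\to0+$:
\begin{itemize}
\item the vertical segments contribute nothing;
\item the horizontal integrals can be restricted to $\Sigma$;
\item $\psi_k(t,y)$ is recovered pointwise in $y$, not just in $L^2$.
\end{itemize}
For $\lambda$ outside a neighborhood of $\Sigma$, the resolvent is analytic across the real axis. The two branches therefore converge to the same function and their difference vanishes. Pointwise evaluation is justified because $\Delta_k^{-1}$ maps $L^2$ into $H^2\subset C(\T)$, so $L^2$ convergence of $g$ gives uniform convergence of $h$.

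\emph{Main obstacle.} The hardest step is the uniform control as $\epsilon\to0$ for $\lambda\in\Sigma$, in particular near the critical values and the endpoints. This is exactly the limiting absorption principle. For fixed $\epsilon>0$ no limit is actually needed: the contour integral is exact by Cauchy's theorem, and the statement is a limit of expressions that are all equal to $\psi_k(t,y)$. The proof closes with that observation. Uniform LAP bounds matter only for the later estimates, not for Proposition~\ref{F3.3} itself.
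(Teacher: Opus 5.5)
Your overall route is the standard one the paper imports from \cite{IIJ24}: a Dunford integral over a thin rectangle, identification of the resolvent with the Rayleigh solutions, then $\epsilon\to0+$. However, your closing claim that no limit needs to be controlled is false, and it throws away the actual content of the proposition.

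Cauchy's theorem makes only the \emph{full closed contour} integral exact. For fixed $\epsilon$, the quantity $\frac{1}{2\pi i}\int_\Sigma e^{-ik\lambda t}[\psi^+_{k,\epsilon}-\psi^-_{k,\epsilon}](y,\lambda)\,d\lambda$ is not equal to $\psi_k(t,y)$. It differs from it by three pieces:
\begin{itemize}
\item the vertical segments;
\item the errors from the factors $e^{\mp k\epsilon t}-1$;
\item the parts of the lines $\operatorname{Im}\mu=\pm\epsilon$ whose real part lies in $[\min b-\eta,\max b+\eta]\setminus\Sigma$.
\end{itemize}
Only the first is harmless: it sits at fixed distance $\eta$ from the spectrum and contributes $O(\epsilon)$.

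Dropping $e^{\mp k\epsilon t}$ requires $\int_\Sigma|\psi^\pm_{k,\epsilon}(y,\lambda)|\,d\lambda=o(\epsilon^{-1})$. Since $L_k$ is not normal, no a priori resolvent bound gives this.

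On $(\max b,\max b+\eta]$, and symmetrically near $\min b$, the jump $\psi^+_{k,\epsilon}-\psi^-_{k,\epsilon}$ does tend to $0$ pointwise. But you need domination all the way up to the endpoint. Your remark about analyticity ``outside a neighborhood of $\Sigma$'' does not reach the points adjacent to the endpoints. Even a bound $\|(\mu-L_k)^{-1}\|\lesssim\operatorname{dist}(\mu,\Sigma)^{-1}$ would only give a jump of size $\epsilon/d^2$ at distance $d$ from the endpoint. Its integral over $\epsilon<d<\eta$ is $O(1)$, not $o(1)$.

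The missing ingredient is the limiting absorption principle. You need bounds on $\psi^\iota_{k,\epsilon}(\cdot,\lambda)$ that are uniform in $0<\epsilon<\epsilon_0$ and in $\lambda$ in a neighborhood of $\Sigma$, including just outside its endpoints. Since $\min b$ and $\max b$ are the critical values $b(y_{1*}),b(y_{2*})$, this is the degenerate case. It is covered by Proposition~\ref{DeC7}, whose blow-up argument allows either sign of $b(y_{j*})-\lambda$, together with Lemma~\ref{T5} away from the critical values. This is also exactly where the exclusion of generalized embedded eigenvalues in Assumption~\ref{MaAs} enters. Your argument uses only the absence of eigenvalues, which is a symptom of the gap. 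With the uniform bounds, all three pieces vanish by dominated convergence. So the LAP is needed for Proposition~\ref{F3.3} itself, not only for the later estimates.

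Separately, Step 2 has a sign slip. Substituting $g=(b''h-\omega_{0k})/(b-\lambda+i\iota\epsilon)$ into $\Delta_k h=g$ gives \eqref{F7} for $h$ directly, so $\Delta_k^{-1}(\mu-L_k)^{-1}\omega_{0k}=+\psi^\iota_{k,\epsilon}(\cdot,\lambda)$. With your minus sign, the horizontal pieces give the \emph{negative} of \eqref{F3.4}, not \eqref{F3.4} as you claim. With the correct sign they give $\frac{1}{2\pi i}\int e^{-ik\lambda t}[\psi^+_{k,\epsilon}-\psi^-_{k,\epsilon}]\,d\lambda$, which is \eqref{F3.4} (compare \eqref{PMR1}).
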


\begin{remark}\label{F6.1}
The existence of $\psi^{\iota}_{k,\epsilon}$ for sufficiently small $\epsilon\neq0$ follows from Assumption \ref{MaAs}, which implies the solvability of \eqref{F7} for sufficiently small $\epsilon\neq0$.

\end{remark}

 
\section{Bounds on Green's function and Modified Green's function}



In this section we recall some useful estimates for Green's functions associated with Rayleigh equations, including the Green's function for the standard elliptic operator $k^2-\partial_y^2$ on $\T$ and for the elliptic operator $k^2-\partial_y^2+V(y)$ on $\T$ where $V$ is a potential term coming from the nonlocal term in the Rayleigh equations. Throughout the section we assume that $\kappa k\in\Z\cap[1,\infty)$.  The proofs of the following results can be found in \cite{BCJ26}, Section 2.
\subsection{Elementary properties of the standard Green's function}\label{sub1}
Recall that the Green's function $G_k(y,z)$ solves  for $y, z\in\T$,
\begin{equation}\label{eq:Helmoltz}
-\frac{d^2}{dy^2}G_k(y,z)+k^2G_k(y,z)=\delta(y-z).
\end{equation}
 $G_k$ has the symmetry
\begin{equation}\label{Gk2}
G_k(y,z)=G_k(z,y), \qquad {\rm for}\,\,y, z\in\T.
\end{equation}
We note the following point-wise bounds for $G_k$ with $y,z\in\T$,
\begin{equation}\label{Helm1.0}
    |G_k(y,z)|+|k|^{-1}|\partial_yG_k(y,z)|\lesssim |k|^{-1}e^{-|k||y-z|},
    \end{equation}
and the integral bounds
\begin{equation}\label{Gk1.1}
\begin{split}
\sup_{y\in\T,\beta\in\{0,1\}}\bigg[|k|^{3/2-\beta}\left\|\partial_{y,z}^{\beta}G_k(y,z)\right\|_{L^2(z\in\T)}\bigg]\lesssim 1.
\end{split}
\end{equation}

Define $F_k(y,z)$ for $ y, z\in\T$ as
\begin{equation}\label{GkFk}
F_k(y,z):=\partial_z\partial_yG_k(y,z)-\delta(y-z),\qquad{\rm for}\,\,y,z\in\T.
\end{equation}
Then $F_k$ satisfies the bounds
\begin{equation}\label{Fk.1}
    |F_k(y,z)|+|k|^{-1}|\partial_yF_k(y,z)|\lesssim |k|e^{-|k||y-z|},
\end{equation}
and
\begin{equation}\label{Gk3.1}
\begin{split}
&\sup_{y\in\T,\beta\in\{0,1\}}\bigg[|k|^{-1/2-\beta}\left\|\partial_{y,z}^{\beta}F_k(y,z)\right\|_{L^2(z\in\T)}\bigg]\lesssim 1.
\end{split}
\end{equation}



\subsection{Bounds on the modified Green's function}\label{sec:modifiedgreen}
For applications below, we need to study the ``modified Green's function'' $\mathcal{G}^j_{k}(y,z;\Lambda)$ defined for the parameters $j\in\{1,2\}, \lambda\in \Sigma_{j,\delta_0}$, and $0<|\epsilon|<\epsilon_0$, which satisfies for $y,z\in\T,$
\begin{equation}\label{mGk1}
(k^2-\partial_y^2)\mathcal{G}^j_{k}(y,z;\Lambda)+\frac{b''(y)}{b(y)-\lambda +i\iota\epsilon}\Big[\chi_{j, \delta_0, \delta(\Lambda)}(y) \Big]\mathcal{G}^j_{k}(y,z;\Lambda)=\delta(y-z).
\end{equation}

If $\delta(\Lambda)\geq\delta_0$, the annular cutoff vanishes and $\mathcal G_k^j=G_k$. Since $\delta(\Lambda)\lesssim\delta_0$, the regime $\delta(\Lambda)\sim\delta_0$ is covered by standard Green function and local elliptic estimates; the degenerate estimates are needed only when $\delta(\Lambda)\ll\delta_0$.

The bounds we need for the modified Green's function $\mathcal{G}^j_k(y,z;\Lambda)$ are the following.

\begin{lemma}\label{mGk50} 

Let $\mathcal{G}^{j}_{k}(y,z;\Lambda)$ be defined as in \eqref{mGk1} for $j\in\{1,2\}, \lambda\in \Sigma_{j,\delta_0}$ and $0<|\epsilon|<\epsilon_0,\delta(\Lambda)\leq1/8.$ 
Then we have for $\beta\in\{0,1\}$ and $y, z\in\T$, $\mathcal{G}^j_{k}(y,z;\Lambda)=\mathcal{G}^j_{k}(z,y;\Lambda)$, and

\begin{equation}\label{mGk51}
\varrho_{j,k}^\beta(y;\Lambda) \big|\partial_y^{\beta}\mathcal{G}^j_{k}(y,z;\Lambda)\big| \lesssim  \varrho_{j,k}(z;\Lambda) \min\Big\{e^{-|k||y-z|}, \frac{\varrho_j^2(y;\Lambda)}{\varrho_j^2(z;\Lambda)},\,\frac{\varrho_j(z;\Lambda) }{\varrho_j(y;\Lambda)}\Big\}.
\end{equation}

\end{lemma}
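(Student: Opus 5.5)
We prove Lemma~\ref{mGk50} by combining an energy identity, a comparison with the explicit scale-invariant model operator, and an ODE (Wronskian) construction of $\mathcal G^j_k$. Throughout we fix $j$, translate so that $y_{j\ast}=0$, and write $V(y):=\frac{b''(y)}{b(y)-\lambda+i\iota\epsilon}\chi_{j,\delta_0,\delta(\Lambda)}(y)$.

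\emph{Step 1: symmetry and solvability.} The operator $k^2-\partial_y^2+V$ is complex symmetric, since $V$ is a multiplication operator. Hence once $\mathcal G^j_k$ exists, $\mathcal G^j_k(y,z;\Lambda)=\mathcal G^j_k(z,y;\Lambda)$ follows by testing the equation for $\mathcal G(\cdot,z)$ against $\mathcal G(\cdot,y)$ and integrating by parts.

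For solvability, note that on $\operatorname{supp}V\subset\{\delta(\Lambda)\le|y|\le 2\delta_0\}$ we have $|b(y)-\lambda|\gtrsim y^2$. This uses the definition \eqref{Intd1}: the factor $8$ forces $|y|$ to be much larger than the critical-layer width. Moreover $b''$ has the sign of $b(y)-b(y_\ast)$, which dominates $\lambda-b(y_\ast)$ and $\epsilon$. Consequently $\operatorname{Re}V\ge c\,y^{-2}\mathbf 1_{\operatorname{supp}}\ge0$ and $|\operatorname{Im}V|\lesssim |\epsilon|y^{-4}\ll \operatorname{Re}V$. Testing with $\bar u$ gives the coercive bound
\[
\|u'\|_{L^2}^2+k^2\|u\|_{L^2}^2+\int \operatorname{Re}V\,|u|^2\lesssim |\langle f,u\rangle|,
\]
so the Lax--Milgram theorem yields existence and uniqueness.

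\emph{Step 2: pointwise bounds by an ODE construction.} We use the Wronskian formula $\mathcal G(y,z)=\phi_-(y_<)\phi_+(y_>)/W$, where $\phi_\pm$ are homogeneous solutions that are decaying in the appropriate directions on the circle. In practice we proceed as follows. In the region $|y|\gtrsim 1/|k|$, or outside $\operatorname{supp}V$, the term $k^2$ dominates, or else the equation is free. There $\phi_\pm$ behave like $e^{\mp|k|y}$, and the bound $e^{-|k||y-z|}$ follows as in \eqref{Helm1.0}.

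In the region $\delta(\Lambda)\lesssim|y|\lesssim\min(1/|k|,\delta_0)$ we compare with the model operator $-\partial_y^2+2/y^2$. Its homogeneous solutions are $y^2$ and $y^{-1}$. We construct $\phi_\pm$ perturbatively around these, using $V-2/y^2=O(|y|^{-1}+(|\lambda-b_\ast|+\epsilon)/y^4)$ and a Volterra iteration. On $|y|\le\delta(\Lambda)$ the potential vanishes, so the solutions are linear, which matches the $\varrho$ weights. Gluing these pieces shows that the solution regular at $0$ grows like $\varrho^2$ and the one decaying outward behaves like $\varrho^{-1}$. The Wronskian is then $\approx 1$ at every scale, measured in the $\varrho$ variable.

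Combining the pieces gives, for $|y|\le|z|$, the bound $|\mathcal G|\lesssim \varrho(y)^2\varrho(z)^{-1}\cdot$ (truncated at $1/|k|$), and for $|y|\ge|z|$ the bound $|\mathcal G|\lesssim\varrho(z)^2\varrho(y)^{-1}$. These are exactly the factors $\varrho_{j,k}(z)\,\varrho_j^2(y)/\varrho_j^2(z)$ and $\varrho_{j,k}(z)\,\varrho_j(z)/\varrho_j(y)$ in \eqref{mGk51}, with $\varrho_{j,k}$ capping the scale at $1/|k|$. Each of the three terms in the minimum holds in every regime, because in each regime at least one of them is the sharp bound and the others are weaker there.

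\emph{Step 3: the derivative bound.} The $\beta=1$ case follows from the homogeneous solutions: each satisfies $|\phi'|\lesssim\varrho^{-1}|\phi|$ at scale $\varrho_{j,k}$. Alternatively, it follows from local elliptic regularity on dyadic annuli, where $|V|\lesssim\varrho^{-2}$.

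The main obstacle is the uniform control of the perturbation from the model $2/y^2$ near the inner edge $|y|\sim\delta(\Lambda)$. There $\lambda$ and $\epsilon$ are not negligible against $b(y)-b_\ast$, and the potential is only comparable to $\varrho^{-2}$ with a positive real part. To handle this we would replace the exact Volterra perturbation by a monotonicity (maximum-principle-type) argument for $|\phi|^2$. Using $\operatorname{Re}V\ge0$, we obtain $(|\phi|^2)''\ge 2\operatorname{Re}V|\phi|^2\ge0$, which gives convexity and growth and hence the upper bounds without needing precise asymptotics.
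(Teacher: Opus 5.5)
The paper does not prove this lemma; it quotes it from \cite{BCJ26}, so your outline has to stand on its own. Several parts are sound:
\begin{itemize}
\item Step~1, coercivity and complex symmetry. Only $\operatorname{Re}V\ge0$ is needed there. Your claim $|\operatorname{Im}V|\ll\operatorname{Re}V$ holds only up to a $\varkappa$-dependent factor, since $|\operatorname{Im}V|/\operatorname{Re}V=|\epsilon|/|b-\lambda|$ and $\delta(\Lambda)$ adds $|\epsilon|$ without dividing by $b''(y_{j\ast})$.
\item The bulk Volterra asymptotics on $C\delta(\Lambda)\le|y|\le c\min(|k|^{-1},\delta_0)$.
\item The exponential regime.
\end{itemize}

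The gap is the sentence ``the Wronskian is then $\approx1$ at every scale.'' There is no ``solution regular at $0$'' here: $V\equiv0$ on $|y|\le\delta(\Lambda)$, so every solution is smooth through $y_{j\ast}=0$. For $z>0$, say, $\mathcal G^j_k(\cdot,z)$ on $y<z$ is the solution $\phi_L$ fixed by the condition on the far side of $0$. Normalize it as $|y|^{-1}$ on $y\le-C\delta(\Lambda)$. Two things hinge on $\phi_L$ leaving the layer as $a\,y^2+b\,y^{-1}$ (in the perturbed model basis) with $|b|\lesssim1$ and $|a|\gtrsim\delta(\Lambda)^{-3}$, uniformly in $\lambda,\epsilon,\iota,k$:
\begin{itemize}
\item the diagonal bound $|\mathcal G^j_k(z,z)|\lesssim\varrho_{j,k}(z)$, which is this Wronskian lower bound;
\item the power-law factor $\varrho_j^2(y)/\varrho_j^2(z)$ in \eqref{mGk51}, since otherwise the $y^{-1}$ mode dominates on $\delta\ll y\ll z$.
\end{itemize}
Volterra iteration and gluing cannot produce this. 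After rescaling by $\delta(\Lambda)$ the layer is a bounded interval with bounded coefficients. Gronwall then controls the size of $(\phi,\phi')$ in both directions, but not its direction. Ruling out a near-resonance is exactly where the sign of $\operatorname{Re}V$ must enter.

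Your fallback does not supply this. It targets upper bounds, which across the layer are free. Used as a comparison principle over many scales it is also lossy. Discarding $2|\phi'|^2$ in $w''=2|\phi'|^2+2(k^2+\operatorname{Re}V)w$, with $w=|\phi|^2$, leaves comparison functions solving $w''=4y^{-2}w$, i.e.\ $|\phi|\sim|y|^{(1\pm\sqrt{17})/4}$. That gives exponents $\approx1.28$ and $\approx0.78$ instead of $2$ and $1$, losing exactly the quadratic depletion. Two ways to close the gap:
\begin{itemize}
\item[(i)] Keep that term and integrate from the far side, where $\phi_L$ decays in your picture: $\operatorname{Re}(\bar\phi_L\phi_L')(y)=\int_{-\infty}^y\bigl(|\phi_L'|^2+(k^2+\operatorname{Re}V)|\phi_L|^2\bigr)\,ds>0$. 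For $\phi=ay^2+by^{-1}$ one has $\operatorname{Re}(\bar\phi\phi')=2|a|^2y^3+\operatorname{Re}(a\bar b)-|b|^2y^{-3}$. Positivity at $y=C\delta(\Lambda)$, together with $|\phi_L(C\delta)|\ge|\phi_L(-C\delta)|\approx(C\delta)^{-1}$, forces $|a|\gtrsim_C\delta(\Lambda)^{-3}$.
\item[(ii)] Work directly on $\T$. Testing \eqref{mGk1} with $\overline{\mathcal G^j_k(\cdot,z)}$ gives $\int_\T\bigl(|\partial_y\mathcal G|^2+k^2|\mathcal G|^2+\operatorname{Re}V|\mathcal G|^2\bigr)=\operatorname{Re}\mathcal G^j_k(z,z)$. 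Combined with $|f(z)|^2\lesssim\varrho_{j,k}(z)\int_\T(|f'|^2+k^2|f|^2+\operatorname{Re}V|f|^2)$, this yields $|\mathcal G^j_k(z,z)|\lesssim\varrho_{j,k}(z)$. Off the diagonal, Kato's inequality $|\mathcal G|''\ge(k^2+\operatorname{Re}V)|\mathcal G|$ on $\T\setminus\{z\}$ lets you compare with positive supersolutions of the real equation. Use $\cosh(|k|(\pi-|y-z|))/\cosh(|k|\pi)$ for the factor $e^{-|k||y-z|}$, and positive solutions with profiles $\varrho_j^2$, $\varrho_j^{-1}$ for the power factors. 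Applied to $|\phi|$ rather than $|\phi|^2$, the comparison exponents are exactly $2$ and $-1$, and positivity makes the transmission through the layer elementary.
\end{itemize}
Finally, in Step~3 the pointwise claim $|\phi'|\lesssim\varrho^{-1}|\phi|$ fails where $|\phi|$ is nearly zero. Use your elliptic-regularity alternative instead, applied to the majorant on a $\varrho_{j,k}(y)$-neighbourhood.
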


As a corollary of Lemma \ref{mGk50}, we also have the following bounds.

\begin{lemma}\label{mGk30}
Under the assumptions of Lemma \ref{mGk50},
 define for $y,z\in \T$,
 \begin{equation}\label{mGk31}
\mathcal{H}^j_k(y,z;\Lambda):=\Big[\partial_z+\chi_{j, 10\delta(\Lambda)}(y)\partial_y\Big]\mathcal{G}^j_{k}(y,z;\Lambda).
\end{equation}
Then we have the bounds for $y\in\T, z\in S^j_{4\delta(\Lambda)}$ and $\beta\in\{0,1\}$,
\begin{equation}\label{mGk52}
\begin{split}
&\varrho_{j,k}^\beta(y;\Lambda)\big|\partial_y^\beta\mathcal{H}^j_{k}(y,z;\Lambda)\big|\lesssim \min\Big\{e^{-|k||y-z|}, \frac{\varrho_j^2(y;\Lambda)}{\varrho_j^2(z;\Lambda)},\,\frac{\varrho_j(z;\Lambda) }{\varrho_j(y;\Lambda)}\Big\},
\end{split}
\end{equation}
and for $y,z\in\T$,
\begin{equation}\label{lem:mGK30:New}
\varrho_{j,k}^\beta(y;\Lambda)\big|\partial_y^\beta(\partial_z + \partial_y) \mathcal{G}^{j}_k(y,z;\Lambda) \big|\lesssim \min \left\{e^{-(|k|/2)|y-z|}, \frac{\varrho_{j,k}(y;\Lambda)}{\varrho_{j}(z;\Lambda)}, \frac{\varrho_{j,k}(z;\Lambda)}{\varrho_{j}(y;\Lambda)}    \right\}.
\end{equation}



\end{lemma}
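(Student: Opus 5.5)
The plan is to derive both bounds from Lemma \ref{mGk50} through a commutator identity. Write $\mathcal{L}_y:=k^2-\partial_y^2+V(y)$ with $V(y):=\frac{b''(y)}{b(y)-\lambda+i\iota\epsilon}\chi_{j,\delta_0,\delta(\Lambda)}(y)$. Since $\mathcal{L}_y\mathcal{G}^j_k(\cdot,z)=\delta(\cdot-z)$, differentiating in $z$ and in $y$ gives
\[
\mathcal{L}_y\,\partial_z\mathcal{G}^j_k=\partial_z\delta(y-z),\qquad
\mathcal{L}_y\,\partial_y\mathcal{G}^j_k=\partial_y\delta(y-z)-V'(y)\mathcal{G}^j_k(y,z).
\]
The two delta terms cancel, so
\[
\mathcal{L}_y\,(\partial_y+\partial_z)\mathcal{G}^j_k(y,z)=-V'(y)\mathcal{G}^j_k(y,z),
\]
and therefore
\begin{equation*}
(\partial_y+\partial_z)\mathcal{G}^j_k(y,z;\Lambda)=-\int_{\T}\mathcal{G}^j_k(y,w;\Lambda)\,V'(w)\,\mathcal{G}^j_k(w,z;\Lambda)\,dw.
\end{equation*}
The structural input is a bound on $V'$. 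On the annulus $\delta(\Lambda)\lesssim|w-y_{j\ast}|\le 2\delta_0$ we have $|V(w)|\lesssim\varrho_j^{-2}(w)$, by the choice of $\delta(\Lambda)$ and $\delta_0$ in \eqref{IntN4.0}–\eqref{Intd1}. Differentiating $V$ either hits the potential, which is smooth at scale $\varrho_j$, or hits the cutoff; near the inner edge the cutoff derivative is of size $\delta(\Lambda)^{-1}$ on a set of width $\delta(\Lambda)$. Hence
\[
|V'(w)|\lesssim\varrho_j^{-3}(w)\mathbf 1_{|w-y_{j\ast}|\le 2\delta_0}+O(1).
\]

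To prove \eqref{lem:mGK30:New}, insert \eqref{mGk51} for both factors of $\mathcal{G}$ in the integral and split the $w$-integral dyadically in $\varrho_j(w)$. The factor $\varrho_{j,k}(w)\varrho_{j,k}(z)\varrho_j^{-3}(w)$ combines with the decay factors $\min\{\varrho_j^2(\cdot)/\varrho_j^2(\cdot),\varrho_j(\cdot)/\varrho_j(\cdot)\}$, and these sum geometrically. The case checks are as follows. If $w$ lies between $y$ and $z$ in scale, the quadratic decay on one side and the linear decay on the other give $\varrho_{j,k}(z)/\varrho_j(y)$ or $\varrho_{j,k}(y)/\varrho_j(z)$. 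If $w$ lies outside both, the sum is dominated by the extreme dyadic shell. The exponential alternative follows by keeping $e^{-|k||y-w|}e^{-|k||w-z|}\le e^{-|k||y-z|}$ and absorbing the dyadic sum into the loss from $|k|$ to $|k|/2$. For $\beta=1$, the same argument applies using the $\partial_y$ bound in \eqref{mGk51}, which contributes the extra factor $\varrho_{j,k}^{-1}(y)$.

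For \eqref{mGk52}, with $z\in S^j_{4\delta(\Lambda)}$ and so $\varrho_j(z)\approx\delta(\Lambda)$, consider two regions. First, when $|y-y_{j\ast}|\ge 20\delta(\Lambda)$, the cutoff vanishes and $\mathcal{H}^j_k=\partial_z\mathcal{G}^j_k$. Using the symmetry $\mathcal{G}^j_k(y,z)=\mathcal{G}^j_k(z,y)$ and \eqref{mGk51} with the roles of $y,z$ swapped, $|\partial_z\mathcal{G}|\lesssim \frac{\varrho_{j,k}(y)}{\varrho_{j,k}(z)}\min\{e^{-|k||y-z|},\varrho_j^2(z)/\varrho_j^2(y),\varrho_j(y)/\varrho_j(z)\}$. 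Since $\varrho_j(y)\gtrsim\varrho_j(z)$, this gives $\frac{\varrho_{j,k}(y)}{\varrho_{j,k}(z)}\frac{\varrho_j^2(z)}{\varrho_j^2(y)}\lesssim\frac{\varrho_j(z)}{\varrho_j(y)}$. Hence $\varrho_j(z)/\varrho_j(y)$ is the smallest entry and it dominates the required minimum. Second, when $|y-y_{j\ast}|\le 20\delta(\Lambda)$, write $\mathcal{H}^j_k=(\partial_y+\partial_z)\mathcal{G}^j_k-(1-\chi_{j,10\delta})\partial_y\mathcal{G}^j_k$. Here all the weights are comparable to $\delta(\Lambda)$, so the minimum in \eqref{mGk52} is $\approx 1$. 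The first term is then controlled by \eqref{lem:mGK30:New}, and the second by \eqref{mGk51} with $\beta=1$, since $\varrho_{j,k}(z)/\varrho_{j,k}(y)\approx1$. The $y$-derivative in this region picks up $\chi'$ of size $\delta^{-1}\lesssim\varrho_{j,k}^{-1}(y)$ times $\partial_y\mathcal{G}$, which is harmless.

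The main obstacle is the dyadic summation in the integral representation. The potential is critical, $|V'|\sim\varrho^{-3}$, so the summation is only borderline convergent. The gain must be extracted from the quadratic factor $\varrho_j^2(y)/\varrho_j^2(z)$ of \eqref{mGk51} in the regime where $w$ lies between $y$ and $z$; otherwise a logarithm appears. I would first try to check all orderings of $\varrho_j(y),\varrho_j(w),\varrho_j(z)$ explicitly. If a logarithm still survives, I would fall back on an energy argument for $(\partial_y+\partial_z)\mathcal{G}$ with the source $-V'\mathcal{G}$, using the favorable sign of $V$.
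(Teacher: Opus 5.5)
\textbf{Verdict.} Your argument for \eqref{lem:mGK30:New} works, but your argument for \eqref{mGk52} has a genuine gap: it never proves the exponential entry at the full rate $e^{-|k||y-z|}$. Write $\delta=\delta(\Lambda)$ throughout.

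\textbf{The part that works.} The identity $\mathcal{L}_y(\partial_y+\partial_z)\mathcal{G}^j_k=-V'\mathcal{G}^j_k$, the Duhamel formula, and \eqref{mGk51} in both factors give \eqref{lem:mGK30:New}. The interpolation $\min\{e^{-|k||y-w|},A\}\le e^{-|k||y-w|/2}A^{1/2}$ handles the exponential entry, and it is exactly where the rate $|k|/2$ comes from. The logarithm you fear does not occur. For example, with $|k|\lesssim1$ and $\varrho_j(y)\le\varrho_j(w)\le\varrho_j(z)$, the integrand is $\varrho_j^2(y)\varrho_j^{-2}(w)\varrho_j^{-1}(z)$, which integrates to $\varrho_j(y)/\varrho_j(z)$. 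The other orderings are similar. For $\beta=1$ you lose the $y\leftrightarrow z$ symmetry and must check both orderings, but they also close.

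\textbf{The gap in \eqref{mGk52}.} Three places fall short.
\begin{itemize}
\item In the region $|y-y_{j*}|\ge20\delta$, your claim that $\varrho_j(z)/\varrho_j(y)$ is the smallest entry is false once $|k||y-z|\gg\log(\varrho_j(y)/\varrho_j(z))$. The swapped form of \eqref{mGk51} attaches to $e^{-|k||y-z|}$ the factor $\varrho_{j,k}(y)/\varrho_{j,k}(z)$. This factor is $\approx(|k|\delta)^{-1}$ when $|k|\delta\ll1\le|k||y-y_{j*}|$. Set $x=|k|\delta$ and $s=|k||y-z|$. Then you only have $\min\{x^{-1}e^{-s},xs^{-2}\}$. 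For $x\approx e^{-s/2}$ this is $\approx e^{-s/2}s^{-2}$, much larger than the required $e^{-s}$. So pointwise use of \eqref{mGk51} yields at best $e^{-|k||y-z|/2}$.
\item In your second region the minimum is not $\approx1$ when $|k|\delta\gg1$, because $|y-z|$ can be of size $\delta\gg|k|^{-1}$. Quoting \eqref{lem:mGK30:New} there again gives only the rate $|k|/2$.
\item For $\beta=1$ in the first region you need the mixed derivative $\partial_y\partial_z\mathcal{G}^j_k$. Bound \eqref{mGk51} does not control it, and your proposal does not address it.
\end{itemize}

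\textbf{The missing idea.} Apply $\mathcal{L}_y$ to $\mathcal{H}^j_k$ itself, not to $(\partial_y+\partial_z)\mathcal{G}^j_k$. Since $\chi_{j,10\delta}\equiv1$ near $z\in S^j_{4\delta}$, the derivatives of the Dirac mass cancel exactly. Use also $\partial_y^2\mathcal{G}^j_k=(k^2+V)\mathcal{G}^j_k$ on the support of $\chi_{j,10\delta}'$. This gives
\[
\mathcal{L}_y\mathcal{H}^j_k(\cdot,z)=h(\cdot,z):=-\chi_{j,10\delta}V'\mathcal{G}^j_k-2\chi_{j,10\delta}'(k^2+V)\mathcal{G}^j_k-\chi_{j,10\delta}''\,\partial_y\mathcal{G}^j_k .
\]
The source $h$ is supported in $S^j_{20\delta}$, where $\varrho_j\approx\delta$ and $\varrho_{j,k}\approx d_{j,k}$. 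By \eqref{mGk51}, $|h(w,z)|\lesssim\delta^{-1}d_{j,k}^{-1}e^{-|k||w-z|}$. Now represent
\[
\partial_y^\beta\mathcal{H}^j_k(y,z)=\int_{S^j_{20\delta}}\partial_y^\beta\mathcal{G}^j_k(y,w)\,h(w,z)\,dw .
\]
Bound the first factor by \eqref{mGk51} and use $e^{-|k||y-w|}e^{-|k||w-z|}\le e^{-|k||y-z|}$. The prefactor $\varrho_{j,k}(w)\approx d_{j,k}$ cancels $d_{j,k}^{-1}$, and the $O(\delta)$ length of the integration interval cancels $\delta^{-1}$. This proves \eqref{mGk52} with the full exponential rate, for both values of $\beta$ and all $y\in\T$, with no region split.

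This localized source is what the cutoff in the definition of $\mathcal{H}^j_k$ is for. It also explains why \eqref{mGk52} has rate $|k|$, while \eqref{lem:mGK30:New}, whose source $V'$ is spread over all scales, has rate $|k|/2$.
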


\section{The Limiting Absorption Principle }

In this section we control the full solution of the Rayleigh equations by its main part. 
It turns out that the situation is very different for the spectral range $\lambda\in \Sigma\backslash\cup_j\Sigma_{j,\delta_0}$ (the non-degenerate case) and $\lambda\in\cup_j \Sigma_{j,\delta_0}$ (the degenerate case). We first consider the non-degenerate case. 




\subsection{The non-degenerate case}
Fix $\epsilon\in[-1/4,1/4]\backslash\{0\}, \lambda\in \Sigma\backslash \cup_{j} \Sigma_{j, \delta_0},  \kappa k\in\mathbb{Z}\backslash\{0\}$. Define for each $g\in L^2(\T)$ the operator
\begin{equation}\label{T1}
T_{\Theta}g(y):=\int_\T G_k(y,z)\frac{b''(z)g(z)}{b(z)-\lambda+i\epsilon}dz.
\end{equation}

\begin{lemma}\label{T2}
For  $\epsilon\in[-1/4,1/4]\backslash\{0\}, \lambda\in \Sigma\backslash \cup_{j} \Sigma_{j, \delta_0},  \kappa k\in\mathbb{Z}\backslash\{0\}$, the operator $T_\Theta$ satisfies the bound
\begin{equation}\label{T3}
\|T_\Theta g\|_{H^1_k(\T)}\lesssim |k|^{-1/3}\|g\|_{H^1_k(\T)},\qquad {\rm for\,\,all}\,\,g\in H^{1}_k(\T).
\end{equation}
In addition, we have the more precise regularity structure
\begin{equation}\label{T3.1}
\begin{split}
&\bigg\|\partial_yT_\Theta g(y)+\frac{b''(y)(1-\chi_{\ast,\delta_0/10})g(y)}{b'(y)}\log{(b(y)-\lambda+i\epsilon)}\bigg\|_{W^{1,1}(\T)}\\
&\lesssim \langle k\rangle^{4/3}\|g\|_{H^1_k(\T)}.
\end{split}
\end{equation}

\end{lemma}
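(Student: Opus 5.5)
We prove the bound by splitting the potential $\frac{b''(z)}{b(z)-\lambda+i\epsilon}$ into a part near the critical points and a part away from them, using a partition of unity. Write $1=\chi_{\ast,\delta_0/10}+(1-\chi_{\ast,\delta_0/10})$.

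\emph{Near the critical points.} Since $\lambda\notin\cup_j\Sigma_{j,\delta_0}$ and $b(z)-b(y_{j\ast})\approx b''(y_{j\ast})(z-y_{j\ast})^2/2$ with $|z-y_{j\ast}|\le\delta_0/5$, we have $|b(z)-\lambda|\gtrsim\delta_0^2$ on the support of $\chi_{\ast,\delta_0/10}$. On this piece the potential is smooth and uniformly bounded, with $C^1$ bounds independent of $\epsilon$. Hence the contribution $\int G_k(y,z)\chi_{\ast,\delta_0/10}(z)b''(z)g(z)(b(z)-\lambda+i\epsilon)^{-1}dz$ is just $(k^2-\partial_y^2)^{-1}$ applied to a bounded multiple of $g$. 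By \eqref{Helm1.0}, its $H^1_k$ norm is at most $|k|^{-1}\|g\|_{L^2}\lesssim |k|^{-2}\|g\|_{H^1_k}$. Its derivative is also trivially bounded in $W^{1,1}$ by $\langle k\rangle\|g\|_{L^2}$.

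\emph{Away from the critical points.} On the support of $1-\chi_{\ast,\delta_0/10}$ we have $|b'|\gtrsim\delta_0$, so we can use the identity
\[
\frac{1}{b(z)-\lambda+i\epsilon}=\frac{1}{b'(z)}\partial_z\log(b(z)-\lambda+i\epsilon)
\]
and integrate by parts in $z$. This gives
\[
T^{\rm far}g(y)=-\int_\T \partial_z\Big[G_k(y,z)\,m(z)g(z)\Big]\log(b(z)-\lambda+i\epsilon)\,dz,\qquad m:=\frac{b''(1-\chi_{\ast,\delta_0/10})}{b'}.
\]
The logarithm is bounded in every $L^p$, $p<\infty$, uniformly in $\epsilon$ and $\lambda$, since $b'\neq0$ on the support and $\lambda$ is bounded. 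For the $H^1_k$ bound we then split the integral at $|z-y|\sim|k|^{-2/3}$, or equivalently interpolate.
\begin{itemize}
\item For the plain $L^2$-type estimate, \eqref{Gk1.1} gives $\|\partial_zG_k\|_{L^2_z}\lesssim|k|^{-1/2}$ and $\|G_k\|_{L^2_z}\lesssim |k|^{-3/2}$. Combined with $\|g\|_{L^\infty}\lesssim |k|^{-1/2}\|g\|_{H^1_k}$, this yields $|T^{\rm far}g|\lesssim |k|^{-1}\|g\|_{H^1_k}$.
\item For $\partial_yT^{\rm far}g$ we differentiate in $y$ before integrating by parts and use \eqref{GkFk}: $\partial_y\partial_zG_k=\delta(y-z)+F_k$. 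The $\delta$ term produces exactly $-m(y)g(y)\log(b(y)-\lambda+i\epsilon)$, which is the singular term subtracted in \eqref{T3.1}. The $F_k$ term is controlled by \eqref{Gk3.1}, and the remaining term $\partial_yG_k\,\partial_z(mg)\log$ is controlled by \eqref{Gk1.1}.
\end{itemize}

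This exact extraction gives \eqref{T3.1} directly. Differentiating once more, the remainder $\int\partial_yF_k\,mg\log+\dots$ lies in $L^1$ with a bound of order $|k|^{2}$ times $\|g\|_{L^2}$. The bound $\langle k\rangle^{4/3}\|g\|_{H^1_k}$ follows after trading powers of $k$ through the $H^1_k$ weights. Some derivatives of $g$ appear only once and are measured in $L^2$, which gives $W^{1,1}$ rather than $W^{1,\infty}$.

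\emph{Main obstacle.} The hardest step is the gain $|k|^{-1/3}$ in \eqref{T3}. The term $m(y)g(y)\log(\cdot)$ in $\partial_yT g$ has $L^2$ norm of order $\|g\|_{L^\infty}\lesssim|k|^{-1/2}\|g\|_{H^1_k}$. This is only borderline, because the $H^1_k$ norm also weighs $\partial_y$ without a factor of $k$. So one must check that $\|m g\log\|_{L^2}\lesssim \|g\|_{L^2}^{1/2}\|g'\|_{L^2}^{1/2}$. This quantity is at most $|k|^{-1/2}\|g\|_{H^1_k}$, better than needed.

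The genuinely delicate part is that $\log$ is unbounded near the critical layer. To handle it, we split $z$ at distance $r$ from the critical layer $\{b=\lambda\}$. Near the layer we use $\|\log\|_{L^2(|b-\lambda|<r)}\lesssim r^{1/2}|\log r|$ together with $L^\infty$ control of $g$. Away from the layer we use one more integration by parts, at a cost of $r^{-1}$. Optimizing $r\sim|k|^{-2/3}$ produces the stated $|k|^{-1/3}$ and is the source of the exponent $1/3$. I would first try the choice $r=|k|^{-2/3}$, uniformly in $\epsilon$.
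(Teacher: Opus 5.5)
Your architecture is the right one, and it matches the structure of the argument the paper cites from \cite{IIJ24}. You isolate $\chi_{\ast,\delta_0/10}$, where the potential is smooth. Elsewhere you write $(b-\lambda+i\epsilon)^{-1}=(b')^{-1}\partial_z\log(b-\lambda+i\epsilon)$, integrate by parts, and read off $-mg\log(b-\lambda+i\epsilon)$ from $\partial_y\partial_zG_k=\delta(y-z)+F_k$; this is exactly the term subtracted in \eqref{T3.1}. The near part and \eqref{T3.1} go through essentially as you sketch, because there only $L^1$ control is needed and $g'\log(b-\lambda+i\epsilon)\in L^1$.

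The gap is the decay $|k|^{-1/3}$ in \eqref{T3}, which is the whole content of that estimate (it handles large $|k|$ in Lemma~\ref{T5}). None of the bounds you actually write down produce a negative power of $|k|$:
\begin{itemize}
\item Your pointwise bound $|T^{\mathrm{far}}g|\lesssim|k|^{-1}\|g\|_{H^1_k}$ only gives $|k|\|T^{\mathrm{far}}g\|_{L^2}\lesssim\|g\|_{H^1_k}$.
\item For the $F_k$ term, Cauchy--Schwarz with \eqref{Gk3.1} gives $|k|^{1/2}\|mg\log\|_{L^2}\lesssim\|g\|_{H^1_k}$, again with no gain.
\item The terms $\int G_k\,\partial_z(mg)\log$ and $\int\partial_yG_k\,\partial_z(mg)\log$ cannot be closed with the $L^2_z$ kernel bounds \eqref{Gk1.1}. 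The reason is that $g'\log(b-\lambda+i\epsilon)$ is uniformly in $L^r$ only for $r<2$.
\end{itemize}

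Your splitting at distance $r\sim|k|^{-2/3}$ from the critical layer does not repair this, and it is not needed. After the integration by parts no factor $(b-\lambda)^{-1}$ is left for a second integration by parts to act on. As you note yourself, $\|\log(b-\lambda+i\epsilon)\|_{L^p(\operatorname{supp}(1-\chi_{\ast,\delta_0/10}))}\lesssim_p1$ uniformly for every $p<\infty$, so the unboundedness of the logarithm is harmless.

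What is missing is to use the exponential localization of the kernels through $L^q$ norms with $q<2$. From \eqref{Helm1.0} and \eqref{Fk.1},
\[
\|\partial_zG_k(y,\cdot)\|_{L^{6/5}}\lesssim|k|^{-5/6},\quad
\|G_k(y,\cdot)\|_{L^{3}}\lesssim|k|^{-4/3},\quad
\|F_k(y,\cdot)\|_{L^{6/5}}\lesssim|k|^{1/6},\quad
\|\partial_yG_k(y,\cdot)\|_{L^{3}}\lesssim|k|^{-1/3}.
\]
Combine these with $\|\log(b-\lambda+i\epsilon)\|_{L^6}\lesssim1$, $\|g\|_{L^\infty}\lesssim|k|^{-1/2}\|g\|_{H^1_k}$ and $\|\partial_z(mg)\|_{L^2}\lesssim\|g\|_{H^1_k}$. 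H\"older then bounds each of $|k|\,|\int\partial_zG_k\,mg\log|$, $|k|\,|\int G_k\,\partial_z(mg)\log|$, $|\int F_k\,mg\log|$ and $|\int\partial_yG_k\,\partial_z(mg)\log|$ by $|k|^{-1/3}\|g\|_{H^1_k}$, pointwise in $y$. The remaining local term satisfies $\|mg\log\|_{L^2}\lesssim|k|^{-1/2}\|g\|_{H^1_k}$.

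So the exponent $1/3$ comes from this H\"older bookkeeping, not from optimizing a layer scale. Using Schur's test and Young's inequality instead of H\"older even gives $|k|^{-1/2}$.
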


\begin{lemma}\label{T5}
There exist $\epsilon_0, \mu_0>0$, such that the following statement holds. For all $\lambda\in \Sigma\backslash \cup_{j} \Sigma_{j, \delta_0}, \kappa k\in\mathbb{Z}\backslash\{0\}, 0<|\epsilon|<\epsilon_0$ and any $g\in H^1_k(\T)$, we have the bound
\begin{equation}\label{T6}
\|g+T_\Theta g\|_{H^1_k(\T)}\ge\mu_0 \|g\|_{H^1_k(\T)}.
\end{equation}
\end{lemma}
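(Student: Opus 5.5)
We argue by contradiction combined with compactness, splitting according to the size of $|k|$. By Lemma~\ref{T2}, $\|T_\Theta g\|_{H^1_k}\lesssim |k|^{-1/3}\|g\|_{H^1_k}$. Hence there is $K_0$ such that for $|k|\ge K_0$ we have $\|g+T_\Theta g\|_{H^1_k}\ge \tfrac12\|g\|_{H^1_k}$, uniformly in $\lambda$ and $\epsilon$. It remains to treat finitely many $k$ with $|k|<K_0$ (recall $\kappa k\in\Z$). For each such $k$ the norms $H^1_k$ and $H^1$ are uniformly equivalent, so it suffices to prove \eqref{T6} for a fixed $k$ with a constant uniform in $\lambda\in\Sigma\setminus\cup_j\Sigma_{j,\delta_0}$ and $0<|\epsilon|<\epsilon_0$.

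Suppose the bound fails for a fixed $k$. Then there are sequences $\lambda_n\in\Sigma\setminus\cup_j\Sigma_{j,\delta_0}$ (a compact set), $\epsilon_n\to0$ (we may take $\epsilon_0$ smaller along the way; if $\epsilon_n$ stays away from $0$ the argument is simpler), and $g_n$ with $\|g_n\|_{H^1}=1$ and $\|g_n+T_{\Theta_n}g_n\|_{H^1}\to0$. Passing to subsequences, we may assume $\lambda_n\to\lambda_*$, $\epsilon_n\to0$ with a fixed sign $\iota$, and $g_n\rightharpoonup g$ weakly in $H^1$, hence $g_n\to g$ uniformly by compact embedding.

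The key step is to show that $T_{\Theta_n}g_n\to T_{\lambda_*,\iota0}g$ strongly in $H^1$. Here the limiting operator has kernel $G_k(y,z)b''(z)/(b(z)-\lambda_*+i\iota0)$, interpreted via the Plemelj formula as P.V. plus $-i\pi\iota\,\delta$ terms. This uses that $\lambda_*$ is a regular value of $b$: since $\lambda_*$ lies at distance $\gtrsim\delta_0^2$ from the critical values, $b'\ne0$ on $\{b=\lambda_*\}$. Near each crossing point we change variables to $u=b(z)$. The difference between the $\epsilon_n$ and limiting kernels then reduces to boundary-layer/Hilbert transform estimates on H\"older functions, which converge because $g_n\to g$ in $C^{1/2}$ and $G_k$ is Lipschitz. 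For the derivative, the regularity structure \eqref{T3.1} gives the singular part explicitly as $-\frac{b''g}{b'}\log(b-\lambda+i\epsilon)$, which converges in $L^2$ by dominated convergence (the logarithm is square integrable uniformly). The remainder is bounded in $W^{1,1}\subset C^0$, and an Arzelà--Ascoli/compactness argument gives convergence. I expect this step, namely uniform control and convergence of $T_{\Theta_n}g_n$ in $H^1$ near the critical layer, to be the main technical obstacle; I would rely on \eqref{T3.1} as the cleanest route.

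Once this is established, $g_n=-T_{\Theta_n}g_n+o(1)$ converges strongly in $H^1$. Therefore $g_n\to g$ strongly, $\|g\|_{H^1}=1$, and $g+T_{\lambda_*,\iota0}g=0$. Applying $k^2-\partial_y^2$, we find that $\psi:=g$ solves, in the sense of distributions,
\[(k^2-\partial_y^2)\psi+\mathrm{P.V.}\frac{b''\psi}{b-\lambda_*}+i\iota\pi\sum_{b(z)=\lambda_*}\frac{b''(z)\psi(z)}{|b'(z)|}\delta(y-z)=0.\]
For $\iota=+$ this is exactly \eqref{emb2}, so $\lambda_*$ is a generalized embedded eigenvalue (here $\lambda_*\neq b(y_{j*})$). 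For $\iota=-$ we take complex conjugates, which gives the same conclusion for $\bar\psi$. If $\lambda_*$ is an endpoint of $\Sigma$ it is a critical value, which is excluded. In either case this contradicts Assumption~\ref{MaAs}, which proves \eqref{T6}.
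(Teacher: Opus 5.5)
Your argument is correct and is essentially the proof the paper relies on (it defers to Lemma~4.2 of \cite{IIJ24}): the $|k|^{-1/3}$ bound of Lemma~\ref{T2} reduces matters to finitely many $k$, the regularity structure \eqref{T3.1} gives strong $H^1$ precompactness of $T_{\Theta_n}g_n$, and the limit is a nontrivial solution of \eqref{emb2} or of its complex conjugate at a non-critical $\lambda_*$, contradicting Assumption~\ref{MaAs}. The remaining issues are cosmetic: $H^1(\T)$ embeds compactly into $C^\alpha$ only for $\alpha<1/2$; a $W^{1,1}$-bounded sequence is precompact in $L^2$ by Helly/Rellich rather than by Arzel\`a--Ascoli; the parameter set $\Sigma\setminus\cup_j\Sigma_{j,\delta_0}$ is not closed, though its closure still avoids the critical values; and $\epsilon_n\to0^+$ produces $-i\pi$ rather than $+i\pi$, which your conjugation remark already absorbs.
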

The proofs of both Lemmas follow from Lemmas 4.1 and 4.2 in \cite{IIJ24}.

\subsection{The degenerate case }

For $\lambda\in \Sigma_{j,\delta_0}, \kappa k\in  \Z\backslash\{0\}, y\in \T$ and $0<|\epsilon|<\epsilon_0$
 we define the family of spaces $X^{\sigma_1, \sigma_2}$ as 
\begin{align}\label{DeC1}
    \norm{g}_{X^{\sigma_1, \sigma_2}} =  &\sum_{\beta\in\{0,1\}}\delta^{-1/2}(\Lambda)\Big\| \delta^{\sigma_1}(\Lambda) d_{j,k}^{\sigma_2+\beta}\partial_y^\beta g\Big\|_{L^2(S^j_{\delta(\Lambda)})}\\
&+\sum_{\beta\in\{0,1\}}\Big\|\varrho_{j}^{\sigma_1} \varrho_{j,k}^{\sigma_2+\beta}(\cdot;\Lambda)\partial^\beta_yg\Big\|_{L^\infty(\T\backslash S^j_{\delta(\Lambda)})}. \notag
\end{align}

\begin{remark}
    We comment on the design of the norm in the $S_{\delta(\Lambda)}^j$ region. This region contains the critical layers which logarithmically fail to be bounded and are thus incompatible with the $L^\infty$ type norms used away from the critical layers. To capture the pointwise vanishing the $H^1$ norm is appropriately weighted by $\delta^{-1/2}(\Lambda)$ to ensure $L^\infty$ control by $(\delta(\Lambda))^{-\sigma_1- \sigma_2}.$ However, when $|k \delta(\Lambda)| \gg 1$, using this norm incurs a penalty like $ |k \delta(\Lambda)|^{1/2}$ when used to control $L^\infty$. Nevertheless, this loss can be compensated by taking sufficiently regular data. This is one of the main reasons for the difference in $k$ dependence between Theorem \ref{thm2} and \cite[Theorem~1.2]{CJWZ25}. 
\end{remark}

Fix $j\in\{1,2\}$, $0<|\epsilon|<\epsilon_0$, $\lambda\in\Sigma_{j,\delta_0}$, and $\kappa k\in\mathbb{Z}\backslash\{0\}$. Define for each $g\in L^2(\T)$ the operator
\begin{equation}\label{DeC3}
T_\Theta g(y):= \int_\T \mathcal{G}_{k}^j(y,z;\Lambda)\bigg[1-\chi_{j,\delta_0, \delta(\Lambda)}(z) \bigg]\frac{b''(z)g(z)}{b(z)-\lambda+i\epsilon}dz.
\end{equation}
Then we have the following bounds for $T_\Theta$.
\begin{lemma}\label{DeC4}
 For $0<|\epsilon|<\epsilon_0, \lambda\in\Sigma_{j,\delta_0},\,  \kappa k\in\mathbb{Z}\backslash\{0\},$ the operator $T_\Theta$ satisfies the bound for $X^{\sigma_1, \sigma_2}  $ with $\sigma_1 \in [0,1], \sigma_2 \in [-2,0], \sigma_1 + \sigma_2 \in [-2, 1]$,
\begin{equation}\label{DeC5}
\|T_\Theta g\|_{X^{\sigma_1, \sigma_2}}\lesssim (1 + |k| \delta(\Lambda))^{-1/4} \|g\|_{X^{\sigma_1, \sigma_2}},\quad {\rm for\,\,all}\,\,g\in H^{1}_k(\T) \cap W^{1, \infty}(\T).
\end{equation}
Additionally, we have the expansion
\begin{align}
\label{DeC6}
\p_y T_\Theta g = -\chi_{j,\delta(\Lambda)}^*(y)\frac{b''(y)}{b'(y)}g(y)\log\frac{b(y)-\lambda+i\epsilon}{\delta^2(\Lambda)}+L_0(y,\lambda),
\end{align}
where $L_0$ satisfies the bounds 
\begin{align}
\label{DeC6.4}
  \norm{\p_y L_0(y,\lambda)}_{L^1(S_{3 \delta(\Lambda)}^j)} \lesssim    
\frac{(\delta(\Lambda))^{-\sigma_1 - \sigma_2 }}{d_{j,k}}  \|g\|_{X^{\sigma_1, \sigma_2}}.
\end{align}
\end{lemma}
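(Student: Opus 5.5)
The plan is to write $T_\Theta g(y)=\int_\T \mathcal{G}_k^j(y,z;\Lambda)V(z)g(z)\,dz$ with $V(z):=[1-\chi_{j,\delta_0,\delta(\Lambda)}(z)]\,b''(z)/(b(z)-\lambda+i\epsilon)$. We split the $z$-integral according to the support of $V$:
\begin{itemize}
\item an inner piece, $z\in S^j_{2\delta(\Lambda)}$, which contains the critical layers;
\item an outer piece, $|z-y_{j*}|\gtrsim\delta_0$, where $V$ is bounded and smooth (or, when $\lambda$ lies near a second critical layer away from $y_{j*}$, nondegenerate since $b'\neq0$ there).
\end{itemize}
On the intermediate annulus the cutoff $\chi_{j,\delta_0,\delta(\Lambda)}$ removes $V$. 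That is exactly where $|V|\approx 2/|z-y_{j*}|^2$, and this part has already been absorbed into $\mathcal G^j_k$. All kernel estimates come from Lemmas~\ref{mGk50} and~\ref{mGk30}.

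\textbf{Outer piece.} Here $\varrho_j(z)\sim 1$. The bound \eqref{mGk51} gives $|\partial_y^\beta\mathcal G|\lesssim \varrho_{j,k}^{-\beta}(y)|k|^{-1}\min\{e^{-|k||y-z|},\varrho_j^2(y)\}$. Multiplying by $\varrho_j^{\sigma_1}\varrho_{j,k}^{\sigma_2+\beta}(y)$ and using $\sigma_1+\sigma_2\ge-2$, we get a bound of the form $|k|^{-1}\|g\|_{X}$ in the $L^\infty$ part of the norm. The $L^2(S^j_\delta)$ part follows in the same way, since $\varrho_j(y)\sim\delta$ there. If critical layers occur in the outer region, we treat them as in Lemma~\ref{T2}: the logarithmic singularity of $1/(b-\lambda+i\epsilon)$ is integrable against the Lipschitz function $g$, and integrating by parts in $z$ gives the $|k|^{-1/3}$ gain.

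\textbf{Inner piece.} This is the main step. On $S^j_{2\delta}$ we write $V(z)=\frac{b''(z)}{b'(z)}\,\partial_z\log(b(z)-\lambda+i\epsilon)$ near the layers $z_\pm$ where $b(z_\pm)=\lambda$, with $|z_\pm-y_{j*}|\sim\delta$. Away from the layers, $|V|\lesssim\delta^{-2}$ directly. Integrating by parts in $z$ moves the derivative onto $\mathcal G\, g\, b''/b'$. The derivative falling on $\mathcal G$ is controlled by the combination $\mathcal H^j_k$ from Lemma~\ref{mGk30}, up to a $\partial_y$ term that we keep for the expansion. The derivative falling on $g$ is controlled by the weighted $H^1$ norm $\delta^{-1/2}\|\delta^{\sigma_1}d_{j,k}^{\sigma_2+1}\partial_y g\|_{L^2}$ via Cauchy--Schwarz, with the $\log$ in $L^2$.

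Combining \eqref{mGk52} with the weights shows the inner contribution is bounded by $\delta^{-\sigma_1-\sigma_2}\min\{1,\varrho_j^2(y)/\delta^2\}^{-1}\cdot$ (gain) in the pointwise norm at $y$. The ratio $\varrho_j^2(y)/\varrho_j^2(z)$ in \eqref{mGk51} is paired with the decay $\varrho_j(z)/\varrho_j(y)$, and the admissible range $\sigma_1+\sigma_2\in[-2,1]$ is precisely what makes $\varrho_j^{\sigma_1+\sigma_2}(y)\min\{\varrho^2(y)/\delta^2,\delta/\varrho(y)\}\lesssim\delta^{\sigma_1+\sigma_2}$.

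The smallness $(1+|k|\delta)^{-1/4}$ comes from two places. The factor $\varrho_{j,k}(z)\lesssim d_{j,k}$ in the kernel, set against the $\delta^{-2}$ size of $V$ on a set of length $\delta$, gives $d_{j,k}/\delta$. When $|k|\delta\gg1$ this is $(|k|\delta)^{-1}$. When $|k|\delta\lesssim1$ the factor $(1+|k|\delta)^{-1/4}\sim1$ only asks for boundedness. That boundedness relies on the favorable potential already in $\mathcal G$, which supplies the quadratic decay $\varrho_j^2(y)/\varrho_j^2(z)$ across the annulus. Losing a square root in Cauchy--Schwarz to control $L^\infty$ by the $\delta^{-1/2}$-weighted $H^1$ norm costs $(|k|\delta)^{1/2}$. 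Interpolating gives the exponent $-1/4$. I expect the most delicate bookkeeping to be the case $y\in S^j_\delta$ with $|k|\delta\gg1$, where the $L^2$-in-$y$ norm must be estimated using the exponential factor $e^{-|k||y-z|}$ through Schur's test.

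\textbf{Expansion \eqref{DeC6}.} We differentiate in $y$ and use $\partial_y\mathcal G=\mathcal H-\partial_z\mathcal G$ on the support of $\chi_{j,10\delta}$. Integrating the $\partial_z\mathcal G$ term by parts against $V g$, after subtracting the primitive $\frac{b''}{b'}\log\frac{b-\lambda+i\epsilon}{\delta^2}$ of $V$, produces the jump term $-\frac{b''(y)}{b'(y)}g(y)\log\frac{b(y)-\lambda+i\epsilon}{\delta^2}$ from the $\delta$-function in $\partial_z\partial_y\mathcal G$, as in \eqref{GkFk}. The cutoff $\chi^*_{j,\delta}$ confines this term to where $b'\neq0$ is nondegenerate. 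The remainder $L_0$ is built from $\mathcal H$, $\partial_y\mathcal H$ and the smooth part of $\partial_z(\log\cdot\,\frac{b''}{b'}g)$. Bounding it in $L^1(S^j_{3\delta})$ with the estimates above gives a factor $\delta^{-\sigma_1-\sigma_2}$ and the loss $1/d_{j,k}$ from one extra derivative on the kernel, which is \eqref{DeC6.4}.
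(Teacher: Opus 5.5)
Your proposal follows essentially the paper's route, and the ingredients match. You split $T_\Theta=T_1+T_2$ into sources off $S^j_{\delta_0}$ (controlled by Lemma~\ref{mGk50} via the factor $\varrho_j^2(y)$ and $\sigma_1+\sigma_2\ge-2$) and sources in $S^j_{2\delta(\Lambda)}$. You separate the regular core from the annulus $\chi^*_{j,\delta}$ that carries the critical layers, integrate by parts using $\frac{b''}{b-\lambda+i\epsilon}=\frac{b''}{b'}\partial_z\log\frac{b-\lambda+i\epsilon}{\delta^2(\Lambda)}$ with Lemmas~\ref{mGk50}--\ref{mGk30} for the kernel, treat the regimes $|k|\delta(\Lambda)\lesssim1$ and $|k|\delta(\Lambda)\gg1$ separately, and obtain the jump term in \eqref{DeC6} from the $\delta(y-z)$ in the second derivative of $\mathcal G^j_k$.

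Two small corrections. First, no critical layers can lie off $S^j_{\delta_0}$: with exactly two nondegenerate critical points, $b$ is monotone between them, so $|b-\lambda|\gtrsim\delta_0^2$ there for $\lambda\in\Sigma_{j,\delta_0}$. This is fortunate, because a Lemma~\ref{T2}-type treatment would leave a logarithmic singularity in $\partial_yT_\Theta g$ that the $L^\infty$ part of the $X^{\sigma_1,\sigma_2}$ norm cannot absorb uniformly in $\epsilon$. Second, for the inner sources only the decay $\varrho_j(z)/\varrho_j(y)$ and the condition $\sigma_1+\sigma_2\le1$ are used; the quadratic ratio plays no role there, since $\varrho_j(y)\ge\delta(\Lambda)\approx\varrho_j(z)$.
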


\begin{proof}
For simplicity of notation we drop $\Theta$ and decompose as 
\begin{equation}\label{DeC5.1}
T_\Theta  g(y):=T_1 g(y)+T_2g(y),
\end{equation}
where 
\begin{equation}\label{DeC5.2}
\begin{split}
&T_1g(y):=\int_{\T}\mathcal{G}_{k}^j(y,z;\Lambda)\bigg[1- \chi_{j, \delta_0}(z)\bigg]\frac{b''(z)g(z)}{b(z)-\lambda+i\epsilon}dz,\\
&T_2g(y):=\int_\T \mathcal{G}_{k}^j(y,z;\Lambda)\chi_{j,\delta_0\wedge\delta(\Lambda)}(z)\frac{b''(z)g(z)}{b(z)-\lambda+i\epsilon}dz.
\end{split}
\end{equation}
For the rest of the proof we assume $\delta(\Lambda)<\delta_0$, so that the cutoff in $T_2$ is $\chi_{j,\delta(\Lambda)}$. Otherwise, for $\delta(\Lambda)\geq\delta_0$ we have either $|\lambda-b(y_{j*})|\gtrsim\delta_0^2$ or $|\epsilon|\gtrsim\delta_0^2$. The conclusions \eqref{DeC5} and \eqref{DeC6.4} follow from similar calculations as in the non-degenerate case.

It follows from the bounds on modified Green's function $\mathcal{G}_{k}^j(y,z;\Lambda)$, the support of $1 - \chi_{j, \delta_0}(y)$, and $\lambda \in \Sigma_{j, \delta_0}$ that 
\begin{equation}\label{DeC13.4}
    \left|\varrho_{j,k}^\beta(y;\Lambda)\partial_y^\beta T_1g(y)\right|\lesssim\varrho_j^2(y;\Lambda)|k|^{-1}\|g\|_{L^\infty(\T\backslash S_{\delta_0}^j)},
\end{equation}
\begin{equation}\label{DeC5.3}
\big\|T_1g\big\|_{X^{\sigma_1, \sigma_2}}\lesssim |k|^{-1/2}\big\|g\big\|_{X^{\sigma_1, \sigma_2}}.
\end{equation}
In addition, by \eqref{mGk1},
\begin{equation}\label{DeC5.3.1}
    \begin{split}
        \big\|\partial_y^2 T_1g\big\|_{L^1(S^j_{3\delta(\Lambda)})}\lesssim&(\delta(\Lambda))^{-2}(1+|k|\delta(\Lambda))^2\|T_1g\|_{L^1(S^j_{3\delta(\Lambda)})}\\
        \lesssim&\delta(\Lambda)(1+|k|\delta(\Lambda))^2|k|^{-1+\sigma_2}\|g\|_{X^{\sigma_1, \sigma_2}}\\
        \lesssim&\frac{(\delta(\Lambda))^{-\sigma_1 - \sigma_2 }}{d_{j,k}}  \|g\|_{X^{\sigma_1, \sigma_2}}.
    \end{split}
\end{equation}
To prove \eqref{DeC5}, it suffices to prove 
\begin{equation}\label{DeC5.4}
\|T_2g\|_{X^{\sigma_1, \sigma_2} }\lesssim  (1 + |k| \delta(\Lambda))^{-1/4}\|g\|_{X^{\sigma_1, \sigma_2}}.
\end{equation}
We assume momentarily that $|\epsilon|\lesssim |\lambda-b(y_{j\ast})|$ and explain how to remove this assumption at the end of the proof.  
We decompose further as
\begin{equation}\label{DeC5.5}
\begin{split}
T_2 g(y)&=\int_\T \mathcal{G}_{k}^j(y,z;\Lambda)  \chi_{j, c\delta(\Lambda)}(z)   \frac{b''(z)g(z)}{b(z)-\lambda+i\epsilon}dz\\
&+\int_\T \mathcal{G}_{k}^j(y,z;\Lambda) \chi_{j, \delta(\Lambda)}^*(z)\frac{b''(z)g(z)}{b(z)-\lambda+i\epsilon}dz\\
&:=T_{2,R}g(y)+T_{2,S}g(y),
\end{split}
\end{equation}
where $c=1/16$ in this section.
It suffices to prove for $\diamond\in\{R,S\}$
\begin{equation}\label{DeC5.50}
\|T_{2,\diamond}g\|_{X^{\sigma_1, \sigma_2}}\lesssim  (1 + |k| \delta(\Lambda))^{-1/4}\|g\|_{X^{\sigma_1, \sigma_2}}. 
\end{equation}

In the following process, one can simultaneously obtain the pointwise bounds on $T_2$ which will be used in Proposition~\ref{DeC7}. For $\delta(\Lambda)|k| \lesssim 1$, it yields
\begin{equation}\label{DeC10.3}
    \begin{split}
        \left|T_2g(y)\right|
        \lesssim&\varrho_{j}^{-1}(y;\Lambda) \delta^{1/2}(\Lambda)\left[\|g\|_{L^2(S_{3\delta(\Lambda)}^j)}+\delta(\Lambda)\|\partial_y g\|_{L^2(S_{3\delta(\Lambda)}^j)}\right]\\
        \lesssim& \varrho_{j}^{-1}(y;\Lambda)\left(\delta(\Lambda)\right)^{1-\sigma_1-\sigma_2}\|g\|_{X^{\sigma_1,\sigma_2}},\qquad\ \text{for}\ y\in\mathbb{T}.
    \end{split}
\end{equation}
In addition,
\begin{equation}\label{DeC10.3.1}
\begin{aligned}
        &\left|\varrho_{j,k}(y;\Lambda)\partial_y T_2g(y)\right|
        \lesssim\varrho_{j}^{-1}(y;\Lambda) \delta^{1/2}(\Lambda)\left[\|g\|_{L^2(S_{3\delta(\Lambda)}^j)}+\delta\|\partial_y g\|_{L^2(S_{3\delta(\Lambda)}^j)}\right]\\
        &\hspace{31mm}\lesssim\varrho_{j}^{-1}(y;\Lambda)\left(\delta(\Lambda)\right)^{1-\sigma_1-\sigma_2}\|g\|_{X^{\sigma_1,\sigma_2}},\qquad\ \text{for}\ y\in\mathbb{T}\backslash S_{\delta(\Lambda)}^j;\\
        &\left\|\delta(\Lambda)\partial_y T_2g(y)\right\|_{L^2(S_{\delta(\Lambda)}^j)}\lesssim\|g\|_{L^2(S_{3\delta(\Lambda)}^j)}+\delta(\Lambda)\|\partial_y g\|_{L^2(S_{3\delta(\Lambda)}^j)}\lesssim \left(\delta(\Lambda)\right)^{1/2-\sigma_1-\sigma_2}\|g\|_{X^{\sigma_1,\sigma_2}}.
\end{aligned}
\end{equation}
{\bf Step 1.} We first prove \eqref{DeC5.50} with $\diamond=R$. 

{\it Case I: $\delta(\Lambda)|k| \lesssim 1 $}.  In this case for $|z-y_{j*}|\leq2c\delta(\Lambda)$  we have the bound 
\begin{equation}\label{DeC5.6}
\big|\mathcal{G}_{k}^j(y,z;\Lambda)\big|\lesssim \frac{\delta^2(\Lambda)}{\varrho_j(y; \Lambda)},\quad \big|\partial_y\mathcal{G}_{k}^j(y,z;\Lambda)\big|\lesssim\frac{\delta^2(\Lambda)}{\varrho_{j,k}(y; \Lambda) \varrho_j(y; \Lambda)  }. 
\end{equation}
It follows from the bound \eqref{DeC5.6} that 
\begin{equation}\label{DeC5.7}
\|T_{2,R}g\|_{X^{\sigma_1, \sigma_2 }}\lesssim \|g\|_{X^{\sigma_1, \sigma_2}}.
\end{equation}

{\it Case II: $\delta(\Lambda) |k| \gg 1$.} In this case, we have for $|z-y_{j*}|\leq2c\delta(\Lambda)$ that
\begin{equation}\label{DeC5.8}
\big|\mathcal{G}_{k}^j(y,z;\Lambda)\big|+|k|^{-1}\big|\partial_y\mathcal{G}_{k}^j(y,z;\Lambda)\big|\lesssim |k|^{-1}e^{-|k||y-z|}. 
\end{equation}
The desired bound 
\begin{equation}\label{DeC5.9}
\|T_{2,R}g\|_{X^{\sigma_1, \sigma_2}}\lesssim \big(1+|k|\delta(\Lambda)\big)^{-1/4}\|g\|_{X^{\sigma_1, \sigma_2}}
\end{equation}
follows from \eqref{DeC5.8}. 

Furthermore, we have 
\begin{equation}\label{DeC5.10}
    \begin{split}
        \big\|\partial_y^2 T_{2,R}g\big\|_{L^1(S^j_{3\delta(\Lambda)})}\lesssim&\left(\delta(\Lambda)\right)^{-2}(1+|k|\delta(\Lambda))^2\|T_{2,R}g\|_{L^1(S^j_{3\delta(\Lambda)})}+\left\|\chi_{j, c\delta(\Lambda)}(y)   \frac{b''(y)g(y)}{b(y)-\lambda+i\epsilon}\right\|_{L^1(S^j_{3\delta(\Lambda)})}\\
        \lesssim&\left[(1+|k|\delta(\Lambda))^2+ d_{j,k}^{-1}\delta(\Lambda)\right]\left(\delta(\Lambda)\right)^{-2-\sigma_1}d_{j,k}^{1-\sigma_2}\|g\|_{X^{\sigma_1, \sigma_2}}\\
        \lesssim&\frac{(\delta(\Lambda))^{-\sigma_1 - \sigma_2 }}{d_{j,k}}  \|g\|_{X^{\sigma_1, \sigma_2}}.
    \end{split}
\end{equation}
{\bf Step 2.} We now turn to the proof of \eqref{DeC5.50} with $\diamond=S$ and still consider two cases. 

{\it Case I: $\delta(\Lambda)|k| \lesssim 1$}.
 We can rewrite 
\begin{equation}\label{DeC6.2}
\begin{split}
T_{2,S}g(y)&=\int_\T\mathcal{G}_{k}^j(y,z;\Lambda)\chi_{j,\delta(\Lambda)}^\ast(z)\frac{b''(z)g(z)}{b'(z)}\partial_z\log{\frac{b(z)-\lambda+i\epsilon}{\delta^2(\Lambda)}}\,dz\\
&=-\int_\T\partial_z\bigg[\mathcal{G}_{k}^j(y,z;\Lambda)\chi_{j,\delta(\Lambda)}^\ast(z)\frac{b''(z)g(z)}{b'(z)}\bigg]\log{\frac{b(z)-\lambda+i\epsilon}{\delta^2(\Lambda)}}dz.
\end{split}
\end{equation}
As a consequence of \eqref{DeC6.2}, we also have
\begin{equation}\label{DeC6.3}
\begin{split}
\partial_y\Big[T_{2,S}g(y)\Big]&=\partial_y\int_\T\mathcal{G}_{k}^j(y,z;\Lambda)\chi_{j,\delta(\Lambda)}^\ast(z)\frac{b''(z)g(z)}{b'(z)}\partial_z\log{\frac{b(z)-\lambda+i\epsilon}{\delta^2(\Lambda)}}dz\\
&=-\int_\T\bigg[\partial_y(\partial_z+\partial_y)\mathcal{G}^j_{k}(y,z;\lambda,\epsilon)\chi_{j,\delta(\Lambda)}^\ast(z)\frac{b''(z)g(z)}{b'(z)}\bigg]\log{\frac{b(z)-\lambda+i\epsilon}{\delta^2(\Lambda)}}dz\\
&\quad+\int_\T\bigg[\partial^2_y\mathcal{G}_{k}^j(y,z;\Lambda)\chi_{j,\delta(\Lambda)}^\ast(z)\frac{b''(z)g(z)}{b'(z)}\bigg]\log{\frac{b(z)-\lambda+i\epsilon}{\delta^2(\Lambda)}}dz\\
&\quad-\int_\T\partial_y\mathcal{G}_{k}^j(y,z;\Lambda)\partial_z\bigg[\chi_{j,\delta(\Lambda)}^\ast(z)\frac{b''(z)g(z)}{b'(z)}\bigg]\log{\frac{b(z)-\lambda+i\epsilon}{\delta^2(\Lambda)}}dz.
\end{split}
\end{equation}
Note that on the support of $\chi_{j,\delta(\Lambda)}^\ast(z)$, we have 
\begin{equation}\label{DeC6.31}
|b'(z)|\approx \delta(\Lambda), \quad \varrho_j(z; \Lambda)\approx \delta(\Lambda).
\end{equation}
The desired bound \eqref{DeC5.50} for $\diamond=S$ follows from \eqref{DeC6.2}-\eqref{DeC6.3}, Lemmas \ref{mGk50}, \ref{mGk30} and we have, in addition,
\begin{equation}\label{Dec6.32}
\begin{split}
&(\delta(\Lambda))^{-1}\bigg\|\partial_y\bigg\{\partial_yT_{2,S}g(y)+\chi_{j,\delta(\Lambda)}^*(y)\frac{b''(y)g(y)}{b'(y)}\log{\frac{b(y)-\lambda+i\epsilon}{\delta^2(\Lambda)}}\bigg\}\bigg\|_{L^1(S_{3 \delta(\Lambda)}^j)}\\
&\lesssim\delta(\Lambda)^{-\sigma_1 - \sigma_2 -2} \Big[1+|k|\delta(\Lambda)\Big]^{-1/4}\|g\|_{X^{\sigma_1, \sigma_2}}.
\end{split}
\end{equation}

{\it Case II: $\delta(\Lambda) |k| \gg 1$.} This case is analogous to {\it Case I}, using Lemmas \ref{mGk50}, \ref{mGk30}. 

Finally, it remains to consider $|\epsilon|\geq|\lambda-b(y_{j*})|$. In this case $\delta^2(\Lambda)\approx|\epsilon|$, and 
\begin{align*}
|b(z)-\lambda+i\epsilon|\approx |z-y_{j*}|^2+|\epsilon|
\approx\varrho_j^2(z;\Lambda),\qquad |z-y_{j*}|\leq2\delta_0.
\end{align*}
In particular, on the support of $\chi_{j,\delta(\Lambda)}$, the reciprocal denominator is $O(\delta^{-2}(\Lambda))$ and the normalized logarithm is bounded. Each spatial derivative costs at most another factor $\delta^{-1}(\Lambda)$. Combining these bounds with Lemma~\ref{mGk50} gives \eqref{DeC5.4}. By similar estimates as in \eqref{DeC5.10}, we have the following bound
\begin{equation}\label{Dec6.33}
    \big\|\partial_y^2 T_2g\big\|_{L^1(S^j_{3\delta(\Lambda)})}\lesssim\frac{(\delta(\Lambda))^{-\sigma_1 - \sigma_2 }}{d_{j,k}}  \|g\|_{X^{\sigma_1, \sigma_2}},
\end{equation}
which gives \eqref{DeC6.4}. Thus both epsilon regimes are covered.





\end{proof}

\begin{proposition}
    \label{DeC7}
 For $\sigma_1 \in [0,1], \sigma_2 \in [-2,0], \sigma_1 + \sigma_2 \in [-2, 1]$, there exist positive numbers $\epsilon_0, \mu_0$ such that   the following statement holds. For $\epsilon\in[-\epsilon_0,\epsilon_0]\backslash\{0\}$, $\lambda\in \cup_{j}\Sigma_{j, \delta_0}$,  $\kappa k\in\mathbb{Z}\backslash\{0\}$ we have
\begin{equation}\label{DeC8}
\|(I+T_\Theta) g\|_{X^{\sigma_1, \sigma_2}}\ge \mu_0\| g\|_{X^{\sigma_1, \sigma_2}}, \quad {\rm for\,\,all}\,\,g\in H^{1}_k(\T) \cap W^{1, \infty}(\T).
\end{equation}
\end{proposition}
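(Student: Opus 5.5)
We argue by contradiction combined with compactness, in the style of the limiting absorption principle of Lemma~\ref{T5} and \cite{IIJ24}. The first reduction uses Lemma~\ref{DeC4}. Since $\|T_\Theta g\|_{X^{\sigma_1,\sigma_2}}\le C(1+|k|\delta(\Lambda))^{-1/4}\|g\|_{X^{\sigma_1,\sigma_2}}$, there is a large constant $A$ such that $I+T_\Theta$ is invertible by a Neumann series whenever $|k|\delta(\Lambda)\ge A$. In that regime \eqref{DeC8} holds with $\mu_0=1/2$. It therefore suffices to treat $|k|\delta(\Lambda)\le A$, which forces $|k|\lesssim A/\delta(\Lambda)$. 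Similarly, for $|k|$ larger than a fixed constant the $T_1$ part is small by \eqref{DeC5.3}, while the $T_2$ part is confined to $S^j_{\delta(\Lambda)}$ and has the structure \eqref{DeC6}.

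Next suppose \eqref{DeC8} fails. Then there are sequences $\epsilon_n\to0$, $\lambda_n\in\Sigma_{j,\delta_0}$, $k_n$ with $|k_n|\delta(\Lambda_n)\le A$, and $g_n$ with $\|g_n\|_{X^{\sigma_1,\sigma_2}}=1$ and $\|(I+T_{\Theta_n})g_n\|_{X^{\sigma_1,\sigma_2}}\to0$. Passing to subsequences, we split into two cases.

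\emph{Case (a): $\delta(\Lambda_n)\to\delta_\infty>0$.} Then $k_n$ is bounded, so we may take $k_n=k$ fixed and $\lambda_n\to\lambda_\infty\neq b(y_{j*})$. The weights are uniformly comparable to constants away from $y_{j*}$. Using the regularity structure \eqref{DeC6} with the bound \eqref{DeC6.4}, and the compactness of the Green's function operator, we extract a limit $g_n\to g$ in $H^1$ with $g\neq0$. Writing $h=\mathcal L^{-1}$-type terms back in terms of $\psi$, the limit solves the limiting Rayleigh equation \eqref{emb2}, or $L_kg=\lambda g$. This contradicts Assumption~\ref{MaAs}. Nontriviality of $g$ comes from the identity $g_n=-T_{\Theta_n}g_n+o(1)$, since $T$ maps bounded sets into precompact sets.

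\emph{Case (b): $\delta(\Lambda_n)\to0$.} We rescale $y=y_{j*}+\delta(\Lambda_n)Y$ and normalize by $\delta(\Lambda_n)^{-\sigma_1-\sigma_2}$. Two sub-cases arise.

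If $|k_n|\delta_n\to0$, the rescaled potential converges to $b''(y_{j*})/(b''(y_{j*})Y^2/2-\tau+i\eta)$ with $\tau^2+\eta\sim1/64$. The rescaled functions are bounded in the local $H^1$ and weighted $L^\infty$ norms. The limit $G$ solves $-G''+\frac{2}{Y^2-\tau'+i\eta'}G=0$ on $\R$, with decay $|G|\lesssim \langle Y\rangle^{-\sigma_1-\sigma_2}$. The Green's function bounds of Lemma~\ref{mGk50} then ensure the limit is nontrivial. We also need to exclude concentration at infinity: in the annulus $|Y|\gg1$, the weights $\min\{\varrho^2(y)/\varrho^2(z),\varrho(z)/\varrho(y)\}$ give decay, because $\sigma_1+\sigma_2\in[-2,1]$ lies strictly inside the range of homogeneous growth $Y^{2}, Y^{-1}$.

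For this limit problem, the explicit homogeneous solutions $\varphi=Y^2-\tau'+i\eta'$ and $\varphi\int\varphi^{-2}$ admit no nontrivial solution with the required decay at both ends. In the limit $\eta'\to0$ the Plemelj jump conditions are imposed at $Y=\pm\sqrt{\tau'}$. We check this by a direct computation, for instance by using $\operatorname{Im}\int \bar G\,(\cdot)$ to show $G$ vanishes at the critical layers and then applying reduction of order.

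If instead $|k_n|\delta_n\to c\in(0,A]$, the same argument applies with $+c^2$ added to the limit operator, and the sign is favorable. Here we use the positive energy identity: multiply by $\bar G$ and take real and imaginary parts. We expect this to be the main obstacle, namely ruling out nontrivial decaying solutions of the rescaled limiting inner equation uniformly in $(\tau',\eta')$, including the degenerate limits $\eta'\to0$ and $\tau'\to0$, while also controlling tails in the weighted norm.
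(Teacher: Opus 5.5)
Your reduction to $|k|\delta(\Lambda)\lesssim1$, your treatment of $\lambda_\infty\neq b(y_{j*})$, and your blow-up at scale $\delta_n$ match the paper; the blow-up leads to the Rayleigh equation for the parabolic profile, as in the paper's Case~I. Case (b) has a genuine gap, however: you never consider that the normalization $\|g_n\|_{X^{\sigma_1,\sigma_2}}=1$ may be carried by the macroscopic region $\T\setminus S^j_{\delta_0}$.

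\textbf{Why the blow-up limit can be trivial or admissible.} The Green's function weights cannot exclude this scenario. The far part of $g_n$ is reproduced by $T_1$, and $\|T_1\|\lesssim|k|^{-1/2}$ is not small for bounded $k$. That far part enters the inner region only through $T_1g_n$, which after your rescaling has size $(1+|Y|)^2\delta_n^{2+\sigma_1+\sigma_2}$ (cf.~\eqref{DeC16.6}). Two consequences follow.
\begin{enumerate}
\item For $\sigma_1+\sigma_2>-2$ your blow-up limit may simply be $0$, and nothing in Lemma~\ref{mGk50} makes it nontrivial.
\item At $\sigma_1+\sigma_2=-2$ (the space $X^{0,-2}$ is used in Lemma~\ref{DCM1}), the limit may be a nonzero multiple of the regular solution $\varphi=Y^2-\tau'+i\eta'$. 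This function satisfies your growth bound $|G|\lesssim\langle Y\rangle^{2}$, so your model problem yields no contradiction.
\end{enumerate}
The decay $|H|\lesssim(1+|Y|)^{-1}$ that the paper uses is not a consequence of the norm. It comes from three ingredients: $g_\ell=-T_1g_\ell-T_2g_\ell+o(1)$, the tail bound \eqref{DeC10.3} for $T_2$, and the assumption that the outer weighted norm of $g_\ell$ vanishes.

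\textbf{The hypothesis you never use.} In the degenerate regime you never invoke the hypothesis that $b(y_{j*})$ itself is not an embedded eigenvalue of $L_k$. Yet the estimate fails without it. Suppose $\psi_*\not\equiv0$ solved $(k^2-\p_y^2)\psi_*+\frac{b''}{b-b(y_{j*})}\psi_*=0$ with $|\psi_*|\lesssim|y-y_{j*}|^2$, and take $\lambda=b(y_{j*})$. Then
$(I+T_\Theta)\psi_*=\int_\T\mathcal G^j_k(\cdot,z;\Lambda)\,\frac{-i\epsilon\, b''\psi_*}{(b-\lambda)(b-\lambda+i\epsilon)}(z)\,dz$,
whose source is $\lesssim\delta^2(\Lambda)\varrho_j^{-2}$. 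Lemma~\ref{mGk50} then gives $\|(I+T_\Theta)\psi_*\|_{X^{\sigma_1,\sigma_2}}\to0$ as $\epsilon\to0$ (for fixed $k$ and $\sigma_1+\sigma_2>-2$), while $\|\psi_*\|_{X^{\sigma_1,\sigma_2}}\gtrsim1$.

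\textbf{The missing step.} What is needed is the paper's dichotomy (its Case~II), which runs as follows when the weighted outer norm of $g_\ell$ does not vanish.
\begin{enumerate}
\item Show that $k_\ell$ stays bounded. For $(\sigma_1,\sigma_2)=(1,0)$ this needs an extra blow-up, because the tail of $T_2g_\ell$ is then not small outside $S^j_{\delta_0}$. This also contradicts your remark that $T_2$ is confined to $S^j_{\delta(\Lambda)}$: only its source is.
\item Extract $g_\ell\to g\neq0$ in $H^1_{\rm loc}(\T\setminus\{y_{j*}\})$, with $|g|\lesssim|y-y_{j*}|^2$ coming from the bound on $T_1$. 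This quadratic vanishing ensures that no delta mass appears at $y_{j*}$.
\item The limit then solves $(k_*^2-\p_y^2)g+\frac{b''}{b-b(y_{j*})}g=0$, contradicting Assumption~\ref{MaAs}.
\end{enumerate}
Only when the outer norm vanishes does the local norm on $S^j_{3\delta_\ell}$ stay bounded below, and only then does your blow-up argument apply.
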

\begin{remark}
    In practice, we only need the proposition for a finite number of $(\sigma_1, \sigma_2)$ so the implicit dependence of $\epsilon_0, \mu_0$ on $(\sigma_1, \sigma_2)$ is not important as we can take the minimum over the relevant cases. 
\end{remark}
\begin{proof}
    We prove \eqref{DeC8} by a contradiction argument.  Assume \eqref{DeC8} does not hold for any $\epsilon_0>0$. Since there are only two degenerate spectral neighborhoods, we may choose the contradiction sequence below in a fixed $\Sigma_{j,\delta_0}$. Then there exist for $\ell\in\mathbb{Z}\cap[1,\infty)$,
    \begin{equation}\label{DeC9}
        \lambda_\ell\to\lambda_\ast\in \Sigma_{j,\delta_0}, \,\,\epsilon_\ell\neq0 \,\,{\rm with}\,\,\epsilon_\ell\to0, \,\,\kappa k_\ell\to \kappa k_\ast\in (\mathbb{Z}\backslash\{0\})\cup\{\pm\infty\},
    \end{equation}
    and functions $g_\ell$ satisfying
    \begin{equation}\label{DeC9.1}
        \|g_\ell\|_{X^{\sigma_1, \sigma_2}(\T)}=1,
    \end{equation} 
    such that 
    \begin{equation}\label{DeC10}
        \big\|(I+T_{\Theta_\ell}(\lambda_\ell+i\epsilon_\ell))g_\ell\big\|_{X^{\sigma_1, \sigma_2}(\T)}\to 0\quad\ \text{as}\ \ell\to\infty.
    \end{equation}
    We can assume that $\lambda_\ast=b(y_{j\ast})$, otherwise the proof follows from the arguments in the non-degenerate case, see Lemma 4.2 in \cite{IIJ24}. For convenience, we use the shorthand $\delta_\ell:=\delta(\Lambda_\ell)$, which tends to zero as $\ell\to\infty$ by \eqref{DeC9}.
    By Lemma \ref{DeC4}, it suffices to consider $|k_\ell|\delta_\ell\lesssim 1$. 
    We consider the following cases.

    {\it Case I: $\limsup_{\ell\to\infty}\sum_{\beta\in\{0,1\}}\|\varrho_{j,k_\ell}^{\beta}(\cdot;\Lambda_\ell)\partial_y^\beta g_\ell\|_{L^\infty(\T\backslash S_{\delta_0})}=0$.} In this case,  we assert that (passing to a subsequence if necessary)
    \begin{equation}\label{DeC13}
        \|\delta_\ell^{\sigma_1+\sigma_2-1/2}g_\ell\|_{L^2(S_{3\delta_\ell})}+\|\delta_\ell^{\sigma_1+\sigma_2+1/2}\partial_y g_\ell\|_{L^2(S_{3\delta_\ell})}\geq\sigma>0.
    \end{equation}
    Otherwise, if
    \begin{equation}\label{DeC13.1}
        \limsup_{\ell\to\infty}\|\delta_\ell^{\sigma_1+\sigma_2-1/2}g_\ell\|_{L^2(S_{3\delta_\ell})}+\|\delta_\ell^{\sigma_1+\sigma_2+1/2}\partial_y g_\ell\|_{L^2(S_{3\delta_\ell})}=0,
    \end{equation}
    together with \eqref{DeC13.4}, \eqref{DeC5.3}, \eqref{DeC10.3} and the Case I assumption, we would obtain
    \begin{align}\label{DeC13.2}
        \|T_1g_\ell\|_{X^{\sigma_1,\sigma_2}},\, \|T_2g_\ell\|_{X^{\sigma_1,\sigma_2}}\to 0\quad\  \text{as}\ \ell\to\infty.
    \end{align}
    Thus, combining \eqref{DeC10}, we have 
    \begin{equation}\label{DeC13.3}
        \|g_\ell\|_{X^{\sigma_1,\sigma_2}}\lesssim\|(I+T_{\Theta_\ell})g_\ell\|_{X^{\sigma_1,\sigma_2}}+\|T_1g_\ell\|_{X^{\sigma_1,\sigma_2}}+\|T_2 g_\ell\|_{X^{\sigma_1,\sigma_2}}\to 0\quad\  \text{as}\ \ell\to\infty,
    \end{equation}
    contradicting \eqref{DeC9.1}.

    Now, under \eqref{DeC13}, recall from \eqref{Intd1} that
    \begin{equation}\label{DeC14}
        \delta_\ell\approx|\epsilon_\ell|^{1/2}+|\lambda_\ell-b(y_{j\ast})|^{1/2}. 
    \end{equation}
    We divide into several subcases.

    {\it Subcase I.1: $|\epsilon_\ell|^{1/2}\approx |\lambda_\ell-b(y_{j\ast})|^{1/2}$ for a subsequence.}
    Define the change of variables for $\ell\ge1, y\in \T$,
    \begin{equation}\label{DeC15}
        y-y_{j\ast}=s_\ell Y, \quad g_\ell(y):=\delta_\ell^{-\sigma_1-\sigma_2}H_\ell(Y). 
    \end{equation}
    In this subcase, take the scale $s_\ell=|\lambda_\ell-b(y_{j\ast})|^{1/2}$.
    It follows from \eqref{DeC6}-\eqref{DeC6.4} that we can extract a nontrivial limit $H\in H^1(\mathbb{R})$ of $H_\ell$ satisfying for $Y\in \mathbb{R}$,
    \begin{equation}\label{DeC16}
        (\beta^2-\partial_Y^2)H(Y)+\frac{b''(y_{j\ast})}{b''(y_{j\ast})Y^2/2+\gamma+i\alpha}H(Y)=0,
    \end{equation}
    where $\beta=\lim_{\ell\to\infty}|k_\ell||\lambda_\ell-b(y_{j\ast})|^{1/2}\in\mathbb{R}, \alpha=\lim_{\ell\to\infty}\epsilon_\ell|\lambda_\ell-b(y_{j\ast})|^{-1}, \gamma={\rm{sgn}}(b(y_{j\ast})-\lambda_\ell)\in\mathbb{R}\backslash\{0\}.$ This is impossible since the shear flow $(b''(y_{j\ast})Y^2/2,0), Y\in\mathbb{R}$ is spectrally stable. 
    
    To obtain this, we first prove the local convergence. More specifically, for any fixed $L$, 
    \begin{equation}
        \begin{split}
            \|H_\ell\|_{L^2(-L,L)}=\delta_\ell^{\sigma_1+\sigma_2}s_\ell^{-1/2}\|g_\ell\|_{L^2(S_{Ls_\ell})},\\
            \|\partial_Y H_\ell\|_{L^2(-L,L)}=\delta_\ell^{\sigma_1+\sigma_2}s_\ell^{1/2}\|\partial_y g_\ell\|_{L^2(S_{Ls_\ell})}.\\
        \end{split}
    \end{equation}
    By \eqref{DeC9.1}, we have 
    \begin{equation}\label{DeC16.0}
        \sup_{\ell}\|H_\ell\|_{H^1(-L,L)}\lesssim_{L}1,\quad\ \text{for every fixed }\ L.
    \end{equation}
    Furthermore, we assert 
    \begin{equation}\label{Dec16.1}
    \left\|\frac{T_{\Theta_\ell}(\lambda_\ell+i\epsilon_\ell)g_\ell(y_{j\ast}+s_\ell Y)}{\delta_\ell^{-\sigma_1-\sigma_2}}\right\|_{W^{2,1}_Y([-L,L])}\lesssim1.
    \end{equation}
    Then by \eqref{DeC10}, 
    \begin{equation}\label{Dec16.1.01}
        \left\|H_\ell+\frac{T_{\Theta_\ell}(\lambda_\ell+i\epsilon_\ell)g_\ell(y_{j\ast}+s_\ell Y)}{\delta_\ell^{-\sigma_1-\sigma_2}}\right\|_{H^1_Y([-L,L])}\to 0\quad\  \text{as}\ \ell\to\infty,
    \end{equation}
    which provides strong convergence $H_\ell\to H$ in $H^1_{\rm{loc}}(\mathbb{R})$.
    For relatively regular operators $T_1$, $T_{2,R}$, their local $H^2$ estimates are straightforward: For any fixed $L\geq 1$, $y\in S_{Ls_\ell}$,
    \begin{align}
        &\partial_y^2 T_1 g_\ell=k^2 T_1 g_\ell+\frac{b''(y)}{b(y)-\lambda_\ell+i\epsilon_\ell}\left(1-\chi_{j,\delta_\ell}\right)T_1g_\ell,\\
        &\partial_y^2 T_{2,R} g_\ell=k^2 T_{2,R} g_\ell+\frac{b''(y)}{b(y)-\lambda_\ell+i\epsilon_\ell}\left(1-\chi_{j,\delta_\ell}\right)T_{2,R} g_\ell-\chi_{j,\delta_\ell}\chi_{j,c\delta_\ell}\frac{b''(y)}{b(y)-\lambda_\ell+i\epsilon_\ell}g_\ell.
    \end{align}
    Since $|\epsilon_\ell|^{1/2}\approx \delta_\ell$ and $|k_\ell|\lesssim \delta_\ell^{-1}$, 
    \begin{equation}\label{Dec16.1.1}
        |b(y)-\lambda_\ell+i\epsilon_\ell|^{-1}\lesssim|\epsilon_\ell|^{-1}\lesssim\delta_\ell^{-2}.
    \end{equation}
    Combining \eqref{DeC5.3} and \eqref{DeC5.50}, we have 
    \begin{align}
        &\|\partial_y^2 T_1 g_\ell\|_{L^2(S_{Ls_\ell})}\lesssim\delta_\ell^{-2}\|T_1 g_\ell\|_{L^2(S_{Ls_\ell})}\lesssim \delta_\ell^{-\sigma_1-\sigma_2-3/2}|k_\ell|^{-1/2}\|g_\ell\|_{X^{\sigma_1,\sigma_2}},\\
        &\|\partial_y^2 T_{2,R} g_\ell\|_{L^2(S_{Ls_\ell})}\lesssim\delta_\ell^{-2}\left(\|T_{2,R} g_\ell\|_{L^2(S_{Ls_\ell})}+\|g_\ell\|_{L^2(S_{Ls_\ell})}\right)\lesssim \delta_\ell^{-\sigma_1-\sigma_2-3/2}\|g_\ell\|_{X^{\sigma_1,\sigma_2}}.
    \end{align}
Therefore, by \eqref{DeC9.1} and the rescaling \eqref{DeC15},
\begin{align}
&\left\|\partial_Y^2\left(\frac{T_1g_\ell(y_{j\ast}+s_\ell Y)}{\delta_\ell^{-\sigma_1-\sigma_2}}\right)\right\|_{L^2_Y([-L,L])}\lesssim s_\ell^{3/2}\delta_\ell^{\sigma_1+\sigma_2}\|\partial_y^2T_1g_\ell\|_{L^2(S_{Ls_\ell})}\lesssim |k_\ell|^{-1/2},\label{Dec16.2}\\
&\left\|\partial_Y^2\left(\frac{T_{2,R}g_\ell(y_{j\ast}+s_\ell Y)}{\delta_\ell^{-\sigma_1-\sigma_2}}\right)\right\|_{L^2_Y([-L,L])}\lesssim s_\ell^{3/2}\delta_\ell^{\sigma_1+\sigma_2}\|\partial_y^2T_{2,R}g_\ell\|_{L^2(S_{Ls_\ell})}\lesssim 1.\label{Dec16.3}
\end{align}

{\color{black}
For this application of \eqref{Dec6.32}, its integration interval may
be enlarged to $S^j_{Ls_\ell}$, with a constant depending on fixed $L$.
Indeed, set $a=\sigma_1+\sigma_2$ and $u_\ell=T_{2,S}g_\ell$.
On $S^j_{Ls_\ell}\setminus S^j_{3\delta_\ell}$ the source and logarithmic
correction vanish. Since $s_\ell\approx\delta_\ell$ and
$|k_\ell|\delta_\ell\lesssim1$, \eqref{mGk1} and \eqref{DeC5.50} give
\[
 |u_\ell''|\lesssim_L\delta_\ell^{-2}|u_\ell|,
 \qquad
 \|u_\ell''\|_{L^1(S^j_{Ls_\ell}\setminus S^j_{3\delta_\ell})}
 \lesssim_L\delta_\ell^{-a-1}\|g_\ell\|_{X^{\sigma_1,\sigma_2}}.
\]
Multiplication by the rescaling factor $\delta_\ell^a s_\ell$ gives
the required $O_L(1)$ bound on the added annulus.
\par}
By \eqref{Dec6.32}, $T_{2,S}$ is controlled in $W^{2,1}$ after extracting the logarithmic singularity, namely
\begin{equation}\label{DeC16.4}
\begin{aligned}
&\left\|\partial_Y^2\left(\frac{T_{2,S}g_\ell(y_{j\ast}+s_\ell Y)}{\delta_\ell^{-\sigma_1-\sigma_2}}\right)\right.\\
&\qquad\quad\left.+s_\ell\partial_Y\left\{\chi_{j,\delta_\ell}^*(y_{j\ast}+s_\ell Y)\frac{b''(y_{j\ast}+s_\ell Y)H_\ell(Y)}{b'(y_{j\ast}+s_\ell Y)}\log\frac{b(y_{j\ast}+s_\ell Y)-\lambda_\ell+i\epsilon_\ell}{\delta_\ell^2}\right\}\right\|_{L^1_Y([-L,L])}\\
&\lesssim \delta_\ell^{\sigma_1+\sigma_2}s_\ell
\left\|\partial_y\left\{\partial_yT_{2,S}g(y)+\chi_{j,\delta_\ell}^*(y)\frac{b''(y)g(y)}{b'(y)}
\log\frac{b(y)-\lambda_\ell+i\epsilon_\ell}{\delta_\ell^2}\right\}\right\|_{L_y^1(S_{Ls_\ell}^j)}\\
&\lesssim1.
\end{aligned}
\end{equation}
Under the assumption $|\epsilon_\ell|^{1/2}\approx\delta_\ell$, the logarithmic singularity satisfies
\begin{equation}
\begin{split}
    &\left\|s_\ell \chi_{j,\delta_\ell}^*(y_{j\ast}+s_\ell Y)\frac{b''(y_{j\ast}+s_\ell Y)H_\ell(Y)}{b'(y_{j\ast}+s_\ell Y)}\log\frac{b(y_{j\ast}+s_\ell Y)-\lambda_\ell+i\epsilon_\ell}{\delta_\ell^2}\right\|_{H^1_Y([-L,L])}\\
    &\lesssim\|H_\ell\|_{H^1_Y([-L,L])}\lesssim1.
\end{split}
\end{equation}
Together with \eqref{Dec16.2} and \eqref{Dec16.3}, we proved \eqref{Dec16.1}. 
Finally, under assumptions for Case I and Subcase I.1, as $\ell\to\infty$,
\begin{align}
    &\left|\delta_\ell^{\sigma_1+\sigma_2}T_1g_\ell(y_{j\ast}+s_\ell Y)\right|\lesssim(|Y|+1)^2\delta_\ell^{2+\sigma_1+\sigma_2}|k_\ell|^{-1}\|g_\ell\|_{L^\infty(\T\backslash S_{\delta_0})}\to 0,\label{DeC16.6}\\
    &\left|\delta_\ell^{\sigma_1+\sigma_2}T_2g_\ell(y_{j\ast}+s_\ell Y)\right|\lesssim(|Y|+1)^{-1}\|g_\ell\|_{X^{\sigma_1,\sigma_2}}\to (|Y|+1)^{-1}.
\end{align}
Combining this with the consequence 
\begin{equation}
    \left|\delta_\ell^{\sigma_1+\sigma_2}\left(I+T_{\Theta_\ell}\right)g_\ell(y_{j\ast}+s_\ell Y)\right|\lesssim\|\left(I+T_{\Theta_\ell}\right)g_\ell\|_{X^{\sigma_1,\sigma_2}}\to0,
\end{equation}
of \eqref{DeC10}, we obtain
\begin{equation}\label{DeC16.7}
    |H(Y)|=\lim_{\ell\to\infty}|H_\ell(Y)|\lesssim(|Y|+1)^{-1}.
\end{equation}
Thus, $H\in H^1(\mathbb{R})$. The contradiction argument based on \eqref{DeC16} then follows.

{\it Subcase I.2: $|\epsilon_\ell|^{1/2}=o\left( |\lambda_\ell-b(y_{j\ast})|^{1/2}\right)$ for a subsequence.} Passing to a subsequence and using the same rescaling \eqref{DeC15} with $s_\ell=|\lambda_\ell-b(y_{j\ast})|^{1/2}$, we can extract a nontrivial limit $H\in H^1(\mathbb{R})$, such that 
\begin{equation}\label{DeC17}
    (\beta^2-\partial_Y^2)H(Y)+\lim_{\epsilon\to0}\frac{b''(y_{j\ast})}{b''(y_{j\ast})Y^2/2+\gamma+i\epsilon}H(Y)=0,
\end{equation}
where $\epsilon=\epsilon_\ell|\lambda_\ell-b(y_{j\ast})|^{-1}\to 0$ as $\ell\to\infty$. This is again impossible since the shear flow $(b''(y_{j\ast})Y^2/2,0), Y\in\mathbb{R}$ is spectrally stable and has no embedded eigenvalues.

More precisely, in this subcase \eqref{Dec16.2} and \eqref{Dec16.3} still hold, but instead of controlling the singularity by $|\epsilon_\ell|$ as in \eqref{Dec16.1.1}, we use the fact that for $y\in{\rm{supp}}(1-\chi_{j,\delta_\ell})\cup{\rm{supp}}(\chi_{j,c\delta_\ell})$,
\begin{equation}
    |b(y)-\lambda_\ell+i\epsilon_\ell|^{-1}\lesssim|b(y_{j\ast})-\lambda_\ell|^{-1}\lesssim\delta_\ell^{-2}.
\end{equation}
For the operator $T_{2,S}$, we no longer prove its $H^2$ boundedness. Instead, \eqref{DeC16.0} gives $H_\ell\to H$ uniformly on every compact interval by the Sobolev embeddings $H^1\Subset C^{0}$. Then 
\begin{equation}
s_\ell \chi_{j,\delta_\ell}^*(y_{j\ast}+s_\ell Y)\frac{b''(y_{j\ast}+s_\ell Y)H_\ell(Y)}{b'(y_{j\ast}+s_\ell Y)}\log\frac{b(y_{j\ast}+s_\ell Y)-\lambda_\ell+i\epsilon_\ell}{\delta_\ell^2}
\end{equation}
converges strongly in $L^2_{\rm loc}$. Meanwhile, \eqref{DeC16.4} gives strong $L^2_{\rm loc}$ convergence for the remainder
\begin{equation}
\partial_Y\left(\frac{T_{2,S}g_\ell(y_{j\ast}+s_\ell Y)}{\delta_\ell^{-\sigma_1-\sigma_2}}\right)+s_\ell \chi_{j,\delta_\ell}^*(y_{j\ast}+s_\ell Y)\frac{b''(y_{j\ast}+s_\ell Y)H_\ell(Y)}{b'(y_{j\ast}+s_\ell Y)}\log\frac{b(y_{j\ast}+s_\ell Y)-\lambda_\ell+i\epsilon_\ell}{\delta_\ell^2}.
\end{equation}
Thus, the rescaled $\partial_Y T_{2,S}$ strongly converges in $L^2_{\rm{loc}}$, which means $T_{2,S}$ strongly converges in $H^1_{\rm{loc}}$. Together with \eqref{Dec16.1.01}, this yields \eqref{DeC17}. $H\in H^1(\mathbb{R})$ follows from the same arguments as in Subcase I.1.

{\it Subcase I.3: $|\lambda_\ell-b(y_{j*})|^{1/2}=o(|\epsilon_\ell|^{1/2})$ for a subsequence. } 
In this case, we follow a slightly different rescaling \eqref{DeC15} with $s_\ell=|\epsilon_\ell|^{1/2}$.
Then similar arguments as in Subcase I.1 yield \eqref{Dec16.2}-\eqref{DeC16.7}.
Thus, we can extract a nontrivial limit $H\in H^1(\mathbb{R})$ of $H_\ell$ satisfying for $Y\in \mathbb{R}$,
    \begin{equation}\label{DeC18.1}
        (\beta^2-\partial_Y^2)H(Y)+\lim_{\gamma\to 0}\frac{b''(y_{j\ast})}{b''(y_{j\ast})Y^2/2+\gamma+i\alpha}H(Y)=0,
    \end{equation}
    where $\beta=\lim_{\ell\to\infty}|k_\ell||\epsilon_\ell|^{1/2}\in\mathbb{R}, \alpha={\rm{sgn}}\epsilon_\ell \in\mathbb{R}\backslash\{0\}, \gamma=\lim_{\ell\to\infty}(b(y_{j\ast})-\lambda_\ell)\cdot|\epsilon_\ell|^{-1}.$ This is impossible since the shear flow $(b''(y_{j\ast})Y^2/2,0), Y\in\mathbb{R}$ is spectrally stable.

    \medskip
    {\it Case II: $\limsup_{\ell\to\infty}\sum_{\beta\in\{0,1\}}\|\varrho_{j,k_\ell}^{\beta}(\cdot;\Lambda_\ell)\partial_y^\beta g_\ell\|_{L^\infty(\T\backslash S_{\delta_0})}>0$.} 
    
    {\it Subcase II.1:} For $(\sigma_1,\sigma_2)\neq (1,0)$, we can directly assume that $\kappa k_*\in\mathbb{Z}\backslash\{0\}$. 
    Otherwise, $|k_\ell|\to\infty$ would bring $\varrho_{j,k_\ell}(y;\Lambda_\ell)=|k_\ell|^{-1}$ for any fixed $y$. Then by \eqref{DeC1}, \eqref{DeC5.3} and $\sigma_2\leq 0$,
    \begin{equation}\label{DeC10.1}
        \sum_{\beta\in\{0,1\}}\|\varrho_{j,k_\ell}^{\beta}(\cdot;\Lambda_\ell)\partial_y^\beta T_1 g_\ell\|_{L^\infty(\T\backslash S_{\delta_0})}\leq|k_\ell|^{\sigma_2}\|T_1g_\ell\|_{X^{\sigma_1,\sigma_2}}\lesssim|k_\ell|^{\sigma_2-1/2}\|g_\ell\|_{X^{\sigma_1,\sigma_2}}\to 0,
    \end{equation}
    as $\ell\to\infty$.
    Meanwhile, we claim that 
    \begin{equation}\label{DeC10.2}
        \sum_{\beta\in\{0,1\}}\|\varrho_{j,k_\ell}^{\beta}(\cdot;\Lambda_\ell)\partial_y^\beta T_2g_\ell\|_{L^\infty(\T\backslash S_{\delta_0})}\to 0\quad\  \text{as}\ \ell\to\infty.
    \end{equation}
    Indeed, it follows from \eqref{DeC5.4} when $\delta_\ell|k_\ell|\gg1$, and from \eqref{DeC10.3}-\eqref{DeC10.3.1} with the assumption $\sigma_1+\sigma_2<1$ in this subcase when $\delta_\ell|k_\ell|\lesssim1$. 
    Furthermore, by \eqref{DeC10} and $\sigma_2\leq 0$,
    \begin{equation}\label{DeC10.4}
\begin{aligned}
&\sum_{\beta\in\{0,1\}}\|\varrho_{j,k_\ell}^{\beta}(\cdot;\Lambda_\ell)\partial_y^\beta(I+T_{\Theta_\ell})g_\ell\|_{L^\infty(\T\backslash S_{\delta_0})}\\
&\quad\lesssim\delta_0^{-\sigma_1}|k_\ell|^{\sigma_2}\big\|(I+T_{\Theta_\ell})g_\ell\big\|_{X^{\sigma_1,\sigma_2}(\T)}
\to0\quad\text{as }\ell\to\infty.
\end{aligned}
    \end{equation}
    Combining \eqref{DeC10.1}, \eqref{DeC10.2} and \eqref{DeC10.4}, we have as $\ell\to\infty$,
    \begin{equation}
        \begin{split}
            &\sum_{\beta\in\{0,1\}}\|\varrho_{j,k_\ell}^{\beta}(\cdot;\Lambda_\ell)\partial_y^\beta g_\ell\|_{L^\infty(\T\backslash S_{\delta_0})}\\
            &\lesssim\sum_{\beta\in\{0,1\}}\left(\|\varrho_{j,k_\ell}^{\beta}(\cdot;\Lambda_\ell)\partial_y^\beta(I+T_{\Theta_\ell})g_\ell\|_{L^\infty(\T\backslash S_{\delta_0})}+\|\varrho_{j,k_\ell}^{\beta}(\cdot;\Lambda_\ell)\partial_y^\beta T_1g_\ell\|_{L^\infty(\T\backslash S_{\delta_0})}\right.\\
            &\hspace{20mm}+\left.\|\varrho_{j,k_\ell}^{\beta}(\cdot;\Lambda_\ell)\partial_y^\beta T_2g_\ell\|_{L^\infty(\T\backslash S_{\delta_0})}\right)\\
            &\to 0,
        \end{split}
    \end{equation}
    contradicting our non-concentration assumption. By \eqref{DeC9.1}, \eqref{DeC10}, we can assume (passing to a subsequence if necessary) that 
    \begin{equation}\label{DeC11}
        g_\ell\to g, \,\,{\rm in}\,\, H^1_{{\rm loc}}(\T\backslash \{y_{j\ast}\})\,\,{\rm as}\,\,\ell\to\infty.
    \end{equation}
    Then one can obtain that $g$ satisfies
    \begin{equation}\label{DeC11.1}
        |g(y)|\lesssim |y-y_{j\ast}|^{2},
    \end{equation}
    and for $y\in \T$, in the $L^2$ sense,
    \begin{equation}\label{DeC12}
        (k_\ast^2-\partial_y^2)g(y)+\frac{b''(y)}{b(y)-b(y_{j\ast})}g(y)=0.
    \end{equation}
    In fact, for any fixed $\mathfrak{d}>0$, we have $2\delta_{\ell}<\mathfrak{d}$ for sufficiently large $\ell$. In the compact domain $\T\backslash S_\mathfrak{d}$, $\chi_{j,\delta}\equiv 0$, so $T_{\Theta_\ell}g(y)$ satisfies the simplified equation
    \begin{equation}\label{DeC12.1}
        \begin{split}
            \partial_y^2 T_{\Theta_\ell}g_{\ell}=\left\{k_\ell^2+\frac{b''(y)\chi_{j,\delta_0}}{b(y)-\lambda_\ell+i\epsilon_\ell}\right\}T_{\Theta_\ell}g_{\ell}-(1-\chi_{j,\delta_0})\frac{b''(y)g_\ell(y)}{b(y)-\lambda_\ell+i\epsilon_\ell}, 
        \end{split}
    \end{equation}
    with the following bounds independent of $\ell$:
    \begin{align}
        &\|T_{\Theta_\ell}g_\ell\|_{L^\infty(\T\backslash S_\mathfrak{d})}\lesssim_{\mathfrak{d}}\|T_{\Theta_\ell}g_\ell\|_{X^{\sigma_1,\sigma_2}}\lesssim_{\mathfrak{d}}1,\\
        &\|g_\ell\|_{L^\infty(\T\backslash S_\mathfrak{d})}\lesssim_{\mathfrak{d}}\|g_\ell\|_{X^{\sigma_1,\sigma_2}}\lesssim_{\mathfrak{d}}1,\\
        &\left|b(y)-\lambda_\ell+i\epsilon_\ell \right|^{-1}\lesssim |\mathfrak{d}|^{-2}.
    \end{align}
    Therefore, 
    \begin{equation}\label{DeC12.2}
        \limsup_{\ell\to \infty}\|T_{\Theta_\ell}(\lambda_\ell+i\epsilon_{\ell})g_\ell\|_{W^{2,\infty}(\T\backslash S_\mathfrak{d})}\lesssim_{\mathfrak{d}}1.
    \end{equation}
    By Sobolev embeddings, $T_{\Theta_\ell}(\lambda_\ell+i\epsilon_{\ell})g_\ell$ converges strongly in $H^1_{{\rm loc}}(\T\backslash \{y_{j\ast}\})$. Together with \eqref{DeC10}, we obtain \eqref{DeC11}.
    Moreover, by \eqref{DeC10} and \eqref{DeC10.3}, 
    \begin{equation}
        \lim_{\ell\to\infty}T_2g_\ell=0,\quad\ g+ \lim_{\ell\to\infty}T_1g_\ell=0\qquad\ \text{in}\ H^1_{\rm{loc}}(\T\backslash \{y_{j\ast}\}).
    \end{equation}
    By \eqref{DeC5.2}
    \begin{equation}
        \left|T_1g_\ell(y)\right|\lesssim\varrho_{j}^2(y;\Lambda_\ell)|k_\ell|^{-1}\|g_\ell\|_{L^\infty(\T\backslash S_{\delta_0})}\lesssim\varrho_{j}^2(y;\Lambda_\ell)\delta_0^{-\sigma_1}|k_\ell|^{\sigma_2-1}\|g_\ell\|_{X^{\sigma_1,\sigma_2}}\lesssim \varrho_{j}^2(y;\Lambda_\ell).
    \end{equation}
    Thus we have 
    \begin{equation}
        |g(y)|\lesssim|y-y_{j*}|^2,
    \end{equation}
    which gives \eqref{DeC12} by sending $\ell\to\infty$ in \eqref{DeC12.1} and no delta distribution is introduced at $y_{j*}$. Moreover, by \eqref{DeC12.2} and the Case II assumption, we have the limit function $g$ nontrivial. Thus, \eqref{DeC12} implies that $b(y_{j\ast})$ is an embedded eigenvalue for $L_k$, a contradiction to Assumption \ref{MaAs}. 

    {\it Subcase II.2:} If $(\sigma_1,\sigma_2) = (1,0)$, one needs to be careful about asserting $\kappa k_*\in\mathbb{Z}\backslash\{0\}$ since \eqref{DeC10.3}-\eqref{DeC10.3.1} no longer provide extra smallness as $\ell\to\infty$. If additionally,
    \begin{equation}
        \limsup_{\ell\to\infty}\left[\|\delta_\ell^{1/2}g_\ell\|_{L^2(S_{3\delta_\ell})}+\|\delta_\ell^{3/2}\partial_yg_\ell\|_{L^2(S_{3\delta_\ell})}\right]=0,
    \end{equation}
    then \eqref{DeC10.3}-\eqref{DeC10.3.1} still imply \eqref{DeC10.2}, which allows us to assert $\kappa k_*\in\mathbb{Z}\backslash\{0\}$ and follow the same arguments as in Subcase II.1 to obtain the contradiction.
    Otherwise, we have (passing to a subsequence if necessary)
    \begin{equation}
        \|\delta_\ell^{1/2}g_\ell\|_{L^2(S_{3\delta_\ell})}+\|\delta_\ell^{3/2}\partial_yg_\ell\|_{L^2(S_{3\delta_\ell})}\geq\sigma>0.
    \end{equation}
    One can still achieve the contradiction by following the blow-up arguments in Case I with $(\sigma_1,\sigma_2)=(1,0)$. A slight difference is that here Case I assumption no longer holds. Therefore, in the process of obtaining the global convergence of the limit function $H\in H^1(\mathbb{R})$, \eqref{DeC16.6} is obtained by $\|g_\ell\|_{L^\infty(\T\backslash S_{\delta_0})}\lesssim\|g_\ell\|_{X^{1,0}}$ and $\delta_\ell^{2+\sigma_1+\sigma_2}=\delta_\ell^3\to0$ as $\ell\to\infty$.

The proposition is now proved. 
\end{proof}

\section{Bounds on \texorpdfstring{$\psi_{k,\epsilon}^{\iota}$}{psi} and higher order spectral regularity:  Non-Degenerate case}
\label{sec:nd:bdds}
In this section, for convenience, we suppress the indices $k,\epsilon,\iota$ for the spectral density function and its derivatives.
\begin{lemma}\label{ndint}
    Assume that $\kappa k\in\mathbb{Z}\backslash\{0\}, \lambda\in \Sigma\backslash \cup_{j}\Sigma_{j, \delta_0}$, $\iota\in\{\pm\}, \epsilon \in (0, \epsilon_0)$, $\alpha\in\{0,1,2,\cdots\}$. The implicit constants may depend on $\alpha$.
    \begin{enumerate}
    \item For any function $f(y)\in L^2$, $\beta\in\{0,1\}$, we have the following estimates
    \begin{equation}\label{ndlogl2}
        |k|^{1-\beta}\left\|\int_{\T}\partial_y^{\beta} G_k(y,z)f(z)(\log(b(z)-\lambda+i\iota\epsilon))^\alpha dz\right\|_{L^2}\lesssim\frac{1}{|k|^{1/2}}\|f\|_{L^2}.
    \end{equation}
    \item For any function $f(y)\in H^1_k$, we have the following estimates
    \begin{equation}\label{ndfracl2}
        \left\|\int_{\T}\partial_yG_k(y,z)f(z)\frac{(\log(b(z)-\lambda+i\iota\epsilon))^\alpha}{b(z)-\lambda+i\iota\epsilon}dz\right\|_{L^2}\lesssim\frac{1}{|k|^{1/2}}\|f\|_{H^1_k}.
    \end{equation}
    \end{enumerate}
\end{lemma}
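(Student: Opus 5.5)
For \eqref{ndlogl2}, the plan is to treat the logarithmic factor as an $L^2$-bounded multiplier. Since $\lambda\in\Sigma\backslash\cup_j\Sigma_{j,\delta_0}$, the level set $\{b=\lambda\}$ consists of finitely many points $y_\lambda$, with $|b'(y_\lambda)|\gtrsim_{\delta_0}1$ uniformly in $\lambda$. Near each such point we have $|b(z)-\lambda+i\iota\epsilon|\gtrsim|z-y_\lambda|$. Away from these points, $|b(z)-\lambda+i\iota\epsilon|$ is bounded below, except near the critical points $y_{j*}$, where $|b-\lambda|\gtrsim\delta_0^2$ anyway. Thus $|\log(b-\lambda+i\iota\epsilon)|^\alpha\lesssim 1+|\log|z-y_\lambda||^\alpha$, which is in every $L^p$, $p<\infty$, uniformly in $\epsilon,\lambda$. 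We cannot simply put it in $L^\infty$. Instead we use Young/Schur with the kernel bound \eqref{Helm1.0}: $|k|^{1-\beta}|\partial_y^\beta G_k(y,z)|\lesssim e^{-|k||y-z|}$. Writing $w(z)=(\log(\cdot))^\alpha$, we have
\[
\Big|\int \partial_y^\beta G_k f w\,dz\Big|\le \|k^{1-\beta}\partial_y^\beta G_k(y,\cdot)\|_{L^{q}}\|f w\|_{L^{r}},
\]
and this route loses. A better route is to split $w=w\mathbf 1_{|w|\le M}+w\mathbf 1_{|w|>M}$, or to use the kernel's decay directly. By Schur's test, the operator with kernel $e^{-|k||y-z|}w(z)$ on $L^2$ has norm at most $\big(\sup_y\int e^{-|k||y-z|}|w(z)|dz\cdot\sup_z|w(z)|\int e^{-|k||y-z|}dy\big)^{1/2}$; the second factor fails because $w$ is unbounded. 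So instead we bound the $L^2\to L^2$ norm by the Hilbert–Schmidt-type estimate, restricted to the singular part. On $|z-y_\lambda|<|k|^{-1}$ we use Cauchy–Schwarz in $z$: $\int_{|z-y_\lambda|<1/|k|}|fw|\le\|f\|_{L^2}\|w\|_{L^2(|z-y_\lambda|<1/|k|)}\lesssim\|f\|_{L^2}|k|^{-1/2}\log^\alpha\langle k\rangle$. Multiplying by the $L^2_y$ norm of $e^{-|k||y-z|}$, which is $\lesssim|k|^{-1/2}$, gives $|k|^{-1}\log^\alpha\langle k\rangle\|f\|_{L^2}\lesssim|k|^{-1/2}\|f\|_{L^2}$. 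On the complement, $|w|\lesssim\log^\alpha\langle k\rangle$ and Young's inequality gives $|k|^{-1}\log^\alpha\langle k\rangle\|f\|_{L^2}$. Both pieces are comfortably within the claimed $|k|^{-1/2}$. The case $\kappa k$ bounded is trivial.

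For \eqref{ndfracl2}, the $1/(b-\lambda+i\iota\epsilon)$ singularity is not integrable uniformly in $\epsilon$, so we integrate by parts as in Lemma \ref{T2}. Take a cutoff $\eta$ supported near the level points $y_\lambda$, at distance $\gtrsim\delta_0$ from $S$, so that $b'\ne0$ on $\operatorname{supp}\eta$. There we write
\[
\frac{(\log(b-\lambda+i\iota\epsilon))^\alpha}{b-\lambda+i\iota\epsilon}=\frac{1}{(\alpha+1)b'(z)}\partial_z(\log(b-\lambda+i\iota\epsilon))^{\alpha+1}.
\]
On the complement of $\operatorname{supp}\eta$ the factor is bounded and part (1) applies with $\beta=1$, giving $|k|^{-1/2}\|f\|_{L^2}$. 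On the support of $\eta$ we integrate by parts in $z$. When $\partial_z$ falls on $\eta f/b'$, we get a term $\int\partial_yG_k\,(\eta f/b')'(\log)^{\alpha+1}$, which by part (1) is $\lesssim|k|^{-1/2}\|f\|_{H^1}$. Since $\|f\|_{H^1}\le\|f\|_{H^1_k}$, this is fine. When $\partial_z$ falls on the kernel, we use $\partial_z\partial_yG_k=F_k+\delta(y-z)$ from \eqref{GkFk}. The $\delta$ term produces the local function $\eta f(\log)^{\alpha+1}/b'$, whose $L^2$ norm is $\lesssim\|f\|_{L^\infty}\lesssim|k|^{-1/2}\|f\|_{H^1_k}$ by the Gagliardo–Nirenberg inequality $\|f\|_\infty^2\lesssim\|f\|_{L^2}\|f'\|_{L^2}+\|f\|_{L^2}^2$. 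The $F_k$ term has kernel bounded by $|k|e^{-|k||y-z|}$ by \eqref{Fk.1}, and the same splitting argument as in part (1) gives $\lesssim\log^{\alpha+1}\langle k\rangle\|f\|_{L^2}$.

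The main obstacle is this last $F_k$ term. Its naive bound is $\log^{\alpha+1}\langle k\rangle\|f\|_{L^2}$, and we need $|k|^{-1/2}\|f\|_{H^1_k}$, which is $\ge|k|^{1/2}\|f\|_{L^2}$. Since $\log^{\alpha+1}\langle k\rangle\lesssim|k|^{1/2}$, the term is acceptable, with the implicit constant depending on $\alpha$ as the statement allows. The singular region $|z-y_\lambda|<1/|k|$ is controlled by Cauchy–Schwarz against $\|w\|_{L^2}$ there, as in part (1). No LAP is needed; everything is explicit.
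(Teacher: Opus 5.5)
Your proof is correct and follows the same route as the paper: Green's-function bounds plus H\"older/Young for \eqref{ndlogl2}, and for \eqref{ndfracl2} integration by parts via $\partial_z\log^{\alpha+1}(b-\lambda+i\iota\epsilon)$, the identity $\partial_z\partial_yG_k=F_k+\delta(y-z)$, the $k$-weighted Sobolev bound $\|f\|_{L^\infty}\lesssim|k|^{-1/2}\|f\|_{H^1_k}$ for the local term, and an $F_k$ analogue of part (1) (the paper cuts with the fixed $1-\chi_{\ast,\delta_0/10}$ rather than your $\lambda$-dependent $\eta$, which makes no difference). One small correction: the direct H\"older--Young route you set aside does not lose, since $\|e^{-|k||\cdot|}\|_{L^2}\lesssim|k|^{-1/2}$ and $\|f\log^\alpha(\cdot)\|_{L^1}\le\|f\|_{L^2}\|\log^\alpha(\cdot)\|_{L^2}\lesssim\|f\|_{L^2}$ give exactly the claimed $|k|^{-1/2}$ (and likewise $|k|^{1/2}\|f\|_{L^2}$ for the $F_k$ term), so your splitting at scale $|k|^{-1}$ is harmless but unnecessary.
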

\begin{proof}
    \eqref{ndlogl2} follows from \eqref{Helm1.0}, H\"older's and Young's inequalities. In order to obtain \eqref{ndfracl2}, we use the integration by parts in the singular region
    \begin{equation}
        \begin{split}
            &\int_{\T}\partial_yG_k(y,z)(1-\chi_{\ast,\delta_0/10}(z))f(z)\frac{(\log(b(z)-\lambda+i\iota\epsilon))^\alpha}{b(z)-\lambda+i\iota\epsilon}dz\\
            =&\int_{\T}\partial_yG_k(y,z)(1-\chi_{\ast,\delta_0/10}(z))f(z)\frac{1}{b'(z)}\partial_z\left(\frac{1}{\alpha+1}(\log(b(z)-\lambda+i\iota\epsilon))^{\alpha+1}\right)dz\\
            =&-\int_{\T}\partial_z\left(\frac{\partial_yG_k(y,z)f(z)(1-\chi_{\ast,\delta_0/10}(z))}{(\alpha+1)b'(z)}\right)(\log(b(z)-\lambda+i\iota\epsilon))^{\alpha+1}dz\\
            =&-(1-\chi_{\ast,\delta_0/10}(y))\frac{f(y)}{(\alpha+1)b'(y)}(\log(b(y)-\lambda+i\iota\epsilon))^{\alpha+1}\\
            &-\int_{\T}\frac{F_k(y,z)f(z)(1-\chi_{\ast,\delta_0/10}(z))}{(\alpha+1)b'(z)}(\log(b(z)-\lambda+i\iota\epsilon))^{\alpha+1}dz\\
            &-\int_{\T}\partial_yG_k(y,z)\partial_z\left(\frac{(1-\chi_{\ast,\delta_0/10}(z))f(z)}{b'(z)}\right)\frac{1}{\alpha+1}(\log(b(z)-\lambda+i\iota\epsilon))^{\alpha+1}dz.
        \end{split}
    \end{equation}
    Together with \eqref{GkFk}
    and the same routine as in obtaining \eqref{ndlogl2}, we have the following estimates for $\beta\in\{0,1\}$, $\alpha\in\{0,1,2,\cdots\}$:
    \begin{equation}\label{ndFlogl2}
        |k|^{-1-\beta}\left\|\int_{\T}\partial_y^\beta F_k(y,z)f(z)(\log(b(z)-\lambda+i\iota\epsilon))^\alpha dz\right\|_{L^2}\lesssim\frac{1}{|k|^{1/2}}\|f\|_{L^2}.
    \end{equation}
    \eqref{ndfracl2} then follows from \eqref{ndlogl2}, \eqref{ndFlogl2} and H\"older's inequality.
\end{proof}

\begin{lemma}\label{ndraygeneral}
    Assume that $\kappa k\in\mathbb{Z}\backslash\{0\},\  y\in\mathbb{T},\  \lambda\in \Sigma\backslash \cup_{j}\Sigma_{j, \delta_0}$, $\iota\in\{\pm\}, \epsilon \in (0, \epsilon_0)$, $\alpha\in\{1,2,3\}$, and let $f(y,\lambda)$ satisfy
    \begin{equation}\label{ndrayf}
        \begin{split}
            &\left(-\partial_y^2+k^2+\frac{b''(y)}{b(y)-\lambda+i\iota\epsilon}\right)f(y,\lambda) \\
            &= c_1(y)\frac{\log(b(y)-\lambda+i\iota\epsilon)}{b(y)-\lambda+i\iota\epsilon}+c_2(y)\frac{1}{b(y)-\lambda+i\iota\epsilon}
            +c_{3}(y)(\log(b(y)-\lambda+i\iota\epsilon))^\alpha+c_4(y),
        \end{split}
    \end{equation}
    with coefficient functions $c_1(y), c_2(y)\in H^1_k$, and $c_{3}(y), c_{4}(y)\in L^2$. Then we have the following bounds for $f(y,\lambda)$:
    \begin{equation}\label{ndraygeneral1}
        \left\|f(\cdot,\lambda)\right\|_{H^1_k}\lesssim \frac{1}{|k|^{1/2}}\left(\|c_1\|_{H^1_k}+\|c_2\|_{H^1_k}+\|c_{3}\|_{L^2}+\|c_{4}\|_{L^2}\right),
    \end{equation}
    and for any $1\leq p<2$,
    \begin{equation}\label{ndraygeneral2}
        \begin{split}
            \left\|\partial_y f(y,\lambda)-(1-\chi_{\ast,\delta_0\slash10}(y))\left(C_{1}(y)\left(\log(b(y)-\lambda+i\iota\epsilon)\right)^{2}+C_{2}(y)\log(b(y)-\lambda+i\iota\epsilon)\right)\right\|_{W^{1,p}}\\
        \lesssim_p \|c_1\|_{H^1_k}+\|c_2\|_{H^1_k}+\|c_{3}\|_{L^2}+\|c_{4}\|_{L^2},
        \end{split}
    \end{equation}
    with $\|(1-\chi_{\ast,\delta_0\slash10})C_1\|_{H^1_k}\lesssim\|c_1\|_{H^1_k}$, $\|(1-\chi_{\ast,\delta_0\slash10})C_2\|_{H^1_k}\lesssim\|c_2\|_{H^1_k}+|k|^{-1/2}\left(\|c_1\|_{H^1_k}+\|c_{3}\|_{L^2}+\right.$ $\left.\|c_{4}\|_{L^2}\right)$.
\end{lemma}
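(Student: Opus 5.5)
We rewrite the Rayleigh equation \eqref{ndrayf} as the integral equation $f + T_\Theta f = \mathcal{F}$, with $T_\Theta$ as in \eqref{T1} and
\begin{equation*}
\mathcal{F}(y) := \int_\T G_k(y,z)\Big[c_1\frac{\log(b-\lambda+i\iota\epsilon)}{b-\lambda+i\iota\epsilon}+\frac{c_2}{b-\lambda+i\iota\epsilon}+c_3\log^\alpha(b-\lambda+i\iota\epsilon)+c_4\Big](z)\,dz.
\end{equation*}
The plan has two steps: first obtain the $H^1_k$ bound \eqref{ndraygeneral1} from Lemma~\ref{T5}, and then extract the explicit logarithmic singularities of $\partial_y f$ to get \eqref{ndraygeneral2}.

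\textbf{Step 1: the $H^1_k$ bound.} We estimate $\|\mathcal{F}\|_{H^1_k}\lesssim |k|^{-1/2}(\|c_1\|_{H^1_k}+\|c_2\|_{H^1_k}+\|c_3\|_{L^2}+\|c_4\|_{L^2})$ term by term.
\begin{itemize}
\item For the $c_3$ and $c_4$ terms, apply \eqref{ndlogl2} directly with $\beta\in\{0,1\}$, using the exponents $\alpha$ and $0$ respectively.
\item For the $c_1$ and $c_2$ terms with $\beta=1$, apply \eqref{ndfracl2} with exponents $1$ and $0$.
\item For the $c_1$ and $c_2$ terms with $\beta=0$, we need the $L^2$ bound with weight $|k|$. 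Away from the critical points, integrate by parts in $z$ exactly as in the proof of Lemma~\ref{ndint}, writing $(b-\lambda+i\iota\epsilon)^{-1}\log^m = \partial_z(\log^{m+1})/((m+1)b')$. This trades the singular factor for $\partial_z G_k$ and $\partial_z c_i$ multiplied by powers of the logarithm, and \eqref{ndlogl2} then gives the factor $|k|^{-1}\cdot|k|^{-1/2}$.
\item On the support of $\chi_{\ast,\delta_0/10}$ we have $\lambda\notin\Sigma_{j,\delta_0}$, so $|b-\lambda|\gtrsim\delta_0^2$. The denominator is therefore bounded there, and this part is harmless.
\end{itemize}
Lemma~\ref{T5} then yields $\|f\|_{H^1_k}\le\mu_0^{-1}\|\mathcal{F}\|_{H^1_k}$, which is \eqref{ndraygeneral1}.

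\textbf{Step 2: the singular expansion of $\partial_y f$.} We write $\partial_y f = \partial_y\mathcal{F} - \partial_y T_\Theta f$ and use the integration-by-parts identity from Lemma~\ref{ndint}. For the $c_1$ term this produces the boundary term
\begin{equation*}
-(1-\chi_{\ast,\delta_0/10})\frac{c_1}{2b'}\log^2(b-\lambda+i\iota\epsilon),
\end{equation*}
so we take $C_1=-c_1/(2b')$. The remaining pieces are integrals of $F_k$ and $\partial_yG_k$ against $H^1$ or $L^2$ functions times powers of the logarithm. These belong to $W^{1,p}$ for every $p<2$, because $\partial_y$ of such integrals again costs only $F_k$-type kernels or produces $\delta$-terms of the form $c\,\log^m$.

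The $c_2$ term produces a $\log$ boundary term with coefficient $-c_2/b'$. Lemma~\ref{T2}, specifically \eqref{T3.1}, gives $\partial_yT_\Theta f = -(1-\chi_{\ast,\delta_0/10})\frac{b''f}{b'}\log(b-\lambda+i\iota\epsilon)$ plus a $W^{1,1}$ remainder. Collecting these, we take
\begin{equation*}
C_2 = -\frac{c_2}{b'}+\frac{b''f}{b'}
\end{equation*}
(signs to be checked against \eqref{T3.1}). Its $H^1_k$ norm is bounded by $\|c_2\|_{H^1_k}+\|f\|_{H^1_k}$, and Step 1 controls $\|f\|_{H^1_k}$, which gives the stated bound on $C_2$.

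\textbf{Main obstacle.} The expected difficulty is upgrading the remainder from $W^{1,1}$, as supplied by \eqref{T3.1}, to $W^{1,p}$ for $p<2$, and doing so with constants independent of $k$ (the statement carries no $\langle k\rangle^{4/3}$ loss). When the remainder is differentiated once more, the terms that appear are of the form $\partial_z$ applied to coefficient times $\log^m$. These produce $(c_i)'\log^m$, which lies in $L^p$ for $p<2$ by H\"older's inequality and the integrability of $|\log|^{mq}$, together with $c_i\,b'\log^{m-1}/(b-\lambda+i\iota\epsilon)$, which lies in $L^p$ near a simple zero of $b-\lambda$ only after pairing with the $F_k$ kernel. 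We would handle these by repeating the integration by parts once more, so that every term is expressed through $F_k$ and $\partial_yF_k$. We would then use \eqref{Fk.1}, with its factor $|k|e^{-|k||y-z|}$, in Young's inequality, which gives $L^1$-kernel bounds uniform in $k$. This $k$-uniformity is the delicate bookkeeping step.
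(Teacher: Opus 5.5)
Your proposal follows essentially the same route as the paper: the integral equation $f+T_\Theta f=\int_\T G_k\mathcal R$ with Lemma~\ref{T5} gives \eqref{ndraygeneral1}, and the integration by parts $\frac{\log^m}{b-\lambda+i\iota\epsilon}=\frac{1}{(m+1)b'}\partial_z\log^{m+1}$ away from the critical points produces exactly the paper's $C_1=-c_1/(2b')$ and $C_2=(b''f-c_2)/b'$.

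The ``main obstacle'' you describe comes only from invoking \eqref{T3.1}, and the paper never uses it. Instead it moves $-b''f/(b-\lambda+i\iota\epsilon)$ into the source term. A single integration by parts with the $H^1_k$ coefficient $b''f-c_2$ then handles the potential and $c_2$ terms together.

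Two points in your bookkeeping need correcting.
\begin{itemize}
\item When the remainder is differentiated, the $y$-derivative falls only on the kernels, namely $\partial_yF_k$ and $\partial_y^2G_k=k^2G_k-\delta(y-z)$. So no terms of the form $c_i b'\log^{m-1}/(b-\lambda+i\iota\epsilon)$ ever appear, only $h\log^m$ with $h\in L^2$, and no further integration by parts is needed.
\item The kernel bounds are not uniform in $k$: by \eqref{Fk.1}, $\|\partial_yF_k(y,\cdot)\|_{L^1}$ is of size $|k|$. The $k$-uniformity comes instead from $\|h\log^m\|_{L^p}\lesssim_p\|h\|_{L^2}\le|k|^{-1}\|h\|_{H^1_k}$. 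This H\"older step is also exactly where the restriction $p<2$ enters.
\end{itemize}
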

\begin{remark}
    Note that logarithmic singularities of the form $c(y)\cdot\log((b(y)-\lambda+i\iota\epsilon))^{\alpha}$ are not uniformly in $L^2$ as $\epsilon\to0$ if $c(y)$ is merely in $L^2$. This is the reason why the source term $c_{3}(y)\log(b(y)-\lambda+i\iota\epsilon)$ is singled out in \eqref{ndrayf}. As shown in the proof, the source term\\ $c_{3}(y)\log(b(y)-\lambda+i\iota\epsilon)$ plays a role that is almost the same as the source term $c_4(y)$. However, by taking the logarithmic source term out of the general term, we get to track the lowest regularity required for the coefficient functions. This is also the reason why $p=2$ is not achievable in \eqref{ndraygeneral2}. When taking higher derivatives of $f$, the $\delta$ term from the second $y$-derivative of the Green's function produces singularities such as $c_1'(y)\log^2(b(y)-\lambda+i\iota\epsilon)$, as shown in \eqref{ndraygeneral4}. Under the stated regularity assumptions on the coefficients, these need not belong to $L^2$. Lemma \ref{ndraygeneral} is particularly useful in estimating the highest derivatives of the spectral density function.
\end{remark}

\begin{proof}
    For convenience, we focus on the estimates on the highest derivatives as they are most singular. The lower order estimates follow similar calculations.
    We denote the right-hand side of \eqref{ndrayf} as $\mathcal{R}(y,\lambda)$. The integral form of \eqref{ndrayf} reads
    \begin{equation}\label{ndraygeneral3}
        f(y,\lambda)+ T_{\Theta}f(y,\lambda) = \int_{\T}G_k(y,z)\mathcal{R}(z,\lambda)dz.
    \end{equation}
    By the limiting absorption principle in Lemma \ref{T5}, in order to obtain \eqref{ndraygeneral1}, it suffices to prove that 
    \begin{equation}
        \begin{split}
            \left\|\int_{\T}G_k(y,z)\mathcal{R}(z,\lambda)dz\right\|_{H^1_k}
            \lesssim\frac{1}{|k|^{1/2}}\left(\|c_1\|_{H^1_k}+\|c_2\|_{H^1_k}+\|c_{3}\|_{L^2}+\|c_{4}\|_{L^2}\right),
        \end{split}
    \end{equation}
    which follows directly from \eqref{ndlogl2} and \eqref{ndfracl2}.

    Moreover, \eqref{ndraygeneral3} gives
    \begin{equation}
        \begin{split}
            \partial_yf(y,\lambda) =& -\partial_y(T_{\Theta}f)(y,\lambda)+\partial_y\int_{\T}G_k(y,z)\mathcal{R}(z,\lambda)dz\\
            =&-\int_{\T}\partial_yG_k(y,z)\frac{b''(z)f(z,\lambda)}{b(z)-\lambda+i\iota\epsilon}dz+\int_{\T}\partial_yG_k(y,z)\mathcal{R}(z,\lambda)dz.\\
        \end{split}
    \end{equation}
    Notice that since $\lambda\in \Sigma\backslash \cup_{j}\Sigma_{j, \delta_0}$, the denominator is uniformly bounded away from zero for $z\in \rm{supp}(\chi_{\ast,\delta_0\slash10})$. Therefore, 
    \begin{equation}\label{ndraygeneral3.1}
        \left\|\int_{\T}\partial_yG_k(y,z)\chi_{\ast,\delta_0\slash10}(z)\left(-\frac{b''(z)f(z,\lambda)}{b(z)-\lambda+i\iota\epsilon}+\mathcal{R}(z,\lambda)\right)dz\right\|_{H^1_k}\lesssim\|f\|_{L^2}+\sum_{i=1}^{4}\|c_i\|_{L^2},
    \end{equation}
    while in the remaining region, applying integration by parts gives
    \begin{equation}\label{ndraygeneral3.2}
        \begin{split}
            &\int_{\T}\partial_yG_k(y,z)(1-\chi_{\ast,\delta_0\slash10}(z))\left(-\frac{b''(z)f(z,\lambda)}{b(z)-\lambda+i\iota\epsilon}+\mathcal{R}(z,\lambda)\right)dz\\
            =&(1-\chi_{\ast,\delta_0\slash10}(y))\left(C_{1}(y)\left(\log(b(y)-\lambda+i\iota\epsilon)\right)^{2}+C_{2}(y)\log(b(y)-\lambda+i\iota\epsilon)\right)+r(y,\lambda),
        \end{split}
    \end{equation}
    where 
    \begin{align}
        C_1(y)=&-\frac{c_1(y)}{2b'(y)},\quad\ C_2(y) = \frac{b''(y)f(y,\lambda)-c_2(y)}{b'(y)},\\
        r(y,\lambda)=&-\int_{\T}F_k(y,z)(1-\chi_{\ast,\delta_0\slash10}(z))\frac{c_1(z)}{2b'(z)}\left(\log(b(z)-\lambda+i\iota\epsilon)\right)^2dz\notag\\
        &+\int_{\T}F_k(y,z)(1-\chi_{\ast,\delta_0\slash10}(z))\frac{b''(z)f(z,\lambda)-c_2(z)}{b'(z)}\log(b(z)-\lambda+i\iota\epsilon)dz\notag\\
        &+\int_{\T}\partial_yG_k(y,z)\partial_z\left((1-\chi_{\ast,\delta_0\slash10}(z))\frac{b''(z)f(z,\lambda)-c_2(z)}{b'(z)}\right)\log(b(z)-\lambda+i\iota\epsilon)dz\notag\\
        &-\int_{\T}\partial_yG_k(y,z)\partial_z\left(\frac{(1-\chi_{\ast,\delta_0\slash10}(z))c_1(z)}{2b'(z)}\right)\left(\log(b(z)-\lambda+i\iota\epsilon)\right)^2dz\notag\\
        &+\int_{\T}\partial_yG_k(y,z)(1-\chi_{\ast,\delta_0\slash10}(z))c_3(z)(\log(b(z)-\lambda+i\iota\epsilon))^\alpha dz\notag\\
        &+\int_{\T}\partial_yG_k(y,z)(1-\chi_{\ast,\delta_0\slash10}(z))c_4(z)dz.\label{ndraygeneral3.3}
    \end{align}

    It is clear $\|(1-\chi_{\ast,\delta_0\slash10})C_1\|_{H^1_k}\lesssim\|c_1\|_{H^1_k}$, $\|(1-\chi_{\ast,\delta_0\slash10})C_2\|_{H^1_k}\lesssim\|c_2\|_{H^1_k}+|k|^{-1/2}\left(\|c_1\|_{H^1_k}+\right.$ $\left.\|c_{3}\|_{L^2}+\|c_{4}\|_{L^2}\right)$. The remainder term $r(y,\lambda)$ has higher regularity:
    \begin{align}\label{ndraygeneral4}
            \|\partial_yr(y,\lambda)\|_{L^p}&\lesssim\left\|\int_{\T}\partial_yF_k(y,z)(1-\chi_{\ast,\delta_0\slash10})(z)\frac{b''(z)f(z,\lambda)-c_2(z)}{b'(z)}\log(b(z)-\lambda+i\iota\epsilon)dz\right\|_{L^p}\notag\\
            &\quad+\left\|\int_{\T}-\partial_yF_k(y,z)(1-\chi_{\ast,\delta_0\slash10})(z)\frac{c_1(z)}{2b'(z)}\left(\log(b(z)-\lambda+i\iota\epsilon)\right)^2dz\right\|_{L^p}\notag\\
            &\quad+\left\|\partial_y\left((1-\chi_{\ast,\delta_0\slash10})(y)\frac{b''(y)f(y,\lambda)-c_2(y)}{b'(y)}\right)\log(b(y)-\lambda+i\iota\epsilon)\right\|_{L^p}\\
            &\quad+\left\|\partial_y\left(\frac{(1-\chi_{\ast,\delta_0\slash10})(y)c_1(y)}{2b'(y)}\right)\left(\log(b(y)-\lambda+i\iota\epsilon)\right)^2\right\|_{L^p}\notag\\
            &\quad+\left\|(1-\chi_{\ast,\delta_0\slash10})(y)c_3(y)(\log(b(y)-\lambda+i\iota\epsilon))^\alpha \right\|_{L^p}+\left\|(1-\chi_{\ast,\delta_0\slash10})(y)c_4(y)\right\|_{L^p}\notag\\
            &\lesssim_p\|c_1\|_{H^1_k}+\|c_2\|_{H^1_k}+\|c_3\|_{L^2}+\|c_4\|_{L^2}.
    \end{align}
    The conclusion follows from \eqref{ndraygeneral3.1}, \eqref{ndraygeneral3.2}, \eqref{ndraygeneral3.3} and \eqref{ndraygeneral4}.
\end{proof}
\begin{lemma}\label{ndrayg}
    Suppose that $f(y,\lambda)\in H^1_k$ is the solution of \eqref{ndrayf} as in Lemma \ref{ndraygeneral}. For any $g(y,\lambda)$, $y\in\T$ satisfying 
    \begin{equation}
        \left(-\partial_y^2+k^2+\frac{b''(y)}{b(y)-\lambda+i\iota\epsilon}\right)g(y,\lambda)=\mathfrak{c}(y)\frac{\partial_y f(y,\lambda)}{b(y)-\lambda+i\iota\epsilon},\quad\ \|\mathfrak{c}(y)\|_{C^1}\lesssim1,
    \end{equation}
    we have 
    \begin{equation}\label{ndrayg1}
        \|g(\cdot,\lambda)\|_{H^1_k}\lesssim\|c_1\|_{H^1_k}+\|c_2\|_{H^1_k}+\|c_3\|_{L^2}+\|c_4\|_{L^2}.
    \end{equation}
\end{lemma}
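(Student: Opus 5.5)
The plan is to reduce Lemma \ref{ndrayg} to Lemma \ref{ndraygeneral} by rewriting the source term $\mathfrak{c}(y)\partial_y f/(b-\lambda+i\iota\epsilon)$ in the admissible form \eqref{ndrayf}. First split it with the cutoff $\chi_{\ast,\delta_0/10}$. On the support of $\chi_{\ast,\delta_0/10}$ the denominator is bounded below because $\lambda\notin\cup_j\Sigma_{j,\delta_0}$. That piece is therefore a $c_4$-type term with $\|c_4\|_{L^2}\lesssim\|\partial_y f\|_{L^2}\lesssim\|f\|_{H^1_k}$. By \eqref{ndraygeneral1} this is bounded by $|k|^{-1/2}$ times the data norms, which is better than needed.

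On the complementary region $(1-\chi_{\ast,\delta_0/10})$ we have $|b'|\gtrsim1$, and we insert the structure \eqref{ndraygeneral2}:
\[
\partial_y f=(1-\chi_{\ast,\delta_0/10})\bigl(C_1\log^2(b-\lambda+i\iota\epsilon)+C_2\log(b-\lambda+i\iota\epsilon)\bigr)+\tilde r,
\]
with $\tilde r\in W^{1,p}$ for some fixed $p\in(1,2)$, hence $\tilde r\in L^\infty$ and in $H^1$-type spaces up to the $p$ loss. The singular pieces produce sources of the form $\mathfrak{c}C_i\log^{i}(b-\lambda+i\iota\epsilon)/(b-\lambda+i\iota\epsilon)$. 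These cannot be fed in directly, since Lemma \ref{ndraygeneral} only allows $\log^1/(b-\lambda)$. So we rewrite them via $\log^i/(b-\lambda+i\iota\epsilon)=\frac{1}{(i+1)b'}\partial_y\log^{i+1}$. Rather than integrating by parts at the level of the equation, we integrate by parts inside the Duhamel formula $g+T_\Theta g=\int G_k\mathcal R$, exactly as in the proof of Lemma \ref{ndint}. This gives
\[
\int G_k(y,z)\,\partial_z\Bigl(\tfrac{\mathfrak{c}C_i(1-\chi)}{(i+1)b'}\Bigr)\log^{i+1}\,dz+\int\partial_z G_k(y,z)\tfrac{\mathfrak{c}C_i(1-\chi)}{(i+1)b'}\log^{i+1}\,dz .
\]
Both integrals are controlled in $H^1_k$ by \eqref{ndlogl2}, using also the $\partial_z G_k$ kernel with $F_k$ for the derivative and \eqref{ndFlogl2}, together with $\|C_i\|_{H^1_k}\lesssim$ data norms from Lemma \ref{ndraygeneral}. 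Lemma \ref{T5} then inverts $I+T_\Theta$.

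For the remainder source $\mathfrak{c}\tilde r/(b-\lambda+i\iota\epsilon)$ we use \eqref{ndfracl2} with $\alpha=0$, which needs $\tilde r\in H^1_k$. The main obstacle is that $\tilde r$ is only in $W^{1,p}$ with $p<2$. To handle this, integrate by parts once more, writing $1/(b-\lambda)=\partial_y\log/b'$. This leaves $\int\partial_yG_k\,\partial_z(\mathfrak{c}\tilde r/b')\log$ and $\int F_k\,\mathfrak{c}\tilde r\log/b'$. The first is bounded by $\|\partial_yG_k\|_{L^\infty}\|\partial_z\tilde r\|_{L^p}\|\log\|_{L^{p'}}$ pointwise and similarly in $L^2$. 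The derivative of the Duhamel term is handled the same way, using the bound \eqref{Fk.1} and that $\log^m$ lies in every $L^q$ uniformly in $\epsilon$ (since $b'\ne0$ there). This yields \eqref{ndrayg1}, losing the $|k|^{-1/2}$ gain but without any $\epsilon$-dependence, matching the stated bound.
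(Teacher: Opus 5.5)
Your argument is correct, but it is organized differently from the paper's proof.

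\textbf{The paper's route.} On the singular region, the paper integrates by parts once on the whole source $\mathfrak{c}\,\partial_z f/(b-\lambda+i\iota\epsilon)$ inside $\int\partial_yG_k(\cdots)\,dz$. This produces four pieces:
\begin{itemize}
\item a local term $\mathfrak{c}\,\partial_yf\,\log(b-\lambda+i\iota\epsilon)/b'$, which by \eqref{ndraygeneral2} is of the form $C_1\log^3+C_2\log^2+r\log$ and hence lies in $L^2$;
\item an $F_k$ term;
\item a coefficient-derivative term;
\item a term containing $\partial_z^2f\,\log$.
\end{itemize}
In the last term $\partial_z^2 f$ is eliminated using \eqref{ndrayf}. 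This gives sources $c_1\log^2/(b-\lambda+i\iota\epsilon)$, $(b''f-c_2)\log/(b-\lambda+i\iota\epsilon)$, $c_3\log^{\alpha+1}$ and $(k^2f-c_4)\log$, which are handled by \eqref{ndlogl2}, \eqref{ndfracl2} and \eqref{ndraygeneral1}.

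\textbf{Your route.} You substitute \eqref{ndraygeneral2} into the source before integrating by parts. The second-derivative information then enters through $\|(1-\chi_{\ast,\delta_0/10})C_i\|_{H^1_k}$ and $\|\partial_y\tilde r\|_{L^p}$, not through the equation. Your singular pieces are the $\alpha=2,1$ cases of \eqref{ndfracl2}, with coefficient $\mathfrak{c}(1-\chi_{\ast,\delta_0/10})C_i\in H^1_k$, which you re-derive. The remainder costs one integration by parts plus H\"older with $\log\in L^{p'}$.

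\textbf{Comparison.} Your version is more modular: it uses \eqref{ndraygeneral2} as a black box and never revisits the right-hand side of \eqref{ndrayf} or the $k^2f$ term. The paper's version uses the equation to trade $\partial_z^2 f$ for lower-order terms, so no derivative of the remainder appears in its integral terms.

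Two bookkeeping points.
\begin{itemize}
\item Put $h=\mathfrak{c}(1-\chi_{\ast,\delta_0/10})C_i/((i+1)b')$. Differentiating $\int\partial_zG_k\,h\log^{i+1}\,dz$ in $y$ also produces, via $\partial_y\partial_zG_k=F_k+\delta$, the local term $h\log^{i+1}$. For the remainder the analogous term is $\mathfrak{c}\tilde r\log/b'$. You do not list these, but they are bounded in $L^2$: $h,\tilde r\in L^\infty$, and powers of $\log$ are in $L^2$ uniformly in $\epsilon$ on the support of $1-\chi_{\ast,\delta_0/10}$.
\item For the $F_k$ terms and for the $|k|\,\|\cdot\|_{L^2}$ part of the $H^1_k$ norm, estimate the convolutions by Young's inequality against $\|h\log^{i+1}\|_{L^2}$ (respectively $\|\mathfrak{c}\tilde r\log/b'\|_{L^2}$). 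Use $\|F_k(y,\cdot)\|_{L^1}\lesssim1$ and $\|\partial_zG_k(y,\cdot)\|_{L^1}\lesssim|k|^{-1}$. This way no power of $|k|$ is lost, whereas the crude pointwise bound $|F_k|\lesssim|k|$ would lose one.
\end{itemize}
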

\begin{remark}
    We emphasize that $f$ in Lemma~\ref{ndrayg} is not a generic $H^1_k$ function. It must satisfy the Rayleigh equation \eqref{ndrayf}, thus having all properties obtained from Lemma \ref{ndraygeneral}. This is essential for us to obtain the regularity of $g$ even when the source term is more singular than that in \eqref{ndrayf}.
\end{remark}
\begin{proof} 
    By the limiting absorption principle, it suffices to prove 
    \begin{equation}\label{ndrayg2}
        \left\|\int_{\T}G_k(y,z)\frac{\mathfrak{c}(z)\partial_zf(z,\lambda)}{b(z)-\lambda+i\iota\epsilon}dz\right\|_{H^1_k}\lesssim\|c_1\|_{H^1_k}+\|c_2\|_{H^1_k}+\|c_3\|_{L^2}+\|c_4\|_{L^2}.
    \end{equation}
    In fact, in the singular region
    \begin{equation}\label{ndintpsi_y1}
    \begin{split}
        &\int_{\T}\partial_yG_k(y,z)\frac{(1-\chi_{\ast,\delta_0\slash10}(z))\mathfrak{c}(z)\partial_z f(z,\lambda)}{b(z)-\lambda+i\iota\epsilon}dz\\
        =&-\int_{\T}\partial_z\left(\frac{\partial_yG_k(y,z)(1-\chi_{\ast,\delta_0\slash10}(z))\mathfrak{c}(z)\partial_zf(z,\lambda)}{b'(z)}\right)\log(b(z)-\lambda+i\iota\epsilon)dz\\
        =&-(1-\chi_{\ast,\delta_0\slash10}(y))\frac{\mathfrak{c}(y)\partial_yf(y)}{b'(y)}\log(b(y)-\lambda+i\iota\epsilon)\\
        &-\int_{\T}\frac{F_k(y,z)(1-\chi_{\ast,\delta_0\slash10}(z))\mathfrak{c}(z)\partial_zf(z,\lambda)}{b'(z)}\log(b(z)-\lambda+i\iota\epsilon)dz\\
        &-\int_{\T}\partial_yG_k(y,z)\partial_zf(z,\lambda)\partial_z\left((1-\chi_{\ast,\delta_0\slash10}(z))\frac{\mathfrak{c}(z)}{b'(z)}\right)\log(b(z)-\lambda+i\iota\epsilon)dz\\
        &-\int_{\T}(1-\chi_{\ast,\delta_0\slash10}(z))\frac{\partial_yG_k(y,z)\mathfrak{c}(z)\partial_z^2f}{b'(z)}\log(b(z)-\lambda+i\iota\epsilon)dz.
    \end{split}
\end{equation}
{\color{black}
In the following equation, $r$ includes the regular-region integral in
\eqref{ndraygeneral3.1} in addition to the remainder in
\eqref{ndraygeneral3.3}.
\par}

By Lemma \ref{ndraygeneral}, the first term, which is the most singular among all, has the $L^2$ bound
\begin{equation}
    \begin{split}
        &\left\|(1-\chi_{\ast,\delta_0\slash10}(y))\frac{\mathfrak{c}(y)\partial_yf}{b'(y)}\log(b(y)-\lambda+i\iota\epsilon)\right\|_{L^2}\\
        \lesssim&\|(1-\chi_{\ast,\delta_0\slash10}(y))\bigl[C_{1}(y)\left(\log(b(y)-\lambda+i\iota\epsilon)\right)^{3}+C_{2}(y)(\log(b(y)-\lambda+i\iota\epsilon))^2\bigr]\|_{L^2}\\
        &+\|r(y,\lambda)\log(b(y)-\lambda+i\iota\epsilon)\|_{L^2}\\
\lesssim&\|(1-\chi_{\ast,\delta_0\slash10})C_1\|_{H^1_k}+\|(1-\chi_{\ast,\delta_0\slash10})C_2\|_{H^1_k}+\|r\|_{W^{1,p}}\\
\lesssim&\|c_1\|_{H^1_k}+\|c_2\|_{H^1_k}+\|c_3\|_{L^2}+\|c_4\|_{L^2}.
    \end{split}
\end{equation}
The second and third terms follow from \eqref{ndlogl2} and \eqref{ndraygeneral2}. The only term requiring extra care is the last one. By \eqref{ndrayf}, we have $\partial_z^2f=k^2f+\frac{b''(z)f}{b(z)-\lambda+i\iota\epsilon}-\mathcal{R}$. Together with \eqref{ndlogl2}, \eqref{ndfracl2} and \eqref{ndraygeneral1}, we obtain
\begin{align}\label{ndintpsi_y2}
        &\left\|\int_{\T}(1-\chi_{\ast,\delta_0\slash10}(z))\frac{\partial_yG_k(y,z)\mathfrak{c}(z)\partial_z^2f}{b'(z)}\log(b(z)-\lambda+i\iota\epsilon)dz\right\|_{L^2}\notag\\
        \lesssim&\left\|\int_{\T}\partial_yG_k(y,z)(1-\chi_{\ast,\delta_0\slash10}(z))\frac{\mathfrak{c}(z)c_1(z)}{b'(z)}\frac{(\log(b(z)-\lambda+i\iota\epsilon))^2}{b(z)-\lambda+i\iota\epsilon}dz\right\|_{L^2}\notag\\
        &+\left\|\int_{\T}\partial_yG_k(y,z)(1-\chi_{\ast,\delta_0\slash10}(z))\frac{\mathfrak{c}(z)}{b'(z)}\left(b''(z)f(z)-c_2(z)\right)\frac{\log(b(z)-\lambda+i\iota\epsilon)}{b(z)-\lambda+i\iota\epsilon}dz\right\|_{L^2}\notag\\
        &+\left\|\int_{\T}-\partial_yG_k(y,z)(1-\chi_{\ast,\delta_0\slash10}(z))\frac{\mathfrak{c}(z)c_3(z)}{b'(z)}(\log(b(z)-\lambda+i\iota\epsilon))^{\alpha+1}dz\right\|_{L^2}\notag\\
        &+\left\|\int_{\T}\partial_yG_k(y,z)(1-\chi_{\ast,\delta_0\slash10}(z))\frac{\mathfrak{c}(z)}{b'(z)}(-c_4(z)+k^2f(z))\log(b(z)-\lambda+i\iota\epsilon)dz\right\|_{L^2} \notag\\
        \lesssim&\frac{1}{|k|^{1/2}}\left(\|c_1\|_{H^1_k}+\|c_2\|_{H^1_k}+|k|\|f\|_{H^1_k}+\|c_3\|_{L^2}+\|c_{4}\|_{L^2}\right)\notag\\
        \lesssim&\|c_1\|_{H^1_k}+\|c_2\|_{H^1_k}+\|c_{3}\|_{L^2}+\|c_{4}\|_{L^2}.
\end{align}
Thus we proved \eqref{ndrayg2}.

\end{proof}
\begin{lemma}\label{ndpsi}
    Assume that $\kappa k\in\mathbb{Z}\backslash\{0\}, \lambda\in \Sigma\backslash \cup_{j}\Sigma_{j, \delta_0}$ and let $\psi_{k,\epsilon}^{\iota}(y,\lambda)$ with $\iota\in\{\pm\}, \epsilon \in (0, \epsilon_0)$ be as defined in \eqref{F7}. 
    Then we have the bound
    \begin{equation}
        \begin{split}
            \norm{\psi }_{H^1_{k}} \lesssim \frac{\big\|\omega_{0k}\big\|_{H^1_k}}{|k|^{1/2}}.
        \end{split}
    \end{equation}
    Moreover, for any $1\leq p<2$,
    \begin{equation}\label{ndpsi_y}
        \left\|\partial_y\psi_{k,\epsilon}^{\iota}(y,\lambda)-C(y)(1-\chi_{\ast,\delta_0\slash10}(y))\log(b(y)-\lambda+i\iota\epsilon)\right\|_{W^{1,p}}\lesssim\|\omega_{0k}\|_{H^1_k},
    \end{equation}
    with $\left\|C(y)\right\|_{H^1_k}\lesssim\|\omega_{0k}\|_{H^1_k}$.
\end{lemma}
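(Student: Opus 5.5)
The plan is to read \eqref{F7} as a special case of \eqref{ndrayf} in Lemma \ref{ndraygeneral}. Take $f=\psi_{k,\epsilon}^\iota$ with coefficients
\begin{equation*}
c_2=\omega_{0k},\qquad c_1=c_3=c_4=0 .
\end{equation*}
Since $\omega_{0k}\in H^1_k$, the hypotheses of Lemma \ref{ndraygeneral} hold for any admissible choice of $\alpha$, and the $c_3$ term is absent anyway. The existence of $\psi^\iota_{k,\epsilon}\in H^1_k$ for $0<\epsilon<\epsilon_0$ is guaranteed by Remark \ref{F6.1}.

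\textbf{The $H^1_k$ bound.} By \eqref{ndraygeneral1} we get $\|\psi\|_{H^1_k}\lesssim |k|^{-1/2}\|\omega_{0k}\|_{H^1_k}$. Unwinding that lemma, this amounts to two steps:
\begin{itemize}
\item write $\psi+T_\Theta\psi=\int_\T G_k(y,z)\,\omega_{0k}(z)(b(z)-\lambda+i\iota\epsilon)^{-1}dz$ using \eqref{T1};
\item bound the right-hand side in $H^1_k$ by \eqref{ndfracl2} with exponent $0$ (together with its undifferentiated analogue from \eqref{ndlogl2} after one integration by parts), and then invert $I+T_\Theta$ uniformly in $\epsilon$ by the limiting absorption principle, Lemma \ref{T5}.
\end{itemize}

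\textbf{The expansion \eqref{ndpsi_y}.} With $c_1=0$, \eqref{ndraygeneral2} gives $C_1=0$, and the remaining coefficient is
\begin{equation*}
C(y):=C_2(y)=\frac{b''(y)\psi(y,\lambda)-\omega_{0k}(y)}{b'(y)}.
\end{equation*}
This is exactly the coefficient produced when one integrates by parts in the singular region $\operatorname{supp}(1-\chi_{\ast,\delta_0/10})$, using $1/(b-\lambda+i\iota\epsilon)=\partial_z\log(b-\lambda+i\iota\epsilon)/b'(z)$ as in \eqref{ndraygeneral3.2}. Two facts control it. First, $|b'|\gtrsim 1$ on that support, by the choice of $\delta_0$ in \eqref{IntN4.0} and Assumption \ref{MaAs}. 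Second, $b\in C^6$. Hence
\begin{equation*}
\|(1-\chi_{\ast,\delta_0/10})C\|_{H^1_k}\lesssim \|\omega_{0k}\|_{H^1_k}+\|\psi\|_{H^1_k}\lesssim\|\omega_{0k}\|_{H^1_k},
\end{equation*}
where the last step uses the $H^1_k$ bound just proved. The lemma states the estimate for $C$ itself rather than for the cutoff product; I take $C$ to mean the coefficient with the cutoff absorbed, or else multiply $C_2$ by a slightly larger cutoff equal to $1$ on the relevant support. The remainder $r$ is then bounded in $W^{1,p}$ for $p<2$ by $\|\omega_{0k}\|_{H^1_k}$, which is \eqref{ndraygeneral2}. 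The regular-region piece \eqref{ndraygeneral3.1} is even better, being in $H^1$, hence in $W^{1,p}$ on $\T$.

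\textbf{Main obstacle.} The only step needing care is the $W^{1,p}$ control of $\partial_y r$. The troublesome terms are the logarithmic ones:
\begin{itemize}
\item $\partial_y\big((1-\chi_{\ast,\delta_0/10})C\big)\log(b-\lambda+i\iota\epsilon)$;
\item the $\partial_yF_k$-integrals against $\log$.
\end{itemize}
For the first, take $\partial_yC\in L^2$ and $\log(b-\lambda+i\iota\epsilon)\in L^q$ for every $q<\infty$, uniformly in $\epsilon$, since $b'\neq0$ there. Hölder then gives an $L^p$ bound for each $p<2$. This is exactly why the endpoint $p=2$ is excluded. For the second, use the kernel bounds \eqref{Fk.1} and \eqref{Gk3.1} with Young's inequality, as in \eqref{ndlogl2}.
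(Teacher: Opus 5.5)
Your proposal is correct and is exactly the paper's proof. The paper simply invokes Lemma~\ref{ndraygeneral} with $c_2=\omega_{0k}$ and $c_1=c_3=c_4=0$, which gives both the $H^1_k$ bound and the expansion with $C=(b''\psi-\omega_{0k})/b'$. Your remark that the bound on $C$ should be read for the cutoff product $(1-\chi_{\ast,\delta_0/10})C$ is also the right interpretation, since that product is what Lemma~\ref{ndraygeneral} actually controls.
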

\begin{proof}
    By the Rayleigh equation \eqref{F7}, the estimates follow directly from Lemma \ref{ndraygeneral} by taking $c_2(y) = \omega_{0k}(y)$, $c_1(y)=c_3(y)=c_4(y)\equiv 0$ in \eqref{ndrayf}. 
\end{proof}
\begin{lemma}\label{ndpsi_l}
    Assume that $ \kappa k\in\mathbb{Z}\backslash\{0\},\, \lambda\in \Sigma\backslash \cup_{j}\Sigma_{j, \delta_0},\, \epsilon \in (0, \epsilon_0),\,  \iota\in\{\pm\}$.
    Let $\psi_{k,\epsilon}^{\iota}(y,\lambda)$ be as defined in \eqref{F7}.
    Denote for $y\in \T $,
    \begin{equation}
        \begin{split}
            &\mathcal{S}^\iota_{1,0,\epsilon}(y,\lambda):=\, -(1-\chi_{\ast,\delta_0\slash10}(y))(b''(y) \psi(y, \lambda)-\omega_{0k}(y)) \frac{1}{(b'(y))^2}\log(b(y)-\lambda+i\iota\epsilon).\\
        \end{split}
    \end{equation}
    We have the bounds 
    \begin{equation}\label{ndpsi_l1}
        \norm{  \p_\lambda \psi_{k, \epsilon}^\iota  - \mathcal{S}^\iota_{1,0,\epsilon} }_{H^1_k} \lesssim \frac{\big\|\omega_{0k}\big\|_{H^2_k}}{|k|^{1/2}}.
    \end{equation}
\end{lemma}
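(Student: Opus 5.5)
Differentiate the Rayleigh equation \eqref{F7} in $\lambda$, then subtract off the explicit log singularity so the remainder solves a Rayleigh equation of the type in Lemma~\ref{ndraygeneral}. Write $\psi=\psi^\iota_{k,\epsilon}$ and $\varphi:=b(y)-\lambda+i\iota\epsilon$. Differentiating \eqref{F7} gives
\begin{equation*}
\Big(-\partial_y^2+k^2+\frac{b''}{\varphi}\Big)\partial_\lambda\psi=\frac{\omega_{0k}-b''\psi}{\varphi^2}.
\end{equation*}
The source is a double pole, too singular for Lemma~\ref{ndraygeneral}. Away from critical points we use $\varphi^{-2}=-\frac{1}{b'}\partial_y(\varphi^{-1})$, or equivalently $\partial_\lambda\log\varphi=-\varphi^{-1}$. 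With $A(y):=(1-\chi_{\ast,\delta_0/10})(b''\psi-\omega_{0k})/(b')^2$, we have $\mathcal S^\iota_{1,0,\epsilon}=-A\log\varphi$. We then compute $\big(-\partial_y^2+k^2+\frac{b''}{\varphi}\big)\mathcal S^\iota_{1,0,\epsilon}$ directly. Since $\partial_y^2\log\varphi=b''/\varphi-(b')^2/\varphi^2$, the term $A(b')^2/\varphi^2$ cancels the double pole in the support of $1-\chi_{\ast,\delta_0/10}$. In the region where $\chi_{\ast,\delta_0/10}\neq0$, $\varphi$ is bounded away from zero because $\lambda\notin\cup_j\Sigma_{j,\delta_0}$, so that piece is harmless.

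\textbf{The remainder equation.} The remainder $r:=\partial_\lambda\psi-\mathcal S^\iota_{1,0,\epsilon}$ then satisfies a Rayleigh equation whose right-hand side has four kinds of terms:
\begin{itemize}
\item simple poles $2A'b'/\varphi$ and $A b''/\varphi$;
\item a term $-(b''A/\varphi)\log\varphi$ coming from the potential acting on $\mathcal S$;
\item a term $(A''-k^2A)\log\varphi$;
\item a regular term $\chi$-supported, which is $O(\|\psi\|+\|\omega_{0k}\|)$.
\end{itemize}
Matching with \eqref{ndrayf}, we take
\begin{itemize}
\item $c_1=-b''A$,
\item $c_2=2A'b'+Ab''$ together with the $\chi'$ terms,
\item $c_3=A''-k^2A$ with $\alpha=1$,
\item $c_4$ the bounded remainder.
\end{itemize}
Lemma~\ref{ndraygeneral}, estimate \eqref{ndraygeneral1}, then gives $\|r\|_{H^1_k}\lesssim|k|^{-1/2}(\|c_1\|_{H^1_k}+\|c_2\|_{H^1_k}+\|c_3\|_{L^2}+\|c_4\|_{L^2})$.

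\textbf{Main obstacle: controlling the coefficients.} Under $\|\omega_{0k}\|_{H^2_k}$ the bounds on $c_1,c_4$ are immediate, but $c_2$ and $c_3$ need $A\in H^2_k$, hence $\psi\in H^2_k$ away from critical points. This fails, because $\partial_y\psi$ has a $C(y)\log\varphi$ singularity by \eqref{ndpsi_y}. My plan is to keep $\partial_y\psi$ explicitly rather than place it in $c_2,c_3$:
\begin{itemize}
\item The term $2A'b'/\varphi$ contains $2b''\partial_y\psi/(b'\varphi)$. This is exactly the structure of Lemma~\ref{ndrayg} with $\mathfrak c=2b''(1-\chi)/b'$ and $f=\psi$, which satisfies \eqref{ndrayf} with $c_2=\omega_{0k}$. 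We split $r$ linearly and treat this piece by Lemma~\ref{ndrayg}.
\item The term $A''\log\varphi$ contains $\partial_y^2\psi\log\varphi$. Here we substitute $\partial_y^2\psi=k^2\psi+b''\psi/\varphi-\omega_{0k}/\varphi$ from \eqref{F7}. This converts it into $\log\varphi/\varphi$ terms (type $c_1$, with $H^1_k$ coefficients $b''\psi,\omega_{0k}$) plus $k^2\psi\log\varphi$, which cancels against $-k^2A\log\varphi$ up to lower order.
\item The remaining $\partial_y\psi\log\varphi$ pieces lie in $L^2$ with norm $\lesssim\|\omega_{0k}\|_{H^1_k}$, since $C\log^2\varphi\in L^2$ by \eqref{ndpsi_y}. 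So they serve as $c_4$.
\end{itemize}

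\textbf{The $|k|^{-1/2}$ gain.} Lemma~\ref{ndrayg} as stated gives no $|k|^{-1/2}$ gain. I will recover it by redoing its proof: the analogous integral estimates \eqref{ndintpsi_y2} already carry $|k|^{-1/2}$, and the local term is controlled by Lemma~\ref{ndpsi}'s bound $\|\psi\|_{H^1_k}\lesssim|k|^{-1/2}\|\omega_{0k}\|_{H^1_k}$, with $\omega_{0k}\in H^2_k$ used to absorb the extra derivative. Checking this $k$-bookkeeping is the step I expect to be most delicate.
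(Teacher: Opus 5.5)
Your proposal is correct and is essentially the paper's argument. You subtract $\mathcal{S}^\iota_{1,0,\epsilon}$, use the Rayleigh equation to eliminate $\partial_y^2\psi$, organize the source as $c_1\log\varphi/\varphi+(c_2+\mathfrak{c}\,\partial_y\psi)/\varphi+c_3\log\varphi+c_4$, and conclude with Lemmas~\ref{ndraygeneral} and~\ref{ndrayg}; the step you flag as delicate needs no reworking of Lemma~\ref{ndrayg}. Applied with $f=\psi$ (so only $c_2=\omega_{0k}$ is nonzero), it gives $\lesssim\|\omega_{0k}\|_{H^1_k}\le|k|^{-1}\|\omega_{0k}\|_{H^2_k}$ directly from the definition \eqref{hk}, which is already better than the required $|k|^{-1/2}$ since $|k|\geq 1/\kappa>1$.
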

\begin{proof}
    By the Rayleigh equation, $\partial_\lambda\psi_{k,\epsilon}^{\iota}(y,\lambda)-\mathcal{S}^\iota_{1,0,\epsilon}$ satisfies
    \begin{equation}
        \left(\partial_y^2-k^2-\frac{b''(y)}{b(y)-\lambda+i\iota\epsilon}\right)\left(\partial_\lambda\psi_{k,\epsilon}^{\iota}(y,\lambda)-\mathcal{S}^\iota_{1,0,\epsilon}\right)=\mathcal{R}^\iota_{1,0,\epsilon}(y,\lambda),
    \end{equation}
    where $\mathcal{R}^\iota_{1,0,\epsilon}$ is explicit by a direct computation.
    By the previous Lemmas, one can simplify it to the form
\begin{equation}
    \mathcal{R}^\iota_{1,0,\epsilon}(y,\lambda) = c_1(y)\frac{\log(b(y)-\lambda+i\iota\epsilon)}{b(y)-\lambda+i\iota\epsilon}+\frac{c_2(y)+\mathfrak{c}(y)\partial_y\psi}{b(y)-\lambda+i\iota\epsilon}+c_{3}(y)\log(b(y)-\lambda+i\iota\epsilon)+c_4(y),
\end{equation}
with $|k|\|c_1\|_{H^1_k}+\|c_2\|_{H^1_k}+\|c_{3}\|_{L^2}+\|c_{4}\|_{L^2}\lesssim\|\omega_{0k}\|_{H^2_k}$. Therefore, \eqref{ndpsi_l1} follows from Lemmas \ref{ndraygeneral}, \ref{ndrayg} directly.
\end{proof}
\begin{lemma}\label{ndpsi_D}
Assume that $\kappa k\in\mathbb{Z}\backslash\{0\}, \lambda\in \Sigma\backslash \cup_{j}\Sigma_{j, \delta_0}$ and let $\psi_{k,\epsilon}^{\iota}(y,\lambda)$ with $\iota\in\{\pm\}, \epsilon \in (0, \epsilon_0)$ be as defined in \eqref{F7}. 
Then we have the bound 
    \begin{equation}
        \begin{split}
            \norm{ D_\lambda \psi_{k, \epsilon}^\iota }_{H^1_k} \lesssim \frac{\big\|\omega_{0k}\big\|_{H^2_k}}{|k|^{1/2}}.
        \end{split}
    \end{equation}
And, for any $1\leq p<2$,
    \begin{equation}
        \left\|\partial_yD_\lambda\psi_{k,\epsilon}^{\iota}(y,\lambda) -C(y)(1-\chi_{\ast,\delta_0\slash10}(y))\log(b(y)-\lambda+i\iota\epsilon)\right\|_{W^{1,p}}\lesssim \|\omega_{0k}\|_{H^2_k},
    \end{equation}
    with $\|C(y)\|_{H^1_k}\lesssim\|\omega_{0k}\|_{H^2_k}$.
\end{lemma}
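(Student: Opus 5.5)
We apply the operator $D_\lambda=b'(y)\partial_\lambda+\partial_y$ directly to the Rayleigh equation \eqref{F7} and check that the resulting equation for $D_\lambda\psi^\iota_{k,\epsilon}$ fits the framework of Lemmas \ref{ndraygeneral} and \ref{ndrayg}. The key algebraic fact is that $D_\lambda$ annihilates $b(y)-\lambda+i\iota\epsilon$. Hence
\[
D_\lambda\frac{1}{b-\lambda+i\iota\epsilon}=0,\qquad D_\lambda\Big(\frac{b''}{b-\lambda+i\iota\epsilon}\Big)=\frac{b'''}{b-\lambda+i\iota\epsilon}.
\]
So, unlike $\partial_\lambda$, the operator $D_\lambda$ produces no quadratic singularity $(b-\lambda+i\iota\epsilon)^{-2}$. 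It also does not commute with $\partial_y^2$; the commutator is
\[
[\partial_y^2,D_\lambda]=b'''\partial_\lambda+2b''\partial_y\partial_\lambda.
\]

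Step 1. Compute the equation for $D_\lambda\psi$. Writing $\psi=\psi^\iota_{k,\epsilon}$, we obtain
\[
\Big(-\partial_y^2+k^2+\tfrac{b''}{b-\lambda+i\iota\epsilon}\Big)D_\lambda\psi=\frac{\omega_{0k}'-b'''\psi}{b-\lambda+i\iota\epsilon}-b'''\partial_\lambda\psi-2b''\partial_y\partial_\lambda\psi .
\]
The commutator terms involve $\partial_\lambda\psi$, which is not uniformly in $H^1_k$. To handle them, we eliminate $\partial_\lambda$ in favor of $D_\lambda$ and $\partial_y$. Away from the critical points, $\partial_\lambda=(D_\lambda-\partial_y)/b'$. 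Near the critical points, i.e. on the support of $\chi_{\ast,\delta_0/10}$, the denominator is bounded below, so $\partial_\lambda\psi$ is harmless there by Lemma \ref{ndpsi_l}. We therefore split with the cutoff $\chi_{\ast,\delta_0/10}$. On the support of $1-\chi_{\ast,\delta_0/10}$, the term $-2b''\partial_y\partial_\lambda\psi$ becomes $-2\frac{b''}{b'}\partial_yD_\lambda\psi$ plus terms in $\partial_y^2\psi$ and $\partial_y\psi$. The term $-\frac{2b''}{b'}\partial_y D_\lambda\psi$ is a first-order term in the unknown and cannot be absorbed directly. We handle it by conjugation: set $\Psi=\rho(y)D_\lambda\psi$ with $\rho=(b')^{-1}$ (cut off away from the critical points), i.e. work with $\partial_\lambda\psi$ itself in that region. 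Alternatively, we can use the decomposition of Lemma \ref{ndpsi_l}, $\partial_\lambda\psi=\mathcal{S}^\iota_{1,0,\epsilon}+r_{1,0}$ with $\|r_{1,0}\|_{H^1_k}\lesssim|k|^{-1/2}\|\omega_{0k}\|_{H^2_k}$. This gives the cleanest route:
\[
D_\lambda\psi=b'\,\partial_\lambda\psi+\partial_y\psi=b'\mathcal{S}^\iota_{1,0,\epsilon}+\partial_y\psi+b' r_{1,0}.
\]

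Step 2. Show that the logarithms cancel. By Lemma \ref{ndpsi}, $\partial_y\psi=C(y)(1-\chi)\log(b-\lambda+i\iota\epsilon)+W^{1,p}$, where, from the proof of Lemma \ref{ndraygeneral}, $C=(b''\psi-\omega_{0k})/b'$. On the other hand,
\[
b'\mathcal{S}^\iota_{1,0,\epsilon}=-(1-\chi)\frac{b''\psi-\omega_{0k}}{b'}\log(b-\lambda+i\iota\epsilon).
\]
The two logarithmic singularities cancel exactly. Consequently $D_\lambda\psi$ equals $b'r_{1,0}$ plus the $W^{1,p}$ remainder of $\partial_y\psi$ (plus $\partial_y\psi$ near the critical points). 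For the $H^1_k$ bound we still need the $H^1_k$ norm of $\partial_y\psi+b'\mathcal{S}$, which is not given by a $W^{1,p}$ bound. So instead of reading the bound off this identity, we use the equation: $D_\lambda\psi-b'r_{1,0}$ satisfies a Rayleigh equation whose right side has the form \eqref{ndrayf}, with $c_2$ built from $\omega_{0k}'$ and $b'''\psi$, and with terms of the form $\mathfrak{c}\,\partial_y\psi/(b-\lambda+i\iota\epsilon)$ arising from the commutator. We then apply Lemma \ref{ndraygeneral} for $\|\cdot\|_{H^1_k}\lesssim|k|^{-1/2}\|\omega_{0k}\|_{H^2_k}$ and Lemma \ref{ndrayg} for the $\partial_y\psi$-type sources. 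The $W^{1,p}$ expansion with coefficient $C$ then follows from \eqref{ndraygeneral2}, with $C$ collecting $b'''\psi-\omega_{0k}'$ over $b'$, which satisfies $\|C\|_{H^1_k}\lesssim\|\omega_{0k}\|_{H^2_k}$.

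Main obstacle. The main difficulty is the commutator term $2b''\partial_y\partial_\lambda\psi$. Its divergent part must be shown to cancel, or to be expressible, via the Rayleigh equation for $\psi$ (replacing $\partial_y^2\psi$ by $k^2\psi+\frac{b''\psi-\omega_{0k}}{b-\lambda+i\iota\epsilon}$), as sources of the admissible forms with constants $\lesssim\|\omega_{0k}\|_{H^2_k}$ and the $|k|^{-1/2}$ gain. In particular, we must check that no $(b-\lambda+i\iota\epsilon)^{-2}$ term survives. This check is exactly the cancellation found in Step 2.
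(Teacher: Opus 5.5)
Your Step 1 equation is correct, and Lemma \ref{ndpsi_l} is the right input. However, you substitute it in the wrong place, and as a result the $W^{1,p}$ expansion (the second half of the lemma) is not established.

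The paper inserts $\partial_\lambda\psi=\mathcal{S}^\iota_{1,0,\epsilon}+r_{1,0}$ directly into the commutator source of your Step 1 equation. Write $\chi=\chi_{\ast,\delta_0/10}$. Since $\partial_y\mathcal{S}^\iota_{1,0,\epsilon}$ produces only a simple pole with an $H^1_k$ coefficient plus a logarithm with an $L^2$ coefficient, one gets
\[
-b'''\partial_\lambda\psi-2b''\partial_y\partial_\lambda\psi=\frac{2(1-\chi)\,b''\,(b''\psi-\omega_{0k})}{b'\,(b-\lambda+i\iota\epsilon)}+c_3\log(b-\lambda+i\iota\epsilon)+c_4 .
\]
Here $c_3=(1-\chi)\frac{b'''(b''\psi-\omega_{0k})}{(b')^2}+2b''\partial_y\big[(1-\chi)\frac{b''\psi-\omega_{0k}}{(b')^2}\big]$ lies in $L^2$. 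Also $c_4=-b'''r_{1,0}-2b''\partial_y r_{1,0}$ satisfies $\|c_4\|_{L^2}\lesssim|k|^{-1/2}\|\omega_{0k}\|_{H^2_k}$. So the whole right side has the form \eqref{ndrayf} with $c_1=0$ and $c_2=\omega_{0k}'-b'''\psi+2(1-\chi)b''(b''\psi-\omega_{0k})/b'$. Lemma \ref{ndraygeneral} alone then gives both claims, with $C=(b''D_\lambda\psi-c_2)/b'$. In this route no $\partial_y^2\psi$ appears, no $(b-\lambda+i\iota\epsilon)^{-2}$ term has to cancel, and Lemma \ref{ndrayg} is not needed. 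Your ``main obstacle'' and the (correct) cancellation in Step 2 play no role.

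Your route instead works with $u:=D_\lambda\psi-b'r_{1,0}=\partial_y\psi+b'\mathcal{S}^\iota_{1,0,\epsilon}$. The $H^1_k$ bound can be pushed through this way, though you did not carry out the computation. In the equation for $u$, the $(b-\lambda+i\iota\epsilon)^{-2}$ and $\log/(b-\lambda+i\iota\epsilon)$ terms do cancel, up to terms supported where $\chi\neq0$, on which the denominator is bounded below. The leftover source $2(1-\chi)b''\partial_y\psi/(b-\lambda+i\iota\epsilon)$ is exactly what Lemma \ref{ndrayg} handles.

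The expansion, however, cannot come from \eqref{ndraygeneral2}. That estimate needs $c_2\in H^1_k$, whereas $\mathfrak{c}\,\partial_y\psi$ is not uniformly in $H^1_k$, and Lemma \ref{ndrayg} provides no expansion at all. In fact $\partial_y u$ contains
\[
-(1-\chi)^2\,\frac{b''\,(b''\psi-\omega_{0k})}{(b')^2}\,\log^2(b-\lambda+i\iota\epsilon).
\]
This happens because the coefficient of the logarithm in $b'\mathcal{S}^\iota_{1,0,\epsilon}$ contains $\psi$, whose derivative itself carries a logarithm. The $\log^2$ term is cancelled only by $\partial_y(b'r_{1,0})$, and Lemma \ref{ndpsi_l} gives nothing about $r_{1,0}$ beyond an $H^1_k$ bound.

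Relatedly, the coefficient you describe, built only from $(b'''\psi-\omega_{0k}')/b'$, is not the correct one. It misses $b''D_\lambda\psi/b'$ and the commutator contribution computed above.

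The fix is to return to the equation for $D_\lambda\psi$ itself and substitute the decomposition into the commutator terms, as described above.
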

\begin{proof}
    Using the Rayleigh equation together with Lemmas \ref{ndpsi} and \ref{ndpsi_l}, we obtain 
    \begin{equation}\label{Raypsi_D}
        \begin{split}
            &\left(\partial_y^2-k^2-\frac{b''(y)}{b(y)-\lambda+i\iota\epsilon}\right)D_\lambda\psi_{k,\epsilon}^{\iota}(y,\lambda) \\
            &= \frac{b'''(y)\psi(y,\lambda)-\omega_{0k}'(y)}{b(y)-\lambda+i\iota\epsilon}+b'''(y)\partial_\lambda\psi(y,\lambda)+2b''(y)\partial_y\partial_\lambda\psi(y,\lambda)\\
            &=\frac{c_2(y)}{b(y)-\lambda+i\iota\epsilon}+c_3(y)\log(b(y)-\lambda+i\iota\epsilon)+c_4(y).
        \end{split}
    \end{equation}
    Here $\|c_2\|_{H^1_k}+\|c_3\|_{L^2}+\|c_4\|_{L^2}\lesssim\|\omega_{0k}\|_{H^2_k}$.
    The conclusion follows from Lemma \ref{ndraygeneral}.

\end{proof}

\begin{lemma}\label{ndpsi_l2}
    Assume that $\kappa k\in\mathbb{Z}\backslash\{0\}, \lambda\in \Sigma\backslash \cup_{j}\Sigma_{j, \delta_0},\, \epsilon \in (0, \epsilon_0),\, \iota\in\{\pm\}$, and let $\psi_{k,\epsilon}^{\iota}(y,\lambda)$ be as defined in \eqref{F7}. 
    For $y\in \T$, set
    \begin{equation}\label{ndS20}
        \begin{split}
            \mathcal{S}_{2, 0, \epsilon}^\iota(y, \lambda) &:=   C_1(1-\chi_{\ast,\delta_0\slash10}(y))\frac{b''(y)\psi(y,\lambda)-\omega_{0k}(y)}{b(y)-\lambda+i\iota\epsilon}\\
            &+\left[C_2(y)D_\lambda\psi(y,\lambda)+C_3(y)\psi(y,\lambda)+\Omega_{0k}(y)  \right](1-\chi_{\ast,\delta_0\slash10}(y))\log(b(y)-\lambda+i\iota\epsilon),
        \end{split}
    \end{equation}
    where 
    \begin{equation}
        \begin{split}
            &\Omega_{0k}(y)=C_4(y)\omega_{0k}'(y)+C_5(y)\omega_{0k}(y),\\
            &C_1(y)=\frac{1}{(b'(y))^2},\quad\ C_2(y)=-\frac{2b''(y)}{(b'(y))^3},\quad\ C_3(y)=-\frac{2b'''(y)}{(b'(y))^3}+\frac{3(b''(y))^2}{(b'(y))^4},\\
            &C_4(y)=\frac{2}{(b'(y))^3},\quad\ C_5(y)=-\frac{3b''(y)}{(b'(y))^4}.
        \end{split}
    \end{equation}
    Then we have the bounds
    \begin{equation}
        \norm{\p_\lambda^2 \psi_{k, \epsilon}^\iota   - \mathcal{S}_{2, 0, \epsilon}^\iota }_{H^1_k} \lesssim  \frac{\big\|\omega_{0k}\big\|_{H^3_k}}{|k|^{1/2}}.
    \end{equation}
\end{lemma}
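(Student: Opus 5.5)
We follow the scheme of Lemmas \ref{ndpsi_l} and \ref{ndpsi_D}: derive the Rayleigh equation satisfied by $\partial_\lambda^2\psi$, subtract the explicit singular profile $\mathcal{S}^\iota_{2,0,\epsilon}$, and show that the resulting right-hand side has the structure \eqref{ndrayf}, possibly with an additional term $\mathfrak{c}(y)\partial_y f/(b-\lambda+i\iota\epsilon)$ as in Lemma \ref{ndrayg}. The conclusion then follows from the limiting absorption principle (Lemma \ref{T5}) through Lemmas \ref{ndraygeneral} and \ref{ndrayg}.

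\textbf{Step 1: equation for $\partial_\lambda^2\psi$.} Differentiate \eqref{F7} twice in $\lambda$. Writing $\varphi=b-\lambda+i\iota\epsilon$, this gives
\begin{equation*}
\Bigl(-\partial_y^2+k^2+\frac{b''}{\varphi}\Bigr)\partial_\lambda^2\psi=\frac{2(\omega_{0k}-b''\psi)}{\varphi^3}+\frac{2(\omega_{0k}-b''\psi)'\,\text{-type terms}}{\varphi^2}\cdots
\end{equation*}
More precisely, the right-hand side is $-2b''\partial_\lambda\psi/\varphi^2+2(\omega_{0k}-b''\psi)/\varphi^3$. This is too singular to treat directly, which is why we subtract $\mathcal{S}_{2,0,\epsilon}$.

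\textbf{Step 2: the subtraction.} The leading term $C_1(1-\chi)(b''\psi-\omega_{0k})/\varphi$ is designed so that applying $-\partial_y^2$ to it (with $\partial_y\varphi=b'$) produces $-2(b''\psi-\omega_{0k})/\varphi^3$, cancelling the $\varphi^{-3}$ term exactly. The derivatives $\partial_y(b''\psi-\omega_{0k})$ then generate $\varphi^{-2}$ terms. We rewrite these using $\partial_y=D_\lambda-b'\partial_\lambda$, which converts them into terms containing $D_\lambda\psi$ and $\partial_\lambda\psi$. The $\partial_\lambda\psi$ pieces combine with $-2b''\partial_\lambda\psi/\varphi^2$, where we substitute $\partial_\lambda\psi=\mathcal{S}_{1,0}+r_{1,0}$ from Lemma \ref{ndpsi_l}. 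The remaining $\varphi^{-2}$ terms, whose coefficients involve $D_\lambda\psi$, $\psi$, $\omega_{0k}'$ and $\omega_{0k}$, are then cancelled at leading order by $-\partial_y^2$ applied to the logarithmic part $[C_2D_\lambda\psi+C_3\psi+\Omega_{0k}](1-\chi)\log\varphi$, since $\partial_y^2\log\varphi=b''/\varphi-(b')^2/\varphi^2$. This bookkeeping fixes the constants $C_2,\dots,C_5$. We do not recheck them in detail; matching the $\varphi^{-2}$ coefficient is a finite algebraic identity.

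\textbf{Step 3: the residual.} After the cancellation, $\mathcal{R}_{2,0}$ consists of terms of the following kinds:
\begin{itemize}
\item $c_1\log\varphi/\varphi$ and $c_2/\varphi$ with $c_i\in H^1_k$;
\item $\mathfrak{c}\,\partial_y D_\lambda\psi/\varphi$ and $\mathfrak{c}\,\partial_y\psi/\varphi$;
\item $c_3\log^\alpha\varphi$ and $c_4\in L^2$;
\item terms supported where $\chi'\neq0$, which are harmless because $\varphi$ is bounded below there.
\end{itemize}
The coefficients $c_i$ involve $\omega_{0k}$ up to two derivatives, and $D_\lambda\psi$ and $\psi$ in $H^1_k$. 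The $H^1_k$ norms of these are controlled by $\|\omega_{0k}\|_{H^3_k}$ through Lemmas \ref{ndpsi}, \ref{ndpsi_D} and \ref{ndpsi_l}; the $k^2$ terms are absorbed by the $H^m_k$ weights. The terms $\mathfrak{c}\,\partial_y(\cdot)/\varphi$ are handled by Lemma \ref{ndrayg}. For the $D_\lambda\psi$ term this applies because $D_\lambda\psi$ solves an equation of the form \eqref{ndrayf} (see \eqref{Raypsi_D}), with coefficients bounded by $\|\omega_{0k}\|_{H^2_k}$. Collecting these estimates gives the stated bound with the factor $|k|^{-1/2}$, by Lemma \ref{ndraygeneral}.

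\textbf{Main obstacle.} The hard part is showing that no uncontrolled $\varphi^{-2}$ singularity survives, together with the $\partial_y^2$ acting on $D_\lambda\psi\log\varphi$. That term produces $\partial_y^2D_\lambda\psi\,\log\varphi$. We would first try replacing $\partial_y^2D_\lambda\psi$ through its Rayleigh equation, which gives a $\log\varphi/\varphi$ term with an $H^1_k$ coefficient and fits \eqref{ndrayf}. The cross term $2\partial_yD_\lambda\psi\,\partial_y\log\varphi=2b'\partial_yD_\lambda\psi/\varphi$ is exactly of the form covered by Lemma \ref{ndrayg}.
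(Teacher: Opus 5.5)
Your architecture is the paper's: subtract $\mathcal{S}^\iota_{2,0,\epsilon}$, show that the residual has the form \eqref{ndrayf} plus $(\mathfrak{c}_1\partial_yD_\lambda\psi+\mathfrak{c}_2\partial_y\psi)/\varphi$, and close with Lemmas~\ref{ndraygeneral} and~\ref{ndrayg}. The gap is Step~2, which is the entire content of the lemma. You do not verify that the stated $C_1,\dots,C_5$ remove every $\varphi^{-2}$ term, and the mechanism you sketch is inaccurate. Put $N:=b''\psi-\omega_{0k}$. Drop terms carrying $\chi_{\ast,\delta_0/10}$ or its derivatives; they are harmless, since $\varphi$ is bounded below on its support. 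After using \eqref{F7} in the form $\partial_y^2\psi=k^2\psi+N/\varphi$, the $\varphi^{-2}$ coefficient of $(-\partial_y^2+k^2+b''/\varphi)(C_1N/\varphi)$ is $2N'/b'-3b''N/(b')^2$. Hence the $\varphi^{-2}$ coefficient of $(-\partial_y^2+k^2+b''/\varphi)(\partial_\lambda^2\psi-C_1N/\varphi)$ is
\[
-2b''\partial_\lambda\psi-\frac{2N'}{b'}+\frac{3b''N}{(b')^2}
=-\frac{2b''D_\lambda\psi}{b'}-\frac{2b'''\psi}{b'}+\frac{2\omega_{0k}'}{b'}+\frac{3b''N}{(b')^2}
=(b')^2\bigl[C_2D_\lambda\psi+C_3\psi+\Omega_{0k}\bigr].
\]
This is removed exactly by the logarithmic part, whose $\varphi^{-2}$ contribution is $(b')^2[C_2D_\lambda\psi+C_3\psi+\Omega_{0k}]$.

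Two features of this identity contradict your sketch.

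First, the $\partial_\lambda\psi$ terms cancel identically, via $b''\partial_y\psi=b''D_\lambda\psi-b'b''\partial_\lambda\psi$ inside $N'$. Nothing is combined and then substituted. This matters. If a multiple of $\partial_\lambda\psi/\varphi^2$ survived, inserting $\partial_\lambda\psi=\mathcal{S}^\iota_{1,0,\epsilon}+r_{1,0}$ would leave $\log\varphi/\varphi^2$ and $r_{1,0}/\varphi^2$, with $r_{1,0}$ only in $H^1_k$. Neither \eqref{ndrayf} nor Lemma~\ref{ndrayg} covers these. For the same reason, ``cancelled at leading order'' must be ``cancelled exactly''.

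Second, one unit of $3b''N/(b')^2$ comes from $\partial_y^2$ falling twice on $\psi$, namely $-C_1\partial_y^2N/\varphi\ni-b''N/((b')^2\varphi^2)$. This source is not among the ones you list (first derivatives of $N$ and the $\partial_\lambda\psi$ term). In particular $\partial_y^2(C_1N)\notin H^1_k$, so this term cannot be booked as $c_2/\varphi$ before the equation is used. If you drop it, coefficient matching yields $2(b'')^2/(b')^4$ and $-2b''/(b')^4$ instead of the $3$'s in $C_3$ and $C_5$. The true residual would then retain $b''N/((b')^2\varphi^2)$, which no lemma controls.

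With the exact identity in hand, your Step~3 list is right. Note that $\partial_y^2D_\lambda\psi\,\log\varphi$, rewritten via \eqref{Raypsi_D}, also produces $\log^2\varphi$ and $\partial_yr_{1,0}\log\varphi$ pieces. These are admissible as $c_3\log^\alpha\varphi$ with $c_3\in L^2$. Finally, Lemma~\ref{ndrayg} carries no factor $|k|^{-1/2}$. For the $\partial_yD_\lambda\psi/\varphi$ and $\partial_y\psi/\varphi$ terms, the gain comes from $\|\omega_{0k}\|_{H^2_k}\le|k|^{-1}\|\omega_{0k}\|_{H^3_k}$.
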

\begin{proof}
    The Rayleigh equation for $\partial_\lambda^2\psi_{k,\epsilon}^{\iota}(y,\lambda)-\mathcal{S}_{2, 0, \epsilon}^\iota$ can be written as
    \begin{equation}
        \left(\partial_y^2-k^2-\frac{b''(y)}{b(y)-\lambda+i\iota\epsilon}\right)\left(\partial_\lambda^2\psi_{k,\epsilon}^{\iota}(y,\lambda)-\mathcal{S}_{2, 0, \epsilon}^\iota\right)=\mathcal{R}^\iota_{2,0,\epsilon}(y,\lambda).
    \end{equation}
    Similar to the proof of Lemma \ref{ndpsi_l}, one can reorganize 
    \begin{equation}
        \begin{split}
            \mathcal{R}^\iota_{2,0,\epsilon}(y,\lambda)=&c_1(y)\frac{\log(b(y)-\lambda+i\iota\epsilon)}{b(y)-\lambda+i\iota\epsilon}+\frac{c_2(y)+\mathfrak{c}_1(y)\partial_yD_\lambda\psi+\mathfrak{c}_2(y)\partial_y\psi}{b(y)-\lambda+i\iota\epsilon}\\
            &+c_3(y)\log(b(y)-\lambda+i\iota\epsilon)+c_4(y).
        \end{split}
    \end{equation}
    Here $k^2\|c_1\|_{H^1_k}+\|c_2\|_{H^1_k}+\|c_3\|_{L^2}+\|c_{4}\|_{L^2}\lesssim\|\omega_{0k}\|_{H^3_k}$. The conclusion follows directly from Lemmas \ref{ndraygeneral}, \ref{ndrayg}.

\end{proof}

\begin{lemma}\label{ndpsi_lD}
   Assume that $\kappa k\in\mathbb{Z}\backslash\{0\}, \lambda\in \Sigma\backslash \cup_{j}\Sigma_{j, \delta_0},\, \epsilon \in (0, \epsilon_0),\, \iota\in\{\pm\}$, and let $\psi_{k,\epsilon}^{\iota}(y,\lambda)$ be as defined in \eqref{F7}. 
    For $y\in \T$, set
    \begin{equation}
        \mathcal{S}_{1,1,\epsilon}^\iota(y,\lambda): =\big(C_1(y)D_\lambda\psi+C_2(y)\psi(y)+\Omega_{0k}(y)\big)(1-\chi_{\ast,\delta_0\slash10}(y))\log(b(y)-\lambda+i\iota\epsilon), 
    \end{equation}
    where 
    \begin{equation}
        \begin{split}
            &\Omega_{0k}(y)=C_3\omega_{0k}'(y)+C_4(y)\omega_{0k}(y),\\
            &C_1(y)=-\frac{b''}{(b')^2},\quad\ C_2(y)=-\frac{b'''}{(b')^2}+\frac{2(b'')^2}{(b')^3},\quad\ C_3(y)=\frac{1}{(b')^2},\quad\ C_4(y)=-\frac{2b''}{(b')^3}.
        \end{split}
    \end{equation}
    Then we have the bounds 
    \begin{equation}
        \|\partial_\lambda D_\lambda\psi_{k,\epsilon}^{\iota}-\mathcal{S}_{1,1,\epsilon}^\iota\|_{H^1_k}\lesssim\frac{\big\|\omega_{0k}\big\|_{H^3_k}}{|k|^{1/2}}.
    \end{equation}
\end{lemma}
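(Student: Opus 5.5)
The plan follows the template of Lemmas \ref{ndpsi_l} and \ref{ndpsi_l2}: compute the Rayleigh equation satisfied by $\partial_\lambda D_\lambda\psi$, identify the most singular source terms, and subtract them via $\mathcal{S}^\iota_{1,1,\epsilon}$. After subtraction, the remaining source should have the admissible structure of Lemmas \ref{ndraygeneral} and \ref{ndrayg}.

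Step 1 is to differentiate \eqref{Raypsi_D} in $\lambda$. Writing $\varphi:=b(y)-\lambda+i\iota\epsilon$, we get
\begin{align*}
\Big(\partial_y^2-k^2-\frac{b''}{\varphi}\Big)\partial_\lambda D_\lambda\psi
&=\frac{b''D_\lambda\psi}{\varphi^2}+\frac{b'''\psi-\omega_{0k}'}{\varphi^2}+\frac{b'''\partial_\lambda\psi}{\varphi}\\
&\quad+b'''\partial_\lambda^2\psi+2b''\partial_y\partial_\lambda^2\psi .
\end{align*}
The $\varphi^{-2}$ terms are not admissible sources. Using $\varphi^{-2}=-\frac{1}{b'}\partial_y\varphi^{-1}$ and then $\varphi^{-1}=\frac{1}{b'}\partial_y\log\varphi$, their coefficients force a singular profile of the form $(\text{coefficient})\cdot\log\varphi$.

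Step 2 is to identify that profile. Away from critical points we have $\partial_y^2(c\log\varphi)\approx -c\,(b')^2\varphi^{-2}+\dots$. Matching against the $\varphi^{-2}$ coefficient $b''D_\lambda\psi+b'''\psi-\omega_{0k}'$ gives a leading term $-\frac{b''}{(b')^2}D_\lambda\psi\log\varphi$, which is consistent with $C_1$. The next-order terms, coming from the derivatives of $1/(b')^2$ and from the terms $\partial_\lambda^2\psi$, $\partial_y\partial_\lambda^2\psi$, fix $C_2,C_3,C_4$. For the $\partial_\lambda^2\psi$ terms I substitute $\partial_\lambda^2\psi=\mathcal{S}^\iota_{2,0,\epsilon}+O_{H^1_k}$ from Lemma \ref{ndpsi_l2}. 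The $\varphi^{-1}$ part of $\mathcal{S}_{2,0,\epsilon}$, multiplied by $b'''$ and $2b''\partial_y$, produces further $\varphi^{-2}$ contributions. These combine with the explicit ones, and the point is that the $\varphi^{-2}$ part of $\partial_\lambda D_\lambda\psi$ must cancel, which is consistent because $D_\lambda$ of $\varphi$ vanishes. Concretely, I will verify that the coefficient of $\varphi^{-2}$ in $\mathcal{R}^\iota_{1,1,\epsilon}$ is zero, and that the $\varphi^{-1}$ coefficient is $c_2+\mathfrak{c}_1\partial_yD_\lambda\psi+\mathfrak{c}_2\partial_y\psi$. The $\partial_y\psi$ and $\partial_y D_\lambda\psi$ pieces cannot be put in $H^1_k$, so they are treated via Lemma \ref{ndrayg}.

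Step 3 is to apply Lemma \ref{ndraygeneral} (with Lemma \ref{ndrayg} for the $\partial_y f/\varphi$ pieces, where $f=\psi$ or $D_\lambda\psi$ solve equations of type \eqref{ndrayf} by Lemmas \ref{ndpsi} and \ref{ndpsi_D}). This requires showing $|k|\|c_1\|_{H^1_k}+\|c_2\|_{H^1_k}+\|c_3\|_{L^2}+\|c_4\|_{L^2}\lesssim\|\omega_{0k}\|_{H^3_k}$. The coefficients involve up to $\omega_{0k}''$ through $\partial_y^2\Omega_{0k}$, together with $\partial_y^2$ of $D_\lambda\psi$ and of $\psi$, which are re-expressed through the Rayleigh equations. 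The $\log\varphi/\varphi$ terms arise from $\partial_y^2$ hitting the coefficient times $\log\varphi$, and from $\partial_y^2 D_\lambda\psi\cdot\log\varphi$, where $\partial_y^2D_\lambda\psi$ contains $\varphi^{-1}$. The extra factor $|k|$ on $c_1$ comes from the corresponding $k^2$ terms. The cutoff $1-\chi_{\ast,\delta_0/10}$ only generates smooth, compactly supported commutator errors of type $c_3\log\varphi$, which are harmless.

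I expect the main obstacle to be the bookkeeping in Step 2. The key task is confirming exact cancellation of all $\varphi^{-2}$ terms, including those generated from $\mathcal{S}_{2,0,\epsilon}$, and showing that no $\log^2\varphi/\varphi$ term with a merely $L^2$ coefficient survives. I would verify this by first computing formally with $\epsilon\to0$ and using $D_\lambda\varphi=0$.
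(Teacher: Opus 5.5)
Your plan is correct and is essentially the paper's argument. The paper also forms the Rayleigh equation for $\partial_\lambda D_\lambda\psi-\mathcal{S}^\iota_{1,1,\epsilon}$, regroups $\mathcal{R}^\iota_{1,1,\epsilon}$ into the structure $c_1\frac{\log\varphi}{\varphi}+\frac{c_2+\mathfrak{c}_1\partial_yD_\lambda\psi+\mathfrak{c}_2\partial_y\psi}{\varphi}+c_3\log\varphi+c_4$, and concludes with Lemmas \ref{ndraygeneral} and \ref{ndrayg}; it simply organizes the bookkeeping through the identity $\mathcal{S}^\iota_{1,1,\epsilon}=b'\mathcal{S}^\iota_{2,0,\epsilon}+(\text{explicit pole and log terms})$ instead of matching $\varphi^{-2}$ coefficients directly. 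One small correction: the extra pieces $\frac{2(b'')^2}{(b')^3}\psi$ and $-\frac{2b''}{(b')^3}\omega_{0k}$ in $C_2,C_4$ come only from $2b''\partial_y$ hitting the pole term of $\mathcal{S}^\iota_{2,0,\epsilon}$, since the $b'''$ term and derivatives of $(b')^{-2}$ produce only $\varphi^{-1}$ contributions.
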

\begin{proof}
    Notice that by \eqref{ndS20}, one can rewrite $\mathcal{S}_{1,1,\epsilon}^{\iota}$ as follows.
    \begin{equation}
        \begin{aligned}
        \mathcal{S}_{1,1,\epsilon}^{\iota}={}&b'\left(\partial_\lambda^2\psi-r_{2,0,\epsilon}^\iota\right)
        +\left(-\frac{b''}{b'}\psi+\frac{\omega_{0k}}{b'}\right)
        \frac{1-\chi_{\ast,\delta_0\slash10}(y)}{b-\lambda+i\iota\epsilon}\\
&+\left\{\begin{aligned}
&-\frac{\omega_{0k}'}{(b')^2}
+\left[\frac{b'''}{(b')^2}-\frac{(b'')^2}{(b')^3}\right]\psi\\
&+\frac{b''}{(b')^3}\omega_{0k}+\frac{b''}{(b')^2}D_\lambda\psi
\end{aligned}\right\}
(1-\chi_{\ast,\delta_0\slash10}(y))\log(b-\lambda+i\iota\epsilon).
        \end{aligned}
    \end{equation}
    Here we take $r_{2,0,\epsilon}^\iota(y,\lambda):=\p_\lambda^2 \psi_{k, \epsilon}^\iota   - \mathcal{S}_{2, 0, \epsilon}^\iota$. By Lemma \ref{ndpsi_l2}, $\|r_{2,0,\epsilon}^\iota\|_{H^1_k} \lesssim  |k|^{-1/2}\|\omega_{0k}\|_{H^3_k}$.
    Then $\partial_\lambda D_\lambda\psi_{k,\epsilon}^{\iota}-\mathcal{S}_{1,1,\epsilon}^\iota$ satisfies
    \begin{equation}
        \left(\partial_y^2-k^2-\frac{b''(y)}{b(y)-\lambda+i\iota\epsilon}\right)\left(\partial_\lambda D_\lambda\psi_{k,\epsilon}^{\iota}-\mathcal{S}_{1,1,\epsilon}^\iota\right)=\mathcal{R}^\iota_{1,1,\epsilon}.
    \end{equation}
    Here $\mathcal{R}^\iota_{1,1,\epsilon}$ can be reorganized as 
    \begin{equation}
        \begin{split}
            \mathcal{R}^\iota_{1,1,\epsilon}=&c_1(y)\frac{\log(b(y)-\lambda+i\iota\epsilon)}{b(y)-\lambda+i\iota\epsilon}+\frac{c_2(y)+\mathfrak{c}_1(y)\partial_yD_\lambda\psi+\mathfrak{c}_2(y)\partial_y\psi}{b(y)-\lambda+i\iota\epsilon}\\
            &+c_{3}(y)\log(b(y)-\lambda+i\iota\epsilon)+c_4(y),
        \end{split}
    \end{equation}
    with  $k^2\|c_1\|_{H^1_k}+\|c_2\|_{H^1_k}+\|c_{3}\|_{L^2}+\|c_{4}\|_{L^2}\lesssim\|\omega_{0k}\|_{H^3_k}$. Therefore, our conclusion follows from Lemmas \ref{ndraygeneral}, \ref{ndrayg} directly.


\end{proof}

\begin{lemma}
    Assume that $\kappa k\in\mathbb{Z}\backslash\{0\}, \lambda\in \Sigma\backslash \cup_{j}\Sigma_{j, \delta_0},\, \epsilon \in (0, \epsilon_0),\, \iota\in\{\pm\}$, and let $\psi_{k,\epsilon}^{\iota}(y,\lambda)$ be as defined in \eqref{F7}. 
    Then we have the bound 
    \begin{equation}\label{ndD2:bdd}
        \|D_\lambda^2\psi_{k,\epsilon}^{\iota}\|_{H^1_k}\lesssim\frac{\big\|\omega_{0k}\big\|_{H^3_k}}{|k|^{1/2}}.
    \end{equation}
    In addition, for any $1\leq p<2$,
    \begin{equation}
        \left\|\partial_yD_\lambda^2\psi_{k,\epsilon}^{\iota}(y,\lambda)-C(y)(1-\chi_{\ast,\delta_0\slash10}(y))\log(b(y)-\lambda+i\iota\epsilon)\right\|_{W^{1,p}}\lesssim\|\omega_{0k}\|_{H^3_k},
    \end{equation}
    with $\|C\|_{H^1_k}\lesssim\|\omega_{0k}\|_{H^3_k}$.
\end{lemma}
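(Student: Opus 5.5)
The plan is to follow the template of Lemma \ref{ndpsi_D}: derive the Rayleigh equation satisfied by $D_\lambda^2\psi$ by applying $D_\lambda$ to the equation \eqref{Raypsi_D} for $D_\lambda\psi$. Then put the resulting right-hand side into the form \eqref{ndrayf}, possibly with an extra term $\mathfrak{c}\,\partial_y f/(b-\lambda+i\iota\epsilon)$, and conclude from Lemmas \ref{ndraygeneral} and \ref{ndrayg}.

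The structural fact we rely on is $D_\lambda(b-\lambda+i\iota\epsilon)=b'-b'=0$. Hence $D_\lambda$ annihilates the singular potential up to the smooth factor $b''$, and also annihilates $\log(b-\lambda+i\iota\epsilon)$. The commutator with the Rayleigh operator is
\[
\Big[D_\lambda,\ \partial_y^2-k^2-\tfrac{b''}{b-\lambda+i\iota\epsilon}\Big]
=-b'''\partial_\lambda-2b''\partial_y\partial_\lambda-\tfrac{b'''}{b-\lambda+i\iota\epsilon}.
\]
Applying this to $D_\lambda\psi$, and applying $D_\lambda$ to the right side of \eqref{Raypsi_D}, gives the source.

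The first group of source terms is
\[
\frac{b'''D_\lambda\psi+b^{(4)}\psi+b'''\partial_y\psi-\omega_{0k}''}{b-\lambda+i\iota\epsilon}.
\]
The term $b'''\partial_y\psi$ is exactly of the type handled by Lemma \ref{ndrayg}, with $f=\psi$. The remaining part of the numerator is an $H^1_k$ coefficient $c_2$ with norm $\lesssim\|\omega_{0k}\|_{H^3_k}$, using Lemmas \ref{ndpsi} and \ref{ndpsi_D}.

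The second group consists of $b'''\partial_\lambda D_\lambda\psi$, $2b''\partial_y\partial_\lambda D_\lambda\psi$, and the $D_\lambda$-derivatives of $b'''\partial_\lambda\psi+2b''\partial_y\partial_\lambda\psi$. Rewriting $\partial_y=D_\lambda-b'\partial_\lambda$, these produce $\partial_\lambda D_\lambda\psi$, $\partial_y\partial_\lambda D_\lambda\psi$, $\partial_\lambda^2\psi$ and $\partial_y\partial_\lambda^2\psi$ with smooth coefficients. I will substitute the decompositions of Lemmas \ref{ndpsi_l2} and \ref{ndpsi_lD}: each of these quantities equals $\mathcal{S}+r$ with $\|r\|_{H^1_k}\lesssim|k|^{-1/2}\|\omega_{0k}\|_{H^3_k}$. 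The remainders $r$ and $\partial_y r$ are then absorbed into $c_4$.

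The main obstacle is the $y$-derivative of the explicit singular parts $\mathcal{S}_{2,0,\epsilon}^\iota$ and $\mathcal{S}_{1,1,\epsilon}^\iota$. Differentiating produces the following terms.
\begin{itemize}
\item[(a)] Differentiating $\log(b-\lambda+i\iota\epsilon)$ gives $1/(b-\lambda+i\iota\epsilon)$ times $C_2D_\lambda\psi+C_3\psi+\Omega_{0k}$. These belong in $c_2$ as $H^1_k$ coefficients, since $\Omega_{0k}$ involves $\omega_{0k}'$ and $\|\omega_{0k}\|_{H^3_k}$ suffices.
\item[(b)] Differentiating the $(b''\psi-\omega_{0k})/(b-\lambda+i\iota\epsilon)$ piece of $\mathcal{S}_{2,0,\epsilon}^\iota$ gives a term of order $(b-\lambda+i\iota\epsilon)^{-2}$.
\item[(c)] Terms $\partial_y D_\lambda\psi\,\log(\cdot)$, handled via the structure from Lemma \ref{ndpsi_D}.
\end{itemize}

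Term (b) is the dangerous one. Using $1/(b-\lambda+i\iota\epsilon)^2=-\partial_y\big[(b'(b-\lambda+i\iota\epsilon))^{-1}\big]+\dots$, I expect it to cancel exactly against the corresponding $b'\partial_\lambda^2\psi$-type term coming from $-b'\partial_\lambda$. This is the same mechanism by which the rewriting of $\mathcal{S}_{1,1,\epsilon}^\iota$ in Lemma \ref{ndpsi_lD} involves $b'\partial_\lambda^2\psi$, and it is why $D_\lambda^2\psi$ needs no singular subtraction. I would verify this cancellation first, by pairing the terms in the combination $2b''(\partial_y+b'\partial_\lambda)\partial_\lambda D_\lambda\psi$ before expanding.

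After the cancellation, what survives is of the form \eqref{ndrayf}: a $c_1\log/(b-\lambda+i\iota\epsilon)$ term with $|k|\|c_1\|_{H^1_k}$ controlled, $c_3\log^\alpha$ terms with $\alpha\le2$, and $\mathfrak{c}_1\partial_yD_\lambda\psi+\mathfrak{c}_2\partial_y\psi$ over $(b-\lambda+i\iota\epsilon)$. Lemma \ref{ndrayg}, applied to $D_\lambda\psi$ and to $\psi$ (both of which solve equations of the form \eqref{ndrayf}), handles the last two. Lemma \ref{ndraygeneral} then gives both the $H^1_k$ bound \eqref{ndD2:bdd}, including the factor $|k|^{-1/2}$, and the $W^{1,p}$ expansion. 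The coefficient $C$ of the single logarithm is read off from \eqref{ndraygeneral2}. A $\log^2$ term could only arise if $c_1\neq0$; I expect $c_1$ either to vanish or to be lower order after the cancellation, so that it can be absorbed, consistent with the stated expansion.
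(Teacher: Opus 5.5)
Your strategy matches the paper's: derive the equation for $D_\lambda^2\psi$ from \eqref{Raypsi_D}, substitute the expansions of Lemmas~\ref{ndpsi}--\ref{ndpsi_lD}, and apply Lemma~\ref{ndraygeneral}. Your commutator is also correct. However, the source you then write down is miscomputed, and the structural fact that carries the lemma is left as an expectation. Using $[D_\lambda,\partial_\lambda]=0$ and $D_\lambda\partial_y=\partial_yD_\lambda-b''\partial_\lambda$, one gets exactly
\begin{multline*}
\Big(\partial_y^2-k^2-\frac{b''}{b-\lambda+i\iota\epsilon}\Big)D_\lambda^2\psi
=\frac{b^{(4)}\psi+2b'''D_\lambda\psi-\omega_{0k}''}{b-\lambda+i\iota\epsilon}\\
+4b''\partial_y\partial_\lambda D_\lambda\psi+2b'''\partial_\lambda D_\lambda\psi-2(b'')^2\partial_\lambda^2\psi+2b'''\partial_y\partial_\lambda\psi+b^{(4)}\partial_\lambda\psi.
\end{multline*}
Your numerator term $b'''\partial_y\psi$ is a slip: together with the dropped $b'b'''\partial_\lambda\psi$ it is $b'''D_\lambda\psi\in H^1_k$. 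Moreover, $\partial_y\partial_\lambda^2\psi$ never occurs. Your obstacle (b) is an artifact of expanding $\partial_y=D_\lambda-b'\partial_\lambda$; there is no $(b-\lambda+i\iota\epsilon)^{-2}$ term and no cancellation is needed.

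The pole of $\mathcal S^\iota_{2,0,\epsilon}$ enters undifferentiated, with an $H^1_k$ coefficient. The $\partial_y$ falls only on the logarithmic expansions $\mathcal S^\iota_{1,0,\epsilon}$ and $\mathcal S^\iota_{1,1,\epsilon}$. On the logarithm it gives simple poles with $H^1_k$ coefficients. On $\psi$ and $D_\lambda\psi$ inside the coefficients it gives terms $c_3\log^\alpha$, $\alpha\le2$, with $c_3\in L^2$, by the single-log expansions of Lemmas~\ref{ndpsi} and \ref{ndpsi_D}. So the source is $c_2/(b-\lambda+i\iota\epsilon)+c_{3,1}\log+c_{3,2}\log^2+c_4$, with $c_1=0$ and no $\mathfrak c\,\partial_yf/(b-\lambda+i\iota\epsilon)$ terms. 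Lemma~\ref{ndraygeneral} alone then gives both assertions.

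This matters for the second assertion. You anticipate $\mathfrak c_1\partial_yD_\lambda\psi+\mathfrak c_2\partial_y\psi$ over $b-\lambda+i\iota\epsilon$ and a possibly nonzero $c_1$, and you treat them with Lemma~\ref{ndrayg}. That lemma gives only an $H^1_k$ bound, with no $W^{1,p}$ structure. Lemma~\ref{ndraygeneral} cannot absorb such terms either. Writing $\partial_yf=C\log(b-\lambda+i\iota\epsilon)+(\text{a }W^{1,p}\text{ function})$ turns $\mathfrak c\,\partial_yf/(b-\lambda+i\iota\epsilon)$ into two pieces:
\begin{itemize}
\item a $c_1$-term, which produces a $\log^2$ in the expansion of $\partial_yD_\lambda^2\psi$;
\item a pole whose coefficient lies only in $W^{1,p}$ with $p<2$, not in $H^1_k$.
\end{itemize}
With the source you describe, the single-logarithm expansion in the statement would therefore not follow. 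Your closing hope that $c_1$ vanishes and no such terms survive is exactly what has to be shown, and the correct computation above shows it.

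This is also why, in Section~\ref{sec:nd:bdds}, $W^{1,p}$ expansions are asserted only for $\psi$, $D_\lambda\psi$ and $D_\lambda^2\psi$. The $\partial_\lambda$-remainder lemmas, whose sources do contain such terms, give only $H^1_k$ bounds.
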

\begin{proof}
    We argue as in the proof of Lemma \ref{ndpsi_D}. According to the Rayleigh equation together with Lemmas \ref{ndpsi}-\ref{ndpsi_lD}, we have 
    \begin{equation}\label{ndRayD2psi}
        \begin{split}
        &\left(\partial_y^2-k^2-
        \frac{b''}{b-\lambda+i\iota\epsilon}\right)D_\lambda^2\psi\\
        =&\frac{b^{(4)}\psi+2b'''D_\lambda\psi-\omega_{0k}''}{b-\lambda+i\iota\epsilon}+4b''\partial_y\partial_\lambda D_\lambda\psi
        +2b'''\partial_\lambda D_\lambda\psi-2(b'')^2\partial_\lambda^2\psi+2b'''\partial_y\partial_\lambda\psi+b^{(4)}\partial_\lambda\psi\\
        =&\frac{c_2(y)}{b(y)-\lambda+i\iota\epsilon}+c_{3,1}(y)\log(b(y)-\lambda+i\iota\epsilon)+c_{3,2}(y)(\log(b(y)-\lambda+i\iota\epsilon))^2+c_4(y).
        \end{split}
    \end{equation}
    Here $\|c_2\|_{H^1_k}+\|c_{3,1}\|_{L^2}+\|c_{3,2}\|_{L^2}+\|c_{4}\|_{L^2}\lesssim \|\omega_{0k}\|_{H^3_k}$. The conclusion follows directly from Lemma \ref{ndraygeneral}.
\end{proof}


\begin{lemma}\label{ndpsi_l3}
    Assume that $\kappa k\in\mathbb{Z}\backslash\{0\}, \lambda\in \Sigma\backslash \cup_{j}\Sigma_{j, \delta_0},\, \epsilon \in (0, \epsilon_0),\, \iota\in\{\pm\}$, and let $\psi_{k,\epsilon}^{\iota}(y,\lambda)$ be as defined in \eqref{F7}. 
    For $y\in \T$, set
    \begin{equation}
        \begin{split}
            \mathcal{S}_{3, 0,\epsilon}^\iota(y, \lambda) &:=   C_1(1-\chi_{\ast,\delta_0\slash10}(y))\frac{b''(y)\psi(y,\lambda)-\omega_{0k}(y)}{(b(y)-\lambda+i\iota\epsilon)^2}\\
&+\left[\begin{aligned}
&C_2(y)\partial_y\psi(y,\lambda)+C_3(y)D_\lambda\psi(y,\lambda)\\
&+C_4(y)\psi(y,\lambda)+\Omega_{0k,1}(y)
\end{aligned}\right]\frac{1-\chi_{\ast,\delta_0\slash10}(y)}{b(y)-\lambda+i\iota\epsilon}\\
            &+\left[\begin{aligned}
&C_7(y)D_\lambda^2\psi+C_8(y)D_\lambda\psi\\
&+C_9\psi(y,\lambda)+\Omega_{0k,2}(y)
\end{aligned}\right](1-\chi_{\ast,\delta_0\slash10}(y))\log(b(y)-\lambda+i\iota\epsilon),
        \end{split}
    \end{equation}
    where 
    \begin{equation}
        \begin{split}
            &\Omega_{0k,1} = C_5(y)\omega_{0k}'(y)+C_6(y)\omega_{0k}(y),\quad\ \Omega_{0k,2} = C_{10}(y)\omega_{0k}''(y)+C_{11}(y)\omega_{0k}'(y)+C_{12}(y)\omega_{0k}(y),\\
            &C_1(y)=\frac{1}{(b'(y))^2},\quad\ C_2(y)=-\frac{b''}{(b')^3},\quad\ C_3(y)=\frac{3b''}{(b')^3},\\
            &C_4(y)=\frac{2b'''}{(b')^3}-\frac{3(b'')^2}{(b')^4},\quad\ C_5(y)=-\frac{2}{(b')^3},\quad\ C_6(y) =\frac{3b''}{(b')^4}, \\
            &C_7(y)=-\frac{3b''}{(b')^4},\quad\ C_8(y) =-\frac{6b'''}{(b')^4}+\frac{12(b'')^2}{(b')^5},\\
            &C_{9}(y)=-\frac{3b^{(4)}}{(b')^4}+\frac{16b''b'''}{(b')^5}-\frac{15(b'')^3}{(b')^6}-\frac{k^2b''}{(b')^4},\quad\ C_{10}(y)=\frac{3}{(b')^4},\\
            &C_{11}(y)=-\frac{12b''}{(b')^5},\quad\ C_{12}(y) =\frac{k^2}{(b')^4}-\frac{4b'''}{(b')^5}+\frac{15(b'')^2}{(b')^6}.
        \end{split}
    \end{equation}
    Then we have the bounds
    \begin{equation}
        \norm{\p_\lambda^3 \psi_{k, \epsilon}^\iota   - \mathcal{S}_{3, 0,\epsilon}^\iota }_{H^1_k} \lesssim  \frac{\big\|\omega_{0k}\big\|_{H^4_k}}{|k|^{1/2}}.
    \end{equation}
\end{lemma}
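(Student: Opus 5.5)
The proof follows the template already used for Lemmas \ref{ndpsi_l} and \ref{ndpsi_l2}. We differentiate the Rayleigh equation \eqref{F7} three times in $\lambda$ and apply the operator $\partial_y^2-k^2-\frac{b''}{b-\lambda+i\iota\epsilon}$ to $\partial_\lambda^3\psi-\mathcal{S}^\iota_{3,0,\epsilon}$. Differentiating \eqref{F7} in $\lambda$ gives
\[
\Big(\partial_y^2-k^2-\tfrac{b''}{b-\lambda+i\iota\epsilon}\Big)\partial_\lambda^3\psi
=\frac{6\big(\omega_{0k}-b''\psi\big)}{(b-\lambda+i\iota\epsilon)^4}-\frac{6b''\partial_\lambda\psi}{(b-\lambda+i\iota\epsilon)^3}-\frac{3b''\partial_\lambda^2\psi}{(b-\lambda+i\iota\epsilon)^2}
\]
(up to signs). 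Using $\partial_\lambda^2\psi=\mathcal{S}^\iota_{2,0,\epsilon}+r^\iota_{2,0,\epsilon}$ and $\partial_\lambda\psi=\mathcal{S}^\iota_{1,0,\epsilon}+r^\iota_{1,0,\epsilon}$ from Lemmas \ref{ndpsi_l2} and \ref{ndpsi_l}, the right side becomes a sum of explicit singular terms $(b-\lambda+i\iota\epsilon)^{-m}$ and $(b-\lambda+i\iota\epsilon)^{-m}\log(\cdot)$, with $m\le 4$, plus remainders.

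Each coefficient of $\mathcal{S}^\iota_{3,0,\epsilon}$ is designed to cancel the most singular of these terms. The functions $C_1,\dots,C_{12}$ are exactly what one obtains by writing $(b-\lambda+i\iota\epsilon)^{-m}=\frac{1}{(m-1)b'}\partial_y(b-\lambda+i\iota\epsilon)^{-m+1}$ and recursively matching $\partial_y^2$ of the ansatz against the source. Away from $\mathrm{supp}\,\chi_{\ast,\delta_0/10}$ we have $|b'|\gtrsim1$, so these divisions are legitimate. Near the critical points the denominator is bounded below, so there everything is a regular source. The commutator of the cutoff with $\partial_y^2$ is also regular. We also replace $\partial_y$ of $\psi$, $D_\lambda\psi$ and $D_\lambda^2\psi$ with $D_\lambda$ minus $b'\partial_\lambda$. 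This is why $D_\lambda^2\psi$ and $D_\lambda\psi$ appear in the log coefficient, and the $k^2$ in $C_9,C_{12}$ comes from $k^2$ acting on the $(b-\lambda)^{-1}$-level terms.

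After cancellation we expect
\[
\mathcal{R}^\iota_{3,0,\epsilon}=c_1\frac{\log(\cdot)}{b-\lambda+i\iota\epsilon}+\frac{c_2+\mathfrak{c}_1\partial_yD_\lambda^2\psi+\mathfrak{c}_2\partial_yD_\lambda\psi+\mathfrak{c}_3\partial_y\psi}{b-\lambda+i\iota\epsilon}+c_{3,1}\log(\cdot)+c_{3,2}\log^2(\cdot)+c_4,
\]
with $\|c_1\|_{H^1_k}+\|c_2\|_{H^1_k}+\|c_{3,i}\|_{L^2}+\|c_4\|_{L^2}\lesssim\|\omega_{0k}\|_{H^4_k}$. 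The budget here is one more derivative of $\omega_{0k}$, via $\omega_{0k}''$ in $\Omega_{0k,2}$, whose $\partial_y^2$ costs $\omega_{0k}''''$. The remainders $r^\iota_{2,0,\epsilon}$ and $r^\iota_{1,1,\epsilon}$ enter only multiplied by $(b-\lambda+i\iota\epsilon)^{-1}$, or by $(b-\lambda+i\iota\epsilon)^{-2}$ after replacing $r$ through its own equation. We place these into $c_2$, since they are $H^1_k$ with $|k|^{-1/2}$ gain.

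The terms $\mathfrak{c}\,\partial_yD^a_\lambda\psi/(b-\lambda+i\iota\epsilon)$ are handled by Lemma \ref{ndrayg}, which applies because $D_\lambda\psi$ and $D_\lambda^2\psi$ solve Rayleigh equations of the form \eqref{ndrayf}; these are \eqref{Raypsi_D} and \eqref{ndRayD2psi}. The rest is handled by Lemma \ref{ndraygeneral}, and the limiting absorption principle (Lemma \ref{T5}) then gives the $H^1_k$ bound with $|k|^{-1/2}$.

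The main obstacle is verifying that all $(b-\lambda+i\iota\epsilon)^{-2}$ and $(b-\lambda+i\iota\epsilon)^{-1}\log$ terms with non-$H^1$ coefficients genuinely cancel. In particular, the $\partial_y^2$ of the $(b-\lambda)^{-2}$ and $(b-\lambda)^{-1}$ pieces of $\mathcal{S}^\iota_{3,0,\epsilon}$ produce $\partial_y^2\psi$ and $\partial_y^2D_\lambda\psi$. These must be re-expressed using the Rayleigh equation itself; the resulting $b''\psi/(b-\lambda)$ feeds back at one higher singularity level and has to match the stated constants. I would organize this check by working modulo the admissible class above, level by level in $m$. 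I would verify the $m=2$ and $(m=1,\log)$ coefficients symbolically rather than expand everything.
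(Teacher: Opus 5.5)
Your endgame matches the paper. The source of $\p_\lambda^3\psi-\mathcal{S}^\iota_{3,0,\epsilon}$ must lie in the class of \eqref{ndrayf} plus terms $\mathfrak{c}_a\p_y\D^a\psi/(b-\lambda+i\iota\epsilon)$, after which Lemmas \ref{ndraygeneral}, \ref{ndrayg} and \ref{T5} finish. However, the step that is supposed to put the source into that class fails as you describe it.

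\textbf{Where the substitution fails.} Put $w:=b-\lambda+i\iota\epsilon$ and $P:=-\p_y^2+k^2+b''/w$, so that $P\p_\lambda^3\psi=6(\omega_{0k}-b''\psi)w^{-4}-6b''\p_\lambda\psi\,w^{-3}-3b''\p_\lambda^2\psi\,w^{-2}$.
\begin{itemize}
\item Inserting $\p_\lambda\psi=\mathcal{S}^\iota_{1,0,\epsilon}+r^\iota_{1,0,\epsilon}$ and $\p_\lambda^2\psi=\mathcal{S}^\iota_{2,0,\epsilon}+r^\iota_{2,0,\epsilon}$ leaves you with $r^\iota_{1,0,\epsilon}w^{-3}$ and $r^\iota_{2,0,\epsilon}w^{-2}$. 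The $r$'s are only known to lie in $H^1_k$, with no vanishing at the critical layer. So these terms are not of the form $c_2w^{-1}$ with $c_2\in H^1_k$, and nothing in Section \ref{sec:nd:bdds} covers them. Your claim that the remainders enter only with $w^{-1}$ and can be absorbed into $c_2$ is false. Their own equations, whose sources contain $\mathfrak{c}\,\p_y\D\psi/w$, give no vanishing either.
\item The substitution also creates explicit $w^{-3}\log w$ and $w^{-2}\log w$ terms. The stated ansatz has no $w^{-1}\log w$ or $\log^2 w$ pieces, so it cannot cancel them explicitly. In $P\mathcal{S}^\iota_{3,0,\epsilon}$ they are matched only by the hidden logarithms in $\p_y\psi$ and $\p_y\D\psi$, which are invisible modulo $H^1_k$ errors.
\item Replacing $\p_y$ by $\D-b'\p_\lambda$ goes in the wrong direction. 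It generates $\p_\lambda\D^2\psi$ and $\D^3\psi$, which are not available here; \eqref{ndlamD2:bdd} is proved afterwards, using this lemma.
\end{itemize}

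\textbf{What works instead: exact cancellation.} Keep $\p_\lambda\psi$ and $\p_\lambda^2\psi$ unexpanded and eliminate $\p_\lambda$ instead. Since $\D-\p_y=b'\p_\lambda$ commutes with functions of $y$, you have $\p_\lambda\psi=(\D\psi-\p_y\psi)/b'$ and $\p_\lambda^2\psi=(b')^{-2}\big(\D^2\psi-2\p_y\D\psi+b''\p_\lambda\psi+\p_y^2\psi\big)$. Then remove every $\p_y^2\psi$, $\p_y^3\psi$ and $\p_y^2\D\psi$, both in the source and in $P\mathcal{S}^\iota_{3,0,\epsilon}$, using \eqref{F7} and the unexpanded form of \eqref{Raypsi_D}.

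After this, the coefficients of $w^{-4},w^{-3},w^{-2}$ are exact linear combinations of $\psi,\p_y\psi,\D\psi,\p_y\D\psi,\D^2\psi,\omega_{0k},\omega_{0k}',\omega_{0k}''$. The listed $C_1,\dots,C_{12}$ make them vanish identically. For example:
\begin{itemize}
\item the $\p_y\psi\,w^{-3}$ coefficient is $6b''/b'$ in the source and $4b'C_1b''-2(b')^2C_2=6b''/b'$ in $P\mathcal{S}^\iota_{3,0,\epsilon}$;
\item the $\p_y\D\psi\,w^{-2}$ coefficient is $6b''/(b')^2=2b'C_3$ on both sides.
\end{itemize}

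Expansions of the form $\mathcal{S}+r$ may be used only where no negative power of $w$ multiplies them. An example is \eqref{ndRayD2psi}, which enters through $-C_7\p_y^2\D^2\psi\,\log w$. The leftover then consists of $w^{-1}$ times $H^1_k$ data or $\mathfrak{c}_a\p_y\D^a\psi$, $w^{-1}\log w$ times $H^1_k$ data, and $L^2$ coefficients times $\log^\alpha w$ with $\alpha\le3$. That is exactly where your final step applies.
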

\begin{remark}
    From this Lemma, the coefficients for lower derivative terms begin to depend on $k$, such as $C_9(y)$, $C_{12}(y)$ shown above. They come from substituting $\partial_y^2\psi$ and higher $y$-derivatives of $\psi$ by lower derivatives based on the Rayleigh equation. They do not worsen the $k$-dependence of our final estimates, since they can be all absorbed by the highest derivative terms, in this case $\omega_{0k}''$ and $D_\lambda^2\psi$, which determine the final $k$-dependence.
\end{remark}
\begin{proof}
    The Rayleigh equation for $\partial_\lambda^3\psi_{k,\epsilon}^{\iota}(y,\lambda)-\mathcal{S}^\iota_{3,0,\epsilon}$ is 
    \begin{equation}
        \left(\partial_y^2-k^2-\frac{b''(y)}{b(y)-\lambda+i\iota\epsilon}\right)\left(\partial_\lambda^3\psi_{k,\epsilon}^{\iota}(y,\lambda)-\mathcal{S}^\iota_{3,0,\epsilon}\right)=\mathcal{R}^\iota_{3,0,\epsilon}(y,\lambda).
    \end{equation}
Here $\mathcal{R}^\iota_{3,0,\epsilon}$ can be simplified by 
\begin{equation}
    \begin{split}
        \mathcal{R}^\iota_{3,0,\epsilon}(y,\lambda)=&c_1(y)\frac{\log(b(y)-\lambda+i\iota\epsilon)}{b(y)-\lambda+i\iota\epsilon}+\frac{c_2(y)+\mathfrak{c}_1(y)\partial_yD_\lambda^2\psi+\mathfrak{c}_2(y)\partial_yD_\lambda\psi+\mathfrak{c}_3(y)\partial_y\psi}{b(y)-\lambda+i\iota\epsilon}\\
        &+c_3(y)\log(b(y)-\lambda+i\iota\epsilon)+c_4(y),
    \end{split}
\end{equation}
with $k^2\|c_1\|_{H^1_k}+\|c_2\|_{H^1_k}+\|c_3\|_{L^2}+\|c_{4}\|_{L^2}\lesssim\|\omega_{0k}\|_{H^4_k}$. The conclusion follows directly from Lemmas \ref{ndraygeneral}, \ref{ndrayg}.

\end{proof}

\begin{lemma}\label{ndpsi_l2D}
    Assume that $\kappa k\in\mathbb{Z}\backslash\{0\}, \lambda\in \Sigma\backslash \cup_{j}\Sigma_{j, \delta_0},\, \epsilon \in (0, \epsilon_0),\, \iota\in\{\pm\}$, and let $\psi_{k,\epsilon}^{\iota}(y,\lambda)$ be as defined in \eqref{F7}. 
    For $y\in \T$, set
\begin{equation}\label{ndS21}
        \begin{split}
            \mathcal{S}_{2, 1, \epsilon}^\iota(y, \lambda) &:= \left[C_1(y)D_\lambda\psi(y,\lambda)+C_2(y)\psi(y,\lambda)+\Omega_{0k,1}(y)\right]\frac{1-\chi_{\ast,\delta_0\slash10}(y)}{b(y)-\lambda+i\iota\epsilon}\\
            &+\left[
                \begin{aligned}
                    &C_5(y)D_\lambda^2\psi(y,\lambda)+C_6(y)D_\lambda\psi(y,\lambda)\\
                    &+C_7(y)\psi(y,\lambda)+\Omega_{0k,2}(y)
                \end{aligned}    
            \right](1-\chi_{\ast,\delta_0\slash10}(y))\log(b(y)-\lambda+i\iota\epsilon),
        \end{split}
    \end{equation}
    where 
    \begin{equation}
        \begin{split}
&\Omega_{0k,1}(y)=C_3(y)\omega_{0k}'(y)+C_4(y)\omega_{0k}(y),\\
&\Omega_{0k,2}(y)=C_8(y)\omega_{0k}''(y)+C_9(y)\omega_{0k}'(y)+C_{10}(y)\omega_{0k}(y),\\
            &C_1(y)=\frac{b''}{(b')^2},\quad\ C_2(y)=\frac{b'''}{(b')^2}-\frac{2(b'')^2}{(b')^3},\quad\ C_3(y)=-\frac{1}{(b')^2},\quad\ C_4(y)=\frac{2b''}{(b')^3},\\
            &C_5(y)=-\frac{2b''}{(b')^3},\quad\ C_6(y)=-\frac{4b'''}{(b')^3}+\frac{9(b'')^2}{(b')^4},\quad\ C_7(y)=-\frac{2b^{(4)}}{(b')^3}+\frac{12b''b'''}{(b')^4}-\frac{12(b'')^3}{(b')^5},\\
            &C_8(y)=\frac{2}{(b')^3},\quad\ C_9(y)=-\frac{9b''}{(b')^4},\quad\ C_{10}(y)=-\frac{3b'''}{(b')^4}+\frac{12(b'')^2}{(b')^5}.
        \end{split}
    \end{equation}
    Then we have the bounds
    \begin{equation}
        \norm{\p_\lambda^2 D_\lambda \psi_{k, \epsilon}^\iota   - \mathcal{S}_{2, 1, \epsilon}^\iota }_{H^1_k} \lesssim  \frac{\big\|\omega_{0k}\big\|_{H^4_k}}{|k|^{1/2}}.
    \end{equation}
\end{lemma}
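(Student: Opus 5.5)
We follow the template of Lemmas \ref{ndpsi_l2}, \ref{ndpsi_lD} and \ref{ndpsi_l3}. The idea is to apply $\partial_\lambda$ to the Rayleigh equation satisfied by $r^\iota_{1,1,\epsilon}:=\partial_\lambda D_\lambda\psi-\mathcal{S}^\iota_{1,1,\epsilon}$, or equivalently to apply $\partial_\lambda^2$ to the equation \eqref{Raypsi_D} for $D_\lambda\psi$. This yields an equation for $\partial_\lambda^2 D_\lambda\psi$ whose right-hand side contains terms with $(b-\lambda+i\iota\epsilon)^{-2}$, $(b-\lambda+i\iota\epsilon)^{-1}$ and logarithms.

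\textbf{Step 1: generate the singular expansion.} Differentiating \eqref{Raypsi_D} twice in $\lambda$ gives
\[
L\,\partial_\lambda^2D_\lambda\psi=\partial_\lambda^2\Big[\tfrac{b'''\psi-\omega_{0k}'}{b-\lambda+i\iota\epsilon}+b'''\partial_\lambda\psi+2b''\partial_y\partial_\lambda\psi\Big]+\text{commutators},
\]
with $L=\partial_y^2-k^2-\frac{b''}{b-\lambda+i\iota\epsilon}$. The commutators $[\partial_\lambda^2,L]$ produce $\frac{2b''\partial_\lambda D_\lambda\psi}{(b-\lambda+i\iota\epsilon)^2}+\frac{2b''D_\lambda\psi}{(b-\lambda+i\iota\epsilon)^3}$. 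In these we substitute the known expansions: $\partial_\lambda\psi=\mathcal{S}_{1,0}+r_{1,0}$, $\partial_\lambda^2\psi=\mathcal{S}_{2,0}+r_{2,0}$, $\partial_\lambda D_\lambda\psi=\mathcal{S}_{1,1}+r_{1,1}$, and $\partial_\lambda^3\psi=\mathcal{S}_{3,0}+r_{3,0}$ (the last enters through $\partial_y\partial_\lambda^3\psi$ after writing $\partial_y=D_\lambda-b'\partial_\lambda$).

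Away from $\mathrm{supp}\,\chi_{\ast,\delta_0/10}$, and mirroring the earlier lemmas, we then solve for the leading singularities one level at a time. The ansatz $A\,(b-\lambda+i\iota\epsilon)^{-1}$ is fixed by the identity
\[
L\big[A(b-\lambda+i\iota\epsilon)^{-1}\big]=A(b')^2\cdot 2(b-\lambda+i\iota\epsilon)^{-3}+\dots,
\]
and $B\log(b-\lambda+i\iota\epsilon)$ is fixed by $L[B\log]=-B(b')^2(b-\lambda+i\iota\epsilon)^{-2}+\dots$. Matching the $(b-\lambda+i\iota\epsilon)^{-3}$ and $(b-\lambda+i\iota\epsilon)^{-2}$ coefficients determines the coefficient of $(b-\lambda+i\iota\epsilon)^{-1}$ in \eqref{ndS21}, namely $C_1D_\lambda\psi+C_2\psi+\Omega_{0k,1}$, and the coefficient of the log term, namely $C_5D^2_\lambda\psi+\dots$. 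Throughout, we replace $\partial_y^2\psi$ and $\partial_y^2D_\lambda\psi$ via the Rayleigh equation; this is where $D_\lambda^2\psi$ and $\omega_{0k}''$ appear. As a consistency check, the stated constants should agree with $\partial_\lambda\mathcal{S}_{1,1}$ combined with the $b'$-multiple of $\mathcal{S}_{3,0}$, in the same way that $\mathcal{S}_{1,1}$ was rewritten using $\mathcal{S}_{2,0}$ in the proof of Lemma \ref{ndpsi_lD}. Indeed, a clean route is to write $\mathcal{S}_{2,1}=b'(\mathcal{S}_{3,0})+\partial_y(\mathcal{S}_{2,0})$ modulo lower-order singularities.

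\textbf{Step 2: identify the residual.} We show that $\mathcal{R}^\iota_{2,1,\epsilon}:=L(\partial_\lambda^2D_\lambda\psi-\mathcal{S}^\iota_{2,1,\epsilon})$ has the admissible form
\[
c_1\frac{\log(b-\lambda+i\iota\epsilon)}{b-\lambda+i\iota\epsilon}+\frac{c_2+\mathfrak{c}_1\partial_yD^2_\lambda\psi+\mathfrak{c}_2\partial_yD_\lambda\psi+\mathfrak{c}_3\partial_y\psi}{b-\lambda+i\iota\epsilon}+c_3\log(b-\lambda+i\iota\epsilon)+c_4,
\]
with
\[
k^2\|c_1\|_{H^1_k}+\|c_2\|_{H^1_k}+\|c_3\|_{L^2}+\|c_4\|_{L^2}\lesssim\|\omega_{0k}\|_{H^4_k}.
\]
The $H^1_k$ and $L^2$ control of the coefficients uses Lemmas \ref{ndpsi}--\ref{ndpsi_l3} together with \eqref{ndD2:bdd}. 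The remainders $r_{a,b}$ enter only multiplied by at most one factor $(b-\lambda+i\iota\epsilon)^{-1}$, so they can be placed in $c_2$.

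\textbf{Step 3: close the estimate.} We conclude with Lemma \ref{ndraygeneral}, using Lemma \ref{ndrayg} for the $\mathfrak{c}_i\partial_y(\cdot)$ terms. Lemma \ref{ndrayg} applies because $D_\lambda^2\psi$, $D_\lambda\psi$ and $\psi$ each solve Rayleigh equations of the form \eqref{ndrayf}.

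\textbf{Main obstacle.} The delicate step is the cancellation in Step 1: after substitution, every $(b-\lambda+i\iota\epsilon)^{-3}$ and $(b-\lambda+i\iota\epsilon)^{-2}$ term must cancel exactly. Any surviving $(b-\lambda+i\iota\epsilon)^{-2}$ term is not admissible in Lemma \ref{ndraygeneral}. Likewise, derivative terms such as $\partial_y\Omega_{0k,2}$, which contain $\omega_{0k}'''$, must enter only through $\log$ or through $(b-\lambda+i\iota\epsilon)^{-1}$ multiplied by $H^1_k$ coefficients. This is exactly where $H^4_k$ regularity is needed.
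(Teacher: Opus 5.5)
Your architecture matches the paper's. You derive the Rayleigh equation for $\partial_\lambda^2D_\lambda\psi-\mathcal S^\iota_{2,1,\epsilon}$, regroup its right-hand side into the admissible form, and close with Lemmas~\ref{ndraygeneral} and~\ref{ndrayg}. The paper organizes this by rewriting $\mathcal S^\iota_{2,1,\epsilon}$ as $b'(\partial_\lambda^3\psi-r^\iota_{3,0,\epsilon})$ plus explicit terms in $\psi,\partial_y\psi,D_\lambda\psi,D_\lambda^2\psi,\omega_{0k}$. The bookkeeping you actually describe, however, fails as written. Put $q:=b-\lambda+i\iota\epsilon$.

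In Step 1 you substitute $\partial_\lambda D_\lambda\psi=\mathcal S_{1,1}+r_{1,1}$ and $\partial_\lambda\psi=\mathcal S_{1,0}+r_{1,0}$ into two terms: the commutator $2b''\partial_\lambda D_\lambda\psi\,q^{-2}$, and $2b'''\partial_\lambda\psi\,q^{-2}$ coming from $\partial_\lambda^2[(b'''\psi-\omega_{0k}')q^{-1}]$. This splits the $q^{-2}$ coefficients into pieces $r_{1,1}q^{-2}$, $r_{1,0}q^{-2}$ and $q^{-2}\log q$. These are not admissible in Lemma~\ref{ndraygeneral}, since an $H^1_k$ function over $q^2$ is not uniformly controlled. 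They also cannot be matched by explicit ansatz terms; they disappear only after being recombined into $\partial_\lambda D_\lambda\psi$ and $\partial_\lambda\psi$.

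The same problem occurs on the ansatz side. In $L[C_1D_\lambda\psi\,q^{-1}]$, the piece $C_1\partial_y^2D_\lambda\psi\,q^{-1}$ contains $2C_1b''\partial_y\partial_\lambda\psi\,q^{-1}$ by \eqref{Raypsi_D}. Expanding $\partial_\lambda\psi$ there produces $\partial_yr_{1,0}\,q^{-1}$. But $\partial_yr_{1,0}$ is only uniformly in $L^2$, since it has a logarithmic singularity at the critical layer. So this term cannot go into $c_2$. Lemma~\ref{ndrayg} cannot treat it either, because $r_{1,0}$ does not solve an equation of the form \eqref{ndrayf}. Your Step 2 claim, that remainders enter with at most one factor $q^{-1}$ and can be placed in $c_2$, is therefore false for your procedure.

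The missing rule is this: never expand an unknown that multiplies $q^{-2}$ or $q^{-3}$, or that appears differentiated over $q$. Instead convert it to good derivatives via $b'\partial_\lambda D_\lambda\psi=D_\lambda^2\psi-\partial_yD_\lambda\psi$, $b'\partial_\lambda\psi=D_\lambda\psi-\partial_y\psi$ and $b'\partial_y\partial_\lambda\psi=\partial_yD_\lambda\psi-b''\partial_\lambda\psi-\partial_y^2\psi$. Then $2b''\partial_\lambda D_\lambda\psi\,q^{-2}$ cancels exactly against two terms: $-C_5(b')^2D_\lambda^2\psi\,q^{-2}$ from $L[C_5D_\lambda^2\psi\log q]$, and $-2C_1b'\partial_yD_\lambda\psi\,q^{-2}$ from $L[C_1D_\lambda\psi\,q^{-1}]$.

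This matching is why $D_\lambda^2\psi$ sits in the logarithmic coefficient. It does not come from substituting the Rayleigh equation for $\partial_y^2D_\lambda\psi$, as you suggest. The choice is essential: $D_\lambda^2\psi$, unlike $\partial_\lambda D_\lambda\psi$, solves a Rayleigh equation with only simple poles (see \eqref{ndRayD2psi}). Hence, apart from the $q^{-2}$ term just used, $L[C_5D_\lambda^2\psi\log q]$ produces only admissible terms plus $\partial_yD_\lambda^2\psi\,q^{-1}$, which Lemma~\ref{ndrayg} handles. Likewise, $C_8\omega_{0k}''$ is there to cancel $-2C_3b'\omega_{0k}''q^{-2}$, which arises from differentiating the pole coefficient.

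With this rule, no remainder appears over $q^2$ and no $\partial_y$ of a remainder appears over $q$. Every remainder term then fits one of the slots $c_1,\dots,c_4$, and your Step 3 closes the argument as in the paper.
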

\begin{proof}
    Similar to the arguments in Lemma \ref{ndpsi_lD}, one can rewrite $\mathcal{S}_{2,1, \epsilon}^\iota$ as follows
    \begin{equation}
        \begin{split}
            \mathcal{S}_{2,1, \epsilon}^\iota=&b'\left(\partial_\lambda^3\psi-r_{3,0,\epsilon}^\iota\right)\\
&+\left(-\frac{b''}{b'}\psi+\frac{\omega_{0k}}{b'}\right)\frac{(1-\chi_{\ast,\delta_0\slash10}(y))}{(b-\lambda+i\iota\epsilon)^2}\\
&+\left\{\begin{aligned}
&\frac{b''}{(b')^2}\partial_y\psi+\frac{\omega_{0k}'}{(b')^2}
+\left[-\frac{b'''}{(b')^2}+\frac{(b'')^2}{(b')^3}\right]\psi\\
&-\frac{b''}{(b')^3}\omega_{0k}-\frac{2b''}{(b')^2}D_\lambda\psi
\end{aligned}\right\}\frac{1-\chi_{\ast,\delta_0\slash10}(y)}{b-\lambda+i\iota\epsilon}\\
            &+\left\{\begin{aligned}
            &\frac{3b''}{(b')^4}\omega_{0k}'+\frac{k^2(b')^2b''+(b')^2b^{(4)}-4b'b''b'''+3(b'')^3}{(b')^5}\psi\\
            &-\frac{k^2(b')^2-b'b'''+3(b'')^2}{(b')^5}\omega_{0k}+\frac{2b'b'''-3(b'')^2}{(b')^4}D_\lambda\psi\\
            &+\frac{b''}{(b')^3}D_\lambda^2\psi-\frac{\omega_{0k}''}{(b')^3}
            \end{aligned}
            \right\}\\
&\qquad\times(1-\chi_{\ast,\delta_0\slash10}(y))\log(b-\lambda+i\iota\epsilon),
        \end{split}
    \end{equation}
    where $r_{3,0,\epsilon}^\iota(y,\lambda):=\p_\lambda^3 \psi_{k, \epsilon}^\iota   - \mathcal{S}_{3, 0, \epsilon}^\iota$ with $\|r_{3,0,\epsilon}^\iota\|_{H^1_k} \lesssim  |k|^{-1/2}\|\omega_{0k}\|_{H^4_k}$ from Lemma \ref{ndpsi_l3}.
    Then $\partial_\lambda^2D_\lambda\psi_{k, \epsilon}^\iota- \mathcal{S}_{2,1, \epsilon}^\iota$ satisfies 
    \begin{equation}
        \left(\partial_y^2-k^2-\frac{b''(y)}{b(y)-\lambda+i\iota\epsilon}\right)\left(\partial_\lambda^2D_\lambda\psi_{k, \epsilon}^\iota- \mathcal{S}_{2,1, \epsilon}^\iota\right)=\mathcal{R}_{2,1,\epsilon}^{\iota}(y,\lambda).
    \end{equation}
    Here $\mathcal{R}^\iota_{2,1,\epsilon}$ can be regrouped as 
    \begin{equation}
        \begin{split}
            \mathcal{R}_{2,1,\epsilon}^{\iota}(y,\lambda)=&c_1(y)\frac{\log(b(y)-\lambda+i\iota\epsilon)}{b(y)-\lambda+i\iota\epsilon}+\frac{c_2(y)+\mathfrak{c}_1(y)\partial_yD_\lambda^2\psi+\mathfrak{c}_2(y)\partial_yD_\lambda\psi+\mathfrak{c}_3(y)\partial_y\psi}{b(y)-\lambda+i\iota\epsilon}\\
            &+c_3(y)\log(b(y)-\lambda+i\iota\epsilon)+c_4(y),
        \end{split}
    \end{equation}
    with $|k|\|c_1\|_{H^1_k}+\|c_2\|_{H^1_k}+\|c_3\|_{L^2}+\|c_{4}\|_{L^2}\lesssim\|\omega_{0k}\|_{H^4_k}$. 
    The conclusion follows directly from Lemmas \ref{ndraygeneral}, \ref{ndrayg}.
\end{proof}

\begin{lemma}
    Assume that $\kappa k\in\mathbb{Z}\backslash\{0\}, \lambda\in \Sigma\backslash \cup_{j}\Sigma_{j, \delta_0},\, \epsilon \in (0, \epsilon_0),\, \iota\in\{\pm\}$, and let $\psi_{k,\epsilon}^{\iota}(y,\lambda)$ be as defined in \eqref{F7}. 
    For $y\in \T$, set
    \begin{equation}\label{ndS12}
        \begin{split}
            \mathcal{S}_{1, 2, \epsilon}^\iota(y, \lambda) &:= \left[
                \begin{aligned}
                    &C_1(y)D_\lambda^2\psi(y,\lambda)+C_2(y)D_\lambda\psi(y,\lambda)\\
                    &+C_3(y)\psi(y,\lambda)+\Omega_{0k}(y)
                \end{aligned}    
            \right](1-\chi_{\ast,\delta_0\slash10}(y))\log(b(y)-\lambda+i\iota\epsilon),
        \end{split}
    \end{equation}
    where 
    \begin{equation}
        \begin{split}
            &\Omega_{0k}(y)=C_4(y)\omega_{0k}''(y)+C_{5}(y)\omega_{0k}'(y)+C_{6}(y)\omega_{0k}(y), \\
            &C_1(y)=-\frac{b''}{(b')^2},\quad\ C_2(y)=-\frac{2b'''}{(b')^2}+\frac{4(b'')^2}{(b')^3},\quad\ C_3(y)=-\frac{b^{(4)}}{(b')^2}+\frac{6b''b'''}{(b')^3}-\frac{6(b'')^3}{(b')^4},\\
            &C_4(y)=\frac{1}{(b')^2},\quad\ C_5(y)=-\frac{4b''}{(b')^3},\quad\ C_6(y)=-\frac{2b'''}{(b')^3}+\frac{6(b'')^2}{(b')^4}.
        \end{split}
    \end{equation}
    Then we have the bounds
    \begin{equation}\label{ndlamD2:bdd}
        \norm{\p_\lambda D_\lambda^2 \psi_{k, \epsilon}^\iota   - \mathcal{S}_{1, 2, \epsilon}^\iota }_{H^1_k} \lesssim  \frac{\big\|\omega_{0k}\big\|_{H^4_k}}{|k|^{1/2}}.
    \end{equation}
\end{lemma}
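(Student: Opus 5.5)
We follow the template of Lemmas \ref{ndpsi_lD} and \ref{ndpsi_l2D}. In those lemmas the singular part of a mixed derivative was obtained by writing it as $b'$ times the next pure $\lambda$-derivative plus explicit corrections. Here we do the same with one more $D_\lambda$. We use $D_\lambda=b'\partial_\lambda+\partial_y$ to write
\[
\partial_\lambda D_\lambda^2\psi=D_\lambda(\partial_\lambda D_\lambda\psi)+[\partial_\lambda,D_\lambda]D_\lambda\psi
=b'\partial_\lambda^2D_\lambda\psi+\partial_y\partial_\lambda D_\lambda\psi .
\]
We then substitute $\partial_\lambda^2D_\lambda\psi=\mathcal{S}^\iota_{2,1,\epsilon}+r^\iota_{2,1,\epsilon}$ from Lemma \ref{ndpsi_l2D}, and $\partial_\lambda D_\lambda\psi=\mathcal{S}^\iota_{1,1,\epsilon}+r^\iota_{1,1,\epsilon}$ from Lemma \ref{ndpsi_lD}. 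The first step is the algebraic check that $\mathcal{S}^\iota_{1,2,\epsilon}$ equals
\[
b'\bigl(\partial_\lambda^2 D_\lambda\psi-r^\iota_{2,1,\epsilon}\bigr)
\]
plus explicit terms with factors $(b-\lambda+i\iota\epsilon)^{-1}$ and $\log(b-\lambda+i\iota\epsilon)$. In this computation the $(b-\lambda+i\iota\epsilon)^{-1}$ pieces must cancel, since no such term appears in \eqref{ndS12}. This cancellation comes from $\partial_y\log(b-\lambda+i\iota\epsilon)=b'/(b-\lambda+i\iota\epsilon)$ combining with the $C_1,C_2,\Omega_{0k,1}$ terms of $\mathcal{S}^\iota_{2,1,\epsilon}$. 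It is also how the coefficients $C_1,\dots,C_6$ of \eqref{ndS12} are fixed.

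Next we derive the Rayleigh equation for $r^\iota_{1,2,\epsilon}:=\partial_\lambda D_\lambda^2\psi-\mathcal{S}^\iota_{1,2,\epsilon}$. We apply $\partial_\lambda$ to \eqref{ndRayD2psi} and subtract the operator $\partial_y^2-k^2-b''/(b-\lambda+i\iota\epsilon)$ applied to $\mathcal{S}^\iota_{1,2,\epsilon}$. Differentiating \eqref{ndRayD2psi} in $\lambda$ produces $-b''D_\lambda^2\psi/(b-\lambda+i\iota\epsilon)^2$ together with $\partial_\lambda$ of the right-hand side. The double-pole terms $(b-\lambda+i\iota\epsilon)^{-2}$ and the $\log/(b-\lambda+i\iota\epsilon)$ terms from $\partial_\lambda c_{3,\cdot}$ must be cancelled by the derivatives falling on $\log(b-\lambda+i\iota\epsilon)$ in $\mathcal{S}^\iota_{1,2,\epsilon}$. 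Whatever survives is regrouped into the form
\[
\mathcal{R}^\iota_{1,2,\epsilon}=c_1\frac{\log(b-\lambda+i\iota\epsilon)}{b-\lambda+i\iota\epsilon}+\frac{c_2+\mathfrak{c}_1\partial_yD_\lambda^2\psi+\mathfrak{c}_2\partial_yD_\lambda\psi+\mathfrak{c}_3\partial_y\psi}{b-\lambda+i\iota\epsilon}+c_3\log(b-\lambda+i\iota\epsilon)+c_4 .
\]
In this regrouping we replace $\partial_y^2$ of $\psi$, $D_\lambda\psi$ and $D_\lambda^2\psi$ using their Rayleigh equations, namely \eqref{F7}, \eqref{Raypsi_D} and \eqref{ndRayD2psi}. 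We replace $\partial_\lambda$- and mixed derivatives by their singular expansions plus $H^1_k$ remainders from Lemmas \ref{ndpsi_l}, \ref{ndpsi_l2}, \ref{ndpsi_lD} and \ref{ndpsi_l3}. Remainders $r$ that get multiplied by $\log$ are absorbed into $c_3$ using their $H^1_k\subset L^\infty$ bounds.

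Finally we verify the coefficient bounds
\[
|k|\|c_1\|_{H^1_k}+\|c_2\|_{H^1_k}+\|c_3\|_{L^2}+\|c_4\|_{L^2}\lesssim\|\omega_{0k}\|_{H^4_k}.
\]
These use $\|\omega_{0k}''\|_{H^1_k}\le\|\omega_{0k}\|_{H^3_k}$ and the bound \eqref{ndD2:bdd} on $D_\lambda^2\psi$. The terms $\mathfrak{c}_i\partial_y(\cdot)/(b-\lambda+i\iota\epsilon)$ are treated by Lemma \ref{ndrayg}. To use it, we note that $\psi$, $D_\lambda\psi$ and $D_\lambda^2\psi$ each solve an equation of type \eqref{ndrayf} with controlled coefficients. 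The bound \eqref{ndlamD2:bdd} then follows from Lemma \ref{ndraygeneral} together with Lemma \ref{ndrayg} and the limiting absorption principle, Lemma \ref{T5}. The factor $|k|^{-1/2}$ arises exactly as in \eqref{ndraygeneral1}.

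The main obstacle is the bookkeeping in the second step. We must confirm that all $(b-\lambda+i\iota\epsilon)^{-2}$ terms cancel, and that no term of the form $\partial_y^2(\cdot)\log$, or an $\omega_{0k}'''$ multiplied by a pole, survives with coefficients outside the admissible classes. Any uncancelled $\omega_{0k}'''\log$ term is harmless, since it goes into $c_3\in L^2$ by the $H^4_k$ assumption. The genuine risk is a leftover $\log/(b-\lambda+i\iota\epsilon)$ term whose coefficient $c_1$ is only in $L^2$ rather than $H^1_k$. If that happens, we would first try integrating by parts in $y$ as in \eqref{ndintpsi_y1}, trading the derivative onto $G_k$ and using \eqref{ndFlogl2}.
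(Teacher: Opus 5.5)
Your core argument is the paper's: form the Rayleigh equation for $r^\iota_{1,2,\epsilon}=\partial_\lambda D_\lambda^2\psi-\mathcal S^\iota_{1,2,\epsilon}$, expand everything with the known singular expansions, check that the right-hand side is admissible, and close with Lemmas \ref{ndraygeneral}, \ref{ndrayg} and \ref{T5}. With the stated coefficients it does close. Your first step, however, does not do what you say it does, and you should drop it.

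The single-pole cancellation between $b'\mathcal S^\iota_{2,1,\epsilon}$ and $\partial_y\mathcal S^\iota_{1,1,\epsilon}$ is automatic. The pole coefficient of $\mathcal S^\iota_{2,1,\epsilon}$ is exactly minus the logarithmic coefficient of $\mathcal S^\iota_{1,1,\epsilon}$; compare Lemmas \ref{ndpsi_lD} and \ref{ndpsi_l2D}. This is forced by $\partial_\lambda\log(b-\lambda+i\iota\epsilon)=-(b-\lambda+i\iota\epsilon)^{-1}$, so it carries no information about $C_1,\dots,C_6$.

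The logarithmic parts do not match either. In $\partial_\lambda D_\lambda^2\psi=b'\partial_\lambda^2D_\lambda\psi+\partial_y\partial_\lambda D_\lambda\psi$, the term $\partial_y r^\iota_{1,1,\epsilon}$ is only controlled in $L^2$. It has its own logarithmic singularities at the critical layer, which Lemma \ref{ndpsi_lD} does not record. For instance, $b'\mathcal S^\iota_{2,1,\epsilon}$ gives $D_\lambda^2\psi$ the coefficient $-2b''/(b')^2$. The only related contribution of $\partial_y\mathcal S^\iota_{1,1,\epsilon}$ is $-\frac{b''}{(b')^2}\,\partial_yD_\lambda\psi\,\log(b-\lambda+i\iota\epsilon)$, which is of $\log^2$ type by Lemma \ref{ndpsi_D}. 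Neither reproduces $C_1=-b''/(b')^2$; the discrepancy sits in $\partial_y r^\iota_{1,1,\epsilon}$. So that identity can neither derive nor verify \eqref{ndS12}.

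What actually pins the coefficients down is the cancellation of the $(b-\lambda+i\iota\epsilon)^{-2}$ terms in your second step. There, $(b')^2$ times the bracket in \eqref{ndS12}, coming from $\partial_y^2\log(b-\lambda+i\iota\epsilon)$, must cancel the double poles from the following sources:
\begin{itemize}
\item the term $b''D_\lambda^2\psi$;
\item $\partial_\lambda$ of the fraction in \eqref{ndRayD2psi};
\item the pole parts of $\mathcal S^\iota_{2,1,\epsilon}$, $\mathcal S^\iota_{3,0,\epsilon}$, $\mathcal S^\iota_{2,0,\epsilon}$, entering through $4b''\partial_y\partial_\lambda^2D_\lambda\psi$, $-2(b'')^2\partial_\lambda^3\psi$ and $2b'''\partial_y\partial_\lambda^2\psi$.
\end{itemize}
For example, the $D_\lambda^2\psi$ and $\omega_{0k}''$ components read $b''+(b')^2C_1=0$ and $-1+(b')^2C_4=0$, and the other four components vanish as well. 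Once everything is expanded, every $\log/(b-\lambda+i\iota\epsilon)$ coefficient is built from $\psi$, $D_\lambda\psi$, $D_\lambda^2\psi$, $\omega_{0k}$, $\omega_{0k}'$, $\omega_{0k}''$. These all lie in $H^1_k$, so the $L^2$-coefficient risk you flag does not occur. For the $\mathfrak{c}_i\partial_y(\cdot)$ terms, the factor $|k|^{-1/2}$ does not come from Lemma \ref{ndrayg}, which gives no gain. It comes instead from $\|\omega_{0k}\|_{H^3_k}\le|k|^{-1}\|\omega_{0k}\|_{H^4_k}$.

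The paper organizes the computation through a different identity:
\[
\partial_\lambda D_\lambda^2=-(b')^2\partial_\lambda^3+2b'\partial_\lambda^2D_\lambda+b''\partial_\lambda^2+\partial_\lambda\partial_y^2,
\]
combined with \eqref{F7}. It writes $\mathcal S^\iota_{1,2,\epsilon}$ as $-(b')^2\mathcal S^\iota_{3,0,\epsilon}+2b'\mathcal S^\iota_{2,1,\epsilon}+b''\mathcal S^\iota_{2,0,\epsilon}+k^2\mathcal S^\iota_{1,0,\epsilon}$ plus explicit pole terms. No $\partial_y$ falls on a remainder there, so this identity already gives
\[
r^\iota_{1,2,\epsilon}=-(b')^2r^\iota_{3,0,\epsilon}+2b'r^\iota_{2,1,\epsilon}+b''r^\iota_{2,0,\epsilon}+k^2(\partial_\lambda\psi-\mathcal S^\iota_{1,0,\epsilon})
\]
up to terms carrying $\chi_{\ast,\delta_0/10}$, on whose support $b-\lambda$ is bounded away from zero. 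The bound then follows directly from the earlier lemmas, with no bookkeeping at all.
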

\begin{proof}
    Similar to the arguments in Lemma \ref{ndpsi_lD}, one can rewrite $\mathcal{S}_{1,2, \epsilon}^\iota$ as follows
    \begin{equation*}
        \begin{split}
            \mathcal{S}_{1,2, \epsilon}^\iota=&-(b')^2(\partial_\lambda^3\psi-r_{3,0,\epsilon}^\iota)+(b''\psi-\omega_{0k})\frac{1-\chi_{\ast,\delta_0\slash10}(y)}{(b-\lambda+i\iota\epsilon)^2}+2b'(\partial_\lambda^2D_\lambda\psi-r_{2,1,\epsilon}^\iota)\\
            &+b''(\partial_\lambda^2\psi-r_{2,0,\epsilon}^\iota)+\frac{b''}{b'}(D_\lambda\psi-\partial_y\psi)\frac{1-\chi_{\ast,\delta_0\slash10}(y)}{b-\lambda+i\iota\epsilon}\\
            &-\frac{k^2(b''\psi-\omega_{0k})}{(b')^2}(1-\chi_{\ast,\delta_0\slash10}(y))\log(b-\lambda+i\iota\epsilon),\\
        \end{split}
    \end{equation*}
    where $r_{2,1,\epsilon}^\iota(y,\lambda):=\p_\lambda^2 D_\lambda \psi_{k, \epsilon}^\iota   - \mathcal{S}_{2, 1, \epsilon}^\iota$ with $\|r_{2,1,\epsilon}^\iota\|_{H^1_k} \lesssim  |k|^{-1/2}\|\omega_{0k}\|_{H^4_k}$ from Lemma \ref{ndpsi_l2D}.
    Then $\partial_\lambda D_\lambda^2\psi_{k, \epsilon}^\iota- \mathcal{S}_{1,2, \epsilon}^\iota$ satisfies 
    \begin{equation}
        \left(\partial_y^2-k^2-\frac{b''(y)}{b(y)-\lambda+i\iota\epsilon}\right)\left(\partial_\lambda D_\lambda^2\psi_{k, \epsilon}^\iota- \mathcal{S}_{1,2, \epsilon}^\iota\right)=\mathcal{R}_{1,2,\epsilon}^{\iota}(y,\lambda).
    \end{equation}
    $\mathcal{R}^\iota_{1,2,\epsilon}$ is of the form
    \begin{equation}
        \begin{split}
            \mathcal{R}_{1,2,\epsilon}^{\iota}(y,\lambda)=&c_1(y)\frac{\log(b(y)-\lambda+i\iota\epsilon)}{b(y)-\lambda+i\iota\epsilon}+\frac{c_2(y)+\mathfrak{c}_1(y)\partial_yD_\lambda^2\psi+\mathfrak{c}_2(y)\partial_yD_\lambda\psi+\mathfrak{c}_3(y)\partial_y\psi}{b(y)-\lambda+i\iota\epsilon}\\
            &+c_3(y)\log(b(y)-\lambda+i\iota\epsilon)+c_4(y),
        \end{split}
    \end{equation}
    with $|k|\|c_1\|_{H^1_k}+\|c_2\|_{H^1_k}+\|c_3\|_{L^2}+\|c_{4}\|_{L^2}\lesssim\|\omega_{0k}\|_{H^4_k}$. 
    The conclusion follows directly from Lemmas \ref{ndraygeneral}, \ref{ndrayg}.
\end{proof}

\section{Bounds on \texorpdfstring{$\psi_{k,\epsilon}^{\iota}$}{psi}: Degenerate case}


In this section we aim to capture the  smallness in the spectral density function which is at the heart of the depletion mechanism. As was already observed in \cite{IIJ24}, we can capture this smallness by introducing the modified spectral density function $\phi^\iota_{k,\epsilon}(y,\lambda)$: for $\kappa k\in\Z\backslash\{0\}, \iota\in\{\pm\}, \lambda\in \Sigma_{j,\delta_0}, 0<\epsilon<\epsilon_0$, (recall $\epsilon_0$ from Proposition~\ref{DeC7})
\begin{equation}\label{BDC1}
\psi^\iota_{k,\epsilon}(y,\lambda)=\phi^\iota_{k,\epsilon}(y,\lambda)+\frac{\chi_{j, \delta_0}(y)}{b''(y)}\omega_{0k}(y).
\end{equation}
Note that we suppress the dependence on $j$ in $\phi_{k, \epsilon}^\iota$ since it is already implicit in the domain of $\lambda$. 
Then $\phi^\iota_{k,\epsilon}(y,\lambda)$ satisfies for $y\in \T $,
\begin{equation}\label{BDC2}
(-\partial_y^2 + k^2)\phi^\iota_{k,\epsilon}(y,\lambda)+\frac{b''(y)}{b(y)-\lambda+i\iota\epsilon}\phi^\iota_{k,\epsilon}(y,\lambda)=g^\iota_{k,\epsilon}(y,\lambda),\end{equation}
where for $ \kappa k\in\Z\backslash\{0\}, \iota\in\{\pm\}, \lambda\in \Sigma_{j,\delta_0},  0<\epsilon<\epsilon_0$
\begin{equation}\label{BDC3}
g^\iota_{k,\epsilon}(y,\lambda):=\frac{1-\chi_{j, \delta_0}(y)}{b(y)-\lambda+i\iota\epsilon}\omega_{0k}(y)-(k^2-\partial_y^2)\Big[\frac{\chi_{j, \delta_0}(y)}{b''(y)}\omega_{0k}(y)\Big].
\end{equation}
We record the following bounds for $g^\iota_{k,\epsilon}(y,\lambda)$. 

\begin{lemma}
\label{g:bdds}
    We have the following bounds for $g^\iota_{k,\epsilon}(y,\lambda)$.
    \begin{subequations}
         \begin{align}
    |g^\iota_{k,\epsilon}(y,\lambda)| + |\p_\lambda g^\iota_{k,\epsilon}(y,\lambda)| + |\p_\lambda^2 g^\iota_{k,\epsilon}(y,\lambda)| + |\p_\lambda^3 g^\iota_{k,\epsilon}(y,\lambda)| &\lesssim \frac{\norm{\omega_{0k}}_{H_k^4} }{|k|^{3/2}}, \\
     |\p_yg^\iota_{k,\epsilon}(y,\lambda)| + |\D g^\iota_{k,\epsilon}(y,\lambda)| +  |\p_\lambda \D g^\iota_{k,\epsilon}(y,\lambda)| +  |\p_\lambda^2 \D g^\iota_{k,\epsilon}(y,\lambda)| &\lesssim \frac{\norm{\omega_{0k}}_{H_k^4}   }{|k|^{1/2}}, \\ 
    \norm{\p_y^2g^\iota_{k,\epsilon}(y,\lambda)}_{L^2} + \norm{\D^2g^\iota_{k,\epsilon}(y,\lambda)}_{L^2} + \norm{\p_\lambda\D^2g^\iota_{k,\epsilon}(y,\lambda)}_{L^2}  &\lesssim \norm{\omega_{0k}}_{H_k^4}. 
    \end{align}
    \end{subequations}
 Additionally, when $|k \delta(\Lambda)| \gtrsim 1$, we have the following bound for $\alpha \in \{0,1 \}$.
\begin{subequations}
\begin{equation}
\begin{split}
&\left|\int_\T\varrho_{j,k}^\alpha(y;\Lambda)\p_y^\alpha\mathcal G_k^j(y,z;\Lambda)
\sum_{r=0}^3|\p_\lambda^r g_{k,\epsilon}^\iota(z,\lambda)|\,dz\right|\\
&\qquad\lesssim\frac{\|\omega_{0k}\|_{H_k^4}}{|k|^{7/2}},
\end{split}
\end{equation}
\begin{equation}
\begin{split}
&\left|\int_\T\varrho_{j,k}^\alpha(y;\Lambda)\p_y^\alpha\mathcal G_k^j(y,z;\Lambda)
\left(|\p_y g_{k,\epsilon}^\iota(z,\lambda)|+\sum_{r=0}^2|\p_\lambda^r\D g_{k,\epsilon}^\iota(z,\lambda)|\right)\,dz\right|\\
&\qquad\lesssim\frac{\|\omega_{0k}\|_{H_k^4}}{|k|^{5/2}},
\end{split}
\end{equation}
\begin{equation}
\begin{split}
&\left|\int_\T\varrho_{j,k}^\alpha(y;\Lambda)\p_y^\alpha\mathcal G_k^j(y,z;\Lambda)
\left(|\p_y^2g_{k,\epsilon}^\iota(z,\lambda)|+|\D^2g_{k,\epsilon}^\iota(z,\lambda)|
+|\p_\lambda\D^2g_{k,\epsilon}^\iota(z,\lambda)|\right)\,dz\right|\\
&\qquad\lesssim\frac{\|\omega_{0k}\|_{H_k^4}}{|k|^{3/2}}.
\end{split}
\end{equation}
\end{subequations}
    
\end{lemma}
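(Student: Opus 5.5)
The plan is to split $g^\iota_{k,\epsilon}$ into its two pieces and bound each directly. We write
\[
g^\iota_{k,\epsilon}=g_{\rm far}+g_{\rm near},
\]
with
\[
g_{\rm far}:=\frac{(1-\chi_{j,\delta_0})\omega_{0k}}{b-\lambda+i\iota\epsilon},\qquad
g_{\rm near}:=-(k^2-\p_y^2)\Big[\frac{\chi_{j,\delta_0}}{b''}\omega_{0k}\Big].
\]
The piece $g_{\rm near}$ does not depend on $\lambda$, so $\p_\lambda^r g_{\rm near}=0$ for $r\ge1$. Also $\D g_{\rm near}=\p_y g_{\rm near}$, $\D^2 g_{\rm near}=\p_y^2 g_{\rm near}$ and $\p_\lambda\D^2 g_{\rm near}=0$. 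The bounds for $g_{\rm near}$ are therefore Sobolev bounds on $(k^2-\p_y^2)(\chi\omega_{0k}/b'')$. The first derivatives of this expression involve at most $\p_y^3\omega_{0k}$, and the second derivatives at most $\p_y^4\omega_{0k}$. We use the one-dimensional Gagliardo--Nirenberg inequality $\|f\|_{L^\infty}\lesssim |k|^{1/2}\|f\|_{L^2}+|k|^{-1/2}\|\p_yf\|_{L^2}$, together with the weights in $H^4_k$. This gives $|g_{\rm near}|\lesssim |k|^{-3/2}\|\omega_{0k}\|_{H^4_k}$ and $|\p_yg_{\rm near}|\lesssim |k|^{-1/2}\|\omega_{0k}\|_{H^4_k}$, since $\p_y^3\omega_{0k}$ is controlled by $|k|^{1/2}$ times the $H^4_k$ weights. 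The $L^2$ bound $\|\p_y^2g_{\rm near}\|_{L^2}\lesssim\|\omega_{0k}\|_{H^4_k}$ is immediate, using $C^6$ regularity of $b$ and $|b''|\ge\varkappa/2$ on the support of $\chi_{j,\delta_0}$.

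For $g_{\rm far}$, the support of $1-\chi_{j,\delta_0}$ excludes $S^j_{\delta_0}$, while $\lambda\in\Sigma_{j,\delta_0}$. By \eqref{IntN4.0}--\eqref{IntN4.2}, $|b(y)-\lambda|$ may still vanish near the \emph{other} critical point or along monotone stretches, so we do not obtain a uniform lower bound on the denominator. This is the main obstacle. There are two ways to handle it. One is to note that $\lambda$ near $b(y_{j*})$ is separated from $b$ on the complement only where $b$ stays away from $b(y_{j*})$. The other is to compute $\lambda$-derivatives via $\p_\lambda (b-\lambda+i\iota\epsilon)^{-1}=-\p_y(\cdot)/b'$ and use only the combinations listed. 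I would check carefully which one applies. The structure of the lemma, with only $\D$ appearing alongside $\p_y$ and at most three $\p_\lambda$'s, suggests the intended reading is that on ${\rm supp}(1-\chi_{j,\delta_0})$ the denominator is bounded below, i.e.\ $|b(y)-\lambda|\gtrsim\delta_0^2$. This holds if the $\Sigma_{j,\delta_0}$ interval is small compared with the oscillation of $b$ off $S^j_{\delta_0}$ (the factor $1/16$ and the convexity from \eqref{IntN4.0} give this locally). If the other critical value coincides, the interpretation would need $\omega_{0k}$-type support considerations. I would first verify the lower bound using $|b(y)-b(y_{j*})|\ge |b''(y_{j*})|\delta_0^2/4$ for $|y-y_{j*}|\ge\delta_0$, and treat that as the crux. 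Granting it, every $\p_\lambda^r$ and $\D$ falling on $(b-\lambda+i\iota\epsilon)^{-1}$ produces smooth bounded factors, and all the pointwise and $L^2$ bounds for $g_{\rm far}$ reduce to those for $\omega_{0k},\p_y\omega_{0k},\p_y^2\omega_{0k}$, which are stronger than needed.

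For the integral bounds when $|k\delta(\Lambda)|\gtrsim1$, we use Lemma~\ref{mGk50}. In this regime $\varrho_{j,k}\approx|k|^{-1}$ on the relevant region, and \eqref{mGk51} gives $\varrho_{j,k}^\alpha|\p_y^\alpha\mathcal G^j_k(y,z)|\lesssim|k|^{-1}e^{-|k||y-z|}$. The $z$-integral of the kernel is then $\lesssim|k|^{-2}$ in $L^1$ and $\lesssim|k|^{-3/2}$ in $L^2$. Pairing the $L^1$ kernel bound with the pointwise bounds above gives $|k|^{-2}\cdot|k|^{-3/2}$ and $|k|^{-2}\cdot|k|^{-1/2}$. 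Pairing the $L^2$ kernel bound with the $L^2$ bounds via Cauchy--Schwarz gives $|k|^{-3/2}$. These are exactly the three claimed estimates.
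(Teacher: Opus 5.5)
Your overall scheme is right, and it is the natural direct verification; the paper only records this lemma and gives no proof. The $\lambda$-independent piece is handled by Sobolev embedding with the $H^4_k$ weights, together with $\D g_{\rm near}=\p_y g_{\rm near}$ and $\D^2 g_{\rm near}=\p_y^2 g_{\rm near}$. The integral bounds follow from \eqref{mGk51}, which gives $\varrho_{j,k}^\alpha|\p_y^\alpha\mathcal G^j_k|\lesssim |k|^{-1}e^{-|k||y-z|}$ because $\varrho_{j,k}(z)\le |k|^{-1}$ always; you then pair this kernel in $L^1$ or $L^2$ with the pointwise and $L^2$ bounds.

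The gap is the step you call the crux and then only grant. You never prove that $|b(y)-\lambda|$ is bounded below on ${\rm supp}(1-\chi_{j,\delta_0})$. You even assert it ``may still vanish near the other critical point or along monotone stretches,'' which is false under Assumption~\ref{MaAs}. Your fallback options are also not viable. If the denominator vanished at a point with $\omega_{0k}\neq 0$, then $|g^\iota_{k,\epsilon}|$ would be of size $\epsilon^{-1}$ there, and the first bound of the lemma would fail uniformly in $\epsilon$. Rewriting $\p_\lambda$ of the Cauchy kernel as $-\p_y(\cdot)/b'$ cannot rescue a pointwise bound. So the lower bound is forced, not an interpretive choice. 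You had the right target inequality, but you justified it only locally near $y_{j*}$.

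The missing idea is global. The derivative $b'$ vanishes only at $y_{1*},y_{2*}$, so it has constant sign on each arc of $\T\setminus\{y_{1*},y_{2*}\}$, and periodicity forces the two signs to be opposite. Hence $y_{j*}$ is a strict global extremum and the two critical values are distinct. Moreover, $|b(y)-b(y_{j*})|$ increases monotonically as $y$ moves away from $y_{j*}$ along either arc, until it reaches the other critical point. Since $|y_{1*}-y_{2*}|>10\delta_0$, every $y$ with $|y-y_{j*}|\ge\delta_0$ therefore satisfies
\[
|b(y)-b(y_{j*})|\ge\min_\pm|b(y_{j*}\pm\delta_0)-b(y_{j*})|\ge\Big(\tfrac12-\tfrac1{60}\Big)|b''(y_{j*})|\,\delta_0^2 ,
\]
by Taylor's theorem and \eqref{IntN4.0}. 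Combined with $|\lambda-b(y_{j*})|\le|b''(y_{j*})|\delta_0^2/16$ from \eqref{IntN4.2}, this gives $|b(y)-\lambda+i\iota\epsilon|\ge|b(y)-\lambda|\gtrsim\varkappa\delta_0^2$ uniformly in $\epsilon$. With this inserted, the rest of your argument goes through as written.
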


When $|k|\delta(\Lambda) \gtrsim 1$, Lemma \ref{g:bdds} is sufficient to get the needed bounds on $\phi_{k, \epsilon}^\iota$. However,  when $|k|\delta(\Lambda) \ll 1$ to capture the necessary vanishing at the critical layer and the quadratic smallness, we need a further decomposition. We write 
\begin{align}
    g_{k,j*} := g^\iota_{k,\epsilon}(y_{j*}, \lambda),
\end{align}
where we can safely drop the $\lambda, \iota, \epsilon$ dependence from $g_{k,j*}$ since these only enter through the first term in \eqref{BDC3} and 
\begin{align}
    \frac{1-\chi_{j ,\delta_0}(y_{j*})}{b(y_{j*})-\lambda+i\iota\epsilon}\omega_{0k}(y_{j*}) = 0.
\end{align}
We decompose the solution $\phi_{k, \epsilon}^\iota$ as 
\begin{align}
\label{BDC3.5}
   \phi_{k, \epsilon}^\iota(y, \lambda) = g_{k,j*}\chi_{j, \frac{\delta_0}{4k }}(y)\Phi_{ \epsilon}^\iota(y - y_{j*},\lambda) + E_{k, \epsilon}^\iota(y - y_{j*}, \lambda), 
\end{align}
where $\Phi_{ \epsilon}^\iota$ will solve an ``inner problem'' \eqref{Inner_problem} and $E_{k, \epsilon}^\iota$ will be the remainder satisfying \eqref{E:eq}.  

\subsection{Inner problem}
\label{Inner}

For $\lambda \in \Sigma_{j, \delta_0}$, $\epsilon \in (0, \epsilon_0)$ we wish to solve the following equation:
\begin{align}
    &-\p_y^2 G(y,z, \lambda + i \iota \epsilon) + \frac{b_j''(y)}{b_j(y) - \lambda + i \iota \epsilon} G(y,z, \lambda + i \iota \epsilon) = \delta(y - z ) + \delta(y+ z ), \\
    &y \in [-\delta_0, \delta_0],  \quad z\in [0,\delta_0], \quad  G( \pm \delta_0, z, \lambda + i \iota \epsilon) = 0,
\end{align}
where $b_j(y)  := b(y + y_{j*})$. One can check that   
\begin{align}
\label{reg}
    \varphi_{\epsilon}^\iota(y) := b_j(y) -\lambda + i\iota \epsilon
\end{align}
is an exact homogeneous solution. Reduction of order then immediately gives 
\begin{align}
\label{sing}
    H_{r, \epsilon}^\iota(y) &:= \varphi_{\epsilon}^\iota(y) \int_y^{ \delta_0} \frac{ds}{(\varphi_{\epsilon}^\iota)^2(s)}, \\
    H_{l, \epsilon}^\iota(y) &:= \varphi_{\epsilon}^\iota(y) \int_{-\delta_0}^y \frac{ds}{(\varphi_{\epsilon}^\iota)^2(s)},
\end{align}
are also homogeneous solutions which satisfy the right and left boundary conditions, respectively. We will occasionally refer to $H_{r, \epsilon}^\iota , H_{l, \epsilon}^\iota$ as the ``singular'' homogeneous solutions and $\varphi_{\epsilon}^\iota$ as the ``regular'' homogeneous solution. It is instructive to consider the case where $b_j$ is even about 0 which motivates our construction. In that case, it is possible to restrict to considering $y \in [0,\delta_0]$ and impose a Neumann boundary condition at the origin. It then follows that $\varphi_{\epsilon}^\iota$ satisfies the boundary condition at 0. This leads to a Wronskian of $\varphi_{\epsilon}^\iota$ and $H_{r, \epsilon}^\iota$  which is $-1$. This is notable because it is independent of $\lambda$ which significantly simplifies the analysis. Therefore, the Green's function can be expressed as 
\begin{align}
    G(y,z, \lambda + i \iota \epsilon) =  \varphi_{\epsilon}^\iota(y)H_{r, \epsilon}^\iota(z) \mathbbm{1}_{y\leq z} +  \varphi_{\epsilon}^\iota(z)H_{r, \epsilon}^\iota(y) \mathbbm{1}_{y\geq z}.
\end{align}
Around the origin, the Green's function is given by the regular homogeneous solution $\varphi_{\epsilon}^\iota$ which vanishes at the critical layers $b_j(y) = \lambda$ in the limit $\epsilon \to 0$. This will have several technical benefits. A similar simplification happens in the analysis of the Rayleigh equation linearized around a smooth monotone decreasing vortex when considering $k = 1$ \cite{BCV19, IJ}.

We now turn to the case where $b_j$ is not symmetric around 0. We want to keep the structure that there are two symmetrically placed sources and that the $y$ dependence of the Green's function is given by the regular solution on the interval $[-z,z]$. This motivates considering the following modified inner problem 
\begin{align}
\label{Inn:modG}
    &-\p_y^2G(y,z, \lambda + i \iota \epsilon) + \frac{b_j''(y)}{b_j(y) - \lambda + i \iota \epsilon}G(y,z, \lambda + i \iota \epsilon) = a_\epsilon^\iota(z)\delta(y-z) + \delta(y+ z), 
\end{align}
where 
\begin{align}
\label{a:jump}
    a_\epsilon^\iota(z):= \frac{H_{l, \epsilon}^\iota(-z)}{H_{r, \epsilon}^\iota(z) },
\end{align}
with the convention that the value of $a$ at $ z = \delta_0$ is given by L'H\^opital's rule. A direct calculation verifies that the solution of \eqref{Inn:modG} is given by 
\begin{align}
    \label{in:green}
    G(y,z, \lambda + i \iota \epsilon) = 
    \begin{cases}
    \varphi_{\epsilon}^\iota(-z)H_{l, \epsilon}^\iota(y), & y \in [-\delta_0, -z] \\
    \varphi_{\epsilon}^\iota(y) H_{l, \epsilon}^\iota(-z), & y \in [-z, z] \\
   a_\epsilon^\iota(z)\varphi_{\epsilon}^\iota(z)H_{r, \epsilon}^\iota(y), & y \in [z,\delta_0]
    \end{cases}.
\end{align}

Intuitively,  $a_\epsilon^\iota -1$ captures the deviation from $b_j$ being even about 0. This is quantified in the lemma below. 
\begin{lemma}
\label{a:bdds}
    We have the following bounds on $a_\epsilon^\iota(y)$ for $y \in [\delta(\Lambda), \delta_0]$, $\lambda \in \Sigma_{j, \delta_0}\setminus\{b(y_{j*})\}$, $\epsilon \in (0, \epsilon_0)$ and $\alpha \in [0,4]\cap \Z$:
\begin{subequations}
   \begin{align}
        |  \p_y^\alpha( a_\epsilon^\iota(y) -1)| &\lesssim \varrho^{1-\alpha}(y;\Lambda), \label{a_y}\\
    |\p_\lambda^\alpha  (a_\epsilon^\iota(y) -1)| &\lesssim \varrho^{1- 2\alpha}(y;\Lambda), \label{a_lambda} \\
    | D_{j,\lambda}^\alpha (a_\epsilon^\iota(y) -1) | &\lesssim \varrho^{1-\alpha}(y;\Lambda), \label{a_good} \\
   \lim_{\epsilon \to 0} \p_\lambda^\alpha (a_\epsilon^+(y) - a_\epsilon^-(y)) &= 0, \label{a_dif}
   \end{align} 
\end{subequations}
where 
    \begin{align}
    \label{djgood}
        D_{j,\lambda} : = b_j'(y) \p_\lambda + \p_y. 
    \end{align}
\end{lemma}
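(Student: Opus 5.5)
We will write $a_\epsilon^\iota$ through quantities that are even (or odd) in $y$ up to a controlled error, and then estimate that error. Set $\varphi=\varphi_\epsilon^\iota$, $w=\lambda-i\iota\epsilon$ and $\mu=b(y_{j*})$. Taylor expansion gives $b_j(y)=\mu+\tfrac{b''(y_{j*})}{2}y^2+\beta(y)y^3$ with $\beta\in C^3$. The deviation of $b_j$ from an even function is therefore $b_j(y)-b_j(-y)=O(|y|^3)$.

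First we handle the numerator and denominator of $a_\epsilon^\iota$. By definition
\begin{equation*}
a_\epsilon^\iota(y)=\frac{\varphi(-y)\int_{-\delta_0}^{-y}\varphi^{-2}\,ds}{\varphi(y)\int_{y}^{\delta_0}\varphi^{-2}\,ds}=\frac{\varphi(-y)}{\varphi(y)}\cdot\frac{\int_{y}^{\delta_0}\varphi(-s)^{-2}\,ds}{\int_{y}^{\delta_0}\varphi(s)^{-2}\,ds}.
\end{equation*}
For $|s|\ge y\ge\delta(\Lambda)$ we have $|\varphi(\pm s)|\gtrsim\varrho^2(s;\Lambda)$. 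This holds because $\delta(\Lambda)$ is $8$ times the critical-layer scale, so $y_\lambda\le\delta/8$ and the factor $|b_j(s)-\lambda|\approx s^2$ stays away from zero there. It follows that
\begin{equation*}
\frac{\varphi(-y)}{\varphi(y)}-1=\frac{b_j(-y)-b_j(y)}{\varphi(y)}=O\Big(\frac{y^3}{\varrho^2}\Big)=O(\varrho).
\end{equation*}
For the integrals, we write $\varphi(-s)^{-2}-\varphi(s)^{-2}=O(s^3/\varrho^6(s))$. Integrating from $y$ to $\delta_0$ gives $O(\varrho^{-2}(y))$. Since $\int_y^{\delta_0}\varphi^{-2}\,ds\approx\varrho^{-3}(y)$, the ratio of integrals is $1+O(\varrho(y))$.

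The lower bound $|\int_y^{\delta_0}\varphi^{-2}|\gtrsim\varrho^{-3}$ requires a small check. Because $\varphi^{-2}\approx(c s^2)^{-2}$ up to relative errors, the integral is dominated by $s\sim y$, where there is no cancellation. Near $y\sim\delta$ with $\epsilon\lesssim|\lambda-\mu|$, the argument of $\varphi$ is close to real, so the phase of the integrand is nearly constant. This gives \eqref{a_y} for $\alpha=0$.

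Second, we treat derivatives. Each $\partial_y$ either falls on an explicit factor or on the lower limit of an integral. In the first case it costs $\varrho^{-1}$ by the size bounds $|\partial_y^m\varphi|\lesssim\varrho^{2-m}$. In the second case it produces a boundary term $\varphi(\mp y)^{-2}\big/\int\varphi^{-2}$; the difference of the two such terms is again $O(\varrho^{1-1})$ by the same even/odd cancellation.

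Each $\partial_\lambda$ acts as $\partial_w$, which hits $\varphi^{\pm1}$ or $\varphi^{-2}$ and costs $|\varphi|^{-1}\lesssim\varrho^{-2}$. The cancellation $b_j(-s)-b_j(s)=O(s^3)$ is preserved, since every term appears in pairs at $\pm s$. This yields \eqref{a_lambda}.

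For $D_{j,\lambda}=b_j'(y)\partial_\lambda+\partial_y$ we use the identity $D_{j,\lambda}\varphi(y)=0$. Thus the good derivative kills $\varphi(y)$, while on $\varphi(-y)$ it gives $b_j'(y)+b_j'(-y)=O(y^2)$, which is harmless. For the integrals we change variables $s\mapsto$ the level-set parametrization, or simply note that $b_j'(y)\partial_\lambda$ costs $\varrho\cdot\varrho^{-2}=\varrho^{-1}$. Every term therefore loses only $\varrho^{-1}$, which gives \eqref{a_good}.

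Finally, \eqref{a_dif} holds because for $y\ge\delta(\Lambda)$ and $\lambda\ne\mu$ all integrands $\varphi(\pm s)^{-m}$ with $|s|\ge y$ remain uniformly bounded as $\epsilon\to0$. This is clear when $\lambda<\mu$ and $b''>0$, or more generally when $\lambda$ is on the side without critical layers. When critical layers exist, they lie at $|s|\le\delta/8<y$, outside the integration range. So $a_\epsilon^\pm$ and their $\lambda$-derivatives converge to the same real-analytic limit.

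The main obstacle is the nondegeneracy of the denominator $\int_y^{\delta_0}\varphi^{-2}$ uniformly in $(\lambda,\epsilon)$. In particular it must not vanish when $\epsilon\sim|\lambda-\mu|$, where the integrand has genuinely complex phase. We would handle this by rescaling $s=\delta\, S$ and comparing with the model integral $\int_{Y}^\infty(S^2/2+\gamma+i\alpha)^{-2}\,dS$ with $Y\ge1$. For $S\ge1$ the denominator satisfies $|\arg(S^2/2+\gamma+i\alpha)|<\pi/4$, because the weight $8$ in $\delta$ makes $S^2/2$ dominate $|\gamma|+|\alpha|$. Hence the real part of the integrand is $\gtrsim|\cdot|^{-2}$, which gives the required lower bound.
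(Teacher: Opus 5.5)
Your proposal follows essentially the paper's route. Both arguments compare $H_{l,\epsilon}^\iota(-y)$ with $H_{r,\epsilon}^\iota(y)$ through the odd part $b_j(-y)-b_j(y)=O(y^3)$, the size bound $|\varphi_\epsilon^\iota(s)|\approx s^2$ and the product rule, and both obtain \eqref{a_dif} from the absence of critical layers on the integration ranges. Your factorization $a_\epsilon^\iota=\frac{\varphi(-y)}{\varphi(y)}\cdot\frac{\int_y^{\delta_0}\varphi(-s)^{-2}ds}{\int_y^{\delta_0}\varphi(s)^{-2}ds}$ is just a multiplicative version of the paper's geometric-series remainder $R=H_{l,\epsilon}^\iota(-y)-H_{r,\epsilon}^\iota(y)$.

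You also supply the lower bound $|H_{r,\epsilon}^\iota(y)|\gtrsim\varrho^{-1}(y;\Lambda)$, which the paper uses tacitly. Two small corrections are needed. First, with $\delta(\Lambda)$ as defined, $|\epsilon|$ is not divided by $b''(y_{j*})$, so the imaginary part in your rescaled model can be of size $\varkappa^{-1}$. The values of $\varphi^{-2}$ then lie in a sector of opening $\pi-c(\varkappa)$ rather than one of opening less than $\pi/2$, which still gives the bound. Second, near $y=\delta_0$ the integral $\int_y^{\delta_0}\varphi^{-2}\,ds$ vanishes rather than being $\approx\varrho^{-3}$, so the region $[\delta_0/2,\delta_0]$ must be treated separately, as the paper does.
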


\begin{proof}
For $y \in [ \delta_0/2, \delta_0]$, the bounds are immediate so we consider the more difficult case where $y \in [\delta(\Lambda), \delta_0/2]$.
We introduce the quantity 
\begin{align}
    d(y) :=  b_j(-y) - b_j(y).
\end{align}
By the assumption on $b$ and Taylor's theorem, we have
\begin{align}
    |d(y)| \lesssim |y|^3.
\end{align}
We wish to expand $H_{l, \epsilon}^\iota(-y)$ in terms of $H_{r, \epsilon}^\iota(y)$ and use $d$ to quantify the error. We first write 
\begin{align}
    \varphi_{\epsilon}^\iota(-y) = b_j(-y) - \lambda + i \iota \epsilon = \varphi_{\epsilon}^\iota(y) + d(y).
\end{align}
Therefore,
\begin{align}
    H_{l, \epsilon}^\iota(-y) &= \varphi_{\epsilon}^\iota(-y) \int_{-\delta_0}^{-y} \frac{ds}{(\varphi_{\epsilon}^\iota)^2(s)} = (\varphi_{\epsilon}^\iota(y) + d(y)) \int_{-\delta_0}^{-y}\frac{ds}{(\varphi_{\epsilon}^\iota(-s) - d(s))^2 } \\
    &= \left (\varphi_{\epsilon}^\iota(y) + d(y) \right) \left(    \int_{-\delta_0}^{-y} \frac{ds}{(\varphi_{\epsilon}^\iota)^2(-s)}   + \int_{-\delta_0}^{-y} \frac{1}{(\varphi_{\epsilon}^\iota)^2(-s)} \left[ \left(\sum_{n \geq 0} \frac{d^n(s)}{(\varphi_{\epsilon}^\iota)^n(-s)} \right)^2    -1    \right] ds    \right) \\
    &= H_{r, \epsilon}^\iota(y) + d(y)\int_y^{\delta_0}\frac{ds}{
    (\varphi_{\epsilon}^\iota)^2(s)}  + \varphi_{\epsilon}^\iota(y ) \int_{y}^{\delta_0} \frac{1}{(\varphi_{\epsilon}^\iota)^2(s)}
    \left[ \left(\sum_{n \geq 0} \frac{d^n(-s)}{(\varphi_{\epsilon}^\iota)^n(s)} \right)^2    -1    \right] ds   \\
    &+ d(y) \int_{y}^{\delta_0} \frac{1}{(\varphi_{\epsilon}^\iota)^2(s)} \left[ \left(\sum_{n \geq 0} \frac{d^n(-s)}{(\varphi_{\epsilon}^\iota)^n(s)} \right)^2    -1    \right] ds\\
    &= H_{r, \epsilon}^\iota(y) + R(y),
\end{align}
where 
\begin{align}
    R(y) &:=  d(y)\int_y^{\delta_0}\frac{ds}{
    (\varphi_{\epsilon}^\iota)^2(s)}  + \varphi_{\epsilon}^\iota(y ) \int_{y}^{\delta_0} \frac{1}{(\varphi_{\epsilon}^\iota)^2(s)}
    \left[ \left(\sum_{n \geq 0} \frac{d^n(-s)}{(\varphi_{\epsilon}^\iota)^n(s)} \right)^2    -1    \right] ds \\
    &+ d(y) \int_{y}^{\delta_0
    } \frac{1}{(\varphi_{\epsilon}^\iota)^2(s)} \left[ \left(\sum_{n \geq 0} \frac{d^n(-s)}{(\varphi_{\epsilon}^\iota)^n(s)} \right)^2    -1    \right] ds.
\end{align}

Therefore,
\begin{align}
    a_\epsilon^\iota(y) - 1 = \frac{H_{r, \epsilon}^\iota(y) + R(y)}{H_{r, \epsilon}^\iota(y)} -1 = \frac{R(y)}{H_{r, \epsilon}^\iota(y)}. 
\end{align}

By definition of $\delta(\Lambda), \delta_0$, we have that 
\begin{align}
  \frac{|b_j''(0)|y^2}{4} \lesssim  |\varphi_{\epsilon}^\iota(y)| \lesssim |b_j''(0)|y^2  , y \in [\delta(\Lambda) , \delta_0   ].
\end{align}
Therefore,  
\begin{align}
    |\p_y^\alpha R(y)| &\lesssim |y|^{-\alpha},\\
    |\p_y^\alpha (H_{r, \epsilon}^\iota)^{-1}(y)| & \lesssim |y|^{1 -\alpha},\\
    |\p_\lambda^\alpha R(y)| &\lesssim |y|^{-2\alpha},\\
    |\p_\lambda^\alpha (H_{r, \epsilon}^\iota)^{-1}(y)| &\lesssim |y|^{1 -2\alpha}.
\end{align}
The bounds then follow from the product rule and that  $y \approx \varrho(y;\Lambda)$ for $y \in [\delta(\Lambda), \delta_0/2]$. 

Finally, we have
\begin{align}
    a_\epsilon^+(y) - a_\epsilon^-(y) &= \frac{ H_{r, \epsilon}^-(y )H_{l, \epsilon}^+(-y)  - H_{r, \epsilon}^+(y)H_{l, \epsilon}^-(-y)    }{H_{r, \epsilon}^-(y)  H_{r, \epsilon}^+(y)   }. 
\end{align}
The proof then follows from the following estimates, valid for $y \in [\delta(\Lambda), \delta_0/2], \alpha \in \{0,1,2,3,4\}$
\begin{align}
    \left| \varrho^\alpha(y;\Lambda ) \p_y^\alpha \frac{1}{H_{r, \epsilon}^\iota(y)  H_{r, \epsilon}^\iota(y)} \right| \lesssim \varrho^2(y;\Lambda),
\end{align}
\begin{align}
    \left| \varrho^\alpha(y;\Lambda ) \p_y^\alpha\left( \int_y^{\delta_0} \frac{ds}{(\varphi_{\epsilon}^+)^2(s)} - \int_y^{\delta_0} \frac{ds}{ (\varphi_{\epsilon}^-)^2(s) } \right) \right| \lesssim  \frac{\epsilon}{\varrho^5(y; \Lambda)}, 
\end{align}
and the product rule.
\end{proof}
For $\epsilon \in (0, \epsilon_0)$ we define $\Phi_{\epsilon}^\iota$ as the solution of the following equation
\begin{subequations}
\label{Inner_problem}
    \begin{align}
-\p_y^2 \Phi_{\epsilon}^\iota  + \frac{b_j''(y)}{b_j(y) - \lambda + i \iota \epsilon} \Phi_\epsilon^\iota 
     &=  (1- \chi_{\delta(\Lambda)}(y))\mathbbm{1}_{y\leq 0} +  a_\epsilon^\iota(y)(1- \chi_{\delta(\Lambda)}(y))\mathbbm{1}_{y\geq 0},\label{Inner_problem:forcing} \\ 
     \Phi_\epsilon^\iota(\pm \delta_0 , \lambda ) &= 0.
\end{align}
\end{subequations}
With the definition of \eqref{Inner_problem}, $\Phi_\epsilon^\iota$ is given by 
\begin{align}
\label{Phi:rep}
    \Phi_\epsilon^\iota (y, \lambda) = \int_0^{\delta_0} G(y,z, \lambda + i \iota \epsilon) (1 - \chi_{\delta(\Lambda)}(z))\mathbbm{1}_{z \geq 0}  \, dz .
\end{align}

It will also be convenient to express the right hand side of \eqref{Inner_problem:forcing} as 
\begin{align}
\label{Inner_pro2}
      -\p_y^2 \Phi_{\epsilon}^\iota  + \frac{b_j''(y)}{b_j(y) - \lambda + i \iota \epsilon} \Phi_\epsilon^\iota  = 1 - \chi_{\delta(\Lambda)}(y) + (a_\epsilon^\iota(y) - 1) (1 - \chi_{\delta(\Lambda)} (y)) \mathbbm{1}_{y\geq 0}.
\end{align}
\begin{remark}
    Compared to the equation in \eqref{toy:inner}, we no longer have a right hand side that is identically equal to 1 but instead vanishes on an approximately $\delta(\Lambda)$ sized neighborhood of the origin. This has the benefit that $\Phi_{\epsilon}^\iota$ defined by \eqref{Inner_problem} vanishes at the critical layer in the limit as $\epsilon \to 0$, see Lemma \ref{phi:van}. The excluded portion is consistent with quadratic vanishing and therefore we include it in $E_{k, \epsilon}^\iota$.
 \end{remark}

As a direct consequence of  \eqref{Phi:rep} and the support assumption for the right hand side of \eqref{Inner_problem}, we have the following bounds. 
\begin{lemma}\label{lem:phi-bounds}
Let $\Phi_\epsilon^\iota$ solve \eqref{Inner_problem}. For $\lambda \in \Sigma_{j,\delta_0} \setminus \{b(y_{j*})\},  \epsilon \in (0, \epsilon_0)$, $\theta \in (0,1/4]$, and $\alpha \in \{ 0,1\} $ we have
    \begin{subequations}
    \label{phi:lam}
  \begin{align}
     | \varrho^\alpha(y;\Lambda) \p_y^\alpha \Phi_\epsilon^\iota(y,\lambda )| &\lesssim \varrho^{2- \theta}(y; \Lambda), \\
 |\varrho^\alpha(y;\Lambda) \p_y^\alpha \p_\lambda \Phi_\epsilon^\iota(y,\lambda )|  &\lesssim  \varrho^{- \theta}(y; \Lambda),   \\
 |\varrho^\alpha(y;\Lambda) \p_y^\alpha \p_\lambda^2 \Phi_\epsilon^\iota(y,\lambda )|  &\lesssim \frac{1}{\varrho(y;\Lambda) \delta(\Lambda)}, \\
|\varrho^\alpha(y;\Lambda) \p_y^\alpha \p_\lambda^3 \Phi_\epsilon^\iota(y,\lambda )|   &\lesssim  \frac{1}{\varrho(y;\Lambda) \delta^{3}(\Lambda)}, \\
|\varrho^\alpha(y;\Lambda) \p_y^\alpha \p_\lambda^4 \Phi_\epsilon^\iota(y,\lambda )|   &\lesssim  \frac{1}{\varrho(y;\Lambda) \delta^{5}(\Lambda)},
    \end{align}
    \end{subequations}
  \begin{subequations}
  \label{phi:good}
  \begin{align}
 |\varrho^\alpha(y;\Lambda) \p_y^\alpha D_{j,\lambda} \Phi_\epsilon^\iota(y,\lambda )|  &\lesssim \varrho(y;\Lambda),   \\
 |\varrho^\alpha(y;\Lambda) \p_y^\alpha \p_\lambda D_{j,\lambda}   \Phi_\epsilon^\iota(y,\lambda )|  &\lesssim  \frac{1}{\delta(\Lambda)}, \\
 |\varrho^\alpha(y;\Lambda) \p_y^\alpha D_{j,\lambda}^2 \Phi_\epsilon^\iota(y,\lambda )|  &\lesssim  \frac{\varrho(y;\Lambda)}{\delta(\Lambda)}, \\
 |\varrho^\alpha(y;\Lambda) \p_y^\alpha \p_\lambda D_{j,\lambda}^2 \Phi_\epsilon^\iota(y,\lambda )|  &\lesssim  \frac{\varrho(y;\Lambda)}{\delta^3(\Lambda)}.
    \end{align}
    \end{subequations}
\end{lemma}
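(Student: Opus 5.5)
The plan is to work directly from the representation \eqref{Phi:rep} combined with the explicit Green's function \eqref{in:green}. Together these give, for $y\in[-\delta_0,\delta_0]$,
\begin{align*}
\Phi_\epsilon^\iota(y,\lambda)=H_{l,\epsilon}^\iota(y)\int_{|y|}^{\delta_0}\varphi_\epsilon^\iota(-z)\,w(z)\,dz\,\mathbbm{1}_{y<0}
+\varphi_\epsilon^\iota(y)\int_{|y|}^{\delta_0}H_{l,\epsilon}^\iota(-z)\,w(z)\,dz
+H_{r,\epsilon}^\iota(y)\int_0^{|y|}a_\epsilon^\iota(z)\varphi_\epsilon^\iota(z)\,w(z)\,dz\,\mathbbm{1}_{y>0},
\end{align*}
with $w=(1-\chi_{\delta(\Lambda)})$; for $y<0$ the analogous formula uses $\varphi(-z)H_l(y)$. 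The estimates split into the two regions $|y|\le \delta(\Lambda)$ and $|y|\ge\delta(\Lambda)$. On the support of $w$ we have $|\varphi_\epsilon^\iota(z)|\approx z^2\approx\varrho^2(z)$, hence $|H_{r}^\iota(z)|,|H_l^\iota(-z)|\approx z^{-1}$ for $z\ge\delta(\Lambda)$. For $|y|\lesssim\delta(\Lambda)$ we use $|\varphi(y)|\lesssim\delta^2$. To control $H_{r,l}(y)$ for $|y|\ge\delta$ we would use the crude pointwise bound $|H(y)|\lesssim\varrho^{-1}(y)$; note that only $\varphi(y)$ multiplies an integral over $z\ge|y|$ when $|y|\le\delta$. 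The middle term then gives $|\varphi(y)|\int_{\max(|y|,\delta)}^{\delta_0}z^{-1}dz\lesssim\varrho^2\log(1/\varrho)$, and this logarithm is exactly the source of the loss $\varrho^{-\theta}$. The outer terms give $\varrho^{-1}\int_\delta^{|y|}z^2\,dz\lesssim\varrho^2$. Derivatives $\varrho\partial_y$ fall on $\varphi,H$ with the expected scaling, while the boundary terms at $z=|y|$ cancel by the Wronskian-type continuity built into \eqref{in:green}; alternatively one may use the ODE \eqref{Inner_problem}. This settles the first line of \eqref{phi:lam}.

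For the $\lambda$ derivatives, the key identities are $\partial_\lambda\varphi=-1$ and $D_{j,\lambda}\varphi=0$. Differentiating $H=\varphi\int(\varphi)^{-2}$ in $\lambda$ then costs a factor $\varrho^{-2}(s)$ inside the integral; for $D_{j,\lambda}$ it costs nothing on $\varphi$ and only $\varrho^{-1}$-type factors on the integral limits. The $a$-dependence is handled with Lemma \ref{a:bdds}, namely \eqref{a_lambda} and \eqref{a_good}. The cutoff $\chi_{\delta(\Lambda)}$ also depends on $\lambda$ through $\delta(\Lambda)$, with $\partial_\lambda\delta\approx\delta^{-1}$. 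This contributes a derivative supported on $z\approx\delta$ of size $\delta^{-2}$, integrated against weights of size $\delta$ over a $\delta$-interval. That source of loss is exactly what produces the $\delta^{-1},\delta^{-3},\delta^{-5}$ factors in the higher derivatives. Proceeding term by term, the first $\partial_\lambda$ converts $\varrho^{2-\theta}$ into $\varrho^{-\theta}$, since the $\log$ becomes $\int z^{-3}$-type integrals, bounded via $\varrho^{-2}$. From the second derivative on, the integrals $\int_{\delta}z^{-2m-1}dz$ are dominated by their lower endpoint, which yields $\varrho^{-1}\delta^{-1}$, $\varrho^{-1}\delta^{-3}$ and $\varrho^{-1}\delta^{-5}$. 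The bounds \eqref{phi:good} follow similarly, using the commutation $D_{j,\lambda}\varphi=0$ so that only the integration limits and $a$ produce losses. It is cleaner to rewrite $D_{j,\lambda}$ acting on $\int_{|y|}^{\delta_0}F(z,\lambda)dz$ as $-F(|y|)\,\mathrm{sgn}\,y+b_j'(y)\int\partial_\lambda F$, and then to use $\partial_\lambda F=-\partial_z F/b_j'(z)+D_{j,\lambda}F/b_j'(z)$ together with integration by parts.

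The main obstacle is the $D_{j,\lambda}$ estimates, especially $D_{j,\lambda}\Phi\lesssim\varrho$ with no logarithm. A naive $\partial_\lambda$ bound loses $\varrho^{-2}$ relative to $\Phi$, so a genuine cancellation is needed: the first-order $y$-derivative and the $\lambda$-derivative must combine. My first attempt would be the integration by parts in $z$ described above, transferring $\partial_\lambda$ into $\partial_z$ along level sets of $b_j$. The danger is near $z=0$ where $b_j'(z)\to0$, but the cutoff $w$ keeps $z\ge\delta$, where $b_j'(z)\approx z$. The resulting boundary terms at $z=|y|$ should then cancel against the boundary terms from $\partial_y$, leaving only derivatives of $w$, whose size is $\delta^{-1}$ on the scale $\delta$, and of $a-1$, controlled by \eqref{a_good}.
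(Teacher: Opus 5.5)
Your proposal is correct and is essentially the paper's argument: a direct calculation from \eqref{Phi:rep} with the explicit Green's function \eqref{in:green}, in which the $\theta$-loss comes solely from the logarithmic factor $I(y):=\int_{|y|}^{\delta_0}H_{l,\epsilon}^\iota(-z)\bigl(1-\chi_{\delta(\Lambda)}(z)\bigr)\,dz$ multiplying $\varphi_\epsilon^\iota(y)$. Two small corrections: this logarithm survives the first $\lambda$-derivative through $(\p_\lambda\varphi_\epsilon^\iota)I=-I$ and disappears only at the second derivative since $\p_\lambda^2\varphi_\epsilon^\iota=0$ (as the paper remarks), and your integration by parts for $D_{j,\lambda}$ is unnecessary because $D_{j,\lambda}(\varphi_\epsilon^\iota I)=\varphi_\epsilon^\iota D_{j,\lambda}I$ with $D_{j,\lambda}I$ already log-free (also, the first term of your formula should read $\int_0^{|y|}$, not $\int_{|y|}^{\delta_0}$).
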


\begin{proof}
The proof follows by direct calculation. We briefly highlight the origin of the $\theta$ factor. Assume that $y  > 4\delta(\Lambda)$. From \eqref{in:green}, we have 
\begin{align}
    |\Phi_\epsilon^\iota (y,\lambda) - \varphi_\epsilon^\iota(y)\int_{y}^{\delta_0}  H_{l, \epsilon}^\iota(-z) dz| \lesssim \varrho^2(y; \Lambda). 
 \end{align}
 However,  for $y \in (4 \delta(\Lambda), \delta_0/2)$
\begin{align}
\label{log:loss}
    |\int_{y}^{\delta_0}  H_{l, \epsilon}^\iota(-z) dz| \approx |\log \varrho(y; \Lambda)|, \quad  |\varphi_\epsilon^\iota(y)| \approx \varrho^2(y;\Lambda).
\end{align}
Instead of propagating a logarithmic factor through the estimates, we opt to track an algebraic loss encoded by $\theta$. This logarithmic correction disappears after the second spectral derivative because $\p_\lambda^2 \varphi_\epsilon^\iota(y) = 0$.

\end{proof}
The following lemma covers the essential vanishing properties for $\Phi_\epsilon^\iota$. 
\begin{lemma}
\label{phi:van}

Let $\Phi_\epsilon^\iota$ solve \eqref{Inner_problem} with $\lambda \in \Sigma_{j,\delta_0} \setminus \{b(y_{j*})\}$, $\epsilon \in (0, \epsilon_0)$, and $\theta \in (0,1/4]$. The following limits hold. 
   \begin{subequations}
       \begin{align}
       \lim_{\epsilon \to 0} D_{j, \lambda}^\alpha \Phi_\epsilon^\iota(y_\lambda, \lambda) &= 0, \quad b_j(y_\lambda) = \lambda , \alpha \in \{ 0,1,2\},\label{lem:phi1}\\
        \lim_{\epsilon \to 0}     \p_y^\alpha \p_\lambda^\beta (\Phi_\epsilon^+(y,\lambda ) - \Phi_\epsilon^-(y,\lambda ) )    &= 0, \quad   \alpha \in \{ 0,1\}, \beta \in \{0,1,2,3\}, \label{lem:phi2}\\
        \lim_{\epsilon \to 0}     D_{j,\lambda}^\alpha (\Phi_\epsilon^+(y,\lambda ) - \Phi_\epsilon^-(y,\lambda ) )    &= 0, \quad  \alpha \in \{0,1,2\}. \label{lem:phi3}
   \end{align}
   \end{subequations}
   The convergence is understood in a pointwise sense for fixed $y,\lambda$. 
   \end{lemma}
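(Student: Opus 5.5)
The proof will use the explicit representation \eqref{Phi:rep} together with the formula \eqref{in:green} for $G$. The key structural fact is that the forcing in \eqref{Inner_problem} is supported in $|z|\ge \delta(\Lambda)$. For $\lambda\in\Sigma_{j,\delta_0}\setminus\{b(y_{j*})\}$ and $\epsilon\to0$, the critical layers satisfy $|y_\lambda|\approx |\lambda-b(y_{j*})|^{1/2}/|b''(y_{j*})|^{1/2}\le \delta(\Lambda)/4$. On the other hand, for $|s|\ge c\,\delta(\Lambda)$ we have $|\varphi_\epsilon^\iota(s)|\gtrsim s^2$ uniformly in $\epsilon$.

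\emph{Step 1: \eqref{lem:phi1}.} Since $y_\lambda\in[-z,z]$ for every $z$ in the support of the forcing, \eqref{in:green} gives
\[
\Phi_\epsilon^\iota(y,\lambda)=\varphi_\epsilon^\iota(y)\,I_\epsilon^\iota(\lambda),\qquad I_\epsilon^\iota(\lambda):=\int_0^{\delta_0}H_{l,\epsilon}^\iota(-z)(1-\chi_{\delta(\Lambda)}(z))\,dz
\]
for $|y|\le \delta(\Lambda)$, in particular near $y_\lambda$. The main observation is that $D_{j,\lambda}\varphi_\epsilon^\iota=b_j'-b_j'=0$, while $I_\epsilon^\iota$ does not depend on $y$. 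Hence
\[
D_{j,\lambda}^\alpha\Phi_\epsilon^\iota=\varphi_\epsilon^\iota\,(b_j')^\alpha\,\p_\lambda^\alpha I_\epsilon^\iota .
\]
In $I_\epsilon^\iota$ the function $H_{l,\epsilon}^\iota(-z)$ only involves $\int_{-\delta_0}^{-z}ds/(\varphi_\epsilon^\iota)^2$ with $z\ge\delta(\Lambda)$, so no critical layer is crossed. The $\lambda$-derivatives (up to order two), including those falling on the cutoff $\chi_{\delta(\Lambda)}$, are therefore bounded uniformly in $\epsilon$ for fixed $\lambda\neq b(y_{j*})$. Since $\varphi_\epsilon^\iota(y_\lambda)=i\iota\epsilon\to0$, \eqref{lem:phi1} follows.

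\emph{Step 2: \eqref{lem:phi2}--\eqref{lem:phi3}.} Fix $y$ and $\lambda$, and examine the three branches of \eqref{in:green} for $z\ge\delta(\Lambda)$.
\begin{itemize}
\item If $|y|\le z$, then $G=\varphi_\epsilon^\iota(y)H_{l,\epsilon}^\iota(-z)$, and as in Step 1 no critical layer is crossed.
\item If $y>z$, then $G=a_\epsilon^\iota(z)\varphi_\epsilon^\iota(z)H^\iota_{r,\epsilon}(y)$ with $y>z\ge\delta(\Lambda)>|y_\lambda|$, so $\int_y^{\delta_0}ds/(\varphi_\epsilon^\iota)^2$ also stays away from the layers.
\item The case $y<-z$ is symmetric.
\end{itemize}
Consequently every ingredient, namely $\varphi_\epsilon^\pm$, $H^\pm_{l/r,\epsilon}$, $a^\pm_\epsilon$ and $\delta(\Lambda)$, converges together with its $\lambda$-derivatives as $\epsilon\to0$ to one and the same ($\iota$-independent) smooth limit. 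For $a_\epsilon^\iota$ this is \eqref{a_dif}. This is in contrast with integrals crossing a layer, whose $\pm$ limits differ by a residue term, and that is precisely what the cutoff $1-\chi_{\delta(\Lambda)}$ is designed to avoid.

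I would then differentiate under the integral in \eqref{Phi:rep} with respect to $y$ and $\lambda$. The one $y$-derivative is harmless because $G$ is continuous at $z=\pm y$ and only its first derivative jumps. Dominated convergence in $z$ is justified by bounds of the type in Lemma~\ref{a:bdds} and the lower bound $|\varphi_\epsilon^\iota(s)|\gtrsim s^2$ on $|s|\ge c\delta(\Lambda)$. It then gives $\p_y^\alpha\p_\lambda^\beta(\Phi_\epsilon^+-\Phi_\epsilon^-)\to0$ pointwise. Writing $D_{j,\lambda}^\alpha$ as a combination of $\p_y^{\alpha'}\p_\lambda^{\beta'}$ with smooth coefficients $b_j'$ yields \eqref{lem:phi3}. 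The one term not of the form $\p_y^{\le1}$ is $\p_y^2\Phi$ inside $D_{j,\lambda}^2$; I would rewrite it via \eqref{Inner_problem}, whose potential and forcing both have $\iota$-independent limits at fixed $y\neq y_\lambda$.

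\emph{Main obstacle.} The step needing most care is producing $\epsilon$-uniform integrable majorants when $\p_\lambda^\beta$ with $\beta\le3$ falls on $\chi_{\delta(\Lambda)}$ and on $a_\epsilon^\iota$, and handling the neighbourhood $z\approx|y|$. Since $\lambda\neq b(y_{j*})$ is fixed, $\delta(\Lambda)$ is bounded below as $\epsilon\to0$, so all of these bounds are merely $\lambda$-dependent constants and the difficulty is bookkeeping rather than a genuine singularity.
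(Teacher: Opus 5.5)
Your argument is correct. For \eqref{lem:phi1} it is the same direct computation from \eqref{Phi:rep} that the paper indicates. For \eqref{lem:phi2}--\eqref{lem:phi3} you take a genuinely different route.

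\textbf{The paper's route.} It forms the difference $\Phi_\epsilon=\Phi^+_\epsilon-\Phi^-_\epsilon$ and writes the Rayleigh-type equation this difference satisfies. Its right-hand side has two pieces: $-\Phi^-_\epsilon b_j''$ times the jump of the Cauchy kernel, and the term $(a^+_\epsilon-a^-_\epsilon)(1-\chi_{\delta(\Lambda)})$ on $y\ge0$. The paper argues that this tends to zero in distributions. The would-be delta masses at the critical layers carry the factor $\Phi^-_\epsilon(y_\lambda,\lambda)$, which vanishes by \eqref{lem:phi1}, and the second piece vanishes by \eqref{a_dif}. Higher derivatives are then referred to \eqref{phi:lam}--\eqref{phi:good}.

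\textbf{Your route.} You read off from \eqref{in:green} that, because the forcing lives on $z\ge\delta(\Lambda)>|y_\lambda|$, every singular factor $\int(\varphi^\iota_\epsilon)^{-2}$ inside $G$ is taken over a region avoiding the layers. The only factor evaluated across the layers is the regular solution $\varphi^\iota_\epsilon$. Hence each branch separately converges, with derivatives, to a common $\iota$-independent limit by dominated convergence with $\lambda$-dependent majorants.

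\textbf{Comparison.} Your approach is more elementary and proves slightly more: convergence of each branch, not just of the difference. It also sidesteps a step the paper leaves implicit, namely passing from a vanishing limiting right-hand side to a vanishing limiting difference, which needs uniqueness or compactness for the limiting inner problem. The paper's approach isolates the mechanism stressed in the introduction, that the inter-branch jump is generated only at the critical layers. It does not rely on the explicit Green's function.

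\textbf{Two small slips, both harmless.}
\begin{itemize}
\item For $\alpha=2$ the product formula should read $D_{j,\lambda}^2\Phi^\iota_\epsilon=\varphi^\iota_\epsilon\bigl[(b_j')^2\p_\lambda^2 I^\iota_\epsilon+b_j''\p_\lambda I^\iota_\epsilon\bigr]$. This is still $\varphi^\iota_\epsilon$ times a uniformly bounded factor, so the conclusion is unchanged.
\item In \eqref{lem:phi3} you excluded $y=y_\lambda$ when replacing $\p_y^2\Phi$ via the equation. That point is covered by \eqref{lem:phi1} itself. Alternatively, use the identity $\p_y^2\Phi^\iota_\epsilon=b_j''I^\iota_\epsilon$, valid for $|y|\le\delta(\Lambda)$, whose two branches have the same limit.
\end{itemize}
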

\begin{proof} 
The proof of \eqref{lem:phi1} follows by a direct calculation using \eqref{Phi:rep}. The proof of \eqref{lem:phi2} follows from the observation that 
    \begin{align}
        \Phi_\epsilon(y, \lambda) := \Phi_\epsilon^+(y, \lambda) - \Phi_\epsilon^-(y, \lambda)
    \end{align}
    satisfies the equation 
    \begin{align}
        -\p_y^2 \Phi_\epsilon + \frac{b_j''(y)}{b_j(y) - \lambda + i \epsilon} \Phi_\epsilon &= - \Phi_\epsilon^-(y,\lambda) b_j''(y) \left( \frac{1}{b_j(y) - \lambda + i \epsilon } - \frac{1}{b_j(y) -\lambda - i \epsilon   } \right) \\
        &+ (a_\epsilon^+(y ) - a_\epsilon^-(y))(1 - \chi_{\delta(\Lambda)}(y))\mathbbm{1}_{y\geq 0}.
     \end{align}
     By Lemma \ref{a:bdds} and \eqref{lem:phi1} we have the following convergence in the sense of distributions: 
     \begin{align}
        \lim_{\epsilon \to 0}  \frac{2 i \epsilon \Phi_\epsilon^-(y,\lambda) b_j''(y)  }{ (b_j(y) - \lambda)^2 + \epsilon^2 }  + (a_\epsilon^+(y ) - a_\epsilon^-(y))(1 - \chi_{\delta(\Lambda)}(y))\mathbbm{1}_{y\geq 0} = 0.
     \end{align}
    The case for higher vanishing then follows by the bounds in \eqref{phi:lam}, \eqref{phi:good} and the assumption that $\lambda \neq b(y_{j*})$. 
\end{proof}



\subsection{Remainder term $E_{k, \epsilon}^\iota$}
Now that we have defined $\Phi_\epsilon^\iota$, \eqref{BDC2} and \eqref{BDC3.5} imply that  $E_{k ,\epsilon}^\iota$ must satisfy 
\begin{align}
\label{E:eq}
    &(-\partial_y^2 + k^2)E^\iota_{k,\epsilon}(y,\lambda)+\frac{b_j''(y)}{b_j(y)-\lambda+i\iota\epsilon}E^\iota_{k,\epsilon}(y,\lambda) = \mathcal{R}_{0, 0, \epsilon}^\iota(y,\lambda),
\end{align}
where 
\begin{align}
\label{eq:Erhs}
    \mathcal{R}_{0, 0, \epsilon}^\iota(y,\lambda) &:= g^\iota_{k,\epsilon}(y + y_{j*},\lambda)  - g_{k,j*}\chi_{ \frac{\delta_0} {4k}   }(y)
\notag\\
&\quad + g_{k,j*}\chi_{\frac{\delta_0}{4k}}(y)\chi_{\delta(\Lambda)}(y) \\
    &+ g_{k,j*}\chi_{ \frac{\delta_0} {4k}   } (y) (1 - a_\epsilon^\iota(y))(1 - \chi_{\delta(\Lambda)}(y))\mathbbm{1}_{y\geq 0}  \notag \\ 
    &- g_{k, j*}\chi_{ \frac{\delta_0} {4k}   }(y)k^2 \Phi_\epsilon^\iota(y,\lambda) 
     + g_{k,j*}\chi_{ \frac{\delta_0} {4k}   }''(y) \Phi_\epsilon^\iota(y,\lambda) + g_{k, j*}2\chi_{ \frac{\delta_0} {4k}   }'(y) \p_y\Phi_\epsilon^\iota(y,\lambda). \notag
\end{align}
The essential property of $E_{k, \epsilon}^{\iota}$ is that we have removed the term proportional to $1 - \chi_{\delta(\Lambda)}(y)$ from the right hand side via the introduction of $\Phi_\epsilon^\iota$. This would have led to a logarithmic loss as in \eqref{log:loss}.  
Integrating over a short interval whose length is comparable to $\delta(\Lambda)$ is consistent with quadratic smallness which is what allows us to cut out the critical layers and ensure that $\Phi_\epsilon^\iota$ vanishes at the critical layers when $\epsilon \to 0$. Adding $\chi_{ \frac{\delta_0} {4k}   }$ ensures that the term $k^2 \Phi_\epsilon^\iota$ can be viewed perturbatively for $|\delta(\Lambda)k| \lesssim 1$. However, the logarithmic loss of $\Phi_\epsilon^\iota$ relative to quadratic smallness (captured by $\theta > 0$) is ultimately responsible for the loss of decay in $k$.

The following lemma will be useful for bounding spectral derivatives and good derivatives of $\mathcal{R}_{0,0,\epsilon}^\iota(y,\lambda)$.
\begin{lemma}
\label{R00:bdd}
Let $\lambda \in \Sigma_{j,\delta_0} \setminus \{b(y_{j*})\},  \epsilon \in (0, \epsilon_0), \theta \in (0,1/4], \iota\in \{-, +\}$, and  $ \mathcal{R}_{0, 0, \epsilon}^\iota(y,\lambda)$ be defined as in \eqref{eq:Erhs}. Then   the following bounds hold for $|y| \lesssim |k|^{-1}, |k |\delta(\Lambda) \ll 1$ with a constant depending on $\theta$.
\begin{subequations}\label{R00:lam}
\begin{align}
    |\mathcal{R}_{0,0,\epsilon}^\iota(y,\lambda)| &\lesssim \frac{  \norm{\omega_{0k}}_{H_k^4}   }{|k|}  \left(   \frac{\mathbbm{1}_{|y| \lesssim \delta(\Lambda)   }}{|k|^{1/2}} +  \varrho^{1/2}(y;\Lambda) +   |k|^{3/2}\varrho^{2 - \theta}(y;\Lambda) \right), \\
    |\p_\lambda \mathcal{R}_{0, 0, \epsilon}^\iota(y,\lambda)| &\lesssim \frac{ \norm{\omega_{0k}}_{H_k^4}   }{|k|^{3/2}}  \left(   \frac{\mathbbm{1}_{|y| \lesssim \delta(\Lambda)   } }{\delta^2(\Lambda)} + \frac{1}{\varrho(y;\Lambda)} +  \frac{|k|^{2}}{\varrho^{ \theta}(y;\Lambda)} \right), \\
    |\p_\lambda^2 \mathcal{R}_{0, 0, \epsilon}^\iota(y,\lambda)| &\lesssim    \frac{ \norm{\omega_{0k}}_{H_k^4}   }{|k|^{3/2}}  \left(   \frac{\mathbbm{1}_{|y| \lesssim \delta(\Lambda)   } }{\delta^4(\Lambda)} + \frac{1}{\varrho^3(y;\Lambda)} +  \frac{|k|^{2}}{\delta(\Lambda)\varrho(y;\Lambda)} \right), \\
    |\p_\lambda^3 \mathcal{R}_{0, 0, \epsilon}^\iota(y,\lambda)|  &\lesssim    \frac{ \norm{\omega_{0k}}_{H_k^4}   }{|k|^{3/2}}  \left(   \frac{\mathbbm{1}_{|y| \lesssim \delta(\Lambda)   } }{\delta^6(\Lambda)} + \frac{1}{\varrho^5(y;\Lambda)} +  \frac{|k|^{2}}{\delta^3(\Lambda)\varrho(y;\Lambda)} \right),
\end{align}
\end{subequations}
\begin{subequations}\label{R00:good}
\begin{align}
    |D_{j,\lambda} \mathcal{R}_{0, 0, \epsilon}^\iota(y,\lambda)| &\lesssim \frac{\norm{\omega_{0k}}_{H_k^4}  }{|k|^{1/2}} \left(   1 +   \frac{\mathbbm{1}_{|y| \lesssim \delta(\Lambda) }}{|k| \delta(\Lambda)}  + |k| \varrho(y;\Lambda) +\mathbbm{1}_{|y| \approx |k|^{-1}} |k|^{\theta} \right), \\
    |\p_\lambda D_{j,\lambda} \mathcal{R}_{0, 0, \epsilon}^\iota(y,\lambda)| &\lesssim \frac{\norm{\omega_{0k}}_{H_k^4}  }{|k|^{1/2}} \left(   \frac{\mathbbm{1}_{|y| \lesssim \delta(\Lambda) }}{|k| \delta^3(\Lambda)}   +  \frac{|k|}{\delta(\Lambda)} + |k|^{2 + \theta }\mathbbm{1}_{|y| \approx |k|^{-1}}  \right), \\
    |\p_\lambda^2 D_{j,\lambda} \mathcal{R}_{0, 0, \epsilon}^\iota(y,\lambda)| &\lesssim \frac{\norm{\omega_{0k}}_{H_k^4}  }{|k|^{1/2}} \left(   \frac{\mathbbm{1}_{|y| \lesssim  \delta(\Lambda) }}{|k|\delta^5(\Lambda)}   +  \frac{|k|}{\delta^3(\Lambda)}  \right),
\end{align}
\begin{equation}
\begin{aligned}
&|D_{j,\lambda}^2 ( \mathcal{R}_{0, 0, \epsilon}^\iota(y,\lambda) - g^\iota_{k,\epsilon}(y + y_{j*},\lambda))| \\
     &\lesssim \norm{\omega_{0k}}_{H_k^4}   \left(  \frac{\mathbbm{1}_{|y| \approx \delta(\Lambda)  }}{\delta^2(\Lambda) |k|^{3/2} }   + \frac{1}{|k|^{3/2} \varrho(y; \Lambda)  }   + \frac{|k|^{1/2} \varrho(y;\Lambda) }{\delta(\Lambda)} + |k|^{1/2 + \theta} \mathbbm{1}_{|y| \approx |k|^{-1}}      \right),
\end{aligned}
\end{equation}
\begin{equation}
\begin{aligned}
&|\p_\lambda D_{j,\lambda}^2 ( \mathcal{R}_{0, 0, \epsilon}^\iota(y,\lambda) - g^\iota_{k,\epsilon}(y + y_{j*},\lambda))| \\
     &\lesssim \norm{\omega_{0k}}_{H_k^4}   \left(  \frac{\mathbbm{1}_{|y| \approx \delta(\Lambda)  }}{\delta^4(\Lambda) |k|^{3/2} } + \frac{1}{|k|^{3/2} \varrho^3(y; \Lambda)  }   + \frac{|k|^{1/2} \varrho(y;\Lambda) }{\delta^3(\Lambda)} + |k|^{5/2 + \theta} \mathbbm{1}_{|y| \approx |k|^{-1}}      \right).
\end{aligned}
\end{equation}
\end{subequations}
\end{lemma}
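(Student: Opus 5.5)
The plan is to treat \eqref{eq:Erhs} term by term, bounding each piece separately with the appropriate derivative operator ($\p_\lambda^r$ or $\p_\lambda^r D_{j,\lambda}^s$) and then summing. The pieces are:
\begin{itemize}
\item[(i)] the \emph{Taylor piece} $g^\iota_{k,\epsilon}(y+y_{j*},\lambda)-g_{k,j*}\chi_{\delta_0/(4k)}(y)$;
\item[(ii)] the \emph{inner cutoff piece} $g_{k,j*}\chi_{\delta_0/(4k)}\chi_{\delta(\Lambda)}$;
\item[(iii)] the \emph{asymmetry piece} involving $1-a_\epsilon^\iota$;
\item[(iv)] the \emph{frequency piece} $-g_{k,j*}\chi_{\delta_0/(4k)}k^2\Phi_\epsilon^\iota$;
\item[(v)] the \emph{commutator pieces} involving $\chi_{\delta_0/(4k)}'$ and $\chi_{\delta_0/(4k)}''$.
\end{itemize}
Throughout I use $|g_{k,j*}|\lesssim |k|^{-3/2}\|\omega_{0k}\|_{H^4_k}$ from Lemma \ref{g:bdds}, and note that $g_{k,j*}$ is independent of $\lambda$. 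On the region $|y|\lesssim |k|^{-1}$ the cutoff $\chi_{\delta_0/(4k)}$ has derivatives of size $|k|^m$, and these are supported where $|y|\approx |k|^{-1}$. Moreover $\partial_\lambda\delta(\Lambda)\approx \delta^{-1}(\Lambda)$, so every $\lambda$-derivative falling on $\chi_{\delta(\Lambda)}$ costs $\delta^{-2}$ on $|y|\approx\delta$, whereas $D_{j,\lambda}$ falling on it costs only $\delta^{-1}$.

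For (i), I write $g^\iota_{k,\epsilon}(y+y_{j*})-g_{k,j*}=\int_0^1 y\,\p_yg(\cdot)$. The first term of \eqref{BDC3} vanishes for $|y|\le\delta_0$, so on $|y|\lesssim|k|^{-1}$ it is simply a smooth function of $y$ independent of $\lambda$. Since $\|\p_y g\|_{L^\infty}\lesssim |k|^{-1/2}\|\omega_{0k}\|$ and $\|\p_y^2 g\|_{L^2}$ is bounded, the $1/2$-Hölder bound via Sobolev gives the $\varrho^{1/2}$ term. This piece has no $\lambda$ dependence, so it disappears under $\p_\lambda$. Under $D_{j,\lambda}$ it contributes $\p_y g=O(|k|^{-1/2})$, together with the $|k|$ commutator term from the cutoff. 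This explains the subtraction of $g$ in the $D^2$ estimates.

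For (ii) and (iii), I use $|g_{k,j*}|$ times the cutoff bounds above and Lemma \ref{a:bdds}: \eqref{a_lambda} gives $\p_\lambda^r(a-1)\lesssim\varrho^{1-2r}$, and \eqref{a_good} handles the good derivatives. Since $|1-a|\lesssim\varrho\le\varrho^{1/2}|k|^{-1/2}$, piece (iii) fits the second slot of the $r=0$ bound; for $r\ge1$ it produces the $\varrho^{1-2r}\lesssim\varrho^{-(2r-1)}$ terms. The $\p_\lambda$ derivatives of $\chi_{\delta(\Lambda)}$ give exactly the $\mathbbm{1}_{|y|\lesssim\delta}\,\delta^{-2r}$ terms, and the $D_{j,\lambda}$ versions give $\delta^{-1}$, whence the prefactor $1/(|k|\delta)$ once the $|k|^{-3/2}$ is rewritten as $|k|^{-1/2}\cdot|k|^{-1}$.

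For (iv) and (v), I insert Lemma \ref{lem:phi-bounds}:
\begin{itemize}
\item $k^2|g_{k,j*}|\varrho^{2-\theta}$ gives the $|k|^{1/2}\varrho^{2-\theta}$ term;
\item $\p_\lambda$ gives $|k|^{2}\varrho^{-\theta}$;
\item $\p_\lambda^2$ gives $|k|^2/(\varrho\delta)$;
\item $\p_\lambda^3$ gives $|k|^2/(\varrho\delta^3)$;
\item the good derivative bounds \eqref{phi:good} give the $|k|\varrho$, $|k|/\delta$ and $|k|^{1/2}\varrho/\delta$ terms.
\end{itemize}
The cutoff-derivative terms live on $|y|\approx|k|^{-1}$, where $\varrho\approx|k|^{-1}$. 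There $|k|^2\Phi\approx|k|^{\theta}$ and $|k|\p_y\Phi\approx|k|^{\theta}$, which produces the $|k|^{\theta}\mathbbm{1}_{|y|\approx|k|^{-1}}$ contributions. For the mixed derivatives, $D_{j,\lambda}$ and $\p_\lambda$ commute with the $y$-cutoffs up to these same terms, since $D_{j,\lambda}\chi(y)=\chi'(y)$.

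The main obstacle I anticipate is bookkeeping in the mixed terms, specifically $\p_\lambda D_{j,\lambda}^2$. There the product rule distributes $\lambda$-derivatives among $\chi_{\delta(\Lambda)}$, $a-1$ and $\Phi$, and at each split I must check that the result is dominated by one of the listed terms, using $\varrho\ge\delta$ and $|k|\delta\ll1$, e.g. $\delta^{-1}\varrho^{-2}\le\varrho^{-3}\cdot(\varrho/\delta)$ on $|y|\lesssim|k|^{-1}$. I would organize this by assigning a weight $\delta^{-2}$ (or $\varrho^{-2}$) per $\p_\lambda$ and $\varrho^{-1}$ per $D_{j,\lambda}$, and then verify each monomial against this rule.
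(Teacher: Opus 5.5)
\textbf{Assessment.} Your route is the paper's. Its proof of this lemma is the single sentence ``direct computation using Lemmas~\ref{g:bdds}, \ref{a:bdds}, \ref{lem:phi-bounds}'', and your piece-by-piece bookkeeping closes for the Taylor, inner-cutoff, frequency and commutator terms. It also closes for every contribution to the pure $\p_\lambda^r$ lines and to the $D_{j,\lambda}$, $D_{j,\lambda}^2$, $\p_\lambda D_{j,\lambda}^2$ lines.

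\textbf{Where it fails.} The step you postpone, checking each monomial against the displayed right-hand sides, does not go through for the asymmetry piece (iii) in the $\p_\lambda D_{j,\lambda}$ and $\p_\lambda^2 D_{j,\lambda}$ lines. Your own counting rule gives $\p_\lambda D_{j,\lambda}(a^\iota_\epsilon-1)\sim\varrho^{-2}$ and $\p_\lambda^2 D_{j,\lambda}(a^\iota_\epsilon-1)\sim\varrho^{-4}$ on $y\gg\delta(\Lambda)$. Note that Lemma~\ref{a:bdds} records only pure $\p_y^\alpha$, $\p_\lambda^\alpha$, $D_{j,\lambda}^\alpha$ bounds, so these mixed bounds must be re-derived from its proof. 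Hence (iii) contributes $|g_{k,j*}|\varrho^{-2}$ and $|g_{k,j*}|\varrho^{-4}$.

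Write $M=\|\omega_{0k}\|_{H^4_k}$. On $\delta(\Lambda)\ll|y|\lesssim|k|^{-1}$ the displayed right-hand sides reduce to $M|k|^{1/2}\delta^{-1}$ and $M|k|^{1/2}\delta^{-3}$. The requirement $|k|^{-3/2}\varrho^{-2}\lesssim|k|^{1/2}\delta^{-1}$ forces $\varrho^2\gtrsim\delta/k^2$. This fails on $\delta\ll|y|\ll\delta^{1/2}/|k|$, a range that is nonempty as soon as $k^2\delta\ll1$, which the hypothesis $|k|\delta\ll1$ allows. The $\varrho^{-4}$ term fails in the same way on $\delta\ll|y|\ll\delta^{3/4}|k|^{-1/2}$.

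\textbf{Why this is sharp.} This is not slack that a finer computation removes. Take the model $b_j(y)=b(y_{j*})+cy^2+ey^3$, let $\mu=\lambda-b(y_{j*})-i\iota\epsilon$, and work to first order in $e$ with $\delta\ll y\ll\delta_0$. Then
\[
a^\iota_\epsilon-1\approx e\bigl[4J/I_2-2y^3/(cy^2-\mu)\bigr],
\]
where $I_2=\int_y^\infty(cs^2-\mu)^{-2}ds$ and $J=\int_y^\infty s^3(cs^2-\mu)^{-3}ds$. Evaluating these explicitly gives $a^\iota_\epsilon-1=\frac{e}{c}\,yF(t)$ with $t=\mu/(cy^2)$ and
\[
F(t)=4-\tfrac{t}{5}+\tfrac{22}{175}t^2+\tfrac{118}{875}t^3+\dots
\]
At this order $D_{j,\lambda}t=2(1-t)/y$, so $D_{j,\lambda}(yF)=F+2(1-t)F'=:G(t)$. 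Therefore
\[
\p_\lambda D_{j,\lambda}(a^\iota_\epsilon-1)=\tfrac{e}{c^2y^2}\bigl(G'(0)+O(t)\bigr),\qquad \p_\lambda^2 D_{j,\lambda}(a^\iota_\epsilon-1)=\tfrac{e}{c^3y^4}\bigl(G''(0)+O(t)\bigr),
\]
with $G'(0)=\tfrac{123}{175}$ and $G''(0)=\tfrac{108}{125}$, both nonzero.

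Nothing else in $\mathcal R^\iota_{0,0,\epsilon}$ can cancel these on that range. Pieces (i), (ii) and (v) vanish there, and (iv) is only $O(M|k|^{1/2}\delta^{-1})$, respectively $O(M|k|^{1/2}\delta^{-3})$. So if $b'''(y_{j*})\neq0$ and $|g_{k,j*}|\approx|k|^{-3/2}M$ (which is attainable), those two lines are false as displayed. The paper's one-line proof shares this defect.

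\textbf{Repair.} Add $M|k|^{-3/2}\varrho^{-2}$ to the $\p_\lambda D_{j,\lambda}$ line and $M|k|^{-3/2}\varrho^{-4}$ to the $\p_\lambda^2 D_{j,\lambda}$ line. This mirrors the $\varrho^{-1}$ and $\varrho^{-3}$ terms already present in the two $D_{j,\lambda}^2$ lines.

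The change is harmless for Lemma~\ref{gr:bdds}. By Lemma~\ref{mGk50}, $\int\varrho(z)^{-1}\min\{\varrho^2(y)/\varrho^2(z),\varrho(z)/\varrho(y)\}\,dz\lesssim1$ and $\int\varrho(z)^{-3}\min\{\cdots\}\,dz\lesssim\delta^{-1}\varrho^{-1}(y)$. The new terms therefore contribute at most $M|k|^{-3/2}$ and $M|k|^{-3/2}\delta^{-2}$, within the bounds $M|k|^{-3/2+\theta}\delta^{-1}$ and $M|k|^{-3/2+\theta}\delta^{-3}$ claimed there. You should prove this corrected statement rather than the one displayed.
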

\begin{proof}

The proof follows by direct computation using Lemmas \ref{g:bdds}, \ref{a:bdds}, and \ref{lem:phi-bounds}.

\end{proof}

We have the following Lemma which quantifies the desired smallness properties for $\mathcal{R}_{0,0,\epsilon}^\iota(y,\lambda)$. 
\begin{lemma}
\label{gr:bdds}
Let $\lambda \in \Sigma_{j,\delta_0} \setminus \{b(y_{j*})\},  \epsilon \in (0, \epsilon_0), \theta \in (0, 1/4]$, $\iota\in \{-, +\}$, and  $ \mathcal{R}_{0, 0, \epsilon}^\iota(y,\lambda)$ be defined as in \eqref{eq:Erhs}. Then   the following bounds hold for $\alpha \in \{0,1\}$ and $|k \delta(\Lambda)| \ll 1$ with a constant depending on $\theta$.
\begin{subequations}
    \begin{align}
   |\int_\T {\varrho_{j,k}^\alpha}(y; \Lambda)  \p_y^\alpha  \mathcal{G}_k^j(y,z; \Lambda) \mathcal{R}_{0, 0, \epsilon}^\iota(z - y_{j*},\lambda) dz |&\lesssim   
   \frac{ \norm{\omega_{0k}}_{H_k^4}   \varrho_{j,k}^2(y; \Lambda)}{|k|^{3/2 - \theta}}, \\
   |\int_\T {\varrho_{j,k}^\alpha}(y; \Lambda)  \p_y^\alpha  \mathcal{G}_k^j(y,z; \Lambda)\p_\lambda \mathcal{R}_{0, 0, \epsilon}^\iota(z - y_{j*},\lambda) dz |  &\lesssim \frac{ \norm{\omega_{0k}}_{H_k^4}   }{|k|^{3/2 - \theta}  }, \\
  |\int_\T {\varrho_{j,k}^\alpha}(y; \Lambda)  \p_y^\alpha  \mathcal{G}_k^j(y,z; \Lambda) {\D} \mathcal{R}_{0, 0, \epsilon}^\iota(z - y_{j*},\lambda) dz| &\lesssim \frac{\norm{\omega_{0k}}_{H_k^4}  \varrho_{j, k}(y; \Lambda) }{|k|^{3/2 -\theta}},   \\
   |\int_\T {\varrho_{j,k}^\alpha}(y; \Lambda)  \p_y^\alpha  \mathcal{G}_k^j(y,z; \Lambda) \p_\lambda^2 \mathcal{R}_{0, 0, \epsilon}^\iota(z - y_{j*},\lambda) dz| &\lesssim    \frac{ \norm{\omega_{0k}}_{H_k^4}   }{|k|^{3/2- \theta } \delta(\Lambda) {\varrho_j}(y; \Lambda)},  \\ 
   |\int_\T {\varrho_{j,k}^\alpha}(y; \Lambda)  \p_y^\alpha  \mathcal{G}_k^j(y,z; \Lambda)\p_\lambda {\D} \mathcal{R}_{0, 0, \epsilon}^\iota(z - y_{j*},\lambda) dz| &\lesssim \frac{\norm{\omega_{0k}}_{H_k^4}  }{|k|^{3/2 - \theta } \delta(\Lambda)},  \\
    |\int_\T {\varrho_{j,k}^\alpha}(y; \Lambda)  \p_y^\alpha  \mathcal{G}_k^j(y,z; \Lambda)\p_\lambda^3 \mathcal{R}_{0, 0, \epsilon}^\iota(z - y_{j*},\lambda) dz|  &\lesssim    \frac{ \norm{\omega_{0k}}_{H_k^4}   }{|k|^{3/2 - \theta} \delta^3(\Lambda) \varrho_{j}(y; \Lambda)  },  \\
    |\int_\T {\varrho_{j,k}^\alpha}(y; \Lambda)  \p_y^\alpha  \mathcal{G}_k^j(y,z; \Lambda) \p_\lambda^2 {\D} \mathcal{R}_{0, 0, \epsilon}^\iota(z - y_{j*},\lambda) dz| &\lesssim \frac{\norm{\omega_{0k}}_{H_k^4}  }{|k|^{3/2 - \theta} \delta^3(\Lambda) }, \\
    |\int_\T {\varrho_{j,k}^\alpha}(y; \Lambda)  \p_y^\alpha  \mathcal{G}_k^j(y,z; \Lambda) D_\lambda^2 ( \mathcal{R}_{0, 0, \epsilon}^\iota(z - y_{j*},\lambda) - {g^\iota_{k,\epsilon}(z,\lambda)}) dz|  &\lesssim \frac{\norm{\omega_{0k}}_{H_k^4}  {\varrho_{j,k}}(y; \Lambda)  }{|k|^{3/2-\theta}\delta(\Lambda)}, \\
      |\int_\T {\varrho_{j,k}^\alpha}(y; \Lambda)  \p_y^\alpha  \mathcal{G}_k^j(y,z; \Lambda) \p_\lambda D_\lambda^2 ( \mathcal{R}_{0, 0, \epsilon}^\iota(z - y_{j*},\lambda) - {g^\iota_{k,\epsilon}(z,\lambda)}) dz|  &\lesssim \frac{\norm{\omega_{0k}}_{H_k^4}  {\varrho_{j,k}}(y; \Lambda)  }{|k|^{3/2-\theta}\delta^3(\Lambda)}, \\
     |\int_\T {\varrho_{j,k}^\alpha}(y; \Lambda)  \p_y^\alpha  \mathcal{G}_k^j(y,z; \Lambda) D_\lambda^2 {g^\iota_{k,\epsilon}(z,\lambda)} dz|  &\lesssim \frac{\norm{\omega_{0k}}_{H_k^4}}{|k|^{3/2}},\\
     |\int_\T {\varrho_{j,k}^\alpha}(y; \Lambda)  \p_y^\alpha  \mathcal{G}_k^j(y,z; \Lambda) \p_\lambda D_\lambda^2 {g^\iota_{k,\epsilon}(z,\lambda)} dz|  &\lesssim \frac{\norm{\omega_{0k}}_{H_k^4}}{|k|^{3/2}}.
\end{align}
\end{subequations}
\end{lemma}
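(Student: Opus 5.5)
The plan is to insert the pointwise bounds of Lemma \ref{R00:bdd} into the integral against the modified Green's function and estimate it with the kernel bound \eqref{mGk51}. Throughout we assume $|k|\delta(\Lambda)\ll 1$ and $\alpha\in\{0,1\}$. On $\T\setminus S^j_{2\delta_0/|k|}$ the source $\mathcal{R}^\iota_{0,0,\epsilon}$ and its derivatives reduce to $g^\iota_{k,\epsilon}$, because every cutoff $\chi_{\delta_0/(4k)}$ has dropped out. That region is therefore handled exactly as in the $|k\delta(\Lambda)|\gtrsim1$ bounds of Lemma \ref{g:bdds}. There the exponential factor $e^{-|k||y-z|}$ and the factor $\varrho_{j,k}(z)\le |k|^{-1}$ produce the extra powers of $|k|^{-1}$. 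The ratio $\varrho_j^2(y)/\varrho_j^2(z)$ then supplies the factor $\varrho_{j,k}^2(y)$ or $\varrho_{j,k}(y)$ wherever the statement requires it.

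Near the critical point, where $|z-y_{j*}|\lesssim |k|^{-1}$, we use $\varrho_{j,k}(z)=\varrho_j(z)$. The source splits into three pieces.
\begin{itemize}
\item[(a)] The piece concentrated on $|z|\lesssim\delta(\Lambda)$.
\item[(b)] The power-law terms $\varrho^{-m}$.
\item[(c)] The $k^2\Phi^\iota_\epsilon$ contribution carrying $\varrho^{-\theta}$ or $\varrho^{2-\theta}$, together with the cutoff-derivative terms supported on $|z|\approx|k|^{-1}$.
\end{itemize}
For each piece, the kernel is bounded by the minimum in \eqref{mGk51}, splitting according to $\varrho_j(z)\lessgtr\varrho_j(y)$.

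For $\varrho_j(z)\le\varrho_j(y)$ we use $\varrho_j(z)\cdot\varrho_j(z)/\varrho_j(y)$. For $\varrho_j(z)\ge\varrho_j(y)$ we use $\varrho_j(z)\cdot\varrho_j^2(y)/\varrho_j^2(z)$. As an example, take the undifferentiated source. Its piece (c) is of size $|k|^{1/2-\theta}\varrho^{2-\theta}$ times $\|\omega_{0k}\|_{H^4_k}$. Integrating $\varrho(z)^{3-\theta}\varrho^2(y)/\varrho^2(z)$ over $\varrho(z)\lesssim|k|^{-1}$ gives $\varrho^2(y)|k|^{-2+\theta}$, and multiplying by $|k|^{1/2}$ yields $\varrho_{j,k}^2(y)|k|^{-3/2+\theta}$. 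This is exactly the claimed bound, and it shows where the $|k|^\theta$ loss comes from. Piece (a) contributes $\delta^2(\Lambda)\varrho^{-1}(y)\cdot\delta(\Lambda)$ or less, which is dominated by $\varrho^2(y)$ only because of the quadratic structure. More precisely, for $\varrho_j(y)\gtrsim\delta$ the $\varrho_j(z)/\varrho_j(y)$ branch gives $\delta^3/\varrho_j(y)\le \varrho_j^2(y)$, and for $\varrho_j(y)\approx\delta$ it is trivially fine.

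The $\lambda$-derivative bounds follow the same bookkeeping. Each $\partial_\lambda$ costs $\varrho^{-2}$ or $\delta^{-1}\varrho^{-1}$ in Lemma \ref{R00:bdd}. The integrals $\int\varrho(z)^{2-m}\,dz$ are then either convergent at $\varrho\sim\delta$, which produces the powers $\delta^{-1}$ and $\delta^{-3}$, or logarithmically divergent at the outer scale. In the logarithmic case we absorb the divergence into $|k|^{\theta}$ by choosing a slightly smaller $\theta$ in Lemma \ref{R00:bdd}; this is legitimate since the implicit constant may depend on $\theta$. The $1/\varrho_j(y)$ factors in the $\partial_\lambda^2$ and $\partial_\lambda^3$ estimates come from the branch $\varrho_j(z)/\varrho_j(y)$ with $z$ near the critical layer. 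For $\alpha=1$ the prefactor $\varrho_{j,k}(y)$ exactly compensates the derivative, by \eqref{mGk51}. For the $D_\lambda^2$ and $\partial_\lambda D_\lambda^2$ bounds, the terms involving $g^\iota_{k,\epsilon}$ itself are separated out, using the $L^2$ bounds of Lemma \ref{g:bdds} together with \eqref{Gk1.1}-type Cauchy--Schwarz estimates on the kernel. The remaining differences are handled by the same pointwise argument.

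The main obstacle is the borderline integrals. The sources of order $\varrho^{-3}$ and $\varrho^{-5}$ in $\partial_\lambda^2\mathcal R$ and $\partial_\lambda^3\mathcal R$ must be integrated against a kernel of size only $\varrho_j(z)^2/\varrho_j(y)$ near $z\approx y_{j*}$. The integral $\int_\delta\varrho^{-1}$ or $\int_\delta\varrho^{-3}$ must produce exactly $\delta^{-1}$ or $\delta^{-3}$ with at most a logarithmic loss. That logarithmic loss then has to be absorbed into $|k|^\theta$, using $\delta|k|\ll1$ only through $|\log\delta|\lesssim_\theta\delta^{-\theta'}$ combined with the $\delta$-power slack. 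I would verify this absorption first.
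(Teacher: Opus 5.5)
Your proposal is correct and follows essentially the same route as the paper, whose proof consists of inserting the pointwise source bounds of Lemma~\ref{R00:bdd} into the modified Green's function bounds of Lemma~\ref{mGk50}, with Lemma~\ref{g:bdds} handling the data terms (pointwise away from $y_{j*}$, and through Cauchy--Schwarz in $L^2$ for $D_\lambda^2 g$ and $\partial_\lambda D_\lambda^2 g$). One correction to your bookkeeping concerns the logarithms that can arise, such as $\log(\varrho_j(y)/\delta(\Lambda))$ or $\log(1/(|k|\varrho_j(y)))$: these are not bounded by any power of $|k|$, so they cannot be absorbed by shrinking $\theta$, but they never need to be, since each one carries power slack in the target (for instance $\delta\log(\varrho_j/\delta)\lesssim\varrho_j$ and $|k|\varrho_j\log(1/(|k|\varrho_j))\lesssim 1$), and the $|k|^\theta$ loss comes only from the $\Phi^\iota_\epsilon$ terms already present in Lemma~\ref{R00:bdd} (likewise, piece (c) of the undifferentiated source is $|k|^{1/2}\varrho^{2-\theta}$, which is what your computation actually uses, not $|k|^{1/2-\theta}\varrho^{2-\theta}$).
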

\begin{proof}
    The proof follows by Lemmas \ref{mGk50} and \ref{R00:bdd}{, using Lemma~\ref{g:bdds} for the data terms}.
\end{proof}

\section{Higher order spectral regularity:  Degenerate case }
\label{sec:d:bdds}

In this section we prove bounds for the higher order spectral regularity of $E_{k, \epsilon}^\iota, \phi_{k, \epsilon}^\iota$. The key technical difficulty is identifying terms which are too singular and removing them from the spectral density function as in \cite{IIJ24}. Due to the more refined bounds needed in the current paper, we need to compute significantly more terms.  To begin, we state analogues of the non-degenerate estimates in Section~\ref{sec:nd:bdds} which will be used in a similar way in this section.  

In what follows we use the following regime-dependent notation for a sufficiently small $c \in (0,1)$:
\begin{align}
\label{sdf:cases}
\p_\lambda^\alpha \D^\beta \digamma_{k,\epsilon}^\iota(y,\lambda) = 
\begin{cases}
   \p_\lambda^\alpha \D^\beta   E_{k, \epsilon}^\iota (y - y_{j*},\lambda), & |k\delta(\Lambda)| \leq 2c\\
    \p_\lambda^\alpha \D^\beta \phi_{k, \epsilon}^\iota(y,\lambda), & |k \delta(\Lambda)| \geq c
\end{cases}.
\end{align}
On the overlap either representative may be chosen; the estimates hold for each, and the displayed derivatives are taken on the chosen representative.




\begin{lemma}\label{dint}
    Assume that $\kappa k\in\mathbb{Z}\backslash\{0\}, \lambda\in \Sigma_{j, \delta_0} \setminus \{b(y_{j*})\}$, $\iota\in\{\pm\}, \epsilon\in(0,\epsilon_0)$, $\alpha \in\{0,1,2,\cdots\}$, and that $\sigma_1 \in [0,1], \sigma_2\in [-2,0], \sigma_1 + \sigma_2 \in [-2, 1]$. The following implicit constants are allowed to depend on $\alpha$. 
    In this lemma, functions on $S_{3\delta(\Lambda)}^j$ are extended by zero to $\T$, and the local $H^1$ functions are assumed to have zero endpoint traces.
    \begin{enumerate}
    \item For any function $f(y)\in L^2(S_{3 \delta(\Lambda)}^j)$ with $\text{supp }f \subseteq S_{3 \delta(\Lambda)}^j$, we have the following estimates.
    \begin{align}\label{dlogl2}
        &\left\|\int_{\T} \mathcal{G}_{k}^j(y,z, \Lambda) \chi_{ j, \delta(\Lambda) }(z) f(z)\log^\alpha \left( \frac{b(z)-\lambda+i\iota\epsilon   }{\delta^2(\Lambda)} \right)dz\right\|_{X^{\sigma_1, \sigma_2}} \\
        &\lesssim \norm{f}_{L^2(S_{3 \delta(\Lambda)}^j)} \delta^{\sigma_1}(\Lambda)d_{j,k}^{\sigma_2 + 3/2}.
        \notag
    \end{align}
    \item For any function $f(y) \in H^1(S_{3 \delta(\Lambda)}^j)$  with $\text{supp }f \subseteq S_{3 \delta(\Lambda)}^j$
    we have the following estimates.
    \begin{align}\label{dfracl2}
        \left\|\int_{\T}\mathcal{G}_k^j(y,z, \Lambda ) \chi_{j,\delta(\Lambda)}(z)f(z)\frac{\log^\alpha( \frac{b(z)-\lambda+i\iota\epsilon}{\delta^2(\Lambda)})}{b(z)-\lambda+i\iota\epsilon}dz\right\|_{X^{\sigma_1, \sigma_2}}
        \lesssim \norm{f}_{X^{0,0}} \delta^{\sigma_1}(\Lambda) d_{j,k}^{\sigma_2}.
    \end{align}
   \item For a  function $f(y,\lambda ) \in H^1(\T)$, we assume that there exist functions  $c_1(y,\lambda), c_2(y,\lambda) \in H^1(S_{3 \delta(\Lambda)}^j)$, $c_3(y,\lambda), c_4(y,\lambda) \in L^2(S_{3 \delta(\Lambda)}^j)$ with support in $ S_{3 \delta(\Lambda)}^j$, such that in a distributional sense we have  
   \begin{align}\label{drayf}
       &\left(-\partial_y^2+k^2+\frac{b''(y)}{b(y)-\lambda+i\iota\epsilon}\right)f(y,\lambda) \\
            &= c_1(y,\lambda)\frac{\log \frac{(b(y)-\lambda+i\iota\epsilon)}{\delta^2(\Lambda)}}{b(y)-\lambda+i\iota\epsilon}+c_2(y,\lambda)\frac{1}{b(y)-\lambda+i\iota\epsilon}
            +c_{3}(y,\lambda)\log^\alpha \frac{(b(y)-\lambda+i\iota\epsilon)}{\delta^2(\Lambda)}  +c_4(y,\lambda). \notag
   \end{align}
    Then we have the following:
 \begin{align}\label{dfracl3}
        &\left\|\int_{\T}\mathcal{G}_k^j(y,z, \Lambda ) \chi_{j,\delta}^*(z)\frac{\p_z f(z)}{b(z)-\lambda+i\iota\epsilon}dz\right\|_{X^{\sigma_1, \sigma_2}}\\
        &\lesssim \delta^{\sigma_1}(\Lambda)d_{j,k}^{\sigma_2 -1} \left[ \norm{c_1}_{X^{0,0}} + \norm{c_2}_{X^{0,0}} +         \delta^{1/2}(\Lambda) d_{j,k} (  \norm{c_3}_{L^2(S_{3 \delta(\Lambda)}^j)}  + \norm{c_4}_{L^2(S_{3 \delta(\Lambda)}^j)}  )  \right]. \notag
    \end{align}
    
    \end{enumerate}
\end{lemma}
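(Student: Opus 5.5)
The plan is to prove Lemma \ref{dint} by inserting the pointwise bounds on the modified Green's function from Lemmas \ref{mGk50} and \ref{mGk30} directly into the definition \eqref{DeC1} of $X^{\sigma_1,\sigma_2}$. The arguments mirror the non-degenerate Lemmas \ref{ndint}, \ref{ndraygeneral}, \ref{ndrayg}, with $L^2$ replaced by the weighted norm. In every part the source is supported in $S^j_{3\delta(\Lambda)}$, where $\varrho_j(z;\Lambda)\approx\delta(\Lambda)$ and $\varrho_{j,k}(z;\Lambda)\approx d_{j,k}$.

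For $y\in\T\setminus S^j_{\delta(\Lambda)}$, Lemma \ref{mGk50} gives
\[
\varrho_{j,k}^\beta(y;\Lambda)|\partial_y^\beta\mathcal G_k^j(y,z;\Lambda)|\lesssim d_{j,k}\min\{e^{-|k||y-z|},\delta/\varrho_j(y)\}.
\]
Hence the weight $\varrho_j^{\sigma_1}\varrho_{j,k}^{\sigma_2}$ is controlled by $\delta^{\sigma_1}d_{j,k}^{\sigma_2}$, using the decay factor $\delta/\varrho_j(y)$ and $\sigma_1\le1$, together with $\sigma_2\le0$ when $\varrho_{j,k}=1/|k|$. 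Inside $S^j_{\delta(\Lambda)}$ we use the $L^2$ piece of the norm. There the kernel is $O(d_{j,k})$ and its $y$-derivative is $O(1)$, up to the exponential factor when $|k|\delta\gg1$.

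\textbf{Part (1).} Normalize so that $\log((b-\lambda+i\iota\epsilon)/\delta^2)$ is of size $O(1)$ except for a logarithmic singularity at the critical layers. Then $\|\log^\alpha(\cdot)\|_{L^2(S^j_{3\delta})}\lesssim\delta^{1/2}$, and Cauchy--Schwarz gives
\[
\int|f\log^\alpha|\,dz\lesssim \|f\|_{L^2}\,(d_{j,k}\wedge\delta)^{1/2}.
\]
Combining this with the kernel bound $d_{j,k}$, and with the weights $\delta^{\sigma_1}d_{j,k}^{\sigma_2+\beta}$ (which multiply $\delta^{-1/2}$ on the inner $L^2$ region of length about $\delta$), yields $\delta^{\sigma_1}d_{j,k}^{\sigma_2+3/2}\|f\|_{L^2}$ in both regions. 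When $|k|\delta\gg1$, we use Young's inequality with the exponential kernel in place of the $\delta/\varrho$ decay.

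\textbf{Part (2).} Near the critical layers, where $\chi^*_{j,\delta}$ is supported and $|b'|\approx\delta$, integrate by parts as in \eqref{DeC6.2}, writing
\[
\frac{\log^\alpha}{b-\lambda+i\iota\epsilon}=\frac{1}{(\alpha+1)b'}\,\partial_z\log^{\alpha+1}.
\]
The derivative then falls on $\mathcal G_k^j\chi f/b'$. We use $(\partial_z+\partial_y)\mathcal G$ from \eqref{lem:mGK30:New}, or $\mathcal H^j_k$ from Lemma \ref{mGk30}, and convert the remaining $\partial_y\mathcal G$ term back to $\partial_y$ acting outside, producing the boundary term as in \eqref{DeC6.3}. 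The factor $1/b'\approx\delta^{-1}$ and the $H^1$ control of $f$, namely $\|f\|_{X^{0,0}}$, which is $\delta^{-1/2}$-normalized, combine to give $\delta^{\sigma_1}d_{j,k}^{\sigma_2}\|f\|_{X^{0,0}}$. On the inner region $|z-y_{j*}|\le c\delta$, the denominator is $\gtrsim\delta^2$ (or $\gtrsim\epsilon$), and the bound is direct.

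\textbf{Part (3).} This is the main obstacle, as it was in Lemma \ref{ndrayg}: $\partial_zf/(b-\lambda+i\iota\epsilon)$ is not integrable uniformly in $\epsilon$ for a general $H^1$ function $f$. We integrate by parts once as in Part (2), moving the singularity onto $\partial_z^2 f\cdot\log$ and $\partial_z f\cdot\log$ terms. We then substitute $\partial_z^2f$ from the equation \eqref{drayf}:
\[
\partial_z^2f=k^2f+\frac{b''f}{b-\lambda+i\iota\epsilon}-\text{RHS}.
\]
This produces terms of the forms $\frac{\log^{\beta}}{b-\lambda+i\iota\epsilon}\times(b''f,\,c_1,\,c_2)$, handled by Part (2), and terms $\log^{\alpha+1}c_3$ and $c_4\log$, handled by Part (1). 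The $1/b'$ factor accounts for the loss $d_{j,k}^{-1}$ in the stated bound; this uses $|b'|\approx\delta$ together with $\delta^{-1}\lesssim d_{j,k}^{-1}$. The lower-order terms with $\partial_zf\log$ require a pointwise or $L^2$ bound on $\partial_zf$ near the critical layer. We would obtain this by first representing $f$ through $\mathcal G_k^j$ (inverting $I+T_\Theta$ via Proposition \ref{DeC7}) and applying the expansion \eqref{DeC6}: $\partial_zf$ equals an explicit $\frac{b''}{b'}f\log$ term plus a $W^{1,1}$ remainder. After one more multiplication by $\log$, this remains in $L^2$ with the required scaling. The delicate point, which we expect to take most of the care, is tracking the $\delta^{1/2}d_{j,k}$ factors in front of $c_3,c_4$ against the $\delta^{-1/2}$ normalization in the $X^{0,0}$ norms.
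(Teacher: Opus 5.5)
Your proposal is correct and follows essentially the paper's route. You use Green's function bounds for \eqref{dlogl2} and the $\log^{\alpha+1}$-antiderivative integration by parts from the proof of Lemma~\ref{DeC4} for \eqref{dfracl2}. For \eqref{dfracl3} you use Proposition~\ref{DeC7} to control $\|f\|_{X^{0,0}}$ by the $c_i$, one integration by parts, substitution of $\partial_z^2 f$ from \eqref{drayf}, and \eqref{dfracl2} with $\alpha=1,2$.

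You pay extra attention to the local term $\chi_{j,\delta}^*\partial_y f\log(\cdot)/b'$ produced when $\partial_y$ hits the kernel, a point the paper leaves implicit. One correction there: near the critical layers, $\partial_y f$ is not just the $\frac{b''}{b'}f\log$ term from \eqref{DeC6} plus a $W^{1,1}$ remainder. It also carries explicit $c_1\log^2/b'$ and $c_2\log/b'$ pieces coming from the source. These are harmless, since they stay in $L^2$ after one more multiplication by the logarithm.
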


\begin{proof}
The proof of \eqref{dlogl2} immediately follows from the Green-function bounds and the allowed ranges for $\sigma_1, \sigma_2$. The proof of \eqref{dfracl2} follows by the proof of Lemma \ref{DeC4} and the additional identity 
    \begin{align}
       \p_y \log^{\alpha +1 }  \left( \frac{b(y) - \lambda + i \epsilon}{\delta^2(\Lambda)}  \right)  = (\alpha + 1) \frac{b'(y)}{b(y) - \lambda + i \epsilon} \log^{\alpha }  \left( \frac{b(y) - \lambda + i \epsilon}{\delta^2(\Lambda)}  \right),
    \end{align}
  to handle the extra logarithmic factor appearing relative to those appearing in $T_\Theta$ in \eqref{DeC3}. To prove  \eqref{dfracl3}, observe by extending $c_j$ by 0, we can apply Proposition \ref{DeC7} which yields  
  \begin{align}
      \norm{f}_{X^{0,0}} \lesssim \norm{c_1}_{X^{0,0}} + \norm{c_2}_{X^{0,0}} + \delta^{1/2}(\Lambda)d_{j,k}(\norm{c_3}_{L^2(S_{3 \delta(\Lambda)}^j) } + \norm{c_4}_{L^2(S_{3 \delta(\Lambda)}^j) }).
  \end{align}
 Integrating by parts, using \eqref{drayf}, and \eqref{dfracl2} with $\alpha = 1,2$ then imply the desired result.
\end{proof}
We will apply \eqref{dfracl3} with $f = \digamma_{k, \epsilon}^\iota, \D \digamma_{k, \epsilon}^\iota, \D^2 \digamma_{k, \epsilon}^\iota $. More precisely, we  will decompose the right hand side of the equations that  $\D^\alpha \digamma_{k, \epsilon}^\iota$, $\alpha = 0,1,2$ satisfy (\eqref{dhr1}, \eqref{gd1F}, \eqref{gd2F}) into parts supported in $S_{3 \delta(\Lambda)}^j$ and the complement. For the parts of the solution generated by the complement, since these terms are not singular, they can be estimated directly using Lemma \ref{mGk50}. For the parts of the solution generated by the terms supported in $S_{3 \delta(\Lambda)}^j$, we justify that \eqref{drayf} holds by appealing to the appropriate singular expansions proved below.

\begin{lemma}\label{DCM1}
Assume that $\kappa k\in\Z\backslash\{0\}, \lambda\in \Sigma_{j, \delta_0} \setminus \{b(y_{j*}) \}$, $\iota\in\{\pm\}, \epsilon \in (0, \epsilon_0) $. Then for any $\theta \in (0, 1/4]$ we have the following bound: 
\begin{equation}\label{DCM2}
\begin{split}
\norm{\digamma_{k, \epsilon}^\iota }_{X^{0, -2}} \lesssim \frac{\big\|\omega_{0k}\big\|_{H^4_k}}{|k|^{3/2- \theta}}.
\end{split}
\end{equation}
\end{lemma}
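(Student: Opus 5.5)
We split according to the two regimes in \eqref{sdf:cases} and in each case recast the equation for $\digamma_{k,\epsilon}^\iota$ as an integral equation of the form $(I+T_\Theta)\digamma = \text{source}$, with $T_\Theta$ as in \eqref{DeC3}. Proposition~\ref{DeC7} with $(\sigma_1,\sigma_2)=(0,-2)$ then reduces \eqref{DCM2} to bounding the source in $X^{0,-2}$. Concretely, we split the potential $b''/(b-\lambda+i\iota\epsilon)$ with the annular cutoff $\chi_{j,\delta_0,\delta(\Lambda)}$. The annular part is absorbed into the modified Green's function $\mathcal G_k^j$ of \eqref{mGk1}, and the rest is moved to the right side as $T_\Theta\digamma$. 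This gives
\[
\digamma+T_\Theta\digamma=\int_\T \mathcal G_k^j(y,z;\Lambda)\,F(z,\lambda)\,dz ,
\]
where $F=\mathcal R^\iota_{0,0,\epsilon}(\cdot-y_{j*},\lambda)$ when $|k\delta(\Lambda)|\le 2c$, and $F=g^\iota_{k,\epsilon}$ when $|k\delta(\Lambda)|\ge c$.

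\emph{Regime $|k\delta(\Lambda)|\ll1$.} Here we use the first bound in Lemma~\ref{gr:bdds}, with $\alpha\in\{0,1\}$. It gives the pointwise bound $\varrho_{j,k}^\alpha|\p_y^\alpha(\mathcal G\mathcal R_{0,0,\epsilon})|\lesssim |k|^{-3/2+\theta}\varrho_{j,k}^2\|\omega_{0k}\|_{H^4_k}$. Multiplying by $\varrho_{j,k}^{-2}$ (the weight $\sigma_2=-2$, $\sigma_1=0$) gives an $L^\infty$ bound on $\T\setminus S^j_{\delta(\Lambda)}$ of size $|k|^{-3/2+\theta}$. On $S^j_{\delta(\Lambda)}$ the same pointwise bound with $d_{j,k}=\delta(\Lambda)$ controls the $\delta^{-1/2}L^2$ term, since $\delta^{-1/2}\|\cdot\|_{L^2(S_\delta)}\lesssim\|\cdot\|_{L^\infty(S_\delta)}$. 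This settles the first regime.

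\emph{Regime $|k\delta(\Lambda)|\gtrsim1$.} Here $\digamma=\phi$, $d_{j,k}=|k|^{-1}$, and $\varrho_{j,k}=|k|^{-1}$ on $\T\setminus S^j_{\delta(\Lambda)}$. The first integral bound in Lemma~\ref{g:bdds} (with $r=0$) gives $|\varrho_{j,k}^\alpha\p_y^\alpha\int\mathcal G g|\lesssim |k|^{-7/2}\|\omega_{0k}\|_{H^4_k}$. Multiplying by $\varrho_{j,k}^{-2}=k^2$ yields $|k|^{-3/2}$, which is even better than required. In $S^j_{\delta(\Lambda)}$ the norm carries $\delta^{-1/2}\|k^{2-\beta}\p^\beta_y\cdot\|_{L^2(S_\delta)}$, which is again bounded by the $L^\infty$ control, since the $L^2$ average over an interval of length $\delta$ costs nothing. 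Alternatively one can first use Lemma~\ref{DeC4}, where the factor $(1+|k|\delta)^{-1/4}$ means the inverse is a Neumann series.

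\emph{Main obstacle.} The delicate points are twofold: the overlap/transition between regimes, and the piece of $\mathcal R_{0,0,\epsilon}$ supported near $|y|\approx|k|^{-1}$, where the cutoff $\chi_{\delta_0/(4k)}$ and the term $k^2\Phi$ produce the $|k|^\theta$ loss through the logarithm. I would rely on Lemma~\ref{gr:bdds}, which already encodes this loss, and check only that the factor $\varrho_{j,k}^2$ there is compatible with the $X^{0,-2}$ weights. That compatibility holds because $\varrho_{j,k}\le |k|^{-1}$, so the weights $\varrho_{j,k}^{-2+\beta}$ match exactly.
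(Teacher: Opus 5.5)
Your proposal is correct and is essentially the paper's proof: write the equation for $\digamma_{k,\epsilon}^\iota$ as $(I+T_\Theta)\digamma_{k,\epsilon}^\iota=\int_\T\mathcal G_k^j(y,z;\Lambda)\,r(z,\lambda)\,dz$, invert with Proposition~\ref{DeC7} at $(\sigma_1,\sigma_2)=(0,-2)$, and bound the source in $X^{0,-2}$ using the first estimate of Lemma~\ref{gr:bdds} when $|k\delta(\Lambda)|\ll1$ and Lemma~\ref{g:bdds} when $|k\delta(\Lambda)|\gtrsim1$; your weight bookkeeping ($d_{j,k}\approx\varrho_{j,k}$ on $S^j_{\delta(\Lambda)}$ in each regime) fills in what the paper leaves implicit. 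The only caveat concerns your aside. The Neumann-series inversion via Lemma~\ref{DeC4} works only once $|k|\delta(\Lambda)$ is large enough that the implicit constant times $(1+|k|\delta(\Lambda))^{-1/4}$ is below $1$. Near $|k\delta(\Lambda)|\approx c$ you therefore still need Proposition~\ref{DeC7}, as in your main argument.
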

\begin{proof}
From \eqref{BDC2} and \eqref{E:eq} we have 
\begin{align}
\label{dhr1}
   \left( -\p_y^2 + k^2 + \frac{b''(y)}{b(y) - \lambda  + i\iota \epsilon } \right) \digamma_{k, \epsilon}^\iota (y,\lambda ) = r(y,\lambda),
\end{align}
where 
\begin{align}
\label{rhs:F}
    r(y,\lambda) := 
    \begin{cases}
        \mathcal{R}_{0,0 ,\epsilon}^\iota(y- y_{j*},\lambda ), & |k \delta(\Lambda)| \ll 1\\
        g_{k, \epsilon}^\iota(y,\lambda ), & |k \delta(\Lambda )| \gtrsim 1
    \end{cases}. 
\end{align}
We reformulate \eqref{dhr1} as 
\begin{align}
    \digamma_{k, \epsilon}^\iota + T_\Theta \digamma_{k, \epsilon}^\iota = \int_\T \mathcal{G}_k^j(y,z; \Lambda) r(z,\lambda) dz .
\end{align}
The bound then follows from Proposition~\ref{DeC7} and Lemmas~\ref{gr:bdds} and~\ref{g:bdds}.

\end{proof}

\begin{lemma}\label{lem:lam1}
Assume that $\kappa k\in\Z\backslash\{0\}, \lambda\in \Sigma_{j, \delta_0} \setminus \{b(y_{j*}) \}$, $\iota\in\{\pm\}, \epsilon \in (0, \epsilon_0) $. Then for any $\theta \in (0, 1/4]$ we have the following bound:
\begin{equation}\label{DCM7}
\norm{  \p_\lambda \digamma_{k, \epsilon}^\iota  - \mathcal{S}^\iota_{1,0,\epsilon} }_{X^{0,0}} \lesssim \frac{\big\|\omega_{0k}\big\|_{H^4_k}}{|k|^{3/2- \theta}},
\end{equation}
where
\begin{equation}\label{DCM7.0}
\begin{split}
&\mathcal{S}^\iota_{1, 0, \epsilon}(y,\lambda):=\, -\chi_{j, \delta}^*(y)\digamma^\iota_{k,\epsilon}(y,\lambda)\frac{b''(y)}{(b'(y))^2}\log\frac{b(y)-\lambda+i\iota\epsilon}{\delta^2(\Lambda)}.\\
\end{split}
\end{equation}


\end{lemma}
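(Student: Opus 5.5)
Differentiate \eqref{dhr1} in $\lambda$ and follow the pattern of Lemma \ref{ndpsi_l} in the degenerate setting. Differentiating gives
\begin{align*}
\Big(-\p_y^2+k^2+\frac{b''}{b-\lambda+i\iota\epsilon}\Big)\p_\lambda\digamma_{k,\epsilon}^\iota = \p_\lambda r(y,\lambda) - \frac{b''(y)\,\digamma_{k,\epsilon}^\iota}{(b(y)-\lambda+i\iota\epsilon)^2}.
\end{align*}
The last term is the non-integrable one, and only its part on the annulus $\chi^*_{j,\delta}$ is genuinely singular: on $S^j_{c\delta(\Lambda)}$ and on the complement of $S^j_{\delta(\Lambda)}$ the denominator is comparable to $\varrho_j^2$, up to the $|\epsilon|\gtrsim|\lambda-b(y_{j*})|$ case handled as at the end of Lemma \ref{DeC4}. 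On the annulus we write
\begin{align*}
(b-\lambda+i\iota\epsilon)^{-2} = -\frac{1}{b'}\,\p_y(b-\lambda+i\iota\epsilon)^{-1},
\end{align*}
and then once more $(b-\lambda+i\iota\epsilon)^{-1}=\frac{1}{b'}\p_y\log\frac{b-\lambda+i\iota\epsilon}{\delta^2(\Lambda)}$. This identifies $\mathcal{S}^\iota_{1,0,\epsilon}$ as the leading singular profile: applying the Rayleigh operator to $\mathcal{S}^\iota_{1,0,\epsilon}$, the term $-\p_y^2$ acting on the logarithm produces $-\chi^*_{j,\delta}\frac{b''\digamma}{(b-\lambda+i\iota\epsilon)^2}$. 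Hence $w:=\p_\lambda\digamma_{k,\epsilon}^\iota-\mathcal{S}^\iota_{1,0,\epsilon}$ satisfies a Rayleigh equation whose right side $\mathcal{R}^\iota_{1,0,\epsilon}$ has exactly the structure \eqref{drayf}.

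The plan for computing $\mathcal{R}^\iota_{1,0,\epsilon}$ is to expand $(-\p_y^2)\big[\chi^*_{j,\delta}\digamma\,\tfrac{b''}{(b')^2}\log(\cdot)\big]$ by the product rule. The result is four kinds of terms. Terms of the form $c_1\frac{\log}{b-\lambda+i\iota\epsilon}$ have $c_1\sim \frac{b''}{(b')^2}\chi^*\digamma\cdot\frac{b'''}{b''}$-type coefficients, or $(\chi^*)'$ derivatives. Terms of the form $\frac{c_2}{b-\lambda+i\iota\epsilon}$ involve $c_2$ built from $\digamma$ and its derivatives times $\chi^*$-cutoffs. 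Terms like $\mathfrak{c}\,\frac{\p_y\digamma}{b-\lambda+i\iota\epsilon}$ arise from the cross term $2\p_y(\ldots)\p_y\log$. Finally there are log terms $c_3\log$ containing $\p_y^2\digamma$, which we rewrite using \eqref{dhr1} as $k^2\digamma+\frac{b''\digamma}{b-\lambda+i\iota\epsilon}-r$. The $\frac{b''}{b-\lambda+i\iota\epsilon}\mathcal{S}$ term likewise contributes a $c_1$-type term. On the annulus $|b'|\approx\delta(\Lambda)$ and $\varrho_j\approx\delta(\Lambda)$, so each coefficient is controlled by $\|\digamma\|_{X^{0,-2}}$ with the right powers of $\delta(\Lambda)$ and $d_{j,k}$; Lemma \ref{DCM1} then gives the size $|k|^{-3/2+\theta}\|\omega_{0k}\|_{H^4_k}$.

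Next, reformulate $w+T_\Theta w=\int\mathcal{G}_k^j\mathcal{R}^\iota_{1,0,\epsilon}\,dz$ and apply Proposition \ref{DeC7} with $(\sigma_1,\sigma_2)=(0,0)$. The right side is bounded term by term. The pieces supported in $S^j_{3\delta(\Lambda)}$ are handled by \eqref{dlogl2}, \eqref{dfracl2}, and, for the $\p_y\digamma/(b-\lambda+i\iota\epsilon)$ term, by \eqref{dfracl3} applied to $f=\digamma$ via \eqref{dhr1}. The contribution $\p_\lambda r$ is handled by Lemma \ref{gr:bdds} (second line) when $|k\delta(\Lambda)|\ll1$, and by Lemma \ref{g:bdds} otherwise. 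The regular pieces outside the annulus, namely $-\frac{b''\digamma}{(b-\lambda)^2}$ away from the annulus, are treated directly with Lemma \ref{mGk50}. There we use $|\digamma|\lesssim \varrho_j^2\varrho_{j,k}^{-2}\cdots$ from the $X^{0,-2}$ bound, which makes $\varrho_j^{-4}|\digamma|\lesssim \varrho_j^{-2}\times$(norm), and this integrates against $\mathcal{G}$ to give an $X^{0,0}$ bound.

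The main obstacle is bookkeeping the powers of $\delta(\Lambda)$ versus $d_{j,k}$ so that every coefficient lands in $X^{0,0}$ with no loss. In particular the $\p_y\digamma/(b-\lambda+i\iota\epsilon)$ term requires \eqref{dfracl3}, which costs $d_{j,k}^{-1}$; this must be compensated by the factor $\delta(\Lambda)^{-2}\cdot\delta^2$ coming from $\|\digamma\|_{X^{0,-2}}$ versus the $X^{0,0}$ norms of $c_1,c_2$. I would first verify this in the case $|k\delta(\Lambda)|\lesssim1$, where $d_{j,k}=\delta(\Lambda)$, and then check the large-$k\delta$ regime, where Lemma \ref{DeC4} already gives a decay factor $(1+|k|\delta)^{-1/4}$ to absorb any mismatch.
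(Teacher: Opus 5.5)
Your proposal is correct and follows the paper's proof. You differentiate \eqref{dhr1} in $\lambda$, subtract $\mathcal{S}^\iota_{1,0,\epsilon}$, invert $I+T_\Theta$ in $X^{0,0}$ via Proposition \ref{DeC7}, and bound $\int_\T\mathcal{G}_k^j\mathcal{R}^\iota_{1,0,\epsilon}\,dz$ with \eqref{DCM2} and Lemmas \ref{dint}, \ref{g:bdds}, \ref{gr:bdds}, \ref{mGk50}; your expansion of $\mathcal{R}^\iota_{1,0,\epsilon}$ is the ``direct computation'' the paper leaves implicit.

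Three slips, none of which affect the argument:
\begin{itemize}
\item No $\frac{\log}{b-\lambda+i\iota\epsilon}$ terms actually survive. After you substitute \eqref{dhr1} for $\p_y^2\digamma^\iota_{k,\epsilon}$ in $-\p_y^2\mathcal{S}^\iota_{1,0,\epsilon}$, the resulting $k^2$ and potential pieces cancel those of $\bigl(k^2+\frac{b''}{b-\lambda+i\iota\epsilon}\bigr)\mathcal{S}^\iota_{1,0,\epsilon}$. What remains on the annulus is only terms of the forms $\frac{c_2}{b-\lambda+i\iota\epsilon}$, $\frac{\mathfrak{c}\,\p_y\digamma^\iota_{k,\epsilon}}{b-\lambda+i\iota\epsilon}$ and $c_3\log$.
\item Off $S^j_{\delta(\Lambda)}$ the correct pointwise bound is $|\digamma^\iota_{k,\epsilon}|\lesssim\varrho_{j,k}^2\|\digamma^\iota_{k,\epsilon}\|_{X^{0,-2}}$, not the expression with $\varrho_{j,k}^{-2}$ you wrote; your conclusion $\varrho_j^{-4}|\digamma^\iota_{k,\epsilon}|\lesssim\varrho_j^{-2}\|\digamma^\iota_{k,\epsilon}\|_{X^{0,-2}}$ still follows.
\item For $|k\delta(\Lambda)|\gtrsim1$, the factor $(1+|k|\delta(\Lambda))^{-1/4}$ from Lemma \ref{DeC4} cannot absorb anything in the source term, since it only enters invertibility. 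It is also not needed: the $d_{j,k}^{-1}$ cost in \eqref{dfracl3} is already offset by the factor $\delta^{1/2}(\Lambda)d_{j,k}$ multiplying $\|r\|_{L^2}$.
\end{itemize}
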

\begin{proof}
Equations \eqref{BDC2}, \eqref{E:eq}, and \eqref{rhs:F} yield
\begin{align}
    \left(-\partial_y^2 + k^2 + \frac{b''(y)}{b(y) - \lambda + i \iota \epsilon } \right) \partial_\lambda  \digamma_{k, \epsilon}^\iota(y,\lambda) =  - \frac{b''(y)  \digamma_{k, \epsilon}^\iota  }{(b(y) -\lambda + i \iota \epsilon)^2} + \partial_\lambda r(y,\lambda ). 
\end{align}
Denoting the right-hand side of the equation satisfied by $\partial_\lambda  \digamma_{k, \epsilon}^\iota(y,\lambda) - \mathcal{S}_{1,0, \epsilon}^\iota $ as $\mathcal{R}_{1,0, \epsilon}^\iota$, we solve for $\p_\lambda \digamma_{k, \epsilon}^\iota   -  \mathcal{S}_{1,0, \epsilon}^\iota$ as follows:
\begin{align}
    \p_\lambda \digamma_{k, \epsilon}^\iota   -  \mathcal{S}_{1,0, \epsilon}^\iota   + T_\Theta (\p_\lambda \digamma_{k, \epsilon}^\iota   -  \mathcal{S}_{1,0, \epsilon}^\iota) = \int_\T \mathcal{G}_k^j(y,z; \Lambda) \mathcal{R}_{1,0, \epsilon}^\iota(z,\lambda) dz .
\end{align}
After a direct computation to determine $\mathcal{R}_{1,0, \epsilon}^\iota$,
\eqref{DCM7} follows from  Proposition \ref{DeC7}, the bound \eqref{DCM2}, and Lemmas \ref{dint}, \ref{g:bdds}, \ref{gr:bdds}, and \ref{mGk50}. 
\end{proof}
\begin{lemma}\label{lem:gd1}
Assume that $\kappa k\in\Z\backslash\{0\}, \lambda\in \Sigma_{j, \delta_0} \setminus \{b(y_{j*}) \}$, $\iota\in\{\pm\}, \epsilon \in (0, \epsilon_0) $. Then for any $\theta \in (0, 1/4]$ we have the following bound:
\begin{equation}\label{gDCM2}
\begin{split}
\norm{ \D \digamma_{k, \epsilon}^\iota }_{X^{0,-1}} \lesssim \frac{\big\|\omega_{0k}\big\|_{H^4_k}}{|k|^{3/2- \theta}}.
\end{split}
\end{equation}
\end{lemma}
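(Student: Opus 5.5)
The plan is to imitate the proofs of Lemma~\ref{DCM1} and Lemma~\ref{lem:lam1}: apply $\D$ to the equation \eqref{dhr1}, recast the result as a fixed-point equation $(I+T_\Theta)\D\digamma_{k,\epsilon}^\iota=\int_\T\mathcal G_k^j(y,z;\Lambda)(\cdots)\,dz$, and then invoke Proposition~\ref{DeC7} with $(\sigma_1,\sigma_2)=(0,-1)$. Since $\D$ commutes with $b(y)-\lambda$, i.e. $\D(b-\lambda+i\iota\epsilon)=0$, we get
\begin{align*}
\Big(-\p_y^2+k^2+\frac{b''}{b-\lambda+i\iota\epsilon}\Big)\D\digamma^\iota_{k,\epsilon}
= -[\D,\p_y^2]\digamma^\iota_{k,\epsilon}-\frac{b'''\digamma^\iota_{k,\epsilon}}{b-\lambda+i\iota\epsilon}+\D r,
\end{align*}
where $[\D,\p_y^2]=-2b''\p_y\p_\lambda-b'''\p_\lambda$. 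This step is an algebraic computation, so in what follows I take this right-hand side as the one to estimate (call it \eqref{gd1F}).

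Each source term is bounded in $X^{0,-1}$ after applying the Green's function $\mathcal G_k^j$.

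\emph{The $\D r$ term.} This is handled by the third estimate of Lemma~\ref{gr:bdds} when $|k\delta(\Lambda)|\ll1$, and by Lemma~\ref{g:bdds} otherwise. These give a bound of order $\varrho_{j,k}\,|k|^{-3/2+\theta}$, which is exactly the $X^{0,-1}$ scale.

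\emph{The term $b'''\digamma/(b-\lambda+i\iota\epsilon)$.} I split it with the cutoffs $\chi_{j,\delta}^\ast$ and $\chi_{j,c\delta}$.
\begin{itemize}
\item Away from $S^j_{3\delta}$, the denominator is of size $\varrho_j^2$. Lemma~\ref{DCM1} gives $|\digamma|\lesssim\varrho_{j,k}^2|k|^{-3/2+\theta}$, and Lemma~\ref{mGk50} then yields the weight $\varrho_{j,k}$.
\item Inside $S^j_{3\delta}$, I use \eqref{dfracl2} with $\sigma_2=-1$, which gives $\|\digamma\|_{X^{0,0}}d_{j,k}^{-1}$. Since $\|\digamma\|_{X^{0,0}}\lesssim d_{j,k}^{2}\|\digamma\|_{X^{0,-2}}$, this is acceptable.
\end{itemize}

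\emph{The commutator terms $b''\p_y\p_\lambda\digamma$ and $b'''\p_\lambda\digamma$.} This is the main obstacle, since $\p_\lambda\digamma$ carries the logarithmic singularity. I will substitute $\p_\lambda\digamma=\mathcal S^\iota_{1,0,\epsilon}+(\p_\lambda\digamma-\mathcal S^\iota_{1,0,\epsilon})$ using Lemma~\ref{lem:lam1}.
\begin{itemize}
\item \emph{The remainder piece.} It is controlled in $X^{0,0}$ by $|k|^{-3/2+\theta}$. After applying $\mathcal G$, the term $b''\p_y(\cdot)$ is handled by integrating by parts in $z$ and using Lemma~\ref{mGk50} with $\beta=1$. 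This gains a factor $\varrho_{j,k}$ relative to $X^{0,0}$, and hence gives an $X^{0,-1}$ bound.
\item \emph{The singular piece.} The explicit term $\mathcal S^\iota_{1,0,\epsilon}$ is supported in the annulus, where $|b'|\approx\delta(\Lambda)$. Here I write $\p_y\mathcal S$ as a sum of a $\log$ term and a $1/(b-\lambda+i\iota\epsilon)$ term, using $\p_y\log(\cdot)=b'/(b-\lambda+i\iota\epsilon)$. This puts $\p_y\mathcal S$ into the form covered by \eqref{dlogl2} and \eqref{dfracl2}.
\item \emph{The $\p_y\digamma$ factor inside $\mathcal S$.} This produces a term of the shape $\p_z\digamma/(b-\lambda+i\iota\epsilon)$, which is handled by \eqref{dfracl3}. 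The hypothesis \eqref{drayf} holds because of \eqref{dhr1}, with $c_4=r$ localized and $c_1=c_2=c_3=0$.
\end{itemize}

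The delicate bookkeeping is the case $|k|\delta\gtrsim1$. There $d_{j,k}=|k|^{-1}$, and I would verify that the factors of $\delta$ coming from $\chi_{j,\delta}^*/b'$ are compensated by the decay $(1+|k|\delta)^{-1/4}$ of Lemma~\ref{DeC4} and by the $|k|$ bounds of Lemma~\ref{g:bdds}.

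Summing these contributions gives $\|(I+T_\Theta)\D\digamma\|_{X^{0,-1}}\lesssim|k|^{-3/2+\theta}\|\omega_{0k}\|_{H^4_k}$, and Proposition~\ref{DeC7} then yields \eqref{gDCM2}.
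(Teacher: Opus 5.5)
Your proposal is correct and follows the paper's proof. Both apply $\D$ to \eqref{dhr1}, write $(I+T_\Theta)\D\digamma^\iota_{k,\epsilon}$ as $\mathcal G^j_k$ applied to the new source, split $\p_\lambda\digamma^\iota_{k,\epsilon}$ via Lemma~\ref{lem:lam1}, bound the pieces with Lemmas~\ref{dint}, \ref{g:bdds}, \ref{gr:bdds} and \ref{mGk50}, and invert with Proposition~\ref{DeC7}.

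Two harmless slips. First, the commutator enters as $+[\D,\p_y^2]\digamma^\iota_{k,\epsilon}=-2b''\p_y\p_\lambda\digamma^\iota_{k,\epsilon}-b'''\p_\lambda\digamma^\iota_{k,\epsilon}$; your sign is reversed, which does not affect the estimates. Second, $\p_y\mathcal S^\iota_{1,0,\epsilon}$ produces a $\chi^*_{j,\delta}\p_y\digamma^\iota_{k,\epsilon}\log(\cdot)$ term, not a $\p_y\digamma^\iota_{k,\epsilon}/(b-\lambda+i\iota\epsilon)$ term, so \eqref{dlogl2} already covers it and the detour through \eqref{dfracl3} is unnecessary.
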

\begin{proof}
Equations \eqref{BDC2}, \eqref{E:eq}, and \eqref{rhs:F} yield
\begin{align}
\label{gd1F}
&\left(
-\partial_y^2+k^2
+\frac{b''(y)}{b(y)-\lambda+i\iota \epsilon}
\right)\D \digamma_{k, \epsilon}^\iota(y,\lambda)
\nonumber\\
&\quad =
-2b''(y)\partial_y\partial_\lambda \digamma_{k, \epsilon}^\iota(y,\lambda)
-\frac{b'''(y)\digamma_{k, \epsilon}^\iota(y,\lambda)}
       {b(y)-\lambda+i\iota \epsilon} -b'''(y)\partial_\lambda \digamma_{k, \epsilon}^\iota(y,\lambda)  +\D r(y,\lambda).
\end{align}
Denoting the right-hand side  of \eqref{gd1F} as $\mathcal{R}_{0,1, \epsilon}^\iota$, we solve for $\D \digamma_{k, \epsilon}^\iota$ as follows:
\begin{align}
  \D \digamma_{k, \epsilon}^\iota   + T_\Theta (\D \digamma_{k, \epsilon}^\iota) = \int_\T \mathcal{G}_k^j(y,z; \Lambda) \mathcal{R}_{0,1, \epsilon}^\iota(z,\lambda) dz .
\end{align}
The bound \eqref{gDCM2} follows from  Proposition \ref{DeC7},  and  Lemmas  \ref{lem:lam1}, \ref{dint}, \ref{g:bdds}, \ref{gr:bdds}, and \ref{mGk50}.
\end{proof}

\begin{lemma}\label{DCM100}
Assume that $\kappa k\in\Z\backslash\{0\}, \lambda\in \Sigma_{j, \delta_0} \setminus \{b(y_{j*}) \}$, $\iota\in\{\pm\}, \epsilon \in (0, \epsilon_0)$. Then for any $\theta \in (0, 1/4]$ we have the following bound: 
\begin{equation}\label{DCM9.4}
\norm{ \delta (\Lambda) (\p_\lambda^2 \digamma_{k, \epsilon}^\iota   - \mathcal{S}_{2, 0, \epsilon}^\iota )}_{X^{1,0}} \lesssim  \frac{\big\|\omega_{0k}\big\|_{H^4_k}}{|k|^{3/2- \theta}},
\end{equation}
where 
\begin{equation}\label{DCM7.4}
\begin{split}
\mathcal{S}_{2, 0, \epsilon}^\iota(y, \lambda) &:=   \frac{\chi_{j, \delta}^*(y) C_1(y)\digamma_{k,\epsilon}^\iota(y,\lambda)}{b(y) - \lambda + i \iota \epsilon}   \\
&+ \chi_{j, \delta}^*(y) \left( C_2(y) \D \digamma_{k,\epsilon}^\iota(y,\lambda) + C_3(y)\digamma_{k,\epsilon}^\iota(y,\lambda)   \right) \log (\frac{b(y) -\lambda  + i\iota \epsilon }{\delta^2(\Lambda)}),
\end{split}
\end{equation}
\begin{align}
    C_1(y) = \frac{b''(y)}{(b'(y))^2}, \quad C_2(y) = \frac{ -2 b''(y)}{(b'(y))^3}, \quad  C_3(y) = \frac{3 (b''(y))^2 - 2b'''(y) b'(y)  }{(b'(y))^4}.
\end{align}

\end{lemma}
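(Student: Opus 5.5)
We follow the template of Lemmas \ref{lem:lam1} and \ref{lem:gd1}. The idea is to write down the Rayleigh equation satisfied by $\p_\lambda^2\digamma_{k,\epsilon}^\iota-\mathcal{S}_{2,0,\epsilon}^\iota$, check that its right-hand side $\mathcal{R}_{2,0,\epsilon}^\iota$ contains no term more singular than those covered by Lemma \ref{dint}, and then invert with Proposition \ref{DeC7} in $X^{1,0}$ (equivalently, $\delta(\Lambda)\cdot$ in $X^{1,0}$, which has the same weight homogeneity as $X^{0,0}$ multiplied by $\delta\varrho_j$).

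\textbf{Step 1: the equation for $\p_\lambda^2\digamma$.} Differentiating \eqref{dhr1} twice in $\lambda$ gives
\begin{align*}
\Big(-\p_y^2+k^2+\frac{b''}{b-\lambda+i\iota\epsilon}\Big)\p_\lambda^2\digamma
=-\frac{2b''\digamma}{(b-\lambda+i\iota\epsilon)^3}-\frac{2b''\p_\lambda\digamma}{(b-\lambda+i\iota\epsilon)^2}+\p_\lambda^2 r .
\end{align*}
Substituting the expansion $\p_\lambda\digamma=\mathcal{S}_{1,0,\epsilon}^\iota+r_{1,0,\epsilon}^\iota$ from Lemma \ref{lem:lam1} shows that the right-hand side contains terms of the following types:
\begin{align*}
\frac{\digamma}{(b-\lambda+i\iota\epsilon)^3},\qquad \frac{\digamma\log}{(b-\lambda+i\iota\epsilon)^2},\qquad \frac{r_{1,0}}{(b-\lambda+i\iota\epsilon)^2}.
\end{align*}
On the annulus $\chi_{j,\delta}^*$ we have $b'\approx\delta(\Lambda)$, so we can use $\p_y(b-\lambda+i\iota\epsilon)^{-m}=-m\,b'(b-\lambda+i\iota\epsilon)^{-m-1}$ together with $\p_y\log=b'/(b-\lambda+i\iota\epsilon)$. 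Applying the operator to each piece of $\mathcal{S}_{2,0,\epsilon}^\iota$ and computing directly with $\partial_y = \D-b'\p_\lambda$, we would verify that the constants $C_1,C_2,C_3$ are exactly those cancelling the $(b-\lambda)^{-3}$, $(b-\lambda)^{-2}\log$ and $(b-\lambda)^{-2}$ singularities. The key identity is that $\p_y^2$ of $C_1\digamma/(b-\lambda+i\iota\epsilon)$ produces $2C_1 (b')^2\digamma/(b-\lambda)^3$, which matches $-2b''\digamma/(b-\lambda)^3$ because $C_1(b')^2=b''$. The mixed term $-2\p_yC_1\,\p_y\digamma/(b-\lambda)^2$ is then rewritten via $\p_y\digamma=\D\digamma-b'\p_\lambda\digamma$, which is the source of the $C_2\D\digamma\log$ correction. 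What remains in $\mathcal{R}_{2,0,\epsilon}^\iota$ has the form \eqref{drayf}, plus terms $\mathfrak c\,\p_y(\D\digamma,\digamma)/(b-\lambda+i\iota\epsilon)$. Derivatives hitting the cutoff $\chi^*_{j,\delta}$ are supported where $|y-y_{j*}|\approx\delta$, are nonsingular there, and cost powers of $\delta^{-1}$ consistent with the $\delta(\Lambda)$ prefactor.

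\textbf{Step 2: bounding the sources.} The coefficients $c_i$ are bounded using \eqref{DCM2}, \eqref{DCM7} and \eqref{gDCM2}, each of which yields $|k|^{-3/2+\theta}\|\omega_{0k}\|_{H^4_k}$ in the appropriate weights. The terms $\mathfrak c\,\p_y f/(b-\lambda+i\iota\epsilon)$ with $f=\digamma,\D\digamma$ are handled by \eqref{dfracl3}; here the Rayleigh equations \eqref{dhr1} and \eqref{gd1F} supply the structural hypothesis \eqref{drayf}, with the remainder $r_{1,0}$ of Lemma \ref{lem:lam1} entering $\mathcal{R}_{0,1}$. The pure-log and fraction-with-log terms are controlled by \eqref{dlogl2} and \eqref{dfracl2}. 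Contributions outside $S^j_{3\delta}$ and the forcing $\p_\lambda^2 r$ are estimated by Lemma \ref{mGk50} and Lemma \ref{gr:bdds} (fourth line), respectively; the latter gives exactly $\delta^{-1}\varrho_j^{-1}|k|^{-3/2+\theta}$, which matches $\delta\cdot X^{1,0}$. In the regime $|k\delta|\gtrsim1$ we use Lemma \ref{g:bdds} instead. Finally, Proposition \ref{DeC7} with $(\sigma_1,\sigma_2)=(1,0)$ closes the estimate.

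\textbf{Main obstacle.} The hardest step is the bookkeeping in Step 1. We must check that, after subtracting $\mathcal{S}_{2,0}^\iota$, every residual term gains exactly the $\delta\varrho_j$ needed for $X^{1,0}$ and not less. In particular, the term $\p_y^2$ acting on $C_2\D\digamma\log$ produces $\D\digamma\, b'/(b-\lambda)^2$-type pieces. These must be controlled with only the $X^{0,-1}$ bound \eqref{gDCM2} on $\D\digamma$, and we would pair this with an integration by parts in $z$ as in the proof of Lemma \ref{DeC4}, using the factor $b'\approx\delta$ on the annulus.
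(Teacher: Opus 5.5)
Your proposal is correct and follows essentially the paper's argument. You derive the equation for $\p_\lambda^2\digamma_{k,\epsilon}^\iota-\mathcal{S}_{2,0,\epsilon}^\iota$, check that $C_1,C_2,C_3$ remove the $(b-\lambda+i\iota\epsilon)^{-3}$ and $(b-\lambda+i\iota\epsilon)^{-2}$ singularities on the support of $\chi_{j,\delta}^*$, and invert with Proposition~\ref{DeC7} at $(\sigma_1,\sigma_2)=(1,0)$ using \eqref{DCM2}, \eqref{gDCM2} and Lemmas~\ref{dint}, \ref{g:bdds}, \ref{gr:bdds}, \ref{mGk50}.

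The one thing to correct is your ``main obstacle'': the piece $C_2(b')^2\D\digamma_{k,\epsilon}^\iota/(b-\lambda+i\iota\epsilon)^2$ from $\p_y^2$ of the $C_2$ term is not something you control. It cancels exactly against $2C_1b'\D\digamma_{k,\epsilon}^\iota/(b-\lambda+i\iota\epsilon)^2$ from the $C_1$ term, which is what fixes $C_2=-2C_1/b'$. Likewise, the source $-2b''\p_\lambda\digamma_{k,\epsilon}^\iota/(b-\lambda+i\iota\epsilon)^2$, including the $r_{1,0,\epsilon}^\iota$ part you list, cancels against the $-b'\p_\lambda\digamma_{k,\epsilon}^\iota$ part of the mixed term, since $C_1(b')^2=b''$. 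An uncancelled term of that order would generate a $\log(b-\lambda+i\iota\epsilon)$ profile that is not uniformly in $H^1$, so no integration by parts could absorb it. Only $(b-\lambda+i\iota\epsilon)^{-1}$, $(b-\lambda+i\iota\epsilon)^{-1}\log$ and pure $\log^\alpha$ terms may remain for Lemma~\ref{dint}.
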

\begin{remark}
Using the heuristic that $\digamma_{k, \epsilon}^\iota$ has dimension 2 and $\lambda$ has dimension 2, one may expect that \eqref{DCM9.4} holds with $X^{0, 0}$ and no factor of $\delta(\Lambda)$. However, due to the asymptotics of the modified Green's function, it can not convert bounds on the data of the form $\varrho_{j}^{-2}(y; \Lambda)$ into boundedness globally. However, this boundedness can be recovered in a $\delta(\Lambda)$ neighborhood of the critical point which is what \eqref{DCM9.4} captures. This ``failure to gain two powers'' persists for higher spectral derivatives and is the reason why \eqref{DCM35} needs a factor of $\delta^3(\Lambda)$. 
\end{remark}    
\begin{proof}
Equations \eqref{BDC2}, \eqref{E:eq}, and \eqref{rhs:F} yield
\begin{equation}
\begin{aligned}
&\left(-\partial_y^2+k^2+\frac{b''(y)}{b(y)-\lambda+i\iota\epsilon}\right)
\partial_\lambda^2\digamma_{k,\epsilon}^\iota(y,\lambda)\\
&\quad=-2\frac{b''(y)\digamma_{k,\epsilon}^\iota(y,\lambda)}{(b(y)-\lambda+i\iota\epsilon)^3}
-2\frac{b''(y)\partial_\lambda\digamma_{k,\epsilon}^\iota(y,\lambda)}{(b(y)-\lambda+i\iota\epsilon)^2}
+\partial_\lambda^2r(y,\lambda).
\end{aligned}
\end{equation}
Denoting the right-hand side of the equation satisfied by $\partial_\lambda^2  \digamma_{k, \epsilon}^\iota(y,\lambda) - \mathcal{S}_{2,0, \epsilon}^\iota $ as $\mathcal{R}_{2,0, \epsilon}^\iota$, we solve for $\p_\lambda^2 \digamma_{k, \epsilon}^\iota   -  \mathcal{S}_{2,0, \epsilon}^\iota$ as follows:
\begin{align}
    \p_\lambda^2 \digamma_{k, \epsilon}^\iota   -  \mathcal{S}_{2,0, \epsilon}^\iota   + T_\Theta (\p_\lambda^2 \digamma_{k, \epsilon}^\iota   -  \mathcal{S}_{2,0, \epsilon}^\iota) = \int_\T \mathcal{G}_k^j(y,z; \Lambda) \mathcal{R}_{2,0, \epsilon}^\iota(z,\lambda) dz .
\end{align}
After a direct computation to determine $\mathcal{R}_{2,0, \epsilon}^\iota$,
\eqref{DCM9.4} follows from  Proposition \ref{DeC7},  the bounds \eqref{DCM2}, \eqref{gDCM2}, and Lemmas \ref{dint}, \ref{g:bdds}, \ref{gr:bdds}, and \ref{mGk50}. 
\end{proof}



\begin{lemma}\label{lem:gd2}
Assume that $\kappa k\in\Z\backslash\{0\}, \lambda\in \Sigma_{j, \delta_0} \setminus{ \{b(y_{j*}) \} }$, $\iota\in\{\pm\}, \epsilon \in (0, \epsilon_0)$. Then for any $\theta \in (0, 1/4]$ we have the following bound: 
\begin{equation}\label{DCM5}
\begin{split}
\norm{ \delta(\Lambda) (\p_\lambda \D \digamma_{k, \epsilon}^\iota   - \mathcal{S}_{1,1, \epsilon}^\iota) }_{X^{0,0}} \lesssim \frac{\big\|\omega_{0k}\big\|_{H^4_k}}{|k|^{3/2- \theta}},
\end{split}
\end{equation}
where 
\begin{equation}\label{DCM7.6}
\begin{split}
\mathcal{S}_{1,1, \epsilon}^\iota(y,\lambda) := -\chi_{j, \delta}^*(y) \D\left(  \digamma^\iota_{k,\epsilon}(y,\lambda)\frac{b''(y)}{(b'(y))^2}     \right)\log\frac{b(y)-\lambda+i\iota\epsilon}{\delta^2(\Lambda)}.
\end{split}
\end{equation}

\end{lemma}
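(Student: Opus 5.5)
We will follow the same template as Lemmas \ref{lem:lam1}, \ref{lem:gd1} and \ref{DCM100}. First we derive the equation for $\p_\lambda \D \digamma_{k,\epsilon}^\iota$. We differentiate \eqref{gd1F} in $\lambda$; this is legitimate because $\p_\lambda$ commutes with $\p_y$ and with multiplication by functions of $y$. The result is
\begin{align*}
&\Big(-\p_y^2+k^2+\frac{b''}{b-\lambda+i\iota\epsilon}\Big)\p_\lambda\D\digamma_{k,\epsilon}^\iota
= -\frac{b''\,\D\digamma_{k,\epsilon}^\iota}{(b-\lambda+i\iota\epsilon)^2}
-2b''\p_y\p_\lambda^2\digamma_{k,\epsilon}^\iota\\
&\quad-\frac{b'''\digamma_{k,\epsilon}^\iota}{(b-\lambda+i\iota\epsilon)^2}
-\frac{b'''\p_\lambda\digamma_{k,\epsilon}^\iota}{b-\lambda+i\iota\epsilon}
-b'''\p_\lambda^2\digamma_{k,\epsilon}^\iota+\p_\lambda\D r.
\end{align*}
The most singular terms, $(b-\lambda+i\iota\epsilon)^{-2}$ times $\D\digamma$ or $\digamma$, sit on the annulus $\chi_{j,\delta}^*$. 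On that annulus $|b'|\approx\delta(\Lambda)$, so we can write $(b-\lambda+i\iota\epsilon)^{-2}=-\frac{1}{b'}\p_y (b-\lambda+i\iota\epsilon)^{-1}$, and similarly $(b-\lambda+i\iota\epsilon)^{-1}=\frac1{b'}\p_y\log$. This lets us identify the leading singularity. We expect the singular part to be $\mathcal{S}_{1,1,\epsilon}^\iota$, i.e. the $\D$-derivative of the singular part $\mathcal{S}_{1,0,\epsilon}^\iota$ of Lemma \ref{lem:lam1}. The check is the identity $\D\log\frac{b-\lambda+i\iota\epsilon}{\delta^2(\Lambda)}=-b'\p_\lambda\log\delta^2(\Lambda)$, which is bounded by $\delta^{-1}(\Lambda)$ on the annulus: the good derivative annihilates the logarithmic singularity up to harmless terms. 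Consistency of $\mathcal{S}_{1,1,\epsilon}^\iota$ with $\mathcal{S}_{2,0,\epsilon}^\iota$ can be verified as in the nondegenerate Lemma \ref{ndpsi_lD}. There we would rewrite $\mathcal{S}_{1,1,\epsilon}^\iota$ as $b'(\p_\lambda^2\digamma-r_{2,0,\epsilon}^\iota)$ plus lower-order singular terms, with $r_{2,0,\epsilon}^\iota:=\p_\lambda^2\digamma-\mathcal{S}_{2,0,\epsilon}^\iota$ controlled by Lemma \ref{DCM100}.

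Next we compute $\mathcal{R}_{1,1,\epsilon}^\iota$, the right-hand side of the equation for $\p_\lambda\D\digamma-\mathcal{S}_{1,1,\epsilon}^\iota$. We apply the Rayleigh operator to $\mathcal{S}_{1,1,\epsilon}^\iota$. The $\p_y^2$ of the logarithm produces $-\frac{b''}{b'}\D(\cdots)\frac{b'^2}{(b-\lambda+i\iota\epsilon)^2}$-type terms, and these cancel the leading ones above. Up to a localization, the term $2b''\p_y\p_\lambda^2\digamma$ is recombined using the expansion $\p_\lambda^2\digamma=\mathcal{S}_{2,0,\epsilon}^\iota+r_{2,0,\epsilon}^\iota$. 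After this cancellation, $\mathcal{R}_{1,1,\epsilon}^\iota$ should have the structure \eqref{drayf} on $S^j_{3\delta(\Lambda)}$:
\[
c_1\frac{\log}{b-\lambda+i\iota\epsilon}+\frac{c_2+\mathfrak c_1\p_y\D\digamma+\mathfrak c_2\p_y\digamma}{b-\lambda+i\iota\epsilon}+c_3\log^\alpha+c_4 .
\]
The coefficients carry explicit powers of $\delta^{-1}(\Lambda)$, arising from $1/b'$ and $\p_\lambda\log\delta^2$. Their sizes are controlled by \eqref{DCM2}, \eqref{DCM7}, \eqref{gDCM2} and \eqref{DCM9.4}. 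Outside $S^j_{3\delta(\Lambda)}$ the remaining terms are nonsingular. There we use the $\p_\lambda\D r$ data bounds from Lemmas \ref{gr:bdds} and \ref{g:bdds}, which carry the $|k|^{-3/2+\theta}\delta^{-1}(\Lambda)$ factor.

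Finally we write the integral form
\[
(I+T_\Theta)(\p_\lambda\D\digamma-\mathcal{S}_{1,1,\epsilon}^\iota)=\int_\T\mathcal G_k^j(y,z;\Lambda)\mathcal{R}_{1,1,\epsilon}^\iota(z,\lambda)\,dz
\]
and invert with Proposition \ref{DeC7} in $X^{0,0}$. The right-hand side is bounded by Lemma \ref{dint}. Part (2) of that lemma handles the $\log/(b-\lambda)$ and $1/(b-\lambda)$ terms. Part (3) handles the $\p_y f/(b-\lambda)$ terms with $f=\digamma$ and $f=\D\digamma$, which requires checking that \eqref{drayf} holds for them; for this we use \eqref{dhr1} and \eqref{gd1F} together with Lemma \ref{lem:lam1}. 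Lemma \ref{mGk50} handles the exterior region. The main obstacle is bookkeeping the powers of $\delta(\Lambda)$. Each term must be shown to be at most $\delta^{-1}(\Lambda)|k|^{-3/2+\theta}\|\omega_{0k}\|_{H^4_k}$ in $X^{0,0}$. The hardest case is the $b''\p_y r_{2,0,\epsilon}^\iota$ contribution, since \eqref{DCM9.4} controls $r_{2,0,\epsilon}^\iota$ only in $\delta X^{1,0}$. We would first try to trade the weight $\varrho_j$ in $X^{1,0}$ against the decay $\varrho_j(z)/\varrho_j(y)$ in \eqref{mGk51}. If that fails, we would integrate by parts once more to move the derivative onto $\mathcal G_k^j$ and use \eqref{lem:mGK30:New}.
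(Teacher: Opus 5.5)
Your proposal follows the paper's proof essentially step by step: your equation for $\p_\lambda\D\digamma_{k,\epsilon}^\iota$ agrees with the paper's, and the paper likewise subtracts $\mathcal{S}_{1,1,\epsilon}^\iota$ and computes $\mathcal{R}_{1,1,\epsilon}^\iota$. There the $(b-\lambda+i\iota\epsilon)^{-2}$ terms cancel once $\p_\lambda^2\digamma_{k,\epsilon}^\iota=\mathcal{S}_{2,0,\epsilon}^\iota+r_{2,0,\epsilon}^\iota$ is inserted, and the paper then inverts $I+T_\Theta$ with Proposition~\ref{DeC7}, bounding the source via Lemmas~\ref{dint}, \ref{g:bdds}, \ref{gr:bdds}, \ref{mGk50} and the earlier expansions. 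Your worry about the $b''\p_y r_{2,0,\epsilon}^\iota$ term is settled by your first option: the pointwise bounds in $\|\delta(\Lambda) r_{2,0,\epsilon}^\iota\|_{X^{1,0}}$, combined with \eqref{mGk51}, already give a contribution of size $\delta^{-1}(\Lambda)|k|^{-3/2+\theta}\|\omega_{0k}\|_{H^4_k}$ in $X^{0,0}$, so no further integration by parts is needed.
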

\begin{remark}
    We have to use the $\delta(\Lambda)$ and the space $X^{0,0}$ instead of the potentially more natural space $X^{1,0}$ because of the appearance of $\p_\lambda^2 \digamma_{k, \epsilon}^\iota$ which carries a factor of $\delta(\Lambda)$, see \eqref{DCM9.4}.
\end{remark}

\begin{proof}
Equations \eqref{BDC2}, \eqref{E:eq}, and \eqref{rhs:F} yield
\begin{align}
&\left(
-\partial_y^2+k^2
+\frac{b''(y)}{b(y)-\lambda+i\iota \epsilon}
\right) \p_\lambda \D \digamma_{k, \epsilon}^\iota(y,\lambda)
\\
&\quad = -2b''(y)\p_y\p_\lambda^2 \digamma_{k, \epsilon}^\iota(y,\lambda )
  -b'''(y)\p_\lambda^2 \digamma_{k, \epsilon}^\iota(y,\lambda )
  -\frac{b'''(y)}{b(y)-\lambda+i \iota \epsilon}\p_\lambda \digamma_{k, \epsilon}^\iota(y,\lambda )\notag \\
&\quad-\frac{b''(y)}{\bigl(b(y)-\lambda+i \iota \epsilon\bigr)^2}\D \digamma_{k, \epsilon}^\iota(y,\lambda )
  -\frac{b'''(y)}{\bigl(b(y)-\lambda+i\iota \epsilon\bigr)^2}\digamma_{k, \epsilon}^\iota(y,\lambda )  +  \D\p_\lambda r. \notag
\end{align}
Denoting the right-hand side of the equation satisfied by $\partial_\lambda  \D \digamma_{k, \epsilon}^\iota(y,\lambda) - \mathcal{S}_{1,1, \epsilon}^\iota $ as $\mathcal{R}_{1,1, \epsilon}^\iota$, we solve for $\p_\lambda \D \digamma_{k, \epsilon}^\iota   -  \mathcal{S}_{1,1, \epsilon}^\iota$ as follows:
\begin{align}
    \p_\lambda \D \digamma_{k, \epsilon}^\iota   -  \mathcal{S}_{1,1, \epsilon}^\iota   + T_\Theta (\p_\lambda \D \digamma_{k, \epsilon}^\iota   -  \mathcal{S}_{1,1, \epsilon}^\iota) = \int_\T \mathcal{G}_k^j(y,z; \Lambda) \mathcal{R}_{1,1,\epsilon}^\iota(z,\lambda) dz .
\end{align}
After a direct computation to determine $\mathcal{R}_{1,1, \epsilon}^\iota$,
\eqref{DCM5} follows from  Proposition \ref{DeC7}, the estimates \eqref{DCM2}, \eqref{DCM9.4}, and Lemmas \ref{dint}, \ref{g:bdds}, \ref{gr:bdds}, and \ref{mGk50}. 
\end{proof}

\begin{lemma}\label{lem:gd4}
Assume that $\kappa k\in\Z\backslash\{0\}, \lambda\in \Sigma_{j, \delta_0} \setminus \{b(y_{j*}) \}$, $\iota\in\{\pm\}, \epsilon \in (0, \epsilon_0) $. Then for any $\theta \in (0, 1/4]$ we have the following bound:

\begin{subequations}
    \label{gDCM22}
    \begin{align}
\norm{ \delta(\Lambda) \D^2 \digamma_{k, \epsilon}^\iota }_{X^{0, -1}} \lesssim  \frac{\big\|\omega_{0k}\big\|_{H^4_k}}{|k|^{3/2- \theta}}, \quad  |k \delta(\Lambda)| \ll 1, \\
\norm{  \D^2 \digamma_{k, \epsilon}^\iota }_{X^{0, 0}} \lesssim  \frac{\big\|\omega_{0k}\big\|_{H^4_k}}{|k|^{3/2- \theta}}, \quad  |k \delta(\Lambda)| \gtrsim 1. 
\end{align}
\end{subequations}

\end{lemma}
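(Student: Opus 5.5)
Following the pattern of Lemma \ref{lem:gd1}, I will derive the Rayleigh equation satisfied by $\D^2\digamma_{k,\epsilon}^\iota$ and then invert $I+T_\Theta$ in the weighted space using Proposition \ref{DeC7}. Apply $\D$ to \eqref{gd1F}, using $[\D,\p_y]=-b''\p_\lambda$, $[\D,\p_y^2]=-2b''\p_y\p_\lambda-b'''\p_\lambda$ and $\D\big(b''/(b-\lambda+i\iota\epsilon)\big)=b'''/(b-\lambda+i\iota\epsilon)$ (the $b'$ terms cancel). This gives
\begin{align}
\label{gd2F}
\Big(-\p_y^2+k^2+\frac{b''}{b-\lambda+i\iota\epsilon}\Big)\D^2\digamma_{k,\epsilon}^\iota
=-4b''\p_y\p_\lambda\D\digamma_{k,\epsilon}^\iota-2b'''\p_\lambda\D\digamma_{k,\epsilon}^\iota
-\frac{2b'''\D\digamma_{k,\epsilon}^\iota+b^{(4)}\digamma_{k,\epsilon}^\iota}{b-\lambda+i\iota\epsilon}+\mathcal{L}+\D^2 r,
\end{align}
where $\mathcal{L}$ collects the lower order terms: $2(b'')^2\p_\lambda^2\digamma_{k,\epsilon}^\iota$, $b''''$- and $b'''$-multiples of $\p_\lambda\digamma_{k,\epsilon}^\iota$ and $\p_y\p_\lambda\digamma_{k,\epsilon}^\iota$. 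I denote the right-hand side by $\mathcal{R}_{0,2,\epsilon}^\iota$ and write
\[
\D^2\digamma_{k,\epsilon}^\iota+T_\Theta\D^2\digamma_{k,\epsilon}^\iota=\int_\T\mathcal{G}_k^j(y,z;\Lambda)\mathcal{R}_{0,2,\epsilon}^\iota(z,\lambda)\,dz.
\]
By Proposition \ref{DeC7} with $(\sigma_1,\sigma_2)=(0,-1)$ (resp. $(0,0)$), it then suffices to bound the right-hand side in $X^{0,-1}$ after multiplication by $\delta(\Lambda)$ (resp. in $X^{0,0}$).

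The source terms split as follows, and I will estimate each against the Green's function separately.

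\textbf{Data term.} For $\D^2 r$ I use the last three bounds of Lemma \ref{gr:bdds} when $|k\delta(\Lambda)|\ll1$, and Lemma \ref{g:bdds} when $|k\delta(\Lambda)|\gtrsim1$. These give $\varrho_{j,k}/\delta(\Lambda)$ and $|k|^{-3/2}$ respectively, which is compatible with $\delta(\Lambda)X^{0,-1}$ and with $X^{0,0}$.

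\textbf{Terms with $1/(b-\lambda+i\iota\epsilon)$.} Their coefficients are $\digamma_{k,\epsilon}^\iota$ and $\D\digamma_{k,\epsilon}^\iota$. Away from $S^j_{3\delta(\Lambda)}$ these are nonsingular and are handled by Lemma \ref{mGk50} together with \eqref{DCM2} and \eqref{gDCM2}. Inside $S^j_{3\delta(\Lambda)}$ I use \eqref{dfracl2}, with $X^{0,0}$ norms controlled by $\delta(\Lambda)^{-2}$ times the $X^{0,-2}$ norm and $\delta(\Lambda)^{-1}$ times the $X^{0,-1}$ norm. This explains the prefactor $\delta(\Lambda)$ in the small $k\delta(\Lambda)$ regime.

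\textbf{Terms with $\p_\lambda\D\digamma$ and $\p_\lambda^2\digamma$.} I write $\p_\lambda\D\digamma_{k,\epsilon}^\iota=\mathcal{S}_{1,1,\epsilon}^\iota+r_{1,1,\epsilon}^\iota$ and $\p_\lambda^2\digamma_{k,\epsilon}^\iota=\mathcal{S}_{2,0,\epsilon}^\iota+r_{2,0,\epsilon}^\iota$.
\begin{itemize}
\item The remainders satisfy the bounds \eqref{DCM5} and \eqref{DCM9.4}, each carrying a factor $\delta(\Lambda)^{-1}$. This matches the $\delta(\Lambda)$ weight in the claim.
\item The singular parts are explicit: $\log$-terms with coefficients $\digamma$ and $\D\digamma$, localized by $\chi^*_{j,\delta}$, plus $\chi^*_{j,\delta}C_1\digamma/(b-\lambda+i\iota\epsilon)$. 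These are handled by \eqref{dlogl2} and \eqref{dfracl2}.
\end{itemize}

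The main obstacle is the term $-4b''\p_y(\mathcal{S}_{1,1,\epsilon}^\iota+r_{1,1,\epsilon}^\iota)$, since it involves a $y$-derivative.
\begin{itemize}
\item For the remainder, I plan to integrate by parts in $z$ onto $\mathcal{G}_k^j$ and use Lemma \ref{mGk50} with $\beta=1$, i.e.\ the weight $\varrho_{j,k}$. This costs one factor of $\varrho$ and is absorbed by passing from $X^{0,0}$ to $X^{0,-1}$.
\item For the singular part, $\p_y\log$ produces $b'\D\big(\digamma b''/(b')^2\big)/(b-\lambda+i\iota\epsilon)$. This is a term $\mathfrak{c}\,\p_z f/(b-\lambda+i\iota\epsilon)$ with $f=\D\digamma_{k,\epsilon}^\iota$, supported in the annulus $\chi^*_{j,\delta}$. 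To apply \eqref{dfracl3}, I must check that $f$ satisfies an equation of the form \eqref{drayf}, namely \eqref{gd1F} with the singular expansion from Lemma \ref{lem:lam1}, with $c_i$ controlled by \eqref{DCM2} and \eqref{DCM7}.
\item The remaining pieces, $\log$ times $\p_y$ of the coefficient, fall under \eqref{dlogl2}.
\end{itemize}

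For $|k\delta(\Lambda)|\gtrsim1$, Lemma \ref{DeC4} shows that $T_\Theta$ is small. The $(1+|k|\delta(\Lambda))^{-1/4}$ gain, combined with Lemma \ref{g:bdds}, should remove the need for the prefactor $\delta(\Lambda)$ and give the $X^{0,0}$ bound.
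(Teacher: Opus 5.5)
Your route is the paper's. The equation you derive is exactly \eqref{gd2F}, and the paper's proof inverts $I+T_\Theta$ with Proposition~\ref{DeC7}. It then bounds the forcing using the expansions of $\p_\lambda\digamma^\iota_{k,\epsilon}$, $\p_\lambda^2\digamma^\iota_{k,\epsilon}$ and $\p_\lambda\D\digamma^\iota_{k,\epsilon}$, together with Lemmas~\ref{dint}, \ref{g:bdds}, \ref{gr:bdds} and \ref{mGk50}. The skeleton is fine, but the bookkeeping that should produce the weight $\delta(\Lambda)$ and the change of norm is wrong in several places.

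\textbf{Small $k\delta$ regime.} For $|k\delta(\Lambda)|\ll1$ and $f$ supported in $S^j_{3\delta(\Lambda)}$, one has $\varrho_{j,k}\approx d_{j,k}\approx\delta(\Lambda)$. Hence $\|f\|_{X^{0,0}}\approx\delta(\Lambda)\|f\|_{X^{0,-1}}\approx\delta^2(\Lambda)\|f\|_{X^{0,-2}}$, which are the opposite powers to the ones you state. With your powers, \eqref{dfracl2} (which costs $d_{j,k}^{-1}$ in $X^{0,-1}$) would give $\delta^{-3}$ and $\delta^{-2}$ for the $b^{(4)}\digamma$ and $2b'''\D\digamma$ Cauchy terms, and a single factor $\delta(\Lambda)$ cannot absorb that. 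With the correct powers these inner contributions are $O(\delta(\Lambda))$ and $O(1)$ in $X^{0,-1}$, so they are not where the weight comes from. The loss $\delta^{-1}(\Lambda)$ is produced by three things: the remainders in \eqref{DCM5} and \eqref{DCM9.4}; the data bound in Lemma~\ref{gr:bdds}; and the singular parts, whose coefficients contain inverse powers of $b'$ with $|b'|\approx\delta(\Lambda)$ on ${\rm supp}\,\chi^*_{j,\delta}$. For example, the Cauchy term of $\p_y\mathcal S^\iota_{1,1,\epsilon}$ is $-\chi^*_{j,\delta}\big(\tfrac{b''}{b'}\D\digamma^\iota_{k,\epsilon}+(\tfrac{b'''}{b'}-\tfrac{2(b'')^2}{(b')^2})\digamma^\iota_{k,\epsilon}\big)/(b-\lambda+i\iota\epsilon)$. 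Its coefficients have size $\delta^{-1}$ and $\delta^{-2}$, and it lands exactly at $\delta^{-1}$ in $X^{0,-1}$, so you must track these sizes.

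This formula also shows you misidentified the term. No derivative of $\D\digamma^\iota_{k,\epsilon}$ sits on the Cauchy kernel, so it is a direct application of \eqref{dfracl2}. Neither \eqref{dfracl3} nor a check of \eqref{drayf} for $\D\digamma^\iota_{k,\epsilon}$ is needed, since $\p_y\D\digamma^\iota_{k,\epsilon}$ appears only against the logarithm and is covered by \eqref{dlogl2}. Likewise, do not integrate $b''\p_y r^\iota_{1,1,\epsilon}$ by parts. For the derivative part of the output norm that would require bounds on $\p_y\p_z\mathcal G^j_k$, which Lemma~\ref{mGk50} does not give, whereas \eqref{DCM5} already controls $\varrho_{j,k}\p_y r^\iota_{1,1,\epsilon}$.

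\textbf{Large $k\delta$ regime.} The argument for the second bound does not work as written. The factor $(1+|k|\delta(\Lambda))^{-1/4}$ in Lemma~\ref{DeC4} concerns only $T_\Theta$, i.e.\ the invertibility already contained in Proposition~\ref{DeC7}. It does not make the forcing $\int_\T\mathcal G^j_k(y,z;\Lambda)\mathcal R^\iota_{0,2,\epsilon}(z,\lambda)\,dz$ any smaller, and the non-data parts of that forcing still carry $\delta^{-1}(\Lambda)$.

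What actually removes the weight (Remark~\ref{rem:lamgd}) is that for $|k\delta(\Lambda)|\gtrsim1$ one has $\varrho_{j,k}\approx d_{j,k}\approx|k|^{-1}\lesssim\delta(\Lambda)$ everywhere. Hence $\|h\|_{X^{0,0}}\approx|k|^{-1}\|h\|_{X^{0,-1}}\lesssim\delta(\Lambda)\|h\|_{X^{0,-1}}$. So once the non-data pieces are bounded by $\delta^{-1}(\Lambda)\|\omega_{0k}\|_{H^4_k}|k|^{-3/2+\theta}$ in $X^{0,-1}$ (the same computations, now with $d_{j,k}\approx|k|^{-1}$), they are automatically bounded in $X^{0,0}$.

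Only the data term $\D^2 g^\iota_{k,\epsilon}$, which is $O(1)$ in $L^2$, must be treated separately, by Lemma~\ref{g:bdds}, which places it in $X^{0,0}$. In $\delta(\Lambda)X^{0,-1}$ it would cost an extra factor $|k|\delta(\Lambda)$, and this is exactly why the statement switches norms between the two regimes.
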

\begin{remark}
    \label{rem:lamgd}
    The reason for the separate cases in \eqref{gDCM22} is related to how smallness manifests in the modified Green's function in the different cases. When $|k \delta(\Lambda)| \ll 1$, the modified Green's function only enforces localized smallness near the critical point which is why we use $X^{0, -1}$ and a $\delta(\Lambda)$ weight. However, when $|k \delta(\Lambda)| \gtrsim 1$, the modified Green's function is small everywhere. This can compensate for the global $\delta(\Lambda)$ factor on the right hand side of the equation and allows the use of $X^{0,0}$.  
\end{remark}

\begin{proof}
Equations \eqref{BDC2}, \eqref{E:eq}, and \eqref{rhs:F} yield
\begin{align}
\label{gd2F}
&\left(-\p_y^2 + k^2 + \frac{b''(y)}{b(y) - \lambda + i \iota \epsilon } \right) \D^2 \digamma^\iota (y,\lambda)\\
    &= -4b''(y)\p_y\p_\lambda\D \digamma^\iota (y,\lambda)
  -2b'''(y)\p_\lambda \D \digamma^\iota (y,\lambda)
  +2\bigl(b''(y)\bigr)^2\p_\lambda^2 \digamma^\iota (y,\lambda) \notag\\
&-2b'''(y)\p_y\p_\lambda \digamma^\iota (y,\lambda)-b''''(y)\p_\lambda \digamma^\iota (y,\lambda)
  -\frac{2b'''(y)}{b(y)-\lambda+i \iota \epsilon}\D \digamma^\iota (y,\lambda)
  -\frac{b''''(y)}{b(y)-\lambda+i\iota \epsilon}\digamma^\iota (y,\lambda)\notag \\
  &+ \D^2 r. \notag
\end{align}
Denoting the right-hand side  of \eqref{gd2F} as $\mathcal{R}_{0,2, \epsilon}^\iota$, we solve for $\D^2 \digamma_{k, \epsilon}^\iota$ as follows:
\begin{align}
  \D^2 \digamma_{k, \epsilon}^\iota   + T_\Theta (\D^2 \digamma_{k, \epsilon}^\iota) = \int_\T \mathcal{G}_k^j(y,z; \Lambda) \mathcal{R}_{0,2, \epsilon}^\iota(z,\lambda) dz .
\end{align}
The bound \eqref{gDCM22} follows from  Proposition \ref{DeC7}, and  Lemmas  \ref{lem:lam1} , \ref{lem:gd2}, \ref{dint}, \ref{g:bdds}, \ref{gr:bdds}, and \ref{mGk50}.
\end{proof}

\begin{lemma}\label{DCM103}
Assume that $\kappa k\in\Z\backslash\{0\}, \lambda\in \Sigma_{j, \delta_0} \setminus \{  b(y_{j*})   \}$, $\iota\in\{\pm\}, \epsilon \in (0, \epsilon_0)$. Then for any $\theta \in (0, 1/4]$ we have the following bound: 
\begin{equation}\label{DCM9.43}
\norm{ \delta^3 (\Lambda) (\p_\lambda^3 \digamma_{k, \epsilon}^\iota   - \mathcal{S}_{3,0, \epsilon}^\iota )}_{X^{1, 0}} \lesssim  \frac{\big\|\omega_{0k}\big\|_{H^4_k}}{|k|^{3/2- \theta}},
\end{equation}
where
\begin{align}
    \mathcal{S}_{3, 0 , \epsilon}^\iota(y,\lambda) &:=  \frac{\chi_{j, \delta}^*(y)C_1(y)\digamma_{k, \epsilon}^\iota(y,\lambda)}{ (b(y) - \lambda + i \iota  \epsilon)^2    } \\
    &+ \frac{\chi_{j, \delta}^*(y)(C_2(y) \p_y \digamma_{k, \epsilon}^\iota(y,\lambda)    + C_3(y) \D \digamma_{k, \epsilon}^\iota(y,\lambda) + C_4(y) \digamma_{k, \epsilon}^\iota(y,\lambda) )  }{b(y) - \lambda + i \iota \epsilon} \\
    & + \chi_{j, \delta}^*(y)(  C_5(y) \D^2 \digamma_{k, \epsilon}^\iota(y,\lambda)  +  C_6(y) \D \digamma_{k, \epsilon}^\iota(y,\lambda) \\
    &+  C_7(y)\digamma_{k, \epsilon}^\iota(y,\lambda) ) \log \frac{b(y)  -\lambda + i  \iota \epsilon  }{\delta^2(\Lambda)},
\end{align}
\begin{align}
&C_1(y) = \frac{b''(y)}{(b'(y))^2} , \quad C_2(y)  = - \frac{b''(y)}{(b'(y))^3}, \quad C_3(y) = \frac{3 b''(y)}{(b'(y))^3}, \quad C_4(y) =    \frac{2b'''(y)}{(b'(y))^3}
-\frac{3(b''(y))^2}{(b'(y))^4}, \\
 &C_5(y) = -\frac{3b''(y)}{(b'(y))^4}, \quad C_6(y) = -\frac{6b'''(y)}{(b'(y))^4}
+\frac{12(b''(y))^2}{(b'(y))^5}, \notag \\
&C_7(y) = -\frac{k^2b''(y)}{(b'(y))^4}
-\frac{3b^{(4)}(y)}{(b'(y))^4}
+\frac{16b''(y)b'''(y)}{(b'(y))^5}
-\frac{15(b''(y))^3}{(b'(y))^6}. \notag 
\end{align}


\end{lemma}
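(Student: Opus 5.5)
I will follow the same template as Lemmas \ref{DCM100} and \ref{lem:gd2}. First I differentiate \eqref{dhr1} three times in $\lambda$. This gives
\begin{align*}
\Big(-\partial_y^2+k^2+\frac{b''}{b-\lambda+i\iota\epsilon}\Big)\partial_\lambda^3\digamma_{k,\epsilon}^\iota
=-\frac{6b''\digamma_{k,\epsilon}^\iota}{(b-\lambda+i\iota\epsilon)^4}-\frac{6b''\partial_\lambda\digamma_{k,\epsilon}^\iota}{(b-\lambda+i\iota\epsilon)^3}-\frac{3b''\partial_\lambda^2\digamma_{k,\epsilon}^\iota}{(b-\lambda+i\iota\epsilon)^2}+\partial_\lambda^3 r .
\end{align*}
On the annulus $\operatorname{supp}\chi^*_{j,\delta}$ we have $|b'|\approx\delta(\Lambda)$, so the singular factors can be traded for $\partial_y$-derivatives of logarithms. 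The rules are $\frac{1}{(b-\lambda+i\iota\epsilon)^{n+1}}=-\frac{1}{nb'}\partial_y(b-\lambda+i\iota\epsilon)^{-n}$ and $\frac{1}{b-\lambda+i\iota\epsilon}=\frac{1}{b'}\partial_y\log\frac{b-\lambda+i\iota\epsilon}{\delta^2(\Lambda)}$.

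Next I apply the operator to $\mathcal{S}_{3,0,\epsilon}^\iota$. Whenever $\partial_y$ falls on $\digamma$, I rewrite it as $\D-b'\partial_\lambda$. I replace $\partial_\lambda\digamma$ by $\mathcal{S}_{1,0,\epsilon}^\iota$ plus the remainder of Lemma \ref{lem:lam1}, and $\partial_\lambda^2\digamma$ by $\mathcal{S}_{2,0,\epsilon}^\iota$ plus the remainder of Lemma \ref{DCM100}. Any $\partial_y^2$ of $\digamma$ or $\D\digamma$ is replaced using the Rayleigh equation; this is the source of the $k^2$ term in $C_7$. The coefficients $C_1,\dots,C_7$ are then fixed by requiring that all source terms which are more singular than the admissible classes of \eqref{drayf} cancel. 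These are the terms with $(b-\lambda+i\iota\epsilon)^{-4},(b-\lambda+i\iota\epsilon)^{-3}$, the $(b-\lambda+i\iota\epsilon)^{-2}$ terms, and the terms of the form $\partial_y f\cdot(b-\lambda+i\iota\epsilon)^{-1}$ not covered by \eqref{dfracl3}.

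The remainder $\mathcal{R}_{3,0,\epsilon}^\iota$ should then split into two parts. The first is a part supported in $S^j_{3\delta(\Lambda)}$ of the form \eqref{drayf}, plus terms $\mathfrak{c}(y)\partial_y f/(b-\lambda+i\iota\epsilon)$ with $f\in\{\digamma,\D\digamma,\D^2\digamma\}$. The second consists of the cutoff-derivative terms and the data term $\partial_\lambda^3 r$.

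I then write $\partial_\lambda^3\digamma-\mathcal{S}_{3,0,\epsilon}^\iota+T_\Theta(\cdot)=\int\mathcal{G}^j_k\mathcal{R}_{3,0,\epsilon}^\iota\,dz$ and invoke Proposition \ref{DeC7} in $X^{1,0}$. The data part $\partial_\lambda^3 r$ is handled by the sixth estimate of Lemma \ref{gr:bdds}, which gives $|k|^{-3/2+\theta}\delta^{-3}\varrho_j^{-1}$, and by Lemma \ref{g:bdds} when $|k\delta|\gtrsim1$. Multiplying by $\delta^3$ and measuring with weight $\varrho_j^{1}$ gives exactly the claimed bound. The terms supported near $y_{j*}$ are controlled by \eqref{dlogl2}, \eqref{dfracl2} and \eqref{dfracl3}. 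Their coefficients are estimated through \eqref{DCM2}, \eqref{gDCM2}, \eqref{DCM9.4}, \eqref{DCM5} and \eqref{gDCM22}. Every coefficient carries powers $|b'|^{-m}\approx\delta^{-m}$, and each power of $\delta^{-1}$ should be matched by a factor $d_{j,k}$ or $\delta^{\sigma_1}$ from Lemma \ref{dint}. The annulus-derivative terms, where $\partial_y\chi^*_{j,\delta}$ costs $\delta^{-1}$, are estimated directly via Lemma \ref{mGk50}.

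The main obstacle is the bookkeeping of $\delta$-powers in the two-regime setting. Here $\D^2\digamma$ is only controlled in $\delta X^{0,-1}$ when $|k\delta|\ll1$, and $\partial_\lambda^2\digamma$ only in $\delta X^{1,0}$. This forces the three-power loss $\delta^3$ and the weight $X^{1,0}$ rather than the dimensionally natural space. I would first check the worst term, namely $C_5\D^2\digamma\log(\cdot)$ together with the $\partial_y\D^2\digamma/(b-\lambda)$ source it generates. I expect this term to saturate the bound via \eqref{dfracl3} with $\sigma_1=1,\sigma_2=0$. The second delicate point is the $k^2$ contribution in $C_7$. It is acceptable only because $|k|\lesssim\delta^{-1}$ in the regime $|k\delta|\ll1$. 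In the other regime, $\digamma=\phi$, and the smallness in \eqref{DeC5} of the modified Green's function absorbs it.
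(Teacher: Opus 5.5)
Your outline matches the paper's proof: three $\lambda$-derivatives of \eqref{dhr1}, subtraction of $\mathcal{S}^\iota_{3,0,\epsilon}$ with coefficients fixed by cancelling the non-admissible singular terms, inversion of $I+T_\Theta$ in $X^{1,0}$ by Proposition \ref{DeC7}, and Lemmas \ref{dint}, \ref{g:bdds}, \ref{gr:bdds}, \ref{mGk50} together with the earlier expansions for the forcing. You correctly trace the $k^2$ in $C_7$ to the Rayleigh substitution for $\p_y^2\digamma^\iota_{k,\epsilon}$. Your counts for $\delta^3(\Lambda)\p_\lambda^3 r$ and for the $C_5\D^2\digamma^\iota_{k,\epsilon}$ term are also right.

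The step that fails as written is the $k^2$-part of $C_7$ when $|k\delta(\Lambda)|\gtrsim1$. On the annulus, $k^2|b''|/|b'|^4\approx k^2\delta^{-4}(\Lambda)$ exceeds the largest other part of $C_7$, which has size $\delta^{-6}(\Lambda)$, by the factor $(k\delta(\Lambda))^2$. The terms it leaves in $\mathcal{R}^\iota_{3,0,\epsilon}$ are forcing terms, e.g. $2C_7b'\chi^*_{j,\delta}\p_y\phi^\iota_{k,\epsilon}/(b-\lambda+i\iota\epsilon)$ and $C_7\chi^*_{j,\delta}\,g^\iota_{k,\epsilon}\log(\cdot)$.

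The factor $(1+|k|\delta(\Lambda))^{-1/4}$ in \eqref{DeC5} is a bound on $T_\Theta$ and enters only through the invertibility of $I+T_\Theta$. It says nothing about $\int\mathcal{G}^j_k(y,z;\Lambda)(\cdot)\,dz$ applied to these terms, and a quarter power could not offset $(k\delta(\Lambda))^2$ anyway.

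What pays for $k^2$ is the scale of the Green's function. In this regime $\varrho_{j,k}\approx d_{j,k}\approx|k|^{-1}$, so Lemma \ref{mGk50} gives $|\mathcal{G}^j_k|\lesssim|k|^{-1}e^{-|k||y-z|}$ and $|\p_y\mathcal{G}^j_k|\lesssim e^{-|k||y-z|}$. This is a gain of $k^{-2}$ in place of the $\delta^2(\Lambda)$ available when $|k\delta(\Lambda)|\ll1$. Meanwhile \eqref{DCM2} makes $\phi^\iota_{k,\epsilon}$ and $\p_y\phi^\iota_{k,\epsilon}$ of size $k^{-2}$ and $|k|^{-1}$ times $|k|^{-3/2+\theta}\|\omega_{0k}\|_{H^4_k}$.

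After one integration by parts in the logarithm, each such term is then $O(\delta^{-4}(\Lambda)|k|^{-3/2+\theta}\|\omega_{0k}\|_{H^4_k})$ up to logarithms. That is exactly what the weight $\delta^3(\Lambda)\varrho_j$ of $X^{1,0}$ allows on the annulus. So these terms must be estimated directly from Lemma \ref{mGk50}, not through \eqref{DeC5}.
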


\begin{proof}
Equations \eqref{BDC2}, \eqref{E:eq}, and \eqref{rhs:F} yield
    \begin{align}
      &(-\partial_y^2 + k^2 + \frac{b''(y)}{b(y) - \lambda + i \iota  \epsilon }) \partial_\lambda^3 \digamma_{k, \epsilon}^\iota(y,\lambda)  \\
& = 
-\frac{3b''(y)\partial_\lambda^2  \digamma_{k, \epsilon}^\iota(y,\lambda)}
{(b(y)-\lambda+i \iota \epsilon)^2}
-\frac{6b''(y)\partial_\lambda  \digamma_{k, \epsilon}^\iota(y,\lambda)}
{(b(y)-\lambda+i \iota \epsilon)^3}
-\frac{6b''(y) \digamma_{k, \epsilon}^\iota(y,\lambda)}
{(b(y)-\lambda+i \iota \epsilon)^4}
+\partial_\lambda^3 r(y,\lambda).
\end{align}
Denoting the right-hand side of the equation satisfied by $\partial_\lambda^3  \digamma_{k, \epsilon}^\iota(y,\lambda) - \mathcal{S}_{3,0, \epsilon}^\iota $ as $\mathcal{R}_{3,0, \epsilon}^\iota$, we solve for $\p_\lambda^3 \digamma_{k, \epsilon}^\iota   -  \mathcal{S}_{3,0, \epsilon}^\iota$ as follows:
\begin{align}
    \p_\lambda^3 \digamma_{k, \epsilon}^\iota   -  \mathcal{S}_{3,0, \epsilon}^\iota   + T_\Theta (\p_\lambda^3 \digamma_{k, \epsilon}^\iota   -  \mathcal{S}_{3,0, \epsilon}^\iota) = \int_\T \mathcal{G}_k^j(y,z; \Lambda) \mathcal{R}_{3,0, \epsilon}^\iota(z,\lambda) dz .
\end{align}
After a direct computation to determine $\mathcal{R}_{3,0, \epsilon}^\iota$,
\eqref{DCM9.43} follows from  Proposition \ref{DeC7},  the bounds \eqref{DCM2}, \eqref{gDCM2}, and Lemmas \ref{dint}, \ref{g:bdds}, \ref{gr:bdds}, and \ref{mGk50}.

\end{proof}

\begin{lemma}\label{lem:gd3}
Assume that $\kappa k\in\Z\backslash\{0\}, \lambda\in \Sigma_{j, \delta_0} \setminus \{  b(y_{j*})   \}$, $\iota\in\{\pm\}, \epsilon \in (0, \epsilon_0)$. Then for any $\theta \in (0, 1/4]$ we have the following bound:
\begin{equation}\label{DCM35}
\begin{split}
\norm{ \delta^3(\Lambda) (\p_\lambda^2 \D \digamma_{k, \epsilon}^\iota   - \mathcal{S}_{2,1,\epsilon}^\iota) }_{X^{0,0}} \lesssim \frac{\big\|\omega_{0k}\big\|_{H^4_k}}{|k|^{3/2- \theta}},
\end{split}
\end{equation}
where 
\begin{equation}\label{DCM7.64}
\begin{split}
\mathcal{S}^\iota_{2,1,\epsilon}(y,\lambda)&:=
\frac{\chi_{j, \delta}^*(y) \D (C_1(y)\digamma_{k,\epsilon}^\iota(y,\lambda))  }{b(y) - \lambda + i \iota \epsilon}   \\
& + \chi_{j, \delta}^*(y)\left( \D(C_2(y) \D \digamma_{k,\epsilon}^\iota(y,\lambda)) + \D (C_3(y) \digamma_{k,\epsilon}^\iota(y,\lambda))   \right) \log (\frac{b(y) -\lambda  + i\iota \epsilon }{\delta^2(\Lambda)}),
\end{split}
\end{equation}
\begin{align}
    C_1 = \frac{b''(y)}{(b'(y))^2}, \quad C_2(y) = \frac{ -2 b''(y)}{(b'(y))^3}, \quad  C_3(y) = \frac{3 (b''(y))^2  - 2b'''(y) b'(y)  }{(b'(y))^4}.
\end{align}

\end{lemma}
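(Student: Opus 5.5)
We follow the template of Lemmas \ref{DCM100}, \ref{lem:gd2} and \ref{DCM103}. Applying $\partial_\lambda$ to the equation satisfied by $\p_\lambda \D\digamma_{k,\epsilon}^\iota$ (recorded in the proof of Lemma~\ref{lem:gd2}), or equivalently $\D$ to the equation for $\p_\lambda^2\digamma_{k,\epsilon}^\iota$, gives
\begin{align*}
&\left(-\partial_y^2+k^2+\frac{b''}{b-\lambda+i\iota\epsilon}\right)\p_\lambda^2\D\digamma_{k,\epsilon}^\iota
= -2b''\p_y\p_\lambda^3\digamma_{k,\epsilon}^\iota-b'''\p_\lambda^3\digamma_{k,\epsilon}^\iota
-\sum_{m=1}^{3}\frac{c_m\, b''\,\p_\lambda^{2-m+1}\D\digamma_{k,\epsilon}^\iota}{(b-\lambda+i\iota\epsilon)^{m+1}}\\
&\quad-\sum_{m=0}^{2}\frac{\tilde c_m\, b'''\,\p_\lambda^{2-m}\digamma_{k,\epsilon}^\iota}{(b-\lambda+i\iota\epsilon)^{m+1}}+\D\p_\lambda^2 r,
\end{align*}
with explicit combinatorial constants $c_m,\tilde c_m$. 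We subtract $\mathcal S^\iota_{2,1,\epsilon}$ and write the resulting right-hand side as $\mathcal R^\iota_{2,1,\epsilon}$. The guiding structural fact is that $\mathcal S^\iota_{2,1,\epsilon}$ is precisely $\D$ applied to the part of $\mathcal S^\iota_{2,0,\epsilon}$ from \eqref{DCM7.4} whose singular coefficients are nonvanishing. Using the identity $\D(b-\lambda+i\iota\epsilon)=0$, we have $\D\log\frac{b-\lambda+i\iota\epsilon}{\delta^2(\Lambda)}=-\D\log\delta^2(\Lambda)=O(b'/\delta^2)$, which is harmless on the support of $\chi^*_{j,\delta}$. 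Hence $\D$ passes through the singular factors up to cutoff and $\delta$-derivative errors. The most singular terms of the source, namely the $(b-\lambda+i\iota\epsilon)^{-3}$ and $(b-\lambda+i\iota\epsilon)^{-2}$ pieces, should then cancel against $\bigl(-\partial_y^2+\tfrac{b''}{b-\lambda+i\iota\epsilon}\bigr)\mathcal S^\iota_{2,1,\epsilon}$, exactly as they did for $\mathcal S^\iota_{2,0,\epsilon}$. The outcome is that $\mathcal R^\iota_{2,1,\epsilon}$ has the form \eqref{drayf}: at worst $c_1\log/(b-\lambda+i\iota\epsilon)$, $c_2/(b-\lambda+i\iota\epsilon)$, and terms of the type $\mathfrak c\,\p_y\D^\beta\digamma/(b-\lambda+i\iota\epsilon)$, together with log and regular pieces, all supported in $S^j_{3\delta(\Lambda)}$. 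There are also cutoff-derivative terms supported where $|y-y_{j*}|\approx\delta(\Lambda)$, and non-singular terms off $S^j_{3\delta(\Lambda)}$.

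We then solve $(I+T_\Theta)(\p_\lambda^2\D\digamma-\mathcal S^\iota_{2,1,\epsilon})=\int\mathcal G^j_k\mathcal R^\iota_{2,1,\epsilon}$ and invoke Proposition~\ref{DeC7} in $X^{0,0}$. To bound the right-hand side times $\delta^3(\Lambda)$ in $X^{0,0}$, we split it as follows:
\begin{itemize}
\item The data term $\D\p_\lambda^2 r$ is controlled by Lemma~\ref{gr:bdds} (the $\p_\lambda^2\D\mathcal R_{0,0,\epsilon}$ line, which gives $\delta^{-3}|k|^{-3/2+\theta}$) and by Lemma~\ref{g:bdds} when $|k\delta|\gtrsim1$.
\item The singular terms in $S^j_{3\delta}$ are handled by \eqref{dfracl2} and \eqref{dfracl3}, applied with $f=\digamma,\D\digamma,\D^2\digamma$. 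Their coefficients $c_i$ are measured in $X^{0,0}$ using Lemmas \ref{DCM1}, \ref{lem:lam1}, \ref{lem:gd1}, \ref{DCM100}, \ref{lem:gd2}, \ref{lem:gd4}, and \ref{DCM103}. Each $\p_\lambda$ costs $\delta^{-2}$ and each $1/b'$ costs $\delta^{-1}$, so the $\delta^3$ weight balances the count.
\item The terms $-2b''\p_y\p_\lambda^3\digamma - b'''\p_\lambda^3\digamma$ are rewritten via Lemma~\ref{DCM103} as $\mathcal S^\iota_{3,0,\epsilon}$ plus a remainder bounded by $\delta^{-3}$ in $X^{1,0}$.
\item Off $S^j_{3\delta}$, all terms are non-singular and are bounded directly by Lemma~\ref{mGk50}.
\end{itemize}

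The main obstacle is the term $\p_y\p_\lambda^3\digamma$ inside $S^j_{3\delta}$. After substituting $\mathcal S^\iota_{3,0,\epsilon}$, it produces $\p_y\bigl[\digamma/(b-\lambda+i\iota\epsilon)^2\bigr]$ and $\p_y\p_y\digamma/(b-\lambda+i\iota\epsilon)$, and these must be exactly cancelled by the corresponding pieces of the operator applied to $\mathcal S^\iota_{2,1,\epsilon}$. This cancellation is what forces the specific form $\D(C_1\digamma)/(b-\lambda+i\iota\epsilon)$ in \eqref{DCM7.64}. We would verify it by rewriting $\mathcal S^\iota_{2,1,\epsilon}$ in terms of $b'\bigl(\p_\lambda^3\digamma-r_{3,0,\epsilon}\bigr)$ plus lower-order pieces, as was done in the nondegenerate Lemma~\ref{ndpsi_l2D}. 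The genuinely dangerous remaining term is $\p_y^2\digamma$ divided by $b-\lambda+i\iota\epsilon$. We would replace $\p_y^2\digamma$ using \eqref{dhr1}, which leaves $\digamma/(b-\lambda+i\iota\epsilon)^2$ (absorbed into the cancellation) and $k^2\digamma/(b-\lambda+i\iota\epsilon)$. The latter is acceptable: when $|k\delta|\lesssim1$ it is controlled by \eqref{dfracl2}, and otherwise it is bounded by the global smallness of $\mathcal G^j_k$.
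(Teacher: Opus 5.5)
Your proposal matches the paper's proof. You derive the Rayleigh equation for $\p_\lambda^2\D\digamma_{k,\epsilon}^\iota$, subtract $\mathcal S^\iota_{2,1,\epsilon}$ (which is $\D$ applied to the singular part of $\mathcal S^\iota_{2,0,\epsilon}$, using $\D(b-\lambda+i\iota\epsilon)=0$), invert $I+T_\Theta$ with Proposition~\ref{DeC7} in $X^{0,0}$, and bound the source with Lemmas~\ref{dint}, \ref{g:bdds}, \ref{gr:bdds}, \ref{mGk50} and the earlier expansions, notably Lemma~\ref{DCM103} for the $\p_\lambda^3\digamma_{k,\epsilon}^\iota$ terms. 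The only slip is the first sum in your displayed equation, which should read $-\frac{2b''\p_\lambda\D\digamma_{k,\epsilon}^\iota}{(b-\lambda+i\iota\epsilon)^{2}}-\frac{2b''\D\digamma_{k,\epsilon}^\iota}{(b-\lambda+i\iota\epsilon)^{3}}$; as written it puts the unknown $\p_\lambda^2\D\digamma_{k,\epsilon}^\iota$ on the right and adds a spurious fourth-order pole, though your later discussion correctly treats third-order poles as the worst.
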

\begin{proof}
Equations \eqref{BDC2}, \eqref{E:eq}, and \eqref{rhs:F} yield
    \begin{align}
&\left(
-\partial_y^2+k^2
+\frac{b''(y)}{b(y)-\lambda+i\iota \epsilon}
\right) \p_\lambda^2 \D \digamma_{k, \epsilon}^\iota(y,\lambda)
\nonumber\\
&\quad =-2b''(y)\p_y\p_\lambda^3 \digamma_{k, \epsilon}^\iota(y,\lambda)
  -b'''(y)\p_\lambda^3 \digamma_{k, \epsilon}^\iota(y,\lambda)
  -\frac{b'''(y)}{b(y)-\lambda+i\iota \epsilon}\p_\lambda^2 \digamma_{k, \epsilon}^\iota(y,\lambda)\\
&\quad-\frac{2b''(y)}{\bigl(b(y)-\lambda+i \iota \epsilon\bigr)^2}
  \p_\lambda\D \digamma_{k, \epsilon}^\iota(y,\lambda)
  -\frac{2b'''(y)}{\bigl(b(y)-\lambda+i \iota \epsilon\bigr)^2}\p_\lambda \digamma_{k, \epsilon}^\iota(y,\lambda)\\
&\quad-\frac{2b''(y)}{\bigl(b(y)-\lambda+i\iota \epsilon\bigr)^3}\D \digamma_{k, \epsilon}^\iota(y,\lambda)
  -\frac{2b'''(y)}{\bigl(b(y)-\lambda+i\iota\epsilon\bigr)^3} \digamma_{k, \epsilon}^\iota(y,\lambda) +  \p_\lambda^2\D r.
\end{align}
Denoting the right-hand side of the equation satisfied by $\partial_\lambda^2  \D \digamma_{k, \epsilon}^\iota(y,\lambda) - \mathcal{S}_{2,1, \epsilon}^\iota $ as $\mathcal{R}_{2,1, \epsilon}^\iota$, we solve for $\p_\lambda^2  \D \digamma_{k, \epsilon}^\iota   -  \mathcal{S}_{2,1, \epsilon}^\iota$ as follows:
\begin{align}
    \p_\lambda^2 \D \digamma_{k, \epsilon}^\iota   -  \mathcal{S}_{2,1, \epsilon}^\iota   + T_\Theta (\p_\lambda^2 \D \digamma_{k, \epsilon}^\iota   -  \mathcal{S}_{2,1, \epsilon}^\iota) = \int_\T \mathcal{G}_k^j(y,z; \Lambda) \mathcal{R}_{2,1, \epsilon}^\iota(z,\lambda) dz .
\end{align}
After a direct computation to determine $\mathcal{R}_{2,1, \epsilon}^\iota$,
\eqref{DCM35} follows from  Proposition \ref{DeC7}, the preceding estimates, and Lemmas \ref{dint}, \ref{g:bdds}, \ref{gr:bdds}, and \ref{mGk50}.
\end{proof}

\begin{lemma}\label{lem:gd35}
Assume that $\kappa k\in\Z\backslash\{0\}, \lambda\in \Sigma_{j, \delta_0} \setminus \{  b(y_{j*})   \}$, $\iota\in\{\pm\}, \epsilon \in (0, \epsilon_0)$. Then for any $\theta \in (0, 1/4]$ we have the following bound:
\begin{align}\label{DCM36}
\norm{ \delta^3(\Lambda) (\p_\lambda \D^2 \digamma_{k, \epsilon}^\iota   - \mathcal{S}_{1,2,\epsilon}^\iota) }_{X^{0,-1}} &\lesssim \frac{\big\|\omega_{0k}\big\|_{H^4_k}}{|k|^{3/2- \theta}}, \quad |k \delta(\Lambda)| \ll 1,   \\
\norm{ \delta^2(\Lambda) (\p_\lambda \D^2 \digamma_{k, \epsilon}^\iota   - \mathcal{S}_{1,2,\epsilon}^\iota) }_{X^{0,0}} &\lesssim \frac{\big\|\omega_{0k}\big\|_{H^4_k}}{|k|^{3/2- \theta}}, \quad |k \delta(\Lambda)| \gtrsim 1 ,
\end{align}
where 
\begin{align}\label{DCM7.77}
&\mathcal{S}^\iota_{1,2,\epsilon}(y,\lambda):= \\
&\chi_{j, \delta}^*(y) ( C_1(y) \D^2 \digamma_{k, \epsilon}^\iota(y,\lambda )  +  C_2(y) \D \digamma_{k, \epsilon}^\iota(y,\lambda )    + C_3(y) \digamma_{k, \epsilon}^\iota(y,\lambda )       ) \log \left( \frac{b(y) -\lambda + i \iota \epsilon }{\delta^2(\Lambda)} \right),
\end{align}
\begin{align}
C_1(y)=&-\frac{b''(y)}{(b'(y))^{2}}, \quad 
C_2(y)=-\frac{2b'''(y)}{(b'(y))^{2}}+\frac{4(b''(y))^{2}}{(b'(y))^{3}}, \notag \\
C_3(y)=&-\frac{b''''(y)}{(b'(y))^{2}}+\frac{6b''(y)b'''(y)}{(b'(y))^{3}}-\frac{6(b''(y))^{3}}{(b'(y))^{4}}.
\end{align}

\end{lemma}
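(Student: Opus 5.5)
I would follow exactly the template of Lemmas \ref{lem:gd2}, \ref{lem:gd4} and \ref{lem:gd3}. Apply $\partial_\lambda$ to the equation \eqref{gd2F} satisfied by $\D^2\digamma_{k,\epsilon}^\iota$. This gives
\begin{align*}
\left(-\p_y^2+k^2+\frac{b''}{b-\lambda+i\iota\epsilon}\right)\p_\lambda\D^2\digamma
&= -\frac{b''\,\D^2\digamma}{(b-\lambda+i\iota\epsilon)^2}
-4b''\p_y\p_\lambda^2\D\digamma-2b'''\p_\lambda^2\D\digamma+2(b'')^2\p_\lambda^3\digamma\\
&\quad-2b'''\p_y\p_\lambda^2\digamma-b''''\p_\lambda^2\digamma
-\p_\lambda\Big(\frac{2b'''\D\digamma+b''''\digamma}{b-\lambda+i\iota\epsilon}\Big)+\p_\lambda\D^2 r.
\end{align*}
Then subtract $\mathcal{S}^\iota_{1,2,\epsilon}$. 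The choice of $C_1=-b''/(b')^2$ is dictated by writing the worst term $-b''\D^2\digamma/(b-\lambda+i\iota\epsilon)^2$ as $\frac{b''\D^2\digamma}{b'}\partial_y(b-\lambda+i\iota\epsilon)^{-1}$. Expressing this via $\D$ and $\partial_\lambda$ of the logarithm (using $\D\log(b-\lambda+i\iota\epsilon)=0$ up to the $\delta^{-2}(\Lambda)$ normalization, whose $\partial_\lambda$ derivative is harmless) produces the log-singular term.

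The terms $C_2,C_3$ are determined in the same way. The pieces $\p_\lambda^3\digamma$, $\p_\lambda^2\D\digamma$ and $\p_y\p_\lambda^2\digamma$ carry the singular parts $\mathcal{S}_{3,0,\epsilon}^\iota$, $\mathcal{S}_{2,1,\epsilon}^\iota$ and $\mathcal{S}_{2,0,\epsilon}^\iota$. As in the non-degenerate proof of \eqref{ndlamD2:bdd}, I would rewrite $\mathcal{S}^\iota_{1,2,\epsilon}$ as $-(b')^2(\p_\lambda^3\digamma-r_{3,0})+2b'(\p_\lambda^2\D\digamma-r_{2,1})+b''(\p_\lambda^2\digamma-r_{2,0})$ plus explicit $(b-\lambda+i\iota\epsilon)^{-2}$, $(b-\lambda+i\iota\epsilon)^{-1}$ and $k^2\log$ corrections. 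All of these are localized by $\chi^*_{j,\delta}$. This shows that the remainder $\mathcal{R}^\iota_{1,2,\epsilon}$, the right side of the equation for $\p_\lambda\D^2\digamma-\mathcal{S}^\iota_{1,2,\epsilon}$, has the structure \eqref{drayf} inside $S^j_{3\delta(\Lambda)}$. In that structure the only non-$L^2$-like sources are $\mathfrak c\,\partial_y(\D^2\digamma,\D\digamma,\digamma)/(b-\lambda+i\iota\epsilon)$, and these are handled by \eqref{dfracl3}.

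The estimate then proceeds in three steps. First, invert using $\p_\lambda\D^2\digamma-\mathcal{S}+T_\Theta(\cdot)=\int\mathcal G_k^j\mathcal{R}^\iota_{1,2,\epsilon}$ and Proposition \ref{DeC7} with $(\sigma_1,\sigma_2)=(0,-1)$, respectively $(0,0)$. Second, bound the Green's-function integral of the data term $\p_\lambda\D^2 r$ by Lemma \ref{gr:bdds} (the $\p_\lambda D_\lambda^2(\mathcal R_{0,0}-g)$ and $\p_\lambda D^2_\lambda g$ lines) when $|k\delta(\Lambda)|\ll1$, and by Lemma \ref{g:bdds} when $|k\delta(\Lambda)|\gtrsim1$. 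Third, control the terms involving lower spectral derivatives by inserting the previous bounds: \eqref{DCM9.43} (weight $\delta^3$), \eqref{DCM35} (weight $\delta^3$), \eqref{DCM9.4}, \eqref{DCM5} and \eqref{gDCM22}, together with Lemma \ref{dint}. On the region away from $S^j_{3\delta(\Lambda)}$ these terms are nonsingular, and I would bound them directly with Lemma \ref{mGk50}.

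The weight bookkeeping goes as follows. For $|k\delta|\ll1$, the inputs $\p_\lambda^3\digamma$ and $\p_\lambda^2\D\digamma$ are only controlled after multiplication by $\delta^3(\Lambda)$. This is why the conclusion carries $\delta^3$ and why it is stated in $X^{0,-1}$, matching \eqref{gDCM22}. For $|k\delta|\gtrsim1$, Remark \ref{rem:lamgd} applies: the Green's function is small globally, and one power of $\delta$ is recovered through the factor $d_{j,k}=|k|^{-1}\lesssim\delta$, giving $\delta^2$ in $X^{0,0}$.

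The main obstacle is the algebra verifying that, after the subtraction, no term of the form $\digamma/(b-\lambda+i\iota\epsilon)^2$ or $\log/(b-\lambda+i\iota\epsilon)$ with a non-$H^1$ coefficient survives. This amounts to checking that the listed $C_1,C_2,C_3$ exactly cancel the leading singularities. In particular, the $k^2$-dependent pieces coming from $C_7$ in $\mathcal{S}_{3,0}$ must cancel against the $-\frac{k^2(b''\digamma)}{(b')^2}\log$ correction. I would check this cancellation first, by comparison with the non-degenerate identity for $\mathcal{S}_{1,2,\epsilon}$ (with $\omega_{0k}$ set to $0$, since here the data has been absorbed into $r$). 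A secondary concern is that the terms $\p_y\p_\lambda^2\D\digamma$ must be integrated by parts against $\mathcal G^j_k$ using Lemma \ref{mGk30}, rather than estimated directly.
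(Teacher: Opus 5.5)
Your proposal is correct and follows essentially the same route as the paper. You differentiate \eqref{gd2F} in $\lambda$ (your right-hand side agrees term by term with the paper's), subtract $\mathcal{S}^\iota_{1,2,\epsilon}$, invert $I+T_\Theta$ via Proposition~\ref{DeC7} in $X^{0,-1}$ (respectively $X^{0,0}$), and bound the resulting source using the preceding singular expansions, Lemma~\ref{dint}, and Lemmas~\ref{g:bdds}, \ref{gr:bdds} and \ref{mGk50}, which is exactly the paper's argument; your cross-check against the non-degenerate expansion \eqref{ndS12} with $\omega_{0k}=0$, including the cancellation of the $k^2$ terms, is also sound, since $C_1,C_2,C_3$ coincide with the non-degenerate coefficients.
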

See Remark \ref{rem:lamgd} for a discussion of why two cases are considered here. 

\begin{proof}
Equations \eqref{BDC2}, \eqref{E:eq}, and \eqref{rhs:F} yield
\begin{align}
   &\left(
-\partial_y^2+k^2
+\frac{b''(y)}{b(y)-\lambda+i\iota \epsilon}
\right)  \p_\lambda\D^2 \digamma_{k, \epsilon}^\iota(y,\lambda) \\
&=-4b''(y)\p_y\p_\lambda^2(\D \digamma_{k, \epsilon}^\iota(y,\lambda))
  -2b'''(y)\p_\lambda^2(\D \digamma_{k, \epsilon}^\iota(y,\lambda))
  +2\bigl(b''(y)\bigr)^2\p_\lambda^3\digamma_{k, \epsilon}^\iota(y,\lambda)\notag \\
&-2b'''(y)\p_y\p_\lambda^2\digamma_{k, \epsilon}^\iota(y,\lambda)-b''''(y)\p_\lambda^2\digamma_{k, \epsilon}^\iota(y,\lambda)
  -\frac{2b'''(y)}{b(y)-\lambda+i\iota\epsilon}\p_\lambda\D \digamma_{k, \epsilon}^\iota(y,\lambda) \notag \\
  &-\frac{b''''(y)}{b(y)-\lambda+i\iota \epsilon}\p_\lambda \digamma_{k, \epsilon}^\iota(y,\lambda)
-\frac{b''(y)}{\bigl(b(y)-\lambda+i\iota \epsilon\bigr)^2}\D^2\digamma_{k, \epsilon}^\iota(y,\lambda)
  -\frac{2b'''(y)}{\bigl(b(y)-\lambda+i\iota \epsilon\bigr)^2}\D \digamma_{k, \epsilon}^\iota(y,\lambda) \notag \\ 
  &-\frac{b''''(y)}{\bigl(b(y)-\lambda+i\iota \epsilon\bigr)^2}\digamma_{k, \epsilon}^\iota(y,\lambda) + \D^2\p_\lambda r. \notag
\end{align}
Denoting the right-hand side of the equation satisfied by $\partial_\lambda  \D^2 \digamma_{k, \epsilon}^\iota(y,\lambda) - \mathcal{S}_{1,2, \epsilon}^\iota $ as $\mathcal{R}_{1,2, \epsilon}^\iota$, we solve for $\p_\lambda  \D^2 \digamma_{k, \epsilon}^\iota   -  \mathcal{S}_{1,2, \epsilon}^\iota$ as follows:
\begin{align}
    \p_\lambda \D^2 \digamma_{k, \epsilon}^\iota   -  \mathcal{S}_{1,2, \epsilon}^\iota   + T_\Theta (\p_\lambda \D^2 \digamma_{k, \epsilon}^\iota   -  \mathcal{S}_{1,2, \epsilon}^\iota) = \int_\T \mathcal{G}_k^j(y,z; \Lambda) \mathcal{R}_{1,2, \epsilon}^\iota(z,\lambda) dz .
\end{align}
After a direct computation to determine $\mathcal{R}_{1,2, \epsilon}^\iota$,
\eqref{DCM36} follows from  Proposition \ref{DeC7}, the preceding estimates, and Lemmas \ref{dint}, \ref{g:bdds}, \ref{gr:bdds}, and \ref{mGk50}.
\end{proof}

As a final step, we need control of the spectral density function at the critical values. These appear as boundary terms when performing integration-by-parts in the representation formula \eqref{F3.4}. We omit the proofs as they are consequences of the equations satisfied by $\p_\lambda^\beta(\phi_{k, \epsilon}^+(y,\lambda)), \beta \in \{0,1\}$, Lemma \ref{mGk50}, and Proposition \ref{DeC7}.

\begin{lemma}\label{boundary}
For $\kappa k \in \mathbb{Z} \setminus \{0 \}$,  $\lambda=b(y_{j*})$, $0<\epsilon<\epsilon_0$, and $\theta\in(0,1/4]$, we have the following bound:
\begin{equation}\label{bdphi}
    \norm{\phi_{k,\epsilon}^{+}-\phi_{k,\epsilon}^{-}}_{X^{\theta,0}}
    \lesssim \delta^2(\Lambda)\big\|\omega_{0k}\big\|_{H_k^4}.
\end{equation}
In particular, 
\begin{equation}\label{bdphi0}
    \lim_{\epsilon\to0+}
    \left[\phi_{k,\epsilon}^{+}(y,b(y_{j*}))
    -\phi_{k,\epsilon}^{-}(y,b(y_{j*}))\right]=0,
    \qquad y\in\T.
\end{equation}
\end{lemma}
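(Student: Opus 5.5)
We take the difference of the two resolvent branches at the critical value, $\Phi:=\phi^+_{k,\epsilon}-\phi^-_{k,\epsilon}$ with $\lambda=b(y_{j*})$, and derive the equation it satisfies. Note that $\delta(\Lambda)=8\sqrt{\epsilon}$ in this case. Since the right-hand side $g^\iota_{k,\epsilon}$ from \eqref{BDC3} depends on $\iota$ only through its first term, which is supported away from $y_{j*}$, subtracting the two copies of \eqref{BDC2} gives
\begin{equation*}
\Big(-\partial_y^2+k^2+\frac{b''}{b-\lambda+i\epsilon}\Big)\Phi=-\frac{2i\epsilon\, b''\,\phi^-_{k,\epsilon}}{(b-\lambda)^2+\epsilon^2}+\big(g^+_{k,\epsilon}-g^-_{k,\epsilon}\big).
\end{equation*}
The data difference equals $(1-\chi_{j,\delta_0})\omega_{0k}\,\frac{-2i\epsilon}{(b-\lambda)^2+\epsilon^2}$. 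It is supported where $|b-\lambda|\gtrsim\delta_0^2$, so it is $O(\epsilon\|\omega_{0k}\|_{H^1_k})=O(\delta^2(\Lambda)\|\omega_{0k}\|)$ in every norm we need. We then rewrite the equation in the integral form $\Phi+T_\Theta\Phi=\int\mathcal G_k^j(y,z;\Lambda)\,\mathcal R(z)\,dz$, using the modified Green function of \eqref{mGk1} and the operator \eqref{DeC3}. By Proposition \ref{DeC7} with $(\sigma_1,\sigma_2)=(\theta,0)$, which is an admissible pair, it suffices to bound the Green-function integral of $\mathcal R$ in $X^{\theta,0}$ by $\delta^2(\Lambda)\|\omega_{0k}\|_{H^4_k}$.

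The main term is the one localized near $y_{j*}$. Since $\lambda=b(y_{j*})$ exactly, we have $|b(z)-\lambda|\approx|z-y_{j*}|^2$, so the kernel $\epsilon/((b-\lambda)^2+\epsilon^2)$ is bounded by $\epsilon\,\varrho_j^{-4}(z;\Lambda)$. This is concentrated in $S^j_{C\delta(\Lambda)}$, where it has size $\delta^{-2}(\Lambda)$, and it decays like $\delta^2\varrho_j^{-4}$ outside. We need a pointwise bound for $\phi^-_{k,\epsilon}(\cdot,b(y_{j*}))$. From Lemma \ref{DCM1} (the $X^{0,-2}$ bound on $\digamma$) together with the inner-solution bounds of Lemma \ref{lem:phi-bounds} and the decomposition \eqref{BDC3.5}, we get $|\phi^-_{k,\epsilon}(z)|\lesssim\varrho_{j,k}^{2-\theta}(z;\Lambda)\|\omega_{0k}\|_{H^4_k}$ up to the $|k|$ factors. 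The source term is then bounded by $\epsilon\,\varrho_j^{-2-\theta}(z)$.

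We integrate this source against the pointwise bounds \eqref{mGk51} for $\mathcal G_k^j$. For $|y-y_{j*}|\lesssim\delta$ the integral is of order $\epsilon\,\delta^{-\theta}\approx\delta^{2-\theta}$. For larger $|y-y_{j*}|$, the decay $\varrho_j(z)/\varrho_j(y)$ gives a bound of order $\delta^{2-\theta}\cdot\delta/\varrho_j(y)$. After multiplying by the weight $\varrho_j^{\theta}$ from the $X^{\theta,0}$ norm (and $\delta^\theta$ inside $S^j_\delta$), this is $O(\delta^2)$. The $\theta$ in the norm is placed precisely to absorb the logarithmic loss of the inner solution. The $\partial_y$ part of the norm is handled in the same way, using the $\beta=1$ case of \eqref{mGk51}.

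The main obstacle is the regime $|k|\delta(\Lambda)\gtrsim1$. There the decomposition \eqref{BDC3.5} is not used, and the pointwise bound on $\phi^-$ near $y_{j*}$ has to come from the $X^{0,-2}$ estimate through the $L^2$-based inner part of the norm. This costs a factor $(|k|\delta)^{1/2}$, which must be absorbed by the extra $|k|$-decay of the data in $H^4_k$. As a fallback we would track the $|k|$ powers and absorb them into the implicit constant.

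The limit \eqref{bdphi0} then follows. The $X^{\theta,0}$ norm controls $\varrho_j^{\theta}|\Phi|$ pointwise outside $S^j_\delta$, and controls $\Phi$ in $L^\infty$ with weight $\delta^{\theta-1/2}\cdot\delta^{1/2}$ inside $S^j_\delta$. Hence $|\Phi(y)|\lesssim\delta^{2-\theta}\to0$ for each fixed $y$ as $\epsilon\to0+$.
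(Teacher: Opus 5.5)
Your proposal is correct and is essentially the paper's argument: the paper omits the proof, saying only that it follows from the equation for $\phi^+_{k,\epsilon}-\phi^-_{k,\epsilon}$, the modified Green's function bounds of Lemma~\ref{mGk50}, and Proposition~\ref{DeC7}, which is exactly your scheme. One intermediate estimate needs correcting. For $|y-y_{j*}|\gg\delta(\Lambda)$, the source $\epsilon\varrho_j^{-2-\theta}(z;\Lambda)$ is not confined to $S^j_{C\delta(\Lambda)}$. The region $\varrho_j(z;\Lambda)\approx\varrho_j(y;\Lambda)$ contributes $\delta^2(\Lambda)\varrho_j^{-\theta}(y;\Lambda)$, which is larger than your $\delta^{3-\theta}(\Lambda)/\varrho_j(y;\Lambda)$. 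So it is the weight $\varrho_j^{\theta}$ in $X^{\theta,0}$ that closes the bound outside $S^j_{\delta(\Lambda)}$ as well, and the final $O(\delta^2(\Lambda))$ conclusion stands. Also, no positive $|k|$-powers need absorbing when $|k|\delta(\Lambda)\gtrsim1$, since the factor $|k|^{-1}e^{-|k||y-z|}$ from \eqref{mGk51} leaves only negative powers of $|k|\delta(\Lambda)$.
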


\begin{lemma}\label{boundary1}
For $\kappa k \in \mathbb{Z}\setminus \{0 \}$,  $\lambda=b(y_{j*})$, $0<\epsilon<\epsilon_0$, and $\theta\in(0,1/4]$, we have the following bound:
\begin{equation}\label{bdlam}
    \norm{\p_\lambda\phi_{k,\epsilon}^{+}-\p_\lambda\phi_{k,\epsilon}^{-}}_{X^{1,0}}
    \lesssim \delta^{1-\theta}(\Lambda)\big\|\omega_{0k}\big\|_{H_k^4}.
\end{equation}
In particular,
\begin{equation}\label{bdlam0}
    \lim_{\epsilon\to0+}
    \left[\p_\lambda\phi_{k,\epsilon}^{+}(y,b(y_{j*}))
    -\p_\lambda\phi_{k,\epsilon}^{-}(y,b(y_{j*}))\right]=0,
    \qquad y\in\T\backslash\{y_{j*}\},
\end{equation}
where the convergence is locally uniform in $\T\backslash\{y_{j*}\}$.
\end{lemma}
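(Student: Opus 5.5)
The plan is to write $w_\epsilon:=\p_\lambda\phi_{k,\epsilon}^{+}-\p_\lambda\phi_{k,\epsilon}^{-}$ at $\lambda=b(y_{j*})$, derive the Rayleigh equation it satisfies, and invert with the limiting absorption principle, Proposition \ref{DeC7}, in $X^{1,0}$.

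\textbf{Simplifications at the critical value.} At $\lambda=b(y_{j*})$ we have $\delta(\Lambda)=8\sqrt{\epsilon}$, and for $|y-y_{j*}|\le 2\delta_0$,
\[
|b(y)-\lambda\pm i\epsilon|\approx \varrho_j^2(y;\Lambda).
\]
So there is no critical layer: the potentials and all normalized logarithms are non-singular, and $|\log((b-\lambda\pm i\epsilon)/\delta^2)|\lesssim 1+\log(\varrho_j/\delta)$. This is the regime $|\epsilon|\gtrsim|\lambda-b(y_{j*})|$ already treated at the end of the proof of Lemma \ref{DeC4}.

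\textbf{Equation for $w_\epsilon$.} Differentiating \eqref{BDC2} in $\lambda$ and subtracting the two branches gives
\begin{align*}
\Big(-\p_y^2+k^2+\frac{b''}{b-\lambda+i\epsilon}\Big)w_\epsilon
={}&-\frac{b''}{(b-\lambda+i\epsilon)^2}\big(\phi^+_{k,\epsilon}-\phi^-_{k,\epsilon}\big)
-b''\phi^-_{k,\epsilon}\Big[\frac{1}{(b-\lambda+i\epsilon)^2}-\frac{1}{(b-\lambda-i\epsilon)^2}\Big]\\
&+\frac{2i\epsilon\, b''}{(b-\lambda)^2+\epsilon^2}\,\p_\lambda\phi^-_{k,\epsilon}
+\p_\lambda\big(g^+_{k,\epsilon}-g^-_{k,\epsilon}\big).
\end{align*}
The last term involves only the first piece of \eqref{BDC3}, which is supported where $|b-\lambda|\gtrsim\delta_0^2$. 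Hence it is $O(\epsilon\,\|\omega_{0k}\|_{H^4_k})=O(\delta^2\|\omega_{0k}\|_{H^4_k})$ and smooth.

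\textbf{Pointwise bounds on the sources.} I would bound each term pointwise.
\begin{itemize}
\item For the first term, use Lemma \ref{boundary}: $|\phi^+_{k,\epsilon}-\phi^-_{k,\epsilon}|\lesssim\delta^2\varrho_j^{-\theta}\|\omega_{0k}\|_{H^4_k}$. This gives a source of size $\delta^2\varrho_j^{-4-\theta}$.
\item For the second term, use $|\phi^-_{k,\epsilon}|\lesssim\varrho_j^{2-\theta}$. This comes from the decomposition \eqref{BDC3.5}, Lemma \ref{lem:phi-bounds} and Lemma \ref{DCM1}. Since the bracket is $O(\epsilon\varrho_j^{-6})$, the source is again $\lesssim\delta^2\varrho_j^{-4-\theta}$.
\item For the third term, use $|\p_\lambda\phi^-_{k,\epsilon}|\lesssim\varrho_j^{-\theta}$. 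This comes from Lemma \ref{lem:lam1}, the explicit singular term $\mathcal{S}_{1,0,\epsilon}^\iota$ (bounded here), and the $\p_\lambda\Phi$ bound in Lemma \ref{lem:phi-bounds}. The source is $\lesssim\epsilon\varrho_j^{-4-\theta}$.
\end{itemize}
All implicit constants carry $\|\omega_{0k}\|_{H^4_k}$ times possibly a power of $|k|$; that is harmless since \eqref{bdlam} asks for no $k$-decay. In the regime $|k\delta|\gtrsim1$ one uses $\phi$ directly in place of the $\Phi+E$ split, with the same bounds.

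\textbf{Inversion.} Reformulate the equation as $(I+T_\Theta)w_\epsilon=\int\mathcal G_k^j(y,z;\Lambda)\,\mathrm{source}(z)\,dz$. By Proposition \ref{DeC7} it suffices to bound the right-hand side in $X^{1,0}$. Using Lemma \ref{mGk50},
\[
|\mathcal G^j_k(y,z)|\lesssim\varrho_j(z)\min\big(\varrho_j^2(y)/\varrho_j^2(z),\ \varrho_j(z)/\varrho_j(y)\big),
\]
with the analogous derivative bound. A source $\delta^2\varrho_j^{-4-\theta}$ integrates to
\[
\lesssim\delta^{2}\cdot\delta^{-2-\theta}\cdot\min\big(1,\ \delta/\varrho_j(y)\big)
\lesssim\delta^{1-\theta}\varrho_j^{-1}(y),
\]
since the mass concentrates at $|z-y_{j*}|\sim\delta$ and the tail $\varrho_j(z)\gg\delta$ is summable. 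Multiplying by the weight $\varrho_j$ gives the $L^\infty$ part of the $X^{1,0}$ norm outside $S^j_\delta$. Inside $S^j_{\delta}$ the weighted $L^2$ piece follows from the same pointwise bound, integrated over an interval of length $\delta$ with the $\delta^{-1/2}$ normalization.

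\textbf{Main obstacle.} The step I expect to be delicate is justifying the pointwise bound $|\p_\lambda\phi^-_{k,\epsilon}|\lesssim\varrho_j^{-\theta}$ uniformly, including inside $S^j_\delta$. There the $X^{0,0}$ control of Lemma \ref{lem:lam1} is only an $L^2$-type statement, and the logarithmic factor must be absorbed into the $\theta$-loss. If it fails pointwise, I would instead feed the $X^{0,0}$ information directly into Lemma \ref{dint}, treating $\epsilon\varrho_j^{-4}$ as $\delta^{-2}\cdot(\text{bounded})$ on $S^j_{3\delta}$.

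\textbf{Limit.} Finally, \eqref{bdlam0} follows because the $X^{1,0}$ norm controls $\sup_{|y-y_{j*}|\ge\eta}|w_\epsilon|$ by $\eta^{-1}\delta^{1-\theta}\to0$ as $\epsilon\to0+$, uniformly on compact subsets of $\T\backslash\{y_{j*}\}$.
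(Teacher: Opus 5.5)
Your proposal is correct and follows essentially the argument the paper has in mind (the paper omits the proof, citing only the $\lambda$-differentiated equations for the branch difference, Lemma~\ref{mGk50} and Proposition~\ref{DeC7}): at $\lambda=b(y_{j*})$ there is no critical layer, the sources are $\lesssim\delta^2(\Lambda)\varrho_j^{-4-\theta}$ by Lemma~\ref{boundary} and the bounds on $\phi^-_{k,\epsilon}$ and $\p_\lambda\phi^-_{k,\epsilon}$, and the Green's-function integration followed by Proposition~\ref{DeC7} gives exactly $\delta^{1-\theta}(\Lambda)$ in $X^{1,0}$. Two bookkeeping corrections. First, the lemmas you quote are stated for $\lambda\neq b(y_{j*})$, so extend them to the endpoint by continuity in $\lambda$ at fixed $\epsilon>0$. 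Second, powers of $|k|$ are not harmless, since the constant in \eqref{bdlam} must be $k$-independent; they are in fact avoided, because your ingredient bounds carry $|k|^{-3/2+\theta}$, and when $|k\delta(\Lambda)|\gtrsim1$ the factor $\varrho_{j,k}(z)e^{-|k||y-z|}\approx|k|^{-1}e^{-|k||y-z|}$ in Lemma~\ref{mGk50} absorbs the $|k\delta(\Lambda)|^{1/2}$ loss in your pointwise bound for $\phi^+_{k,\epsilon}-\phi^-_{k,\epsilon}$ inside $S^j_{\delta(\Lambda)}$.
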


\section{Proof of Main Results}
\label{sec:main:results}
In this section we give a proof of Theorems \ref{thm1} and \ref{thm2}. Theorem \ref{thm1} follows from Theorem \ref{thm2} after summing in $k$ so we focus on the proof of the bounds in \eqref{main:bdd}. We follow the general approach from \cite{IIJ24}. 
By standard well-posedness, it suffices to consider $t\geq1$. In the remainder of the section we assume that $b(y) \neq b(y_{1*}), b(y_{2*})$ and let $M_k:= \norm{\omega_{0k}}_{H_k^4}$.

Before proving the main results, we state some useful steps that will be used frequently in the following proofs. To handle quantities of the form
\begin{align}
    \frac{f(y,\lambda)}{b(y)  -\lambda + i \epsilon},
\end{align}
we will make use of the identity 
\begin{align}
    f(y,\lambda) = f(y, b(y)) - (b(y)  -\lambda)\int_0^1 \p_\lambda f(y, \lambda + s(b(y) - \lambda)) ds,
\end{align}
and the bound
\begin{align}
\left| \lim_{\epsilon \to 0} \int_{\R} \frac{e^{i \lambda k t}  
\chi_l(\lambda -c)}{b(y) - \lambda + i \epsilon} d \lambda \right| \lesssim 1, 
\end{align}
which is \textit{independent} of $l > 0, c$. This is a consequence of the fact that the Fourier transform of $\text{p.v.} \frac{1}{\lambda}$ is proportional to $\text{sgn}(\lambda)$, Parseval's identity, and the smoothness of $\chi$. We will also  make use of the bound  
\begin{align}
    \int_{I_{bj}} |\chi_{j, \delta}^*(y) \frac{f(y,\lambda)}{|b'(y)|^2} \log \frac{b(y) - \lambda + i \epsilon}{\delta^2(\Lambda)}| d \lambda  \lesssim \norm{f(y, \cdot )}_{L_{\lambda}^\infty(I_{bj})},
\end{align}
where $I_{bj}$ is the set $ |b(y) - \lambda| \leq c|b(y) - b(y_{j*})|$ and $c \in (0,1)$ is a sufficiently small constant depending on $b$. Finally, for $\lambda \in \Sigma_{j, \delta_0}$ and $\lambda\in I_{bj}$ we record the bound 
\begin{align}
\label{prelim:pointwise}
    |f(y, \lambda)| \lesssim 
    \norm{f}_{X^{\sigma_1, \sigma_2}}
    \begin{cases}
        (\delta(\Lambda))^{-\sigma_1 - \sigma_2},  &  |\delta(\Lambda) k | \ll 1\\
        |k \delta(\Lambda)|^{1/2} (\delta(\Lambda))^{-\sigma_1} |k|^{\sigma_2}, & |k \delta(\Lambda)| \gtrsim 1 
    \end{cases}.
\end{align}
The factor of $|k \delta(\Lambda)|^{1/2}$ is the reason for the extra factor of $|k|^{1/2}$ relative to what dimension counting would suggest.

\subsection{Proof of \eqref{damp1}}
Using the representation formula \eqref{F3.4}, we have
\begin{equation}\label{PMR1}
\begin{split}
\psi_k(t,y)&=\frac{1}{2\pi i}\lim_{\epsilon\to0+}\int_{\Sigma}e^{-ik\lambda t}\Big[\psi_{k,\epsilon}^+(y,\lambda)-\psi^-_{k,\epsilon}(y,\lambda)\Big]\,d\lambda.
\end{split}
\end{equation}
 We can decompose for $t\ge1$,
\begin{equation}\label{PMR2}
\psi_k(t,y):=\psi_{k, \rm{nd}}(t,y)+\psi_{k, \rm{d}}(t,y),
\end{equation}
where
\begin{align}
 \psi_{k, \rm{nd}}   &=  \frac{1}{2\pi i}\lim_{\epsilon\to0+}\int_{\Sigma} (1 - \sum_j\chi_{bj,c\delta_0^2}(\lambda ))e^{-ik\lambda t}\Big[\psi_{k,\epsilon}^+(y,\lambda)-\psi^-_{k,\epsilon}(y,\lambda)\Big]\,d\lambda, \\
 \psi_{k, \rm{d}}   &=  \sum_{j} \frac{1}{2\pi i}\lim_{\epsilon\to0+}\int_{\Sigma} \chi_{bj,c\delta_0^2}(\lambda )e^{-ik\lambda t}\Big[\psi_{k,\epsilon}^+(y,\lambda)-\psi^-_{k,\epsilon}(y,\lambda)\Big]\,d\lambda .
\end{align}
\par
We first estimate $\psi_{k,\rm d}$. By \eqref{BDC1}, the difference of the two branches is unchanged when $\psi_{k,\epsilon}^\iota$ is replaced by $\phi_{k,\epsilon}^\iota$. Integrating by parts twice and using Lemmas~\ref{boundary} and \ref{boundary1}, we obtain
\begin{align}
\psi_{k,\rm d}(t,y)
&=-\frac{1}{2\pi i(kt)^2}\sum_j\lim_{\epsilon\to0+}
\int_\Sigma e^{-ik\lambda t}\p_\lambda^2
\Big[\chi_{bj,c\delta_0^2}(\lambda)
(\phi_{k,\epsilon}^+-\phi_{k,\epsilon}^-)(y,\lambda)\Big]\,d\lambda \\
&=-\frac{1}{2\pi i(kt)^2}\sum_j\lim_{\epsilon\to0+}
\int_\Sigma e^{-ik\lambda t}\chi_{bj,c\delta_0^2}(\lambda)
\Big[\p_\lambda^2\phi_{k,\epsilon}^+(y,\lambda)
-\p_\lambda^2\phi_{k,\epsilon}^-(y,\lambda)\Big]\,d\lambda
+{\rm ND}(t,y). \notag
\end{align}
Here ${\rm ND}$ contains the terms with derivatives on the fixed spectral cutoffs; the non-degenerate estimates give
$|{\rm ND}(t,y)|\lesssim M_k/(|k|^{3-\theta}t^2)$.
\par
We next observe that by \eqref{BDC3.5}, \eqref{phi:lam}, and Lemma~\ref{phi:van} we have the following essential vanishing property:
\begin{align}
\label{vanishing:rep}
    \lim_{\epsilon\to0+}\int_{\Sigma}
    \left|\chi_{bj,c\delta_0^2}(\lambda)g_{k,j*}\chi_{j,\frac{\delta_0}{4k}}(y)\right|
    \left|\p_\lambda^2\Phi_\epsilon^+(y-y_{j*},\lambda)
    -\p_\lambda^2\Phi_\epsilon^-(y-y_{j*},\lambda)\right|\,d\lambda=0.
\end{align}
The same conclusion holds with zero or one spectral derivative.
Therefore,
\begin{align}
    &\lim_{\epsilon\to0+}\int_{\Sigma} \chi_{bj,c\delta_0^2}(\lambda )e^{-ik\lambda t}\Big[\p_\lambda^2\phi_{k,\epsilon}^+(y,\lambda)-\p_\lambda^2\phi^-_{k,\epsilon}(y,\lambda)\Big]\,d\lambda \\
    &= \lim_{\epsilon\to0+}\int_{\Sigma} \chi_{bj,c\delta_0^2}(\lambda )e^{-ik\lambda t}\Big[\p_\lambda^2\digamma_{k,\epsilon}^+(y,\lambda)- \p_\lambda^2\digamma^-_{k,\epsilon}(y,\lambda)\Big]\,d\lambda.
\end{align}
 Here the derivatives of $\digamma_{k,\epsilon}^\iota$ are read as in \eqref{sdf:cases}. 

We estimate each $j$-summand separately. First suppose that $|b(y)-b(y_{j*})||kt|>1$. Choose $C>1$ sufficiently large. The preliminary estimates and Lemmas~\ref{DCM1}--\ref{DCM100} give
\begin{equation}
\begin{aligned}
&\Bigl|\lim_{\epsilon\to0+}\int_\Sigma e^{-ik\lambda t}
\chi_{bj,c\delta_0^2}(\lambda)
\chi_{bj,C|b(y)-b(y_{j*})|}(\lambda)\\
&\qquad\qquad\times
\Big[\p_\lambda^2\digamma_{k,\epsilon}^+(y,\lambda)
-\p_\lambda^2\digamma_{k,\epsilon}^-(y,\lambda)\Big]\,d\lambda\Bigr|
\lesssim\frac{M_k}{|k|^{1-\theta}}.
\end{aligned}
\end{equation}

On the support of $1-\chi_{bj,C|b(y)-b(y_{j*})|}$, it appears as if there is a logarithmic loss since $\p_\lambda^2 \digamma_{k, \epsilon}^\iota(y,\lambda) \approx \frac{1}{|b(y_{j*})-\lambda|}$. However, we observe that the annular factor $\chi_{j,\delta}^*(y)$ vanishes. This allows us to do one more integration by parts. Then, Lemmas~\ref{DCM100} and \ref{DCM103} yield
\begin{equation}
\begin{aligned}
&\Bigl|\lim_{\epsilon\to0+}\int_\Sigma e^{-ik\lambda t}
\chi_{bj,c\delta_0^2}(\lambda)
\bigl(1-\chi_{bj,C|b(y)-b(y_{j*})|}(\lambda)\bigr)\\
&\qquad\qquad\times
\Big[\p_\lambda^2\digamma_{k,\epsilon}^+(y,\lambda)
-\p_\lambda^2\digamma_{k,\epsilon}^-(y,\lambda)\Big]\,d\lambda\Bigr|\\
&\qquad\lesssim
\frac{M_k}{|k|^{1-\theta}|kt|\,|b(y)-b(y_{j*})|}
\lesssim\frac{M_k}{|k|^{1-\theta}}.
\end{aligned}
\end{equation}
Together with the prefactor $(kt)^{-2}$, this gives the required bound for these summands.

Now suppose that $|b(y)-b(y_{j*})||kt|\leq1$. Using the inner cancellation with zero derivatives, we split the original $j$-summand with the cutoff $\chi_{bj,C(|k|t)^{-1}}$. One integration by parts, Lemma~\ref{boundary}, and Lemmas~\ref{DCM1} and \ref{lem:lam1} give
\begin{equation}
\begin{aligned}
&\Bigl|\lim_{\epsilon\to0+}\int_\Sigma e^{-ik\lambda t}
\chi_{bj,c\delta_0^2}(\lambda)
\chi_{bj,C(|k|t)^{-1}}(\lambda)\\
&\qquad\qquad\times
\Big[\digamma_{k,\epsilon}^+(y,\lambda)
-\digamma_{k,\epsilon}^-(y,\lambda)\Big]\,d\lambda\Bigr|
\lesssim\frac{M_k}{|k|^{1-\theta}k^2t^2}.
\end{aligned}
\end{equation}
On the support  of $\bigl(1-\chi_{bj,C(|k|t)^{-1}}(\lambda)\bigr)$, $\chi_{j,\delta}^*(y)=0$. Three integrations by parts and Lemmas~\ref{DCM1}, \ref{lem:lam1}, \ref{DCM100}, and \ref{DCM103} give
\begin{equation}
\begin{aligned}
&\Bigl|\lim_{\epsilon\to0+}\int_\Sigma e^{-ik\lambda t}
\chi_{bj,c\delta_0^2}(\lambda)
\bigl(1-\chi_{bj,C(|k|t)^{-1}}(\lambda)\bigr)\\
&\qquad\qquad\times
\Big[\digamma_{k,\epsilon}^+(y,\lambda)
-\digamma_{k,\epsilon}^-(y,\lambda)\Big]\,d\lambda\Bigr|
\lesssim\frac{M_k}{|k|^{1-\theta}k^2t^2}.
\end{aligned}
\end{equation}
Finally, two integrations by parts and the non-degenerate estimates in Lemma~\ref{ndpsi}, \eqref{ndpsi_l1}, and Lemma~\ref{ndpsi_l2}, with a smaller fixed neighborhood of the critical values, give
$|\psi_{k,\rm nd}(t,y)|\lesssim M_k/(|k|^{3-\theta}t^2)$.
Thus $|\psi_k(t,y)|\lesssim M_k/(|k|^{3-\theta}t^2)$ for $t\geq1$. By continuity in $y$, the bound also holds at the critical points. This completes the proof of \eqref{damp1}.
\par




\subsection{Proof of \eqref{damp2}}

Using \eqref{damp1} and \eqref{pro2}, which is proved independently below, we obtain
\begin{align}
|\p_y\psi_k(t,y)|
&\leq |\p_y(e^{iktb}\psi_k)(t,y)|+|ktb'(y)|\,|\psi_k(t,y)| \notag\\
&\lesssim \left(\frac{1}{|k|^{5/2-\theta}t^{3/2}}
+\frac{|b'(y)|}{|k|^{2-\theta}t}\right)M_k.
\end{align}
This proves \eqref{damp2}.

\subsection{Proof of \eqref{depl1}}

Applying $\p_y^2-k^2$ to \eqref{F3.4} and using \eqref{F7} gives
\begin{align}\label{PMR9}
\omega_k(t,y)
=\frac{1}{2\pi i}\lim_{\epsilon\to0+}\int_\Sigma e^{-ik\lambda t}
\left[
\frac{b''(y)\psi_{k,\epsilon}^+(y,\lambda)-\omega_{0k}(y)}
{b(y)-\lambda+i\epsilon}
-\frac{b''(y)\psi_{k,\epsilon}^-(y,\lambda)-\omega_{0k}(y)}
{b(y)-\lambda-i\epsilon}
\right]\,d\lambda.
\end{align}
We use the same cutoffs as in \eqref{PMR2}. The non-degenerate estimates give
$|(\p_y^2-k^2)\psi_{k,\rm nd}|\lesssim |k|^{-3+\theta}M_k$.
Near $y_{j*}$, this term and the summand corresponding to the other critical value are bounded by $M_k/(|k|^{2-\theta}t)$, by one integration by parts; their denominators are separated from zero.

By \eqref{BDC1}, the correction to the $j$-summand has zero limit, since
$\chi_{bj,c\delta_0^2}(b(y))(1-\chi_{j,\delta_0}(y))=0$.
The inner contribution in \eqref{BDC3.5} also has zero limit: subtract its numerator at $\lambda=b(y)$ and apply \eqref{phi:lam} and Lemma~\ref{phi:van}.
Thus the $j$-summand equals
\begin{align}\label{PMR10}
\frac{b''(y)}{2\pi i}\lim_{\epsilon\to0+}
\int_\Sigma e^{-ik\lambda t}\chi_{bj,c\delta_0^2}(\lambda)
\left[
\frac{\digamma_{k,\epsilon}^+(y,\lambda)}{b(y)-\lambda+i\epsilon}
-\frac{\digamma_{k,\epsilon}^-(y,\lambda)}{b(y)-\lambda-i\epsilon}
\right]\,d\lambda.
\end{align}
As before, spectral derivatives are taken on the $\phi$ branches before removing the inner terms.

Fix $y$ near $y_{j*}$ and choose $C$ sufficiently large. On $I_{bj}$, first write the branch difference over $b(y)-\lambda+i\epsilon$; the remaining kernel jump samples the lower branch at $\lambda=b(y)$. Subtract the numerator at $\lambda=b(y)$ and use the preliminary Cauchy and logarithmic bounds, together with Lemmas~\ref{DCM1} and \ref{lem:lam1}. Outside $I_{bj}$ the denominator is separated from zero. We obtain
\begin{align}\label{PMR4}
&\left|\lim_{\epsilon\to0+}\int_\Sigma e^{-ik\lambda t}
\chi_{bj,c\delta_0^2}(\lambda)\chi_{bj,C|b(y)-b(y_{j*})|}(\lambda)
\left[
\frac{\digamma_{k,\epsilon}^+(y,\lambda)}{b(y)-\lambda+i\epsilon}
-\frac{\digamma_{k,\epsilon}^-(y,\lambda)}{b(y)-\lambda-i\epsilon}
\right]\,d\lambda\right| \notag\\
&\hspace{25mm}\lesssim \frac{M_k}{|k|^{1-\theta}}|b(y)-b(y_{j*})|.
\end{align}
On the support of $1-\chi_{bj,C|b(y)-b(y_{j*})|}$, we have
$|b(y)-\lambda|\approx|\lambda-b(y_{j*})|$ and $\chi_{j,\delta}^*(y)=0$.
Lemmas~\ref{DCM1}, \ref{lem:lam1}, and \ref{DCM100} therefore give, for $\alpha\in\{0,1,2\}$ and sufficiently small $\epsilon$,
\begin{align}\label{PMR11}
\left|\p_\lambda^\alpha
\left(\frac{\digamma_{k,\epsilon}^\iota(y,\lambda)}
{b(y)-\lambda+i\iota\epsilon}\right)\right|
\lesssim \frac{M_k}{|k|^{1-\theta}|\lambda-b(y_{j*})|^\alpha}.
\end{align}

First suppose that $|b(y)-b(y_{j*})||kt|>1$.
Split \eqref{PMR10} with $\chi_{bj,C|b(y)-b(y_{j*})|}$.
The first part is bounded by \eqref{PMR4}. On the complementary support, two integrations by parts and \eqref{PMR11}, including the derivatives of the cutoffs, give the bound
\begin{align}
\frac{M_k}{|k|^{1-\theta}|kt|^2}
\int_{|b(y)-b(y_{j*})|}^\infty s^{-2}\,ds
\lesssim \frac{M_k}{|k|^{1-\theta}|kt|^2|b(y)-b(y_{j*})|}
\lesssim \frac{M_k|b(y)-b(y_{j*})|}{|k|^{1-\theta}}.
\end{align}

Now suppose that $|b(y)-b(y_{j*})||kt|\leq1$.
Split \eqref{PMR10} with $\chi_{bj,C(|k|t)^{-1}}$.
Within this cutoff, \eqref{PMR4} handles the part with
$\chi_{bj,C|b(y)-b(y_{j*})|}$.
The difference
$\chi_{bj,C(|k|t)^{-1}}-\chi_{bj,C|b(y)-b(y_{j*})|}$
is supported away from the critical layer. Its spectral interval has length
$\lesssim(|k|t)^{-1}$, so \eqref{PMR11} with $\alpha=0$ gives
$M_k/(|k|^{1-\theta}|kt|)$.
On the support of $1-\chi_{bj,C(|k|t)^{-1}}$, two integrations by parts give
\begin{align}
\frac{M_k}{|k|^{1-\theta}|kt|^2}
\int_{(|k|t)^{-1}}^\infty s^{-2}\,ds
\lesssim \frac{M_k}{|k|^{2-\theta}t}.
\end{align}
Together with $|b(y)-b(y_{j*})|\approx |b'(y)|^2$, the two cases prove
\begin{align}
|\omega_k(t,y)|\lesssim
\left(\frac{1}{|k|^{2-\theta}t}
+\frac{|b'(y)|^2}{|k|^{1-\theta}}\right)M_k
\end{align}
near the critical points.

For the uniform bound, apply Lemmas~\ref{DCM1} and \ref{lem:lam1} with $\theta/2$.
In \eqref{PMR10}, use the preliminary Cauchy bound on
$|\lambda-b(y)|\lesssim c\min\{|b(y)-b(y_{j*})|,|k|^{-2}\}$.
On the rest of $|\lambda-b(y_{j*})|\approx |b(y)-b(y_{j*})|$, direct integration costs at most $1+\log(2+|k|)$.
The region $|\lambda-b(y_{j*})|\ll |b(y)-b(y_{j*})|$ is bounded directly by Lemma~\ref{DCM1}.
For $|\lambda-b(y_{j*})|\gg |b(y)-b(y_{j*})|$, the quotient is bounded by $M_k/|k|^{1-\theta/2}$ when $|\lambda-b(y_{j*})|\lesssim |k|^{-2}$; beyond this scale, retain the factor $|k\delta(\Lambda)|^{1/2}$ in the preliminary pointwise estimate to obtain
\begin{align}
\left|\frac{\digamma_{k,\epsilon}^\iota(y,\lambda)}
{b(y)-\lambda+i\iota\epsilon}\right|
\lesssim \frac{M_k}{|k|^{3-\theta/2}|\lambda-b(y_{j*})|^{3/4}}.
\end{align}
This is integrable in $\lambda$. Consequently, \eqref{PMR9} gives
\begin{align}
|\omega_k(t,y)|
\lesssim \frac{M_k(1+\log(2+|k|))}{|k|^{3-\theta/2}}
\lesssim \frac{M_k}{|k|^{3-\theta}}.
\end{align}
Away from the critical points this also implies the first bound in \eqref{depl1}.
Combining the bounds and extending by continuity proves \eqref{depl1}.

\subsection{Proof of \eqref{pro1}}

Differentiate the profile in \eqref{PMR9} and integrate the derivative of the phase by parts. Since $\D(b(y)-\lambda+i\iota\epsilon)^{-1}=0$, we obtain
\begin{align}
\p_y(e^{iktb}\omega_k)(t,y)
&=\frac{1}{2\pi i}\lim_{\epsilon\to0+}\int_\Sigma e^{-ik(\lambda-b(y))t}
\Biggl[
\frac{\D\bigl(b''(y)\psi_{k,\epsilon}^+(y,\lambda)-\omega_{0k}(y)\bigr)}
{b(y)-\lambda+i\epsilon}
\notag\\
&\hspace{36mm}
-\frac{\D\bigl(b''(y)\psi_{k,\epsilon}^-(y,\lambda)-\omega_{0k}(y)\bigr)}
{b(y)-\lambda-i\epsilon}
\Biggr]\,d\lambda.
\end{align}
The endpoint terms vanish by Lemma~\ref{boundary}.
We now use the cutoffs in \eqref{PMR2}. The non-degenerate contribution is bounded by $M_k/|k|^{2-\theta}$.
Near $y_{j*}$, this contribution and the summand at the other critical value are bounded by $M_k/(|k|^{2-\theta}t)$, by one integration by parts and Lemmas~\ref{boundary} and \ref{boundary1}.

By \eqref{BDC1}, \eqref{BDC3.5}, and Lemma~\ref{phi:van}, the correction and inner terms vanish in the limit, including their good derivatives.
The terms containing $b'''\digamma_{k,\epsilon}^\iota$ are bounded by the proof of \eqref{depl1}.
It remains to estimate
\begin{align}\label{PMR3}
\frac{b''(y)}{2\pi i}\lim_{\epsilon\to0+}\int_\Sigma
e^{-ik(\lambda-b(y))t}\chi_{bj,c\delta_0^2}(\lambda)
\left[
\frac{\D\digamma_{k,\epsilon}^+(y,\lambda)}{b(y)-\lambda+i\epsilon}
-\frac{\D\digamma_{k,\epsilon}^-(y,\lambda)}{b(y)-\lambda-i\epsilon}
\right]\,d\lambda.
\end{align}
All spectral derivatives are taken on the $\phi$ branches before removing the inner terms.

Fix $y$ near $y_{j*}$ and choose $C$ sufficiently large.
On $I_{bj}$, subtract the numerator at $\lambda=b(y)$ and use the preliminary Cauchy and logarithmic bounds, now with Lemmas~\ref{DCM1}, \ref{lem:gd1}, and \ref{lem:gd2}.
Outside $I_{bj}$ the denominator is separated from zero. Thus
\begin{equation}\label{PMR6}
\begin{aligned}
&\left|\lim_{\epsilon\to0+}\int_\Sigma
e^{-ik(\lambda-b(y))t}
\chi_{bj,c\delta_0^2}(\lambda)\chi_{bj,C|b(y)-b(y_{j*})|}(\lambda)
\left[
\frac{\D\digamma_{k,\epsilon}^+(y,\lambda)}{b(y)-\lambda+i\epsilon}
-\frac{\D\digamma_{k,\epsilon}^-(y,\lambda)}{b(y)-\lambda-i\epsilon}
\right]\,d\lambda\right|\\
&\hspace{25mm}\lesssim \frac{M_k}{|k|^{1-\theta}}|b(y)-b(y_{j*})|^{1/2}.
\end{aligned}
\end{equation}
On the support of $1-\chi_{bj,C|b(y)-b(y_{j*})|}$, the annular factor vanishes and
$|b(y)-\lambda|\approx|\lambda-b(y_{j*})|$.
Lemmas~\ref{lem:gd1} and \ref{lem:gd2} give, for $\alpha\in\{0,1\}$ and sufficiently small $\epsilon$,
\begin{align}\label{PMR5}
\left|\p_\lambda^\alpha
\left(\frac{\D\digamma_{k,\epsilon}^\iota(y,\lambda)}
{b(y)-\lambda+i\iota\epsilon}\right)\right|
\lesssim
\frac{M_k}{|k|^{1-\theta}|\lambda-b(y_{j*})|^{1/2+\alpha}}.
\end{align}

First suppose that $|b(y)-b(y_{j*})||kt|\leq1$.
Split \eqref{PMR3} with $\chi_{bj,C(|k|t)^{-1}}$.
Within this cutoff, \eqref{PMR6} handles the part with
$\chi_{bj,C|b(y)-b(y_{j*})|}$.
For the difference of these two cutoffs, \eqref{PMR5} with $\alpha=0$ gives
\begin{align}
\frac{M_k}{|k|^{1-\theta}}
\int_{0<|\lambda-b(y_{j*})|\lesssim(|k|t)^{-1}}
\frac{d\lambda}{|\lambda-b(y_{j*})|^{1/2}}
\lesssim \frac{M_k}{|k|^{3/2-\theta}t^{1/2}}.
\end{align}
On the support of $1-\chi_{bj,C(|k|t)^{-1}}$, one integration by parts, including derivatives of the cutoffs, gives
\begin{align}
\frac{M_k}{|k|^{1-\theta}|kt|}
\int_{|\lambda-b(y_{j*})|\gtrsim(|k|t)^{-1}}
\frac{d\lambda}{|\lambda-b(y_{j*})|^{3/2}}
\lesssim \frac{M_k}{|k|^{3/2-\theta}t^{1/2}}.
\end{align}

Now suppose that $|b(y)-b(y_{j*})||kt|>1$.
Split \eqref{PMR3} with $\chi_{bj,C|b(y)-b(y_{j*})|}$.
The first part is bounded by \eqref{PMR6}.
On the complementary support, the same integration by parts starts at
$|\lambda-b(y_{j*})|\gtrsim |b(y)-b(y_{j*})|$ and gives
\begin{align}
\frac{M_k}{|k|^{1-\theta}|kt|\,|b(y)-b(y_{j*})|^{1/2}}
\lesssim \frac{M_k|b(y)-b(y_{j*})|^{1/2}}{|k|^{1-\theta}}.
\end{align}
Since $|b(y)-b(y_{j*})|^{1/2}\approx |b'(y)|$, the two cases give the first bound in \eqref{pro1} near the critical points.

For the uniform bound, apply Lemmas~\ref{DCM1}, \ref{lem:gd1}, and \ref{lem:gd2} with $\theta/2$ and use the same regions as at the end of the proof of \eqref{depl1}.
Near $\lambda=b(y)$, use the preliminary Cauchy bound at scale
$c\min\{|b(y)-b(y_{j*})|,|k|^{-2}\}$.
The remaining region with $|\lambda-b(y_{j*})|\approx |b(y)-b(y_{j*})|$ costs at most $1+\log(2+|k|)$, while $|\lambda-b(y_{j*})|\ll |b(y)-b(y_{j*})|$ is bounded directly by Lemma~\ref{lem:gd1}.
For $|\lambda-b(y_{j*})|\gg |b(y)-b(y_{j*})|$, the preliminary pointwise estimate gives
\begin{align}
\left|\frac{\D\digamma_{k,\epsilon}^\iota(y,\lambda)}
{b(y)-\lambda+i\iota\epsilon}\right|
\lesssim \frac{M_k}{|k|^{1-\theta/2}}
\begin{cases}
|\lambda-b(y_{j*})|^{-1/2},
&|\lambda-b(y_{j*})|\lesssim |k|^{-2},\\
|k|^{-1}|\lambda-b(y_{j*})|^{-3/4},
&|\lambda-b(y_{j*})|\gtrsim |k|^{-2}
\end{cases}.
\end{align}
Integrating these bounds and summing the contributions yields
\begin{align}
|\p_y(e^{iktb}\omega_k)(t,y)|
\lesssim \frac{M_k(1+\log(2+|k|))}{|k|^{2-\theta/2}}
\lesssim \frac{M_k}{|k|^{2-\theta}}.
\end{align}
Away from the critical points this also implies the first bound in \eqref{pro1}.
Combining the bounds and extending by continuity proves \eqref{pro1}.

\subsection{Proof of \eqref{pro2}}

We use the decomposition \eqref{PMR2}. Differentiating the profile and integrating the derivative of the phase by parts gives
\begin{align}
\p_y(e^{iktb}\psi_{k,\rm d})(t,y)
&=\frac{1}{2\pi i}\sum_j\lim_{\epsilon\to0+}
\int_\Sigma e^{-ik(\lambda-b(y))t}
\Bigl[\chi_{bj,c\delta_0^2}(\lambda)
\bigl(\D\phi_{k,\epsilon}^+-\D\phi_{k,\epsilon}^-\bigr)(y,\lambda)
\notag\\
&\hspace{38mm}
+b'(y)\p_\lambda\chi_{bj,c\delta_0^2}(\lambda)
\bigl(\phi_{k,\epsilon}^+-\phi_{k,\epsilon}^-\bigr)(y,\lambda)\Bigr]\,d\lambda.
\end{align}
The boundary terms vanish by Lemma~\ref{boundary}. The non-degenerate contribution and the terms with derivatives on the fixed cutoffs are bounded by $M_k/(|k|^{3-\theta}t^2)$, using two integrations by parts and the estimates of Section~\ref{sec:nd:bdds}.
Moreover, Lemmas~\ref{boundary} and \ref{boundary1} give
\begin{align}
\lim_{\epsilon\to0+}
\bigl(\D\phi_{k,\epsilon}^+-\D\phi_{k,\epsilon}^-\bigr)(y,b(y_{j*}))=0.
\end{align}
We can therefore integrate the remaining term once more by parts. As in the proof of \eqref{damp1}, we differentiate the $\phi$ branches before using \eqref{BDC3.5} and Lemma~\ref{phi:van} to remove the inner contribution. It remains to bound
\begin{align}\label{PMR7}
\frac{1}{|kt|}
\left|\lim_{\epsilon\to0+}\int_\Sigma e^{-ik(\lambda-b(y))t}
\chi_{bj,c\delta_0^2}(\lambda)
\bigl[\p_\lambda\D\digamma_{k,\epsilon}^+
-\p_\lambda\D\digamma_{k,\epsilon}^-\bigr](y,\lambda)\,d\lambda\right|.
\end{align}
The pointwise forms of Lemmas~\ref{lem:gd2} and \ref{lem:gd3} give
\begin{align}\label{PMR8}
\left|\p_\lambda\D\digamma_{k,\epsilon}^\iota
-\mathcal S_{1,1,\epsilon}^\iota\right|
&\lesssim \frac{M_k}{|k|^{1-\theta}\delta(\Lambda)},&
\left|\p_\lambda^2\D\digamma_{k,\epsilon}^\iota
-\mathcal S_{2,1,\epsilon}^\iota\right|
&\lesssim \frac{M_k}{|k|^{1-\theta}\delta^3(\Lambda)}.
\end{align}

First suppose that $|b(y)-b(y_{j*})||kt|\leq1$. Split the integral in \eqref{PMR7} with $\chi_{bj,C(|k|t)^{-1}}$, for $C$ sufficiently large. On its support, \eqref{PMR8} gives an $L^1_\lambda$ bound $M_k/(|k|^{1-\theta}|kt|^{1/2})$ for the regular term. If the annular factor is nonzero, then $y$ lies near $y_{j*}$ and $\delta(\Lambda)\approx |b'(y)|$ in the limit $\epsilon\to0+$. Thus \eqref{DCM7.6}, Lemmas~\ref{DCM1} and \ref{lem:gd1}, and the preliminary logarithmic estimate give
\begin{align}
\limsup_{\epsilon\to0+}
\int_\Sigma \chi_{bj,c\delta_0^2}(\lambda)
|\mathcal S_{1,1,\epsilon}^\iota(y,\lambda)|\,d\lambda
\lesssim \frac{M_k|b'(y)|}{|k|^{1-\theta}}
\lesssim \frac{M_k}{|k|^{1-\theta}|kt|^{1/2}}.
\end{align}
Here the integral over $I_{bj}$ uses the logarithmic estimate, and outside $I_{bj}$ the denominator is separated from zero. On the complementary support of $1-\chi_{bj,C(|k|t)^{-1}}$, the annular factor vanishes. One further integration by parts and \eqref{PMR8}, including the derivative of the time cutoff, give the same bound for that part of the integral:
\begin{align}
\frac{M_k}{|k|^{1-\theta}|kt|}
\int_{(|k|t)^{-1}}^\infty s^{-3/2}\,ds
\lesssim \frac{M_k}{|k|^{1-\theta}|kt|^{1/2}}.
\end{align}

Now suppose that $|b(y)-b(y_{j*})||kt|>1$. Use instead $\chi_{bj,c(|k|t)^{-1}}$, with $c$ sufficiently small. This cutoff is separated from $\lambda=b(y)$ and its annular factor vanishes. The first estimate in \eqref{PMR8} again gives $M_k/(|k|^{1-\theta}|kt|^{1/2})$. On the complementary support we integrate once more by parts. The regular term and the derivative of the time cutoff have the same bound as above. The critical-layer term is zero unless $y$ lies near $y_{j*}$. In that neighborhood, freeze the numerator of the Cauchy kernel in $\mathcal S_{2,1,\epsilon}^\iota$ at $\lambda=b(y)$ and use the preliminary Cauchy and logarithmic estimates on $I_{bj}$. The pole coefficient is bounded by $M_k/(|k|^{1-\theta}|b'(y)|)$ and the logarithmic coefficient by $M_k/(|k|^{1-\theta}|b'(y)|^3)$, by Lemmas~\ref{DCM1}, \ref{lem:gd1}, and \ref{lem:gd4}. Lemmas~\ref{lem:lam1} and \ref{lem:gd2} control the Taylor remainder. Outside $I_{bj}$ the denominator is separated from zero. Thus
\begin{align}
&\left|\lim_{\epsilon\to0+}\int_\Sigma e^{-ik(\lambda-b(y))t}
\chi_{bj,c\delta_0^2}(\lambda)
\bigl(1-\chi_{bj,c(|k|t)^{-1}}(\lambda)\bigr)
\mathcal S_{2,1,\epsilon}^\iota(y,\lambda)\,d\lambda\right|
\notag\\
&\hspace{25mm}\lesssim
\frac{M_k}{|k|^{1-\theta}|b'(y)|}.
\end{align}
Including the two factors of $(kt)^{-1}$, this contribution is bounded by
\begin{align}
\frac{M_k}{|k|^{1-\theta}|kt|^2|b'(y)|}
\lesssim \frac{M_k}{|k|^{5/2-\theta}t^{3/2}},
\end{align}
since $|b'(y)||kt|^{1/2}\gtrsim1$. The same bound follows in the first case after including the prefactor in \eqref{PMR7}. Summing in $j$ proves \eqref{pro2}. Continuity in $y$ extends this bound and the preceding profile and depletion bounds to the critical points.

\subsection{Proof of \eqref{pro3}}

The uniform bounds in \eqref{depl1} and \eqref{pro1} give
\begin{align}
|k|^2\|\omega_k(t)\|_{L^2}
+|k|\|\p_y(e^{iktb}\omega_k)(t)\|_{L^2}
\lesssim \frac{M_k}{|k|^{1-\theta}}.
\end{align}
It remains to bound the second $y$-derivative.
Differentiate the profile in \eqref{PMR9} twice and integrate the derivatives of the phase by parts, as in the proof of \eqref{pro1}.
Since $\D(b(y)-\lambda+i\iota\epsilon)^{-1}=0$, the numerator becomes
$\D^2(b''\psi_{k,\epsilon}^\iota-\omega_{0k})$.
The endpoint terms vanish by Lemmas~\ref{boundary} and \ref{boundary1}.
Using the cutoffs in \eqref{PMR2}, the non-degenerate contribution and the terms with derivatives on the fixed cutoffs are bounded by $M_k/|k|$ in $L^2$, by \eqref{ndD2:bdd}, \eqref{ndlamD2:bdd}, \eqref{ndS12}, and the preliminary Cauchy bound.

By \eqref{BDC1}, \eqref{BDC3.5}, and Lemma~\ref{phi:van}, the correction and inner contributions vanish in the limit, including their first two good derivatives.
The terms containing $2b'''\D\digamma_{k,\epsilon}^\iota+b''''\digamma_{k,\epsilon}^\iota$ are controlled as in the proofs of \eqref{pro1} and \eqref{depl1}.
Thus it remains to estimate \eqref{PMR3} with $\D\digamma_{k,\epsilon}^\iota$ replaced by $\D^2\digamma_{k,\epsilon}^\iota$.
As before, spectral derivatives are taken on the $\phi$ branches before removing the inner terms according to \eqref{sdf:cases}.

Fix $y$ near $y_{j*}$. On $I_{bj}$, subtract the numerator at $\lambda=b(y)$.
Lemmas~\ref{DCM1}, \ref{lem:gd1}, \ref{lem:gd4}, and \ref{lem:gd35}, together with \eqref{prelim:pointwise} and the logarithmic expansion \eqref{DCM7.77}, give the bound $M_k/|k|^{1-\theta}$ by the preliminary Cauchy and logarithmic estimates.
Outside $I_{bj}$ the denominator is separated from zero. Splitting according to whether $|\lambda-b(y_{j*})|$ is smaller than, comparable to, or larger than $|b(y)-b(y_{j*})|$, \eqref{DeC1}, \eqref{gDCM22}, and \eqref{prelim:pointwise} bound the remaining contribution by
\begin{align}
&\frac{M_k}{|k|^{1-\theta}}
\left(1+
\int_0^{|b(y)-b(y_{j*})|}
\frac{ds}{|b(y)-b(y_{j*})|^{1/2}s^{1/2}}
+\int_{|b(y)-b(y_{j*})|}^{C}\frac{ds}{s}\right)
\notag\\
&\hspace{20mm}\lesssim
\frac{M_k}{|k|^{1-\theta}}
\left(1+\big|\log|b(y)-b(y_{j*})|\big|\right).
\end{align}
Away from the critical points the same estimates give $M_k/|k|^{1-\theta}$.
Since $|b(y)-b(y_{j*})|\approx|y-y_{j*}|^2$, the logarithmic factor belongs to $L^2$.
Summing in $j$ yields $\|\p_y^2(e^{iktb}\omega_k)(t)\|_{L^2}\lesssim M_k/|k|^{1-\theta}$ and completes the proof of \eqref{pro3}.

\section{Acknowledgments}
RB was partially supported by NSF DMS-2202974 and ERC-EPSRC Horizon Europe Guarantee EP/X020886/1. SC was partially supported by NSF DMS-2453270 and NSF DMS-2247027. HJ was partially supported by NSF DMS-2453270, NSF DMS-2245021 and a grant from the Simons Foundation.

\textbf{AI Usage Declaration.} During the preparation of this manuscript, the authors used ChatGPT 5.6 Sol and ChatGPT 6 Astra to assist with developing and checking detailed calculations and technical arguments,  formatting bibliographic references, and revising the exposition. In particular, we highlight that the expansions in Sections \ref{sec:nd:bdds} and \ref{sec:d:bdds} were produced by ChatGPT and then checked with SymPy. The SymPy code used for these symbolic checks is available
in the accompanying repository~\cite{BCJgit}. The authors retain full responsibility for the mathematical arguments and the content of the manuscript.

\end{document}